
\documentclass{article}

\usepackage{microtype}
\usepackage{graphicx}
\usepackage{subfigure}
\usepackage{booktabs} 

\usepackage{hyperref}

\usepackage{appendix}
\usepackage{amsthm}
\usepackage{amsmath}

\usepackage{amsfonts}
\usepackage{multirow}
\usepackage{color}

\newtheorem{theorem}{Theorem}
\newtheorem{lemma}{Lemma}

\newtheorem{definition}{Definition}
\newtheorem{assumption}{Assumption}
\newtheorem{remark}{Remark}



\usepackage[accepted]{icml2019}

\icmltitlerunning{Faster Stochastic ADMM for Nonconvex Optimization}

\begin{document}

\twocolumn[
\icmltitle{ Faster Stochastic Alternating Direction Method of Multipliers \\ for Nonconvex Optimization }




\begin{icmlauthorlist}
\icmlauthor{Feihu Huang}{1}
\icmlauthor{Songcan Chen}{2,3}
\icmlauthor{Heng Huang}{1,4}
\end{icmlauthorlist}

\icmlaffiliation{1}{Department of Electrical \& Computer Engineering, University of Pittsburgh, PA 15261, USA}
\icmlaffiliation{2}{College of Computer Science \& Technology, Nanjing University of Aeronautics and Astronautics, Nanjing 211106, China}
\icmlaffiliation{3}{MIIT Key Laboratory of Pattern Analysis \& Machine Intelligence}
\icmlaffiliation{4}{JD Finance America Corporation}

\icmlcorrespondingauthor{Feihu Huang}{huangfeihu2018@gmail.com}
\icmlcorrespondingauthor{Heng Huang}{heng.huang@pitt.edu}

\icmlkeywords{Machine Learning, ICML}

\vskip 0.3in
]



\printAffiliationsAndNotice{}  

\vspace*{-8pt}
\begin{abstract}
\vspace*{-2pt}
In this paper, we propose a faster stochastic alternating direction method of multipliers (ADMM) for nonconvex optimization by using a new stochastic path-integrated differential estimator (SPIDER), called as SPIDER-ADMM. Moreover, we prove that the SPIDER-ADMM achieves a record-breaking incremental first-order oracle (IFO) complexity of $\mathcal{O}(n+n^{1/2}\epsilon^{-1})$ for finding an $\epsilon$-approximate stationary point, which improves the deterministic ADMM by a factor $\mathcal{O}(n^{1/2})$, where $n$ denotes the sample size. As one of major contribution of this paper, we provide a new theoretical analysis framework for nonconvex stochastic ADMM methods with providing the optimal IFO complexity. Based on this new analysis framework, we study the unsolved optimal IFO complexity of the existing non-convex SVRG-ADMM and SAGA-ADMM methods,
 and prove they have the optimal IFO complexity of $\mathcal{O}(n+n^{2/3}\epsilon^{-1})$. Thus, the SPIDER-ADMM improves the existing stochastic ADMM methods by a factor of $\mathcal{O}(n^{1/6})$. Moreover, we extend SPIDER-ADMM to the online setting, and propose a faster online SPIDER-ADMM. Our theoretical analysis shows that the online SPIDER-ADMM has the IFO complexity of $\mathcal{O}(\epsilon^{-\frac{3}{2}})$, which improves the existing best results by a factor of $\mathcal{O}(\epsilon^{-\frac{1}{2}})$. Finally, the experimental results on benchmark datasets validate that the proposed algorithms have faster convergence rate than the existing ADMM algorithms for nonconvex optimization.
\end{abstract}

\vspace*{-12pt}
\section{Introduction}
\vspace*{-6pt}
Alternating direction method of multipliers (ADMM) \cite{gabay1976dual,boyd2011distributed} is a powerful optimization tool
for the composite or constrained problems in machine learning. In general, it
considers the following optimization problem:
\vspace*{-4pt}
\begin{equation} \label{eq:1}
\min_{x,y} f(x) + g(y), \quad \mbox{s.t.} \ Ax + By =c, \nonumber
\vspace*{-4pt}
\end{equation}
where $f(x):\mathbb{R}^d\rightarrow \mathbb{R}$ and $g(y):\mathbb{R}^p\rightarrow \mathbb{R}$ are convex functions.
For example, in machine learning, $f(x)$ can be used for the empirical loss, $g(y)$ for the structure regularizer,
and the constraint for encoding the structure pattern of model parameters.
Due to the flexibility in splitting the objective function into loss $f(x)$ and regularizer $g(y)$,
the ADMM can relatively easily solve some complicated structure problems in machine learning,
such as the graph-guided fused lasso \cite{kim2009multivariate} and the overlapping group lasso,
which are too complicated for the other popular optimization methods
such as proximal gradient methods \cite{nesterov2005smooth,beck2009fast}.
Thus, the ADMM has been extensively studied in recent years \cite{boyd2011distributed,nishihara2015general,xu2017admm}.

\vspace*{-2pt}
The above deterministic ADMM generally needs to compute the gradients of empirical loss function
on all examples at each iteration,
which makes it unsuitable for solving big data problems.
Thus, the online and stochastic versions of ADMM \cite{wang2012online,suzuki2013dual,ouyang2013stochastic}
are developed. However, due to large variance of stochastic gradients,
these stochastic methods suffer from a slow convergence rate.
Recently, some fast stochastic ADMM methods \cite{zhong2014fast,suzuki2014stochastic,zheng2016fast} have been proposed
by using the variance reduced (VR) techniques.

\begin{table*}
\vspace*{-0.4cm}
  \centering
  \caption{  IFO complexity comparison of the \textbf{non-convex ADMM} methods for finding
  an $\epsilon$-approximate stationary point of the problem \eqref{eq:2},
  i.e., $\mathbb{E}\|\nabla \mathcal {L}(x,y_{[m]},z)\|^2\leq \epsilon$. $n$ denotes the sample size.}
  \label{tab:1}
\begin{tabular}{c|c|c|c}
  \hline
  \textbf{Problem} & \textbf{Algorithm} & \textbf{Reference}  & \textbf{IFO}  \\ \hline
  \multirow{4}*{Finite-sum} & ADMM & \citet{jiang2019structured}    &  $\mathcal{O}(n\epsilon^{-1})$ \\ \cline{2-4}
  &SVRG-ADMM & \citet{huang2016stochastic,zheng2016stochastic}    &  $\mathcal{O}(n+n^{\frac{2}{3}}\epsilon^{-1})$ \\ \cline{2-4}
  &SAGA-ADMM & \citet{huang2016stochastic}    & $\mathcal{O}(n+n^{\frac{2}{3}}\epsilon^{-1})$ \\ \cline{2-4}
  &SPIDER-ADMM  & Ours  & {\color{red}{$\mathcal{O}( n+n^{\frac{1}{2}}\epsilon^{-1})$ }}\\ \hline
  \multirow{2}*{Online} & SADMM  &\citet{huang2018mini}   & $\mathcal{O}(\epsilon^{-2})$ \\ \cline{2-4}
  &Online SPIDER-ADMM  & Ours  & {\color{red}{$\mathcal{O}(\epsilon^{-\frac{3}{2}})$}}  \\
  \hline
\end{tabular}
\vspace*{-0.5cm}
\end{table*}

\vspace*{-2pt}
So far, the above discussed ADMM methods build on the convexity of objective functions.
In fact, ADMM is also highly successful in solving various nonconvex problems
such as tensor decomposition \cite{kolda2009tensor} and training neural networks \cite{Taylor2016Training}.
Thus, some works \cite{li2015global,wang2015convergence,wang2015global,hong2016convergence,jiang2019structured}
have devoted to studying the non-convex ADMM methods.
More recently, for solving the big data problems,
the nonconvex stochastic ADMMs \cite{huang2016stochastic,zheng2016stochastic} have been
proposed with the VR techniques such as the SVRG \cite{johnson2013accelerating} and
the SAGA \cite{defazio2014saga}. In addition, \citet{huang2018mini}
have extended the online/stochastic ADMM \cite{ouyang2013stochastic}
to the nonconvex setting.

\vspace*{-2pt}
Although these works have studied the convergence of nonconvex stochastic ADMMs and proved
these methods have $\mathcal{O}(\frac{c}{T})$ convergence rate, where $T$ denotes number of iteration and
$c$ a constant independent on $T$, they have not provided the \textbf{optimal} incremental/stochastic first-order oracle (IFO/SFO \cite{Ghadimi2013StochasticFA}) complexity
for these methods yet. In other words, they have only proved these stochastic ADMMs have the same convergence rate to
the deterministic ADMM \cite{jiang2019structured}, but don't tell us whether these stochastic ADMMs have less IFO complexity than the deterministic ADMM,
which is a key assessment criteria of the first-order stochastic methods \cite{Reddi2016Prox}.
For example, from the existing noncovex SAGA-ADMM and SVRG-ADMM \cite{zheng2016stochastic,huang2016stochastic},
we only obtain a \textbf{rough} IFO complexity of $\mathcal{O}(n + bc\epsilon^{-1})$ for finding an $\epsilon$-approximate stationary point,
where $b$ denotes the mini-batch size. In their convergence analysis, to ensure the convergence of these methods, they need to choose a small step size $\eta$
and a large penalty parameter $\rho$.
Under this case, we maybe have $bc\geq n$, so that these stochastic ADMMs have no less IFO complexity than the deterministic ADMM.
Thus, there still exist two important problems to be addressed:
\vspace*{-4pt}
\begin{itemize}
\vspace*{-6pt}\item \emph{ Does the stochastic ADMM have less IFO complexity than the deterministic ADMM for nonconvex optimization?}
\vspace*{-6pt}\item \emph{ If the stochastic ADMM improves IFO complexity, how much can it improve?}
\end{itemize}
\vspace*{-10pt}
In the paper,  we answer the above challenging questions
with positive solutions and propose a new faster stochastic
ADMM method (\emph{i.e.,} SPIDER-ADMM) to solve the following nonconvex nonsmooth problem:
\vspace*{-3pt}
\begin{align} \label{eq:2}
\min_{x,\{y_j\}_{j=1}^m} & f(x) := \left\{
\begin{aligned}
 & \frac{1}{n}\sum_{i=1}^n f_i(x)  \ \mbox{(finite-sum)}\\
 & \mathbb{E}_{\zeta} [f(x;\zeta)]  \ \mbox{(online)}
 \end{aligned} \right.  +  \sum_{j=1}^m g_j(y_j) \nonumber \\
 \mbox{s.t.} & \quad Ax + \sum_{j=1}^mB_jy_j =c,
\end{align}
where $A\in \mathbb{R}^{l\times d}$, $B_j\in \mathbb{R}^{l\times p}$ for all $j\in [m]$, and 
$\zeta$ is a random variable following an unknown distribution.
Here $f(x): \mathbb{R}^d\rightarrow \mathbb{R}$ is a \emph{nonconvex} and smooth
function, and $g_j(y_j): \mathbb{R}^p\rightarrow \mathbb{R}$ is a convex and possibly
\emph{nonsmooth} function for all $j\in [m], \ m\geq 1$.
In machine learning, $f(x)$ can be used for losses such as activation functions of neural networks, $\sum_{j=1}^m g_j(y_j)$ can
be used for not only single structure penalty (\emph{e.g.,} sparse, low rank) but also
superposition structures penalties (\emph{e.g.}, sparse + low rank, sparse + group sparse), which are
widely applied in robust PCA \cite{candes2011robust}, subspace clustering \cite{liu2010robust}, and dirty models \cite{jalali2010dirty}.
For the problem \eqref{eq:2}, its finite-sum subproblem generally arises from the empirical loss minimization and M-estimation.
While its online subproblem comes from the expected loss minimization. To address the online subproblem, we extend the SPIDER-ADMM to
the online setting, and propose an online SPIDER-ADMM.

\subsection*{Challenges and Contributions}
Our SPIDER-ADMM methods build on the variance-reduced technique of SPIDER \cite{fang2018spider} and SpiderBoost \cite{wang2018spiderboost},
which is a variant of stochastic
recursive gradient algorithm (SARAH \cite{nguyen2017sarah,nguyen2017stochastic}) and reaches the state-of-the-art IFO complexity
as the SNVRG \cite{zhou2018stochastic}.
Although the SPIDER and SpiderBoost have shown good performances in the stochastic gradient descent (SGD) and proximal SGD methods,
applying these techniques to the nonconvex ADMM method \emph{is not a trivial task}.
There exist the following two main \textbf{challenges}:
\vspace{-6pt}
\begin{itemize}
 \vspace*{-3pt} \item Due to failure of the Fej\'{e}r monotonicity of iteration, the convergence analysis of the nonconvex ADMM is generally quite difficult \cite{wang2015convergence}.
                 With using the inexact stochastic gradient, this difficulty is greater in the nonconvex stochastic ADMM methods;
 \vspace*{-10pt} \item To obtain the optimal IFO complexity of our methods, we need to design a new effective \emph{Lyapunov} function, which can not follow
                 the existing nonconvex stochastic ADMM methods \cite{huang2016stochastic}.
 \vspace*{-10pt}
\end{itemize}
In this paper, thus, we will fill this gap between
the nonconvex ADMM and the SPIDER/SpiderBoost methods.
Our main \textbf{contributions} are summarized as follows:
\vspace{-6pt}
\begin{itemize}
\setlength\itemsep{0em}
\vspace*{-4pt}
\item[1)] We propose a faster stochastic ADMM ( \emph{i.e.,} SPIDER-ADMM ) method for nonconvex optimization based on the SPIDER/SpiderBoost.
          Moreover, we prove that the SPIDER-ADMM achieves a lower
          IFO complexity of $\mathcal{O}(n+n^{1/2}\epsilon^{-1})$
          for finding an $\epsilon$-approximate stationary point,
          which improves the deterministic ADMM by a factor $\mathcal{O}(n^{1/2})$.
 \vspace*{-4pt}
\item[2)] We extend the SPIDER-ADMM method to the online setting, and propose a faster online SPIDER-ADMM for
          nonconvex optimization. Moreover, we prove that the online SPIDER-ADMM achieves a lower
          IFO complexity of $\mathcal{O}(\epsilon^{-\frac{3}{2}})$,
          which improves the existing best results by a factor of $\mathcal{O}(\epsilon^{-\frac{1}{2}})$.
\vspace*{-4pt}
\item[3)] We provide a useful theoretical analysis framework for nonconvex stochastic ADMM methods
          with providing the optimal IFO complexity.
          Based on our new analysis framework, we also prove that the existing nonconvex SVRG-ADMM and SAGA-ADMM
          have the optimal IFO complexity of $\mathcal{O}(n+n^{2/3}\epsilon^{-1})$.
          Thus, our SPIDER-ADMM improves the existing stochastic ADMMs by a factor of $\mathcal{O}(n^{1/6})$.
\end{itemize}
\subsection*{Notations}
 \vspace*{-6pt}
Let $y_{[m]}= \{y_1,\cdots,y_m\}$ and $y_{[j:m]}= \{y_{j},\cdots,y_m\}$ for $j\in[m]=\{1,2,\cdots,m\}$.
Given a positive definite matrix $G$, $\|x\|^2_G = x^TGx$;
$\sigma_{\max}(G)$ and $\sigma_{\min}(G)$
denote the largest and smallest eigenvalues of matrix $G$, respectively;
$\kappa_G = \frac{\sigma_{\max}(G)}{\sigma_{\min}(G)} \geq 1$.
$\sigma^A_{\max}$ and $\sigma^A_{\min}$ denote the largest and smallest eigenvalues of matrix $A^T A$, respectively.
Given positive definite matrices $\{H_j\}_{j=1}^m$, let $\sigma^H_{\min} = \min_j \sigma_{\min}(H_j)$
and $\sigma^H_{\max} = \max_j \sigma_{\max}(H_j)$. $I_d$ denotes a $d\times d$ identity matrix.
\section{Preliminaries}
 \vspace*{-6pt}
In the section, we introduce some preliminaries regarding problem \eqref{eq:2}.
First, we restate the standard
$\epsilon$-approximate stationary point of the nonconvex problem \eqref{eq:2}
used in \cite{jiang2019structured,zheng2016stochastic}.

\begin{definition} \label{def:1}
Given $\epsilon>0$, the point $(x^*,y_{[m]}^*,z^*)$ is said to be an
$\epsilon$-stationary point of the problem \eqref{eq:2},
if it holds that
\begin{align}
 \mathbb{E}\big[ \mbox{dist}(0,\partial L(x^*,y^*_{[m]},z^*))^2 \big] \leq \epsilon,
\end{align}
where $L(x,y_{[m]},z)=f(x) + \sum_{j=1}^m g_j(y_j) - \langle z, Ax + \sum_{j=1}^mB_jy_j-c \rangle$,
\vspace*{-6pt}
\begin{align}
   \partial L(x,y_{[m]},z) = \left [ \begin{matrix}
     \nabla_x L(x,y_{[m]},z) \\
     \partial_{y_1} L(x,y_{[m]},z) \\
      \cdots \\
     \partial_{y_m} L(x,y_{[m]},z) \\
     -Ax-\sum_{j=1}^mB_jy_j+c
 \end{matrix}
 \right ], \nonumber
\end{align}
\vspace*{-6pt}
and $\mbox{dist}(0,\partial L)=\inf_{L'\in \partial L} \|0-L'\|.$
\end{definition}

Next, we give some standard
assumptions regarding problem \eqref{eq:2} as follows:
\begin{assumption}
Each loss function $f_i(x)$ is $L$-smooth such that
\vspace*{-6pt}
\begin{align}
\|\nabla f_i(x)-\nabla f_i(y)\| \leq L \|x - y\|, \ \forall x,y \in \mathbb{R}^d, \nonumber
\end{align}
which is equivalent to
\vspace*{-6pt}
\begin{align}
f_i(x) \leq f_i(y) + \nabla f_i(y)^T(x-y) + \frac{L}{2}\|x-y\|^2.  \nonumber
\end{align}
\end{assumption}
\vspace*{-6pt}
\begin{assumption}
Full gradient of loss function $f(x)$ is bounded, i.e., there exists a constant $\delta >0$ such that for all $x$,
it follows $\|\nabla f(x)\|^2 \leq \delta^2$.
\end{assumption}
\vspace*{-6pt}
\begin{assumption}
$f(x) $ and $g_j(y_j)$ for all $j\in [m]$ are all lower bounded, and let
$f^*=\inf_x f(x) > - \infty$ and $g_j^*=\inf_{y_j} g_j(y_j) > - \infty$.
\end{assumption}
\vspace*{-6pt}
\begin{assumption}
$A$ is a full row or column rank matrix.
\end{assumption}

Assumption 1 imposes smoothness on the individual loss functions, which is commonly used in convergence analysis of
the nonconvex algorithms \cite{Ghadimi2013StochasticFA,ghadimi2016mini}.
Assumption 2 shows full gradient of loss function have a bounded norm, which is used in
the stochastic gradient-based and ADMM-type methods \cite{boyd2011distributed,suzuki2013dual,hazan2016introduction}.
Assumptions 3 and 4 have been used in the study of nonconvex ADMMs
\cite{hong2016convergence,jiang2019structured,zheng2016stochastic}.
Assumptions 3 guarantees the feasibility of the problem \eqref{eq:2}.
Assumption 4 guarantees the matrix $A^TA$ or $AA^T$ is non-singular.
Since there exist multiple regularizers in the above problem \eqref{eq:2},
$A$ is general a full column rank matrix.
Without loss of generality, we will use the full column rank matrix $A$ below.
\section{ Fast SPIDER-ADMM Method }
In the section, we propose a new faster stochastic ADMM algorithm, \emph{i.e.,} SPIDER-ADMM, to solve
the finite-sum problem \eqref{eq:2}.
We begin with giving the augmented Lagrangian function of the problem \eqref{eq:2}:
\vspace*{-6pt}
\begin{align}
 \mathcal {L}_{\rho}(x,y_{[m]},z) = & f(x) + \sum_{j=1}^mg_j(y_j) - \langle z, Ax + \sum_{j=1}^m B_jy_j-c\rangle \nonumber \\
 &  + \frac{\rho}{2} \|Ax+ \sum_{j=1}^mB_jy_j-c\|^2,
\end{align}
where $z \in \mathbb{R}^l$ and $\rho >0$ denote the dual variable and penalty parameter, respectively.
Algorithm \ref{alg:1} gives the SPIDER-ADMM algorithmic framework.

In Algorithm \ref{alg:1}, we use the proximal method to update the variables $\{y_j\}_{j=1}^m$.
At the step 9 of Algorithm \ref{alg:1}, we update the variables $\{y_j\}_{j=1}^m$ by solving the following subproblem, for all $j\in [m]$
\begin{align}
y^{k+1}_j \!=\! \mathop{\arg\min}_{y_j\in \mathbb{R}^p} \mathcal {L}_{\rho}(x_k,y^{k+1}_{[j-1]},y_j,y^{k}_{[j+1:m]},z_k)
\!+\! \frac{1}{2}\|y_j\!-\!y^k_j\|_{H_j}^2, \nonumber
\end{align}
where
\begin{align}
\vspace*{-6pt}
& \mathcal {L}_{\rho} (x_k,y^{k+1}_{[j-1]},y_j,y^{k}_{[j+1:m]},z_k) =  f(x_k) + \sum_{i=1}^{j-1} g_i(y^{k+1}_{i}) \nonumber  \\
&+ g_j(y_{j}) \!+\! \sum_{i=j+1}^m g_j(y^{k}_{j}) \!-\! z_k^T( B_jy_j +\tilde{c}) \!+\! \frac{\rho}{2}\| B_jy_j +\tilde{c} \|^2 \nonumber
\vspace*{-6pt}
\end{align}
with $\tilde{c} = Ax_k+\sum_{i=1}^{j-1}B_iy_i^{k+1}+ \sum_{i=j+1}^{m}B_iy_i^{k}-c$ and $H_j\succ 0$.
When set $H_j = r_j I_p - \rho B_j^TB_j\succeq I_p$ with $r_j \geq \rho
\sigma_{\max}(B^T_j B_j) + 1$ for all $j\in [m]$ to linearize the term $\frac{\rho}{2}\|B_jy_j + \tilde{c}\|^2$,
then we can use the following proximal operator to update $y_j$, for all $j\in [m]$
\begin{align}
 y^{k+1}_j = \mathop{\arg\min}_{y_j\in \mathbb{R}^p} \frac{r_j}{2}\|y_j-w^k_j\|^2 +  g_j(y_j),
\end{align}
where $w^k_j=\frac{1}{r_j}\big( H_jy^k_j - \rho B^T_j\tilde{c} + B^T_jz_k\big)$.

To update $x$, we define
an approximated function over $x_k$ as follows:
\vspace*{-6pt}
\begin{align}
 &\hat{\mathcal {L}}_{\rho} (x,y^{k+1}_{[m]}, z_k,v_k) =
 f(x_k) + v_k^T(x-x_k)+ \frac{1}{2\eta}\|x-x_k\|^2_G \nonumber \\
 & + \sum_{j=1}^m g_j(y^{k+1}_{j}) - z_k^T(Ax+ \sum_{j=1}^m B_jy^{k+1}_j-c) \nonumber \\
 & + \frac{\rho}{2}\|Ax+\sum_{j=1}^mB_jy_j^{k+1}-c\|^2,
\end{align}
where $\eta>0$ is a step size; $v_k$ is a stochastic gradient over $x_k$;
$G\succ 0$ is a positive matrix.
In updating $x$, to avoid computing inverse of $\frac{G}{\eta} + \rho A^TA$,
we can set $G = r I_d - \rho \eta A^TA \succeq I_d$ with $r \geq \rho \eta
\sigma^A_{\max} + 1 $ to linearize term $\frac{\rho}{2}
\|Ax+ \sum_{j=1}^mB_jy^{k+1}_{j}-c\|^2$. Then at the step 10 of Algorithm \ref{alg:1}, we have
\begin{align}
 x_{k+1} = \frac{Gx_k}{r} - \frac{\eta v_k}{r} - \frac{\eta\rho}{r}A^T(\sum_{j=1}^m B_jy_j^{k+1} - c - \frac{z_k}{\rho}). \nonumber
\end{align}

In Algorithm \ref{alg:1}, after setting $v_0 = \nabla f(x_0)$, for each subsequent iteration $k$, we have:
\vspace*{-6pt}
\begin{align}
 v_{k} = \nabla f_{\mathcal{I}_k}(x_k) - \nabla f_{\mathcal{I}_k}(x_{k-1}) + v_{k-1},
\end{align}
where $\nabla f_{\mathcal{I}_k}(x_{k}) = \frac{1}{|\mathcal{I}_k|} \sum_{i\in\mathcal{I}_k }\nabla f_i(x_t)$.
It is easy to check $\mathbb{E}[v_k|x_0] = \nabla f(x_k)$,
\emph{i.e.,} an unbiased estimate gradient over $x_k$.
Comparing the existing SVRG-ADMM,
our SPIDER-ADMM constructs stochastic gradient $v_k$ based on
the information $x_{k-1}$ and $v_{k-1}$, while the SVRG-ADMM constructs $v_k$ based on
the information $x_{0}$ and $v_{0}$ (\emph{i.e.,} the initalization information of each outer loop).
Due to using more fresh information, thus, SPIDER-ADMM
can yield more accurate estimation of the full gradient than SVRG-ADMM.
Simultaneously, it does not require to additional computation and memory,
so it costs less memory than the existing SAGA-ADMM.
\begin{algorithm}[htb]
   \caption{ SPIDER-ADMM Algorithm }
   \label{alg:1}
\begin{algorithmic}[1]
   \STATE {\bfseries Input:} $b$, $q$, $K$, $\eta>0$ and $\rho>0 $;
   \STATE {\bfseries Initialize:} $x_0 \in \mathbb{R}^d$, $y^0_j \in \mathbb{R}^p, \ j\in [m]$ and $z_0 \in \mathbb{R}^l$;
   \FOR {$k=0,1,\cdots,K-1$}
   \IF {$\mbox{mod}(k,q) = 0$}
   \STATE{} Compute $v_k = \nabla f(x_k)$;
   \ELSE
   \STATE{} Uniformly randomly pick a mini-batch $\mathcal{I}_k$ (with replacement) from $\{1,2,\cdots,n\}$ with $|\mathcal{I}_k|=b$,
            and compute $$ v_{k} = \nabla f_{\mathcal{I}_k}(x_k) - \nabla f_{\mathcal{I}_k}(x_{k-1}) + v_{k-1};$$
   \ENDIF
   \STATE{}  $ y^{k+1}_j= \arg\min_{y_j} \big\{ \mathcal {L}_{\rho}(x_k,y^{k+1}_{[j-1]},y_j,y^{k}_{[j+1:m]},z_k) + \frac{1}{2}\|y_j-y^k_j\|_{H_j}^2 \big\}$ for all $j\in [m]$;
   \STATE{}  $ x_{k+1}= \arg\min_x \hat{\mathcal {L}}_{\rho}\big( x,y^{k+1}_{[m]},z_k,v_k \big)$;
   \STATE{}  $z_{k+1} = z_k- \rho(Ax_{k+1} + \sum_{j=1}^mB_jy^{k+1}_j - c)$;
   \ENDFOR
   \STATE {\bfseries Output \ (in theory):} Chosen uniformly random from $\{x_{k},y_{[m]}^{k},z_k\}_{k=1}^{K}$.
   \STATE {\bfseries Output \ (in practice):} $\{x_{K},y_{[m]}^{K},z_K\}$.
\end{algorithmic}
\end{algorithm}

\section{ Fast Online SPIDER-ADMM Method }
In the section, we propose an online SPIDER-ADMM to
solve the online problem \eqref{eq:2}, which is equivalent to the following stochastic constrained problem:
\begin{align} \label{eq:9}
\min_{x,\{y_j\}_{j=1}^m} & \mathbb{E}_{\zeta} [ f(x;\zeta) ] + \sum_{j=1}^m g_j(y_j),  \nonumber \\
 \mbox{s.t.} & \ Ax + \sum_{j=1}^mB_jy_j =c,
\end{align}
where $f(x)=\mathbb{E}_{\zeta} [ f(x;\zeta) ]$ denotes a population risk
over an underlying data distribution. The problem \eqref{eq:9} can be viewed as
having infinite samples, so we cannot evaluate the full gradient $\nabla f(x)$.
For solving the problem \eqref{eq:9}, so we use stochastic sampling to evaluate the full gradient.
Algorithm \ref{alg:2} shows the algorithmic framework of online SPIDER-ADMM method.
In Algorithm \ref{alg:2}, we use the mini-batch samples to estimate
the full gradient, and update the variables $\{x,y_j,z\}_{j=1}^m$, which is the same as
in Algorithm \ref{alg:1}.

\begin{algorithm}[htb]
   \caption{ Online SPIDER-ADMM Algorithm }
   \label{alg:2}
\begin{algorithmic}[1]
   \STATE {\bfseries Input:} $b_1$, $b_2$, $q$, $K$, $\eta>0$ and $\rho>0$;
   \STATE {\bfseries Initialize:} $x_0 \in \mathbb{R}^d$, $y^0_j \in \mathbb{R}^p, \ j\in [m]$ and $z_0 \in \mathbb{R}^l$;
   \FOR {$k=0,1,\cdots,K-1$}
   \IF {$\mbox{mod}(k,q) = 0$}
   \STATE{} Draw $S_1$ samples with $|S_1|=b_1$, and compute $v_k = \frac{1}{b_1}\sum_{i\in S_1}\nabla f_i(x_k)$;
   \ELSE
   \STATE{} Draw $S_2$ samples with $|S_2|=b_2=\sqrt{b_1}$, and compute
            $$v_k = \frac{1}{b_2} \sum_{i\in S_2}\big(\nabla f_i(x_k) - f_i(x_{k-1})\big) + v_{k-1};$$
   \ENDIF
   \STATE{}  $ y^{k+1}_j= \arg\min_{y_j} \big\{ \mathcal {L}_{\rho}(x_k,y^{k+1}_{[j-1]},y_j,y^{k}_{[j+1:m]},z_k) + \frac{1}{2}\|y_j-y^k_j\|_{H_j}^2 \big\} $ for all $j\in [m]$;
   \STATE{}  $ x_{k+1}= \arg\min_x \hat{\mathcal {L}}_{\rho}\big( x,y^{k+1}_{[m]},z_k,v_k \big)$;
   \STATE{}  $z_{k+1} = z_k- \rho(Ax_{k+1} + \sum_{j=1}^mB_jy^{k+1}_j - c)$;
   \ENDFOR
   \STATE {\bfseries Output \ (in theory):} Chosen uniformly random from $\{x_{k},y_{[m]}^{k},z_k\}_{k=1}^{K}$.
   \STATE {\bfseries Output \ (in practice):} $\{x_{K},y_{[m]}^{K},z_K\}$.
\end{algorithmic}
\end{algorithm}

\section{Convergence Analysis}
In the section, we study the convergence properties of both the SPIDER-ADMM and online SPIDER-ADMM.
At the same time, based on our new theoretical analysis framework, we afresh analyze the convergence properties
of existing ADMM-based nonconvex optimization algorithms,
\emph{i.e.,} SVRG-ADMM and SAGA-ADMM, and derive their optimal
IFO complexity for finding an $\epsilon$-approximate stationary point.
\subsection{Convergence Analysis of SPIDER-ADMM}
In the subsection, we study convergence properties of the SPIDER-ADMM algorithm.
The detailed proofs are provided in the Appendix \ref{Appendix:A1}.
Throughout the paper, let $n_k = \lceil k/q \rceil$ such that $(n_k-1)q \leq k \leq n_kq-1$.

\begin{lemma}
 Suppose the sequence $\{x_k,y_{[m]}^k,z_k\}_{k=1}^K$ is generated from Algorithm \ref{alg:1},
 and define a \emph{Lyapunov} function $R_k$ as follows:
 \begin{align}
 R_k \!=\! & \mathcal{L}_{\rho} (x_k,y_{[m]}^k,z_k) \!+\! (\frac{9L^2}{\sigma^A_{\min}\rho}\!+\!\frac{3\sigma^2_{\max}(G)}{\sigma^A_{\min}\eta^2\rho})\|x_k\!-\!x_{k-1}\|^2  \nonumber \\
 & \!+\! \frac{2L^2}{\sigma^A_{\min}\rho b} \sum_{i=(n_k-1)q}^{k-1}\mathbb{E}\|x_{i+1}\!-\!x_i\|^2. \nonumber
\end{align}
Let $b=q$, $\eta=\frac{2\alpha\sigma_{\min}(G)}{3L} \ (0<\alpha \leq 1)$ and $\rho = \frac{\sqrt{170}\kappa_GL}{\sigma^A_{\min}\alpha}$,
then we have
\begin{align}
 \frac{1}{K}\sum_{k=0}^{K-1} \big(\|x_{k+1} - x_{k}\|^2 + \sum_{j=1}^m \|y_j^k-y_j^{k+1}\|^2\big) \leq \frac{R_{0}-R^*}{K\gamma}, \nonumber
\end{align}
where $\gamma = \min(\chi,\sigma_{\min}^H)$ with $\chi\geq \frac{\sqrt{170}\kappa_GL}{4\alpha}$ and $R^*$ is a lower bound of the function $R_k$.
\end{lemma}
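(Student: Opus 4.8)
The plan is to establish a one‑step decrease of the Lyapunov function $R_k$ in expectation and then telescope it. First I would decompose the one‑step change $\mathcal{L}_{\rho}(x_{k+1},y^{k+1}_{[m]},z_{k+1})-\mathcal{L}_{\rho}(x_k,y^k_{[m]},z_k)$ into the $y$‑block, the $x$‑block, and the $z$‑block. The subproblem solved at step 9 is strongly convex (its quadratic part has Hessian $H_j+\rho B_j^TB_j\succeq H_j$), so its optimality condition yields a decrease of at least $\tfrac12\sum_{j}\|y^{k+1}_j-y^k_j\|^2_{H_j}\geq\tfrac{\sigma^H_{\min}}{2}\sum_{j}\|y^{k+1}_j-y^k_j\|^2$. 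For the $x$‑block I combine the $L$‑smoothness of $f$ (Assumption 1) with the optimality of $x_{k+1}$ for $\hat{\mathcal{L}}_{\rho}$, absorbing the mismatch $\langle\nabla f(x_k)-v_k,\,x_{k+1}-x_k\rangle$ by Young's inequality; the choice $\eta=\tfrac{2\alpha\sigma_{\min}(G)}{3L}$ makes $\tfrac{\sigma_{\min}(G)}{\eta}=\tfrac{3L}{2\alpha}\geq\tfrac{3L}{2}$, leaving a strictly negative coefficient on $\|x_{k+1}-x_k\|^2$ after subtracting $\tfrac{L}{2}$ and the Young slack, plus a remainder $O(1)\|\nabla f(x_k)-v_k\|^2$. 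For the $z$‑block, the dual update gives the identity $\mathcal{L}_{\rho}(x_{k+1},y^{k+1}_{[m]},z_{k+1})-\mathcal{L}_{\rho}(x_{k+1},y^{k+1}_{[m]},z_k)=\tfrac1\rho\|z_{k+1}-z_k\|^2$.

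Next I would control the two ``bad'' terms $\tfrac1\rho\|z_{k+1}-z_k\|^2$ and $\|\nabla f(x_k)-v_k\|^2$. Combining the $x$‑optimality condition with the dual update yields $A^Tz_{k+1}=v_k+\tfrac1\eta G(x_{k+1}-x_k)$, hence $A^T(z_{k+1}-z_k)=(v_k-v_{k-1})+\tfrac1\eta G(x_{k+1}-2x_k+x_{k-1})$; passing to $\|z_{k+1}-z_k\|^2\leq\tfrac1{\sigma^A_{\min}}\|A^T(z_{k+1}-z_k)\|^2$ (full column rank of $A$, Assumption 4) and using $\|v_k-v_{k-1}\|\leq L\|x_k-x_{k-1}\|$ inside an epoch (each $f_i$ is $L$‑smooth, so the averaged recursive increment is $L$‑Lipschitz; at an epoch restart one extra term of order $\|v_{k-1}-\nabla f(x_{k-1})\|^2$ is incurred) gives $\tfrac1\rho\|z_{k+1}-z_k\|^2\leq\tfrac{3}{\sigma^A_{\min}\rho}\big(L^2\|x_k-x_{k-1}\|^2+\tfrac{\sigma^2_{\max}(G)}{\eta^2}\|x_{k+1}-x_k\|^2+\tfrac{\sigma^2_{\max}(G)}{\eta^2}\|x_k-x_{k-1}\|^2\big)$ plus a gradient‑error remainder. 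For the gradient error I would invoke the SPIDER variance recursion: since $v_{(n_k-1)q}=\nabla f(x_{(n_k-1)q})$ exactly and the martingale cross terms vanish in expectation, $\mathbb{E}\|v_k-\nabla f(x_k)\|^2\leq\tfrac{L^2}{b}\sum_{i=(n_k-1)q}^{k-1}\mathbb{E}\|x_{i+1}-x_i\|^2$, which is exactly the quantity carried by the third term of $R_k$, while the $\|x_k-x_{k-1}\|^2$ produced by the dual bound is exactly what the second term of $R_k$ cancels against under the index shift $k\mapsto k+1$.

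Adding the three one‑step inequalities, taking conditional expectations, using $b=q$, and choosing $\rho=\tfrac{\sqrt{170}\kappa_GL}{\sigma^A_{\min}\alpha}$ large enough that every remainder coefficient of $\|x_{k+1}-x_k\|^2$, $\|x_k-x_{k-1}\|^2$ and of the accumulator is dominated by the negative term from the $x$‑block, I would obtain $\mathbb{E}[R_{k+1}]\leq\mathbb{E}[R_k]-\gamma\big(\|x_{k+1}-x_k\|^2+\sum_{j=1}^m\|y^{k}_j-y^{k+1}_j\|^2\big)$ with $\gamma=\min(\chi,\sigma^H_{\min})$ and $\chi\geq\tfrac{\sqrt{170}\kappa_GL}{4\alpha}$. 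Summing over $k=0,\dots,K-1$ telescopes to $\gamma\sum_{k=0}^{K-1}\mathbb{E}\big(\|x_{k+1}-x_k\|^2+\sum_{j=1}^m\|y^k_j-y^{k+1}_j\|^2\big)\leq R_0-\mathbb{E}[R_K]\leq R_0-R^*$, where $R^*>-\infty$ follows from Assumptions 2 and 3 (the bounded‑gradient assumption is used, via $A^Tz_k=v_{k-1}+\tfrac1\eta G(x_k-x_{k-1})$ and the control of $\mathbb{E}\|v_{k-1}-\nabla f(x_{k-1})\|^2$, to bound the $-\langle z_k,\cdot\rangle$ term in $\mathcal{L}_{\rho}$ from below, together with the lower boundedness of $f$ and of each $g_j$); dividing by $K\gamma$ gives the stated bound.

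I expect the main obstacle to be the Lyapunov bookkeeping in the third step: one must verify that the coefficients $\tfrac{9L^2}{\sigma^A_{\min}\rho}+\tfrac{3\sigma^2_{\max}(G)}{\sigma^A_{\min}\eta^2\rho}$ and $\tfrac{2L^2}{\sigma^A_{\min}\rho b}$ appearing in $R_k$ are precisely the ones that simultaneously absorb (i) the $\|x_k-x_{k-1}\|^2$ coming from the dual increment and (ii) the growing SPIDER accumulator, across both ordinary within‑epoch steps and epoch restarts (where the accumulator resets, which only helps), while still leaving a strictly negative coefficient on $\|x_{k+1}-x_k\|^2$; it is this balancing that pins down the relation between $\rho$, $\eta$, $\alpha$, $\kappa_G$ and the constant $\sqrt{170}$. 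A secondary subtlety is the failure of Fej\'er monotonicity for nonconvex ADMM, which is exactly why the dual increment must be controlled purely through the primal differences $x_{k+1}-x_k$ and $x_k-x_{k-1}$ rather than by any contraction argument.
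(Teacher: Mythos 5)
Your plan follows essentially the same route as the paper's proof: the same three-block decomposition of the one-step change in $\mathcal{L}_{\rho}$, the same identity $A^Tz_{k+1}=v_k+\tfrac{1}{\eta}G(x_{k+1}-x_k)$ to convert the dual increment into primal differences plus the SPIDER variance term, the same use of the variance recursion $\mathbb{E}\|v_k-\nabla f(x_k)\|^2\le \tfrac{L^2}{b}\sum_{i=(n_k-1)q}^{k-1}\mathbb{E}\|x_{i+1}-x_i\|^2$ to feed the accumulator in $R_k$, and the same lower-boundedness argument for $R_k$ via Assumptions 2--3. The only substantive deviations are cosmetic: the paper bounds $\|v_k-v_{k-1}\|^2$ uniformly by the three-term triangle inequality rather than splitting into within-epoch and restart cases, and its $y$-block descent carries the full coefficient $\sigma_{\min}(H_j)$ rather than your $\sigma^H_{\min}/2$ (which would only perturb $\gamma$ by a constant).
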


Let $\theta_k = \mathbb{E} [\|x_{k+1}-x_{k}\|^2 + \|x_{k}-x_{k-1}\|^2 + \frac{1}{q}\sum_{i=(n_k-1)q}^k \|x_{i+1}-x_i\|^2 + \sum_{j=1}^m \|y_j^k-y_j^{k+1}\|^2 ]$.
Next, based on the above lemma, we give the convergence properties of SPIDER-ADMM.

\begin{theorem} \label{th:1}
 Suppose the sequence $\{x_k,y_{[m]}^k,z_k)_{k=1}^K$ is generated from Algorithm \ref{alg:1}. Let
  \begin{align}
   &\nu_1 = m\big(\rho^2\sigma^B_{\max}\sigma^A_{\max} + \rho^2(\sigma^B_{\max})^2 + \sigma^2_{\max}(H)\big), \nonumber \\
   &\nu_2 = 3(L^2+ \frac{\sigma^2_{\max}(G)}{\eta^2}),\nu_3 = \frac{18 L^2 }{\sigma^A_{\min} \rho^2} + \frac{3\sigma^2_{\max}(G) }{\sigma^A_{\min}\eta^2\rho^2}, \nonumber
 \end{align}
 and let $b=q$, $\eta = \frac{2\alpha\sigma_{\min}(G)}{3L} \ (0<\alpha \leq 1)$, and
 $\rho = \frac{\sqrt{170}\kappa_GL}{\sigma^A_{\min}\alpha}$,
 then we have
 \begin{align}
  \frac{1}{K} \sum_{k=1}^K \mathbb{E}\big[ \mbox{dist}(0,\partial L(x_k,y_{[m]}^k,z_k))^2\big] &\leq\frac{\nu_{\max}}{K}\sum_{k=1}^{K-1} \theta_k \nonumber\\
  &\leq  \frac{3\nu_{\max}(R_{0}-R^*)}{K\gamma}, \nonumber
 \end{align}
 where $\gamma = \min(\chi,\sigma_{\min}^H)$ with $\chi\geq \frac{\sqrt{170}\kappa_GL}{4\alpha}$, $\nu_{\max}=\max\{\nu_1,\nu_2,\nu_3\}$ and
 $R^*$ is a lower bound of the function $R_k$.
 It implies that the iteration number $K$ satisfies
 \begin{align}
  K = \frac{3\nu_{\max}(R_{0}-R^*)}{\epsilon \gamma}, \nonumber
 \end{align}
 then $(x_{k^*},y_{[m]}^{k^*},z_{k^*})$ is an $\epsilon$-approximate stationary point of \eqref{eq:2}, where $k^* = \mathop{\arg\min}_{k}\theta_{k}$.
\end{theorem}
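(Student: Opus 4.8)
The plan is to derive Theorem~\ref{th:1} from the descent estimate of the preceding lemma by bounding each block of the subdifferential $\partial L(x_k,y_{[m]}^k,z_k)$ in terms of the one-step displacements that the lemma already controls. Concretely, I would first prove a pointwise bound $\mathbb{E}\big[\mbox{dist}(0,\partial L(x_k,y_{[m]}^k,z_k))^2\big]\le \nu_{\max}\,\theta_k$, and then average it over $k$.

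For the pointwise bound, I would write the first-order optimality condition of the proximal $x$-update (step~10 of Algorithm~\ref{alg:1}) and substitute the dual update (step~11); this produces the identity $A^{T}z_k = v_{k-1}+\tfrac{1}{\eta}G(x_k-x_{k-1})$, hence
\[
\nabla_x L(x_k,y_{[m]}^k,z_k)=\big(\nabla f(x_k)-\nabla f(x_{k-1})\big)+\big(\nabla f(x_{k-1})-v_{k-1}\big)-\tfrac{1}{\eta}G(x_k-x_{k-1}).
\]
By $L$-smoothness (Assumption~1), the spectral bound on $G$, and the SPIDER variance estimate $\mathbb{E}\|\nabla f(x_{k-1})-v_{k-1}\|^2\le\tfrac{L^2}{b}\sum_{i=(n_{k-1}-1)q}^{k-2}\mathbb{E}\|x_{i+1}-x_i\|^2$, this $x$-block is bounded by a multiple of $\|x_k-x_{k-1}\|^2$ plus the epoch-averaged term in $\theta_k$; with $b=q$ the coefficients collapse to $\nu_2$ and $\nu_3$. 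For each $y_j$-block I pick the element of $\partial g_j(y_j^k)$ supplied by the optimality condition of step~9 and subtract $B_j^{T}z_k$; the remainder is a linear combination of $B_j^{T}(z_k-z_{k-1})$, $H_j(y_j^k-y_j^{k-1})$ and $\rho B_j^{T}$ times primal differences, which yields the per-block coefficient $\nu_1/m$ once $\|z_k-z_{k-1}\|$ is controlled. For the feasibility block, the dual update gives $\|Ax_k+\sum_{j=1}^m B_jy_j^k-c\|^2=\rho^{-2}\|z_k-z_{k-1}\|^2$, and since $A$ has full column rank (Assumption~4), $\|z_k-z_{k-1}\|^2\le(\sigma^A_{\min})^{-1}\|A^{T}(z_k-z_{k-1})\|^2$, which by the identity above reduces to $\|v_{k-1}-v_{k-2}\|^2$ and consecutive differences of $x$ — again captured by $\theta_k$ with coefficient $\nu_3$. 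Adding the three contributions and taking $\nu_{\max}=\max\{\nu_1,\nu_2,\nu_3\}$ completes the pointwise bound.

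It remains to average. After an index shift, each of $\|x_{k+1}-x_k\|^2$, $\|x_k-x_{k-1}\|^2$ and $\tfrac1q\sum_{i=(n_k-1)q}^{k}\|x_{i+1}-x_i\|^2$ re-sums over $k$ to at most $\sum_{k=0}^{K-1}\|x_{k+1}-x_k\|^2$, so that $\tfrac1K\sum_{k=1}^{K}\theta_k\le\tfrac3K\sum_{k=0}^{K-1}\big(\|x_{k+1}-x_k\|^2+\sum_{j=1}^m\|y_j^k-y_j^{k+1}\|^2\big)$, which is at most $\tfrac{3(R_0-R^*)}{K\gamma}$ by the preceding lemma. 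Combining this with the pointwise bound proves the displayed chain of inequalities; choosing $K=\tfrac{3\nu_{\max}(R_0-R^*)}{\epsilon\gamma}$ makes the average at most $\epsilon$, and since $\mathbb{E}[\mbox{dist}(0,\partial L(x_{k^*},y_{[m]}^{k^*},z_{k^*}))^2]\le\nu_{\max}\theta_{k^*}\le\tfrac1K\sum_{k}\nu_{\max}\theta_k$ for $k^*=\mathop{\arg\min}_k\theta_k$, the point $(x_{k^*},y_{[m]}^{k^*},z_{k^*})$ is $\epsilon$-stationary in the sense of Definition~\ref{def:1}.

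The step I expect to be most delicate is the dual estimate for $\|z_k-z_{k-1}\|$: the lack of Fej\'er monotonicity means there is no shortcut here, and in the stochastic setting the inexactness of $v_k$ enters this estimate through the rank condition on $A$, so the recursive SPIDER variance bound must be threaded through carefully to keep all constants consistent with $\nu_1,\nu_2,\nu_3,\gamma$ and with the parameter choices $b=q$, $\eta=\tfrac{2\alpha\sigma_{\min}(G)}{3L}$, $\rho=\tfrac{\sqrt{170}\kappa_G L}{\sigma^A_{\min}\alpha}$ inherited from the preceding lemma. A secondary bookkeeping nuisance is the index mismatch between evaluating $\partial L$ at iterate $k$ while the optimality conditions produce iterate $k$ from iterate $k-1$, which is precisely why $\theta_k$ must already carry both $\|x_{k+1}-x_k\|^2$ and $\|x_k-x_{k-1}\|^2$.
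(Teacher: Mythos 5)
Your proposal follows essentially the same route as the paper's own proof: bound each block of $\partial L$ (the $x$-gradient via $A^Tz_{k+1}=v_k+\tfrac{G}{\eta}(x_{k+1}-x_k)$ together with the SPIDER variance recursion, the $y_j$-blocks via the step-9 optimality conditions, and the feasibility block via $\rho^{-2}\|z_{k+1}-z_k\|^2$ and the full-rank condition on $A$) by $\nu_{\max}\theta_k$, then average using the Lyapunov lemma, with the factor $3$ arising exactly as you describe from the three $x$-displacement terms in $\theta_k$ re-summing to $\sum_k\|x_{k+1}-x_k\|^2$. The only difference is your index convention (evaluating at $k$ with $v_{k-1}$ rather than at $k+1$ with $v_k$), which is immaterial, so the proposal is correct and matches the paper's argument.
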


\begin{remark}
 Theorem \ref{th:1} shows that the SPIDER-ADMM has $O(1/K)$ convergence rate. Moreover, given $b=q=\sqrt{n}$, $\eta = \frac{2\alpha\sigma_{\min}(G)}{3L} \ (0<\alpha \leq 1)$ and
 $\rho = \frac{\sqrt{170}\kappa_GL}{\sigma^A_{\min}\alpha}$, the SPIDER-ADMM has the optimal
 IFO of $\mathcal{O}(n+n^{\frac{1}{2}}\epsilon^{-1})$ for finding an $\epsilon$-approximate stationary point. In particular,
 we can choose $\alpha \in (0,1]$ according to different problems to obtain appropriate step-size $\eta$ and penalty parameter $\rho$, \emph{e.g.}, set $\alpha=1$,
 we have $\eta=\frac{2\sigma_{\min}(G)}{3L}$ and $\rho=\frac{\sqrt{170}\kappa_GL}{\sigma^A_{\min}}$.
\end{remark}
\subsection{Convergence Analysis of Online SPIDER-ADMM}
In the subsection, we study convergence properties of the online SPIDER-ADMM algorithm.
The detailed proofs are provided in the Appendix \ref{Appendix:A2}.

\begin{lemma}
 Suppose the sequence $\{x_k,y_{[m]}^k,z_k\}_{k=1}^K$ is generated from Algorithm \ref{alg:2},
 and define a \emph{Lyapunov} function $\Phi_k$ as follows:
 \begin{align}
 \Phi_k = & \mathcal{L}_{\rho} (x_k,y_{[m]}^k,z_k) + (\frac{9L^2}{\sigma^A_{\min}\rho}+\frac{3\sigma^2_{\max}(G)}{\sigma^A_{\min}\eta^2\rho})\|x_k-x_{k-1}\|^2 \nonumber \\
 & + \frac{2L^2}{\sigma^A_{\min}\rho b_2} \sum_{i=(n_k-1)q}^{k-1}\mathbb{E}\|x_{i+1}-x_i\|^2. \nonumber
\end{align}
Let $b_2=q$, $\eta=\frac{2\alpha\sigma_{\min}(G)}{3L} \ (0<\alpha \leq 1)$ and $\rho= \frac{\sqrt{170}\kappa_GL}{\sigma^A_{\min}\alpha}$,
then we have
\begin{align}
& \frac{1}{K}\sum_{k=0}^{K-1} \big(\|x_{k+1} - x_{k}\|^2 + \sum_{j=1}^m \|y_j^k-y_j^{k+1}\|^2\big) \nonumber \\
 &\leq \frac{\Phi_{0}-\Phi^*}{K\gamma} + \frac{2\delta^2}{b_1L\gamma} + \frac{72\delta^2}{\sigma^A_{\min}b_1\rho\gamma}, \nonumber
\end{align}
where $\gamma = \min(\chi,\sigma_{\min}^H)$ with $\chi\geq \frac{\sqrt{170}\kappa_GL}{4\alpha}$ and $\Phi^*$ is a lower bound of the function $\Phi_k$.
\end{lemma}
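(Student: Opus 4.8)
The plan is to follow the same route as the finite-sum analysis behind Lemma 1, and to carefully carry along the two extra error terms that appear because, in Algorithm 2, the ``snapshot'' value $v_k$ computed when $\mathrm{mod}(k,q)=0$ is only a mini-batch average of the $\nabla f_i$ rather than the exact $\nabla f(x_k)$. First I would establish a one-step bound on the augmented Lagrangian $\mathcal{L}_\rho$ along the three updates. For the $y_j$-steps, each subproblem is (after the linearization $H_j=r_jI_p-\rho B_j^TB_j\succeq I_p$) a strongly convex proximal step, so minimization gives $\mathcal{L}_\rho(x_k,y^{k+1}_{[m]},z_k)\le \mathcal{L}_\rho(x_k,y^{k}_{[m]},z_k)-\tfrac12\sum_{j=1}^m\|y_j^{k+1}-y_j^k\|_{H_j}^2$, hence a negative term of order $\sigma^H_{\min}\sum_j\|y_j^{k+1}-y_j^k\|^2$. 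For the $x$-step, $\hat{\mathcal{L}}_\rho$ is $\tfrac1\eta G+\rho A^TA$-strongly convex in $x$; combining its optimality at $x_{k+1}$ with the $L$-smoothness of $f$ (to replace the linear model by $f(x_{k+1})$) and a Young split of the bias term $\langle\nabla f(x_k)-v_k,\,x_{k+1}-x_k\rangle\le\tfrac1{2\beta}\|\nabla f(x_k)-v_k\|^2+\tfrac\beta2\|x_{k+1}-x_k\|^2$ yields $\mathcal{L}_\rho(x_{k+1},y^{k+1}_{[m]},z_k)\le\mathcal{L}_\rho(x_k,y^{k+1}_{[m]},z_k)-c_1\|x_{k+1}-x_k\|^2+\tfrac1{2\beta}\|\nabla f(x_k)-v_k\|^2$. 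Finally the dual step gives the exact identity $\mathcal{L}_\rho(x_{k+1},y^{k+1}_{[m]},z_{k+1})-\mathcal{L}_\rho(x_{k+1},y^{k+1}_{[m]},z_k)=\tfrac1\rho\|z_{k+1}-z_k\|^2$.

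Next I would control the dual increment and the gradient error. Eliminating $\rho A^T(Ax_{k+1}+\sum_jB_jy_j^{k+1}-c)$ between the optimality condition of the $x$-update and the dual update collapses to $A^Tz_{k+1}=v_k+\tfrac1\eta G(x_{k+1}-x_k)$, so, using Assumption 4, $\|z_{k+1}-z_k\|^2\le\tfrac1{\sigma^A_{\min}}\|A^T(z_{k+1}-z_k)\|^2$ is bounded by a constant times $\|v_k-v_{k-1}\|^2+\tfrac{\sigma_{\max}^2(G)}{\eta^2}\big(\|x_{k+1}-x_k\|^2+\|x_k-x_{k-1}\|^2\big)$; writing $v_k-v_{k-1}=(\nabla f(x_k)-\nabla f(x_{k-1}))+(v_k-\nabla f(x_k))-(v_{k-1}-\nabla f(x_{k-1}))$ turns the first piece into $L\|x_k-x_{k-1}\|$ and leaves gradient-estimation errors. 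For the errors themselves I would use the SPIDER/SARAH recursion: conditioning on the natural filtration, successive mini-batches being independent makes the cross terms vanish, so $\mathbb{E}\|v_k-\nabla f(x_k)\|^2\le\mathbb{E}\|v_{(n_k-1)q}-\nabla f(x_{(n_k-1)q})\|^2+\tfrac{L^2}{b_2}\sum_{i=(n_k-1)q}^{k-1}\mathbb{E}\|x_{i+1}-x_i\|^2$ and $\mathbb{E}\|v_k-v_{k-1}\|^2\le L^2\,\mathbb{E}\|x_k-x_{k-1}\|^2$; crucially, at a snapshot index the leading term is the variance of a size-$b_1$ average of $\nabla f_i$, which is $\le\delta^2/b_1$ by Assumption 2. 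This is exactly where the online case departs from Lemma 1, in which that term is $0$.

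Then I would insert these bounds into the one-step estimates and sum the three pieces. The $\sum_{i=(n_k-1)q}^{k-1}\mathbb{E}\|x_{i+1}-x_i\|^2$ contributions are absorbed by the term $\tfrac{2L^2}{\sigma^A_{\min}\rho b_2}\sum_{i=(n_k-1)q}^{k-1}\mathbb{E}\|x_{i+1}-x_i\|^2$ in $\Phi_k$ (the inner sum resets every $q$ iterations, so the telescoping is over each block separately), and the $\|x_k-x_{k-1}\|^2$ contributions are absorbed by the coefficient term $\big(\tfrac{9L^2}{\sigma^A_{\min}\rho}+\tfrac{3\sigma_{\max}^2(G)}{\sigma^A_{\min}\eta^2\rho}\big)\|x_k-x_{k-1}\|^2$. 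With $b_2=q$, $\eta=\tfrac{2\alpha\sigma_{\min}(G)}{3L}$ and $\rho=\tfrac{\sqrt{170}\kappa_GL}{\sigma^A_{\min}\alpha}$, checking constants shows all residual positive $\|x_{k+1}-x_k\|^2$ coefficients are dominated, giving $\mathbb{E}[\Phi_{k+1}]-\mathbb{E}[\Phi_k]\le-\gamma\,\mathbb{E}\big(\|x_{k+1}-x_k\|^2+\sum_{j=1}^m\|y_j^{k+1}-y_j^k\|^2\big)+\big(\tfrac2L+\tfrac{72}{\sigma^A_{\min}\rho}\big)\tfrac{\delta^2}{b_1}$ with $\gamma=\min(\chi,\sigma^H_{\min})$, $\chi\ge\tfrac{\sqrt{170}\kappa_GL}{4\alpha}$. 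Telescoping from $k=0$ to $K-1$, dividing by $K\gamma$, and using that $\Phi_k\ge\Phi^*>-\infty$ (the augmented Lagrangian is bounded below after substituting $A^Tz_{k+1}=v_k+\tfrac1\eta G(x_{k+1}-x_k)$, the boundedness of $\nabla f$, and Assumption 3, exactly as for $R_k$) yields the stated inequality.

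I expect the main obstacle to be the bookkeeping in the one-step descent: verifying that, for the prescribed $\eta$ and $\rho$, the negative $\|x_{k+1}-x_k\|^2$ coefficient produced by strong convexity of the $x$-subproblem survives after subtracting the $\tfrac L2\|x_{k+1}-x_k\|^2$ smoothness term, the $\tfrac\beta2\|x_{k+1}-x_k\|^2$ Young term, the $\tfrac1\rho\|z_{k+1}-z_k\|^2$ dual term, and the Lyapunov correction terms — this is precisely what forces the constant $\sqrt{170}$ and the condition $\chi\ge\tfrac{\sqrt{170}\kappa_GL}{4\alpha}$. The SPIDER variance recursion and the lower-boundedness of $\Phi_k$ are essentially identical to the finite-sum proof of Lemma 1; the only genuinely new element is the snapshot variance $\mathbb{E}\|v_{(n_k-1)q}-\nabla f(x_{(n_k-1)q})\|^2\le\delta^2/b_1$, which propagates (once through the $x$-descent Young term and once through the dual bound) into the two additional terms $\tfrac{2\delta^2}{b_1L\gamma}$ and $\tfrac{72\delta^2}{\sigma^A_{\min}b_1\rho\gamma}$.
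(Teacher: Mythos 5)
Your proposal follows essentially the same route as the paper's proof: the same three-part one-step descent (proximal $y_j$-steps, the $x$-step with a Young split of $\langle v_k-\nabla f(x_k),x_k-x_{k+1}\rangle$, and the exact dual identity $\tfrac1\rho\|z_{k+1}-z_k\|^2$), the same control of $\|z_{k+1}-z_k\|^2$ via $A^Tz_{k+1}=v_k+\tfrac1\eta G(x_{k+1}-x_k)$, the same SPIDER variance recursion with the snapshot variance $\delta^2/b_1$ producing the two extra $O(1/b_1)$ terms, and the same Lyapunov telescoping and lower-bound argument for $\Phi^*$. The only cosmetic difference is that your aside $\mathbb{E}\|v_k-v_{k-1}\|^2\le L^2\mathbb{E}\|x_k-x_{k-1}\|^2$ holds only at non-snapshot iterations, but since you also invoke the three-way decomposition of $v_k-v_{k-1}$ that the paper actually uses, this does not affect correctness.
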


Let $\theta_k = \mathbb{E} [ \|x_{k+1}-x_{k}\|^2+\|x_{k}-x_{k-1}\|^2+\frac{1}{q}\sum_{i=(n_k-1)q}^k \|x_{i+1}-x_i\|^2+ \sum_{j=1}^m \|y_j^k-y_j^{k+1}\|^2 ]$.
\begin{theorem} \label{th:2}
 Suppose the sequence $\{x_k,y_{[m]}^k,z_k)_{k=1}^K$ is generated from Algorithm \ref{alg:2}. Let
  \begin{align}
   &\nu_1 = m\big(\rho^2\sigma^B_{\max}\sigma^A_{\max} + \rho^2(\sigma^B_{\max})^2 + \sigma^2_{\max}(H)\big), \nonumber \\
   &\nu_2 = 3(L^2+ \frac{\sigma^2_{\max}(G)}{\eta^2}),\nu_3 = \frac{18 L^2 }{\sigma^A_{\min} \rho^2} + \frac{3\sigma^2_{\max}(G) }{\sigma^A_{\min}\eta^2\rho^2}, \nonumber
 \end{align}
 and let $b_2=q=\sqrt{b_1}$, $\eta = \frac{2\alpha\sigma_{\min}(G)}{3L} \ (0<\alpha \leq 1)$ and
 $\rho = \frac{\sqrt{170}\kappa_GL}{\sigma^A_{\min}\alpha}$,
 then we have
  \begin{align}
  &\frac{1}{K} \sum_{k=1}^K \mathbb{E} \big[ \mbox{dist}(0,\partial L(x_k,y_{[m]}^k,z_k))^2\big]\leq \frac{\nu_{\max}}{K}\sum_{k=1}^{K-1} \theta_k
   + \frac{w}{b_1} \nonumber \\
  & \leq  \frac{3\nu_{\max}(\Phi_{0}-\Phi^*)}{K\gamma} + \frac{6\nu_{\max}\delta^2}{b_1\gamma}(\frac{1}{L} + \frac{36}{\sigma^A_{\min}\rho})+ \frac{w}{b_1}, \nonumber
\end{align}
 where $w= 12\delta^2\max\{1, \frac{6}{\sigma^A_{\min}\rho^2}\}$, $\gamma = \min(\chi,\sigma_{\min}^H)$ with $\chi\geq \frac{\sqrt{170}\kappa_GL}{4\alpha}$,
 $\nu_{\max}=\max\{\nu_1,\nu_2,\nu_3\}$ and
 $\Phi^*$ is a lower bound of the function $\Phi_k$.
 It implies that $K$ and $b_1$ satisfy
 \begin{align}
  K \!=\! \frac{6\nu_{\max}(\Phi_{0}-\Phi^*)}{\epsilon \gamma}, \ b_1 \!=\! \frac{12\nu_{\max}\delta^2}{\epsilon\gamma}(\frac{1}{L}+\frac{36}{\sigma^A_{\min}\rho}) \!+\! \frac{2w}{\epsilon}, \nonumber
 \end{align}
 then $(x_{k^*},y_{[m]}^{k^*},z_{k^*})$ is an $\epsilon$-approximate stationary point of \eqref{eq:2}, where $k^* = \mathop{\arg\min}_{k}\theta_{k}$.
\end{theorem}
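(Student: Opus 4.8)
The plan is to follow the template of the proof of Theorem~\ref{th:1}, while carefully tracking the additional statistical error caused by the inexact snapshot gradient $v_k = \frac{1}{b_1}\sum_{i\in S_1}\nabla f_i(x_k)$ in Algorithm~\ref{alg:2}. The preceding lemma already supplies the descent estimate
\[
\frac{1}{K}\sum_{k=0}^{K-1}\Big(\|x_{k+1}-x_k\|^2+\sum_{j=1}^m\|y_j^k-y_j^{k+1}\|^2\Big)\le \frac{\Phi_0-\Phi^*}{K\gamma}+\frac{2\delta^2}{b_1L\gamma}+\frac{72\delta^2}{\sigma^A_{\min}b_1\rho\gamma},
\]
so the genuinely new work is: (i) upgrading the left-hand side to $\frac1K\sum_k\theta_k$, and (ii) bounding $\mathbb{E}[\mbox{dist}(0,\partial L(x_k,y_{[m]}^k,z_k))^2]$ by $\nu_{\max}\theta_k + w/b_1$ with the stated $w$.

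For step (ii), I write out the first-order optimality conditions of the three subproblems of Algorithm~\ref{alg:2}. The $x$-update gives, after substituting the dual update $z_{k+1}=z_k-\rho(Ax_{k+1}+\sum_jB_jy_j^{k+1}-c)$, the identity $\nabla f(x_{k+1})-A^Tz_{k+1} = \nabla f(x_{k+1})-v_k-\tfrac1\eta G(x_{k+1}-x_k)$; splitting $\nabla f(x_{k+1})-v_k$ into $(\nabla f(x_{k+1})-\nabla f(x_k))+(\nabla f(x_k)-v_k)$ and using $L$-smoothness bounds the first part by $L\|x_{k+1}-x_k\|$. For the second part I invoke the SPIDER variance recursion: conditioning on the last snapshot step $n_kq$, $\mathbb{E}\|v_k-\nabla f(x_k)\|^2 \le \mathbb{E}\|v_{n_kq}-\nabla f(x_{n_kq})\|^2 + \tfrac{L^2}{b_2}\sum_{i=n_kq}^{k-1}\mathbb{E}\|x_{i+1}-x_i\|^2$, and — unlike the finite-sum case — the snapshot error is no longer zero but is controlled by the mini-batch variance, $\mathbb{E}\|v_{n_kq}-\nabla f(x_{n_kq})\|^2 \le \delta^2/b_1$, via Assumption~2. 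The $y_j$-blocks are handled as in standard ADMM analysis: the proximal optimality condition is rewritten in terms of $\partial_{y_j}L(x_{k+1},y_{[m]}^{k+1},z_{k+1})$ by absorbing $z_k-z_{k+1}$, $y_i^k-y_i^{k+1}$ $(i>j)$ and $x_k-x_{k+1}$, which contributes the $\nu_1$-type constants. The feasibility block equals $\tfrac1\rho(z_{k+1}-z_k)$, whose squared norm is bounded — exactly as inside the preceding lemma — by a combination of $\|x_{k+1}-x_k\|^2$, $\|x_k-x_{k-1}\|^2$, the averaged term $\tfrac1q\sum_{i=n_kq}^{k}\|x_{i+1}-x_i\|^2$, and the same $\delta^2/b_1$ snapshot error. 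Summing the three blocks gives $\mathbb{E}[\mbox{dist}(0,\partial L(x_k,y_{[m]}^k,z_k))^2]\le \nu_{\max}\theta_k + w/b_1$ with $w=12\delta^2\max\{1,6/(\sigma^A_{\min}\rho^2)\}$.

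For step (i), averaging over $k=1,\dots,K-1$, the only nonstandard piece of $\tfrac1K\sum_k\theta_k$ is $\tfrac1{Kq}\sum_k\sum_{i=n_kq}^{k}\mathbb{E}\|x_{i+1}-x_i\|^2$; since each index $i$ is counted at most $q$ times, this is at most $\tfrac1K\sum_i\mathbb{E}\|x_{i+1}-x_i\|^2$, so (using $b_2=q$) $\tfrac1K\sum_k\theta_k$ is at most a constant multiple of the left-hand side of the preceding lemma. Chaining this with step (ii) yields the two-line inequality of the theorem. Finally, with $K$ and $b_1$ chosen as in the statement, each term on the right-hand side is made $\le\epsilon$ (after the split implied by the $6\nu_{\max}$ and $w$ coefficients), and selecting $k^*=\arg\min_k\theta_k$ pulls $\theta_{k^*}$ below the average, so that $\mathbb{E}[\mbox{dist}(0,\partial L(x_{k^*},y_{[m]}^{k^*},z_{k^*}))^2]\le\epsilon$, i.e. an $\epsilon$-approximate stationary point of \eqref{eq:2} in the sense of Definition~\ref{def:1}. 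The main obstacle is bookkeeping consistency: the SPIDER variance accumulation, the dual-residual estimate, and the snapshot error $\delta^2/b_1$ must all be propagated with the same constants already fixed in $\Phi_k$ and in the parameter choices $b_2=q=\sqrt{b_1}$, $\eta=\tfrac{2\alpha\sigma_{\min}(G)}{3L}$, $\rho=\tfrac{\sqrt{170}\kappa_GL}{\sigma^A_{\min}\alpha}$, so that the per-step decrease of $\Phi_k$ genuinely dominates every quantity appearing in $\theta_k$ while leaving only the unavoidable $O(\delta^2/b_1)$ residual — which is precisely what forces the large snapshot batch $b_1=\Theta(\epsilon^{-1})$ and hence the $\mathcal{O}(\epsilon^{-3/2})$ IFO complexity.
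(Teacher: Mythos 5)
Your proposal is correct and follows essentially the same route as the paper's own proof: the blockwise bound on $\mbox{dist}(0,\partial L)^2$ via the three optimality conditions, the SPIDER variance recursion with the extra snapshot error $\delta^2/b_1$ from Assumption~2, the counting argument $\sum_{k}\sum_{i=(n_k-1)q}^{k}\|x_{i+1}-x_i\|^2\le q\sum_k\|x_{k+1}-x_k\|^2$ to pass from the lemma's left-hand side to $\frac{1}{K}\sum_k\theta_k$, and the final choice of $K$ and $b_1$. If anything, you are slightly more careful than the paper's appendix, which drops the $\delta^2/b_1$ contribution when bounding the $x$-gradient and feasibility blocks even though the stated theorem accounts for it through the $w/b_1$ term you correctly derive.
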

\begin{remark}
 Theorem \ref{th:2} shows that given $b_2=q=\sqrt{b_1}$, $\eta = \frac{2\alpha\sigma_{\min}(G)}{3L} \ (0<\alpha \leq 1)$,
 $\rho = \frac{\sqrt{170}\kappa_GL}{\sigma^A_{\min}\alpha}$ and $b_1=\mathcal{O}(\epsilon^{-1})$, the online SPIDER-ADMM has the optimal
 IFO of $\mathcal{O}(\epsilon^{-\frac{3}{2}})$ for finding an $\epsilon$-approximate stationary point.
\end{remark}

\subsection{Convergence Analysis of Non-convex SVRG-ADMM}
In the subsection, we extend the existing nonconvex SVRG-ADMM method \cite{huang2016stochastic,zheng2016stochastic}
to the multiple variables setting for solving the problem \eqref{eq:2}.
The SVRG-ADMM algorithm is described in Algorithm 3 given in the Appendix \ref{Appendix:A3}.
Next, we analyze convergence properties of the SVRG-ADMM algorithm,
and derive its optimal IFO complexity.

\begin{lemma}
Suppose the sequence $\{(x^{s}_t,y_{[m]}^{s,t},z^{s}_t)_{t=1}^M\}_{s=1}^S$ is generated from Algorithm \ref{alg:3},
 and define a \emph{Lyapunov} function:
 \begin{align}
 \Gamma^s_t\!=& \mathbb{E}\big[\mathcal{L}_{\rho} (x^s_t,y_{[m]}^{s,t},z^s_t) \!+\! (\frac{3\sigma^2_{\max}(G)}{\sigma^A_{\min}\eta^2\rho} \!+\! \frac{9L^2}{\sigma^A_{\min}\rho})\|x^s_{t}-x^s_{t-1}\|^2 \nonumber \\
 &+ \frac{9L^2 }{\sigma^A_{\min}\rho b}\|x^s_{t-1}-\tilde{x}^s\|^2 + c_t\|x^s_{t}-\tilde{x}^s\|^2\big], \nonumber
 \end{align}
 where the positive sequence $\{c_t\}$ satisfies, for $s =1,2,\cdots,S$
 \begin{equation*}
  c_t= \left\{
  \begin{aligned}
  & \frac{18 L^2 }{\sigma^A_{\min}\rho b} +
     \frac{L}{b} + (1+\beta)c_{t+1}, \ 1 \leq t \leq M, \\
  & 0, \ t \geq M+1.
  \end{aligned}
  \right.\end{equation*}
Let $M=n^{\frac{1}{3}}$, $b=n^{\frac{2}{3}}$, $\eta = \frac{\alpha\sigma_{\min}(G)}{5L} \ (0< \alpha \leq 1)$ and
$\rho = \frac{2\sqrt{231}\kappa_G L}{\sigma^A_{\min}\alpha}$, we have
\begin{align}
\frac{1}{T}\sum_{s=1}^S \sum_{t=0}^{M-1} & \big(\sigma_{\min}^H\sum_{j=1}^m \|y_j^{s,t}-y_j^{s,t+1}\|^2 + \chi_t \|x^s_{t+1}-x^s_t\|^2 \nonumber \\
& + \frac{L}{2b}\|x^s_t-\tilde{x}^s\|^2 \big) \leq \frac{\Gamma^1_0 - \Gamma^*}{T}  .
\end{align}
where $T=MS$, $\chi_t \geq \frac{\sqrt{231}\kappa_G L}{2\alpha} > 0$ and $\Gamma^*$ denotes a lower bound of function $\Gamma^s_t$.
\end{lemma}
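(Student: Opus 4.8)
The plan is to follow the same three-stage template as in the SPIDER-ADMM analysis — a per-block sufficient-decrease estimate for the augmented Lagrangian $\mathcal{L}_\rho$, a dual-variable control step exploiting the full column rank of $A$, and a Lyapunov telescoping — but now carrying along the SVRG reference point $\tilde{x}^s$ and the epoch structure of Algorithm \ref{alg:3}.

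First I would bound the one-step change of $\mathcal{L}_\rho$ across the three updates at inner step $t$ of epoch $s$. Each proximal $y_j$-subproblem is $\sigma^H_{\min}$-strongly convex in the $H_j$-metric, so chaining over $j=1,\dots,m$ gives $\mathcal{L}_\rho(x^s_t,y^{s,t+1}_{[m]},z^s_t)\leq\mathcal{L}_\rho(x^s_t,y^{s,t}_{[m]},z^s_t)-\tfrac{\sigma^H_{\min}}{2}\sum_{j=1}^m\|y^{s,t+1}_j-y^{s,t}_j\|^2$. The $x$-surrogate $\hat{\mathcal{L}}_\rho$ is $\tfrac{1}{\eta}$-strongly convex in the $G$-metric, and combined with $L$-smoothness (Assumption 1) this produces a descent $-\big(\tfrac{\sigma_{\min}(G)}{2\eta}-\tfrac{L}{2}\big)\|x^s_{t+1}-x^s_t\|^2$ plus an error term $\langle v^s_t-\nabla f(x^s_t),x^s_{t+1}-x^s_t\rangle$ that I split by Young's inequality into an $\|x^s_{t+1}-x^s_t\|^2$ piece and a $\|v^s_t-\nabla f(x^s_t)\|^2$ piece.

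The hard step is controlling the dual ascent $\tfrac{1}{\rho}\|z^s_{t+1}-z^s_t\|^2$, which is the usual source of difficulty since the iteration is not Fej\'er monotone. Combining the optimality condition of the $x$-update with the dual step $z^s_{t+1}=z^s_t-\rho(Ax^s_{t+1}+\sum_jB_jy^{s,t+1}_j-c)$ yields $A^Tz^s_{t+1}=v^s_t+\tfrac{1}{\eta}G(x^s_{t+1}-x^s_t)$, so $A^T(z^s_{t+1}-z^s_t)$ equals $(v^s_t-\nabla f(x^s_t))-(v^s_{t-1}-\nabla f(x^s_{t-1}))+(\nabla f(x^s_t)-\nabla f(x^s_{t-1}))+\tfrac{1}{\eta}G(x^s_{t+1}-2x^s_t+x^s_{t-1})$. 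Since $A$ has full column rank (Assumption 4), $z^s_{t+1}-z^s_t\in\mathrm{Range}(A)$, so $\|z^s_{t+1}-z^s_t\|^2\leq\tfrac{1}{\sigma^A_{\min}}\|A^T(z^s_{t+1}-z^s_t)\|^2$, and $L$-smoothness bounds $\tfrac{1}{\rho}\|z^s_{t+1}-z^s_t\|^2$ by a constant times $\tfrac{1}{\sigma^A_{\min}\rho}\big(\|v^s_t-\nabla f(x^s_t)\|^2+\|v^s_{t-1}-\nabla f(x^s_{t-1})\|^2+L^2\|x^s_t-x^s_{t-1}\|^2+\tfrac{\sigma^2_{\max}(G)}{\eta^2}(\|x^s_{t+1}-x^s_t\|^2+\|x^s_t-x^s_{t-1}\|^2)\big)$. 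The SVRG variance bound $\mathbb{E}\|v^s_t-\nabla f(x^s_t)\|^2\leq\tfrac{L^2}{b}\,\mathbb{E}\|x^s_t-\tilde{x}^s\|^2$ converts the estimator error into $\|x^s_t-\tilde{x}^s\|^2$, which is exactly why the terms $\tfrac{9L^2}{\sigma^A_{\min}\rho b}\|x^s_{t-1}-\tilde{x}^s\|^2$ and $c_t\|x^s_t-\tilde{x}^s\|^2$ must be built into $\Gamma^s_t$.

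Finally I would assemble $\Gamma^s_t$ and telescope. The Young inequality $\|x^s_{t+1}-\tilde{x}^s\|^2\leq(1+\beta)\|x^s_t-\tilde{x}^s\|^2+(1+\beta^{-1})\|x^s_{t+1}-x^s_t\|^2$, used to push the reference-point mass one step forward, produces exactly the recursion $c_t=\tfrac{18L^2}{\sigma^A_{\min}\rho b}+\tfrac{L}{b}+(1+\beta)c_{t+1}$ with $c_{M+1}=0$; taking $\beta=1/M$ keeps $(1+\beta)^M\leq e$, so with $M=n^{1/3}$, $b=n^{2/3}$ one has $c_t=\mathcal{O}(n^{-1/3})$ uniformly and the construction still leaves a net $\tfrac{L}{2b}\|x^s_t-\tilde{x}^s\|^2$ in the descent. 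Collecting coefficients, the multiplier of $\|x^s_{t+1}-x^s_t\|^2$ equals $\chi_t=\tfrac{\sigma_{\min}(G)}{2\eta}-\tfrac{L}{2}-(\text{dual and Young contributions})$, and the stated choices $\eta=\tfrac{\alpha\sigma_{\min}(G)}{5L}$ and $\rho=\tfrac{2\sqrt{231}\kappa_GL}{\sigma^A_{\min}\alpha}$ are calibrated precisely so that $\chi_t\geq\tfrac{\sqrt{231}\kappa_GL}{2\alpha}>0$. At an epoch boundary $x^{s+1}_0=\tilde{x}^{s+1}=x^s_M$ collapses the $c_t$- and $\|x-\tilde{x}\|^2$-terms so that $\Gamma^{s+1}_0\leq\Gamma^s_M$; telescoping over $t=0,\dots,M-1$ and $s=1,\dots,S$ then gives $\tfrac1T\sum_{s,t}\big(\sigma^H_{\min}\sum_j\|y^{s,t}_j-y^{s,t+1}_j\|^2+\chi_t\|x^s_{t+1}-x^s_t\|^2+\tfrac{L}{2b}\|x^s_t-\tilde{x}^s\|^2\big)\leq\tfrac1T(\Gamma^1_0-\Gamma^S_M)\leq\tfrac1T(\Gamma^1_0-\Gamma^*)$, with $\Gamma^*>-\infty$ because $\mathcal{L}_\rho$ is bounded below (Assumption 3 together with the identity $A^Tz^s_{t+1}=v^s_t+\tfrac1\eta G(x^s_{t+1}-x^s_t)$, which replaces the $-\langle z,\cdot\rangle$ term by bounded gradient quantities). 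I expect this last interplay — keeping $c_t$ bounded while matching $\Gamma^{s+1}_0$ against $\Gamma^s_M$ and still extracting a strictly positive $\chi_t$ — to be the main obstacle, since it is where the non-monotone behaviour of nonconvex ADMM and the SVRG reference-point drift fight one another.
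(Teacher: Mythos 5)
Your outline matches the paper's proof essentially step for step: the same $y$-block descent via convexity of $g_j$ and the three-point identity, the same $x$-block descent with Young's inequality on $\langle v^s_t-\nabla f(x^s_t),x^s_{t+1}-x^s_t\rangle$, the same dual control through $A^Tz^s_{t+1}=v^s_t+\tfrac{1}{\eta}G(x^s_{t+1}-x^s_t)$ combined with the SVRG variance bound $\mathbb{E}\|v^s_t-\nabla f(x^s_t)\|^2\leq\tfrac{L^2}{b}\mathbb{E}\|x^s_t-\tilde{x}^s\|^2$, and the same $c_t$-recursion with $\beta=1/M$. The only genuinely different choice is your lower-bound argument for $\Gamma^*$ via Assumption 2 (bounded gradients), which is the route the paper uses for SPIDER-ADMM; here the paper instead rewrites $-\langle z^s_t,Ax^s_t+\sum_jB_jy_j^{s,t}-c\rangle$ as $-\tfrac{1}{\rho}(z^s_t)^T(z^s_{t-1}-z^s_t)$ and telescopes $\|z\|^2$ terms. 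Both work.

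One step does not hold as literally written: the epoch-boundary claim $\Gamma^{s+1}_0\leq\Gamma^s_M$. With $x^{s+1}_0=\tilde{x}^{s+1}=x^s_M$ the $c$-term indeed collapses, but the term $\tfrac{9L^2}{\sigma^A_{\min}\rho b}\|x^{s+1}_{-1}-\tilde{x}^{s+1}\|^2=\tfrac{9L^2}{\sigma^A_{\min}\rho b}\|x^s_{M-1}-x^s_M\|^2$ does not reduce to its counterpart $\tfrac{9L^2}{\sigma^A_{\min}\rho b}\|x^s_{M-1}-\tilde{x}^s\|^2$ in $\Gamma^s_M$, so a positive remainder survives that must be absorbed by the $\tfrac{L}{2b}\|x^s_{M}-\tilde{x}^s\|^2$ and $\chi$ slack (which the stated $\rho$ does permit, but this needs to be checked). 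The paper avoids this bookkeeping by bounding $\Gamma^{s+1}_1$ directly against $\Gamma^s_M$, using that $v^{s+1}_0=\nabla f(x^s_M)$ is the exact gradient so that $\mathbb{E}\|v^{s+1}_0-v^s_M\|^2\leq\tfrac{L^2}{b}\|x^s_M-\tilde{x}^s\|^2$ controls $\|z^{s+1}_1-z^s_M\|^2$ across the boundary; you should either adopt that device or carry out the absorption explicitly.
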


Let $\theta^s_{t} = \mathbb{E} [ \|x^s_{t+1}-x^s_{t}\|^2 + \|x^s_{t}-x^s_{t-1}\|^2 + \frac{1}{b}(\|x^s_{t}-\tilde{x}^s\|^2 + \|x^s_{t-1}-\tilde{x}^s\|^2 ) + \sum_{j=1}^m\|y_j^{s,t} - y_j^{s,t+1}\|^2 ]$.
\begin{theorem} \label{th:3}
 Suppose the sequence $\{(x^{s}_t,y_{[m]}^{s,t},z^{s}_t)_{t=1}^M\}_{s=1}^S$ is generated from Algorithm \ref{alg:3} . Let
 \begin{align}
 & \nu_1 = m\big(\rho^2\sigma^B_{\max}\sigma^A_{\max} + \rho^2(\sigma^B_{\max})^2 + \sigma^2_{\max}(H)\big), \nonumber \\
 & \nu_2 = 3L^2 + \frac{3\sigma^2_{\max}(G)}{\eta^2},
 \ \nu_3 = \frac{9L^2 }{\sigma^A_{\min}\rho^2} + \frac{3\sigma^2_{\max}(G)}{\sigma^A_{\min}\eta^2\rho^2}, \nonumber
\end{align}
 and given $M=n^{\frac{1}{3}}$, $b=n^{\frac{2}{3}}$, $\eta = \frac{\alpha\sigma_{\min}(G)}{5L} \ (0 < \alpha \leq 1)$
 and $\rho = \frac{2\sqrt{231}\kappa_G L}{\sigma^A_{\min}\alpha}$,
 then we have
 \begin{align}
\frac{1}{T}\sum_{s=1}^S \sum_{t=0}^{M-1}\mathbb{E}\big[ \mbox{dist}(0,\partial L(x^s_t,y_{[m]}^{s,t},z^s_t))^2\big] \leq \frac{2\nu_{\max}(\Gamma^1_0 - \Gamma^*)}{\gamma T}, \nonumber
\end{align}
where $\gamma = \min(\sigma_{\min}^H,\frac{L}{2},\chi_t)$, $\nu_{\max}= \max(\nu_1,\nu_2,\nu_3)$ and
$\Gamma^*$ is a lower bound of function $\Gamma^s_t$.
It implies that the whole iteration number $T=MS$ satisfies
 \begin{align}
 T = \frac{2\nu_{\max}(\Gamma^1_0 - \Gamma^*)}{\epsilon \gamma}, \nonumber
 \end{align}
then $(x^{s^*}_{t^*},y_{[m]}^{s^*,t^*},z^{s^*}_{t^*})$ is an $\epsilon$-stationary point of \eqref{eq:2}, where $(t^*,s^*) = \mathop{\arg\min}_{t,s}\theta^s_{t}$.
\end{theorem}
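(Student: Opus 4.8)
The plan is to combine the preceding lemma --- which, under the stated $M,b,\eta,\rho$, already telescopes the Lyapunov function $\Gamma^s_t$ and delivers $\frac1T\sum_{s,t}\big(\sigma_{\min}^H\sum_j\|y^{s,t}_j-y^{s,t+1}_j\|^2+\chi_t\|x^s_{t+1}-x^s_t\|^2+\frac{L}{2b}\|x^s_t-\tilde x^s\|^2\big)\le(\Gamma^1_0-\Gamma^*)/T$ --- with a per-iterate bound of the shape $\mathbb{E}\big[\mbox{dist}(0,\partial L(x^s_{t+1},y^{s,t+1}_{[m]},z^s_{t+1}))^2\big]\le\nu_{\max}\,\theta^s_t$. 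Establishing this bound and then reducing $\frac1T\sum_{s,t}\theta^s_t$ to the lemma (which is where the constant $2$ in the statement comes from) is all that remains.

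For the per-iterate bound I would read off the three blocks of $\partial L$ from the first-order optimality conditions of the three updates of Algorithm \ref{alg:3} at inner iteration $t$, using the dual update $z^s_{t+1}=z^s_t-\rho(Ax^s_{t+1}+\sum_jB_jy^{s,t+1}_j-c)$ to simplify each block. The $x$-step gives the identity $A^Tz^s_{t+1}=v^s_t+\frac1\eta G(x^s_{t+1}-x^s_t)$, so $\nabla_xL$ at $(x^s_{t+1},y^{s,t+1}_{[m]},z^s_{t+1})$ equals $\nabla f(x^s_{t+1})-v^s_t-\frac1\eta G(x^s_{t+1}-x^s_t)$; writing $\nabla f(x^s_{t+1})-v^s_t=(\nabla f(x^s_{t+1})-\nabla f(x^s_t))+(\nabla f(x^s_t)-v^s_t)$, bounding the first difference by $L\|x^s_{t+1}-x^s_t\|$ (Assumption 1) and the second, in expectation, by the SVRG variance inequality $\mathbb{E}\|v^s_t-\nabla f(x^s_t)\|^2\le\frac{L^2}{b}\mathbb{E}\|x^s_t-\tilde x^s\|^2$, bounds this block by $\nu_2$ times $\|x^s_{t+1}-x^s_t\|^2+\frac1b\|x^s_t-\tilde x^s\|^2$. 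For each $y_j$-step the optimality condition of the proximal subproblem, after the same substitution, produces the explicit element $\rho B_j^TA(x^s_{t+1}-x^s_t)+\rho B_j^T\sum_{i>j}B_i(y^{s,t+1}_i-y^{s,t}_i)-H_j(y^{s,t+1}_j-y^{s,t}_j)$ of $\partial_{y_j}L$, whose squared norm, summed over $j$ and bounded by Cauchy-Schwarz, gives $\sum_j\mbox{dist}(0,\partial_{y_j}L)^2\le\nu_1\big(\|x^s_{t+1}-x^s_t\|^2+\sum_j\|y^{s,t}_j-y^{s,t+1}_j\|^2\big)$. For the feasibility residual I would use $\|Ax^s_{t+1}+\sum_jB_jy^{s,t+1}_j-c\|=\frac1\rho\|z^s_{t+1}-z^s_t\|$ and the $x$-identity at iterations $t$ and $t-1$, namely $A^T(z^s_{t+1}-z^s_t)=(v^s_t-v^s_{t-1})+\frac1\eta G(x^s_{t+1}-x^s_t)-\frac1\eta G(x^s_t-x^s_{t-1})$, so that Assumption 4 ($A^TA$ nonsingular, hence $\|z^s_{t+1}-z^s_t\|^2\le\frac1{\sigma^A_{\min}}\|A^T(z^s_{t+1}-z^s_t)\|^2$) together with the splitting of $v^s_t-v^s_{t-1}$ into a gradient difference and two SVRG errors at $x^s_t,x^s_{t-1}$ bounds this block by $\nu_3$ times a combination of $\|x^s_{t+1}-x^s_t\|^2$, $\|x^s_t-x^s_{t-1}\|^2$, $\frac1b\|x^s_t-\tilde x^s\|^2$ and $\frac1b\|x^s_{t-1}-\tilde x^s\|^2$. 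Summing the three blocks and taking $\nu_{\max}=\max\{\nu_1,\nu_2,\nu_3\}$ gives exactly $\mathbb{E}[\mbox{dist}(0,\partial L(x^s_{t+1},y^{s,t+1}_{[m]},z^s_{t+1}))^2]\le\nu_{\max}\theta^s_t$, which is precisely why $\theta^s_t$ is defined with those five groups of terms; after the customary relabeling $t+1\mapsto t$ of the stationarity index this is the first displayed inequality of the theorem.

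Finally I would average over $t=0,\dots,M-1$, $s=1,\dots,S$ and divide by $T=MS$. After reindexing, each shifted term of $\theta^s_t$ (the backward difference $\|x^s_t-x^s_{t-1}\|^2$, the shifted anchor distance $\|x^s_{t-1}-\tilde x^s\|^2$, and the warm-start resets between consecutive inner loops) is dominated by a corresponding un-shifted term that the preceding lemma controls, up to a boundary contribution the lemma's telescoping already absorbs; with $\gamma=\min(\sigma_{\min}^H,\frac L2,\chi_t)$ this yields $\frac1T\sum_{s,t}\theta^s_t\le\frac2\gamma\cdot\frac{\Gamma^1_0-\Gamma^*}{T}$, the factor $2$ being exactly this doubling. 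Chaining the two inequalities gives $\frac1T\sum_{s,t}\mathbb{E}[\mbox{dist}(0,\partial L(x^s_t,y^{s,t}_{[m]},z^s_t))^2]\le\frac{2\nu_{\max}(\Gamma^1_0-\Gamma^*)}{\gamma T}$, so choosing $T=\frac{2\nu_{\max}(\Gamma^1_0-\Gamma^*)}{\epsilon\gamma}$ forces the average --- hence the value at the minimizing index $(t^*,s^*)$ --- below $\epsilon$. (Together with the cost count $S(n+Mb)=2Sn$ IFO calls, $S=T/M$, $M=n^{1/3}$, $b=n^{2/3}$, $T=\mathcal{O}(\epsilon^{-1})$, this is what gives the $\mathcal{O}(n+n^{2/3}\epsilon^{-1})$ complexity quoted in the introduction.)

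The main obstacle is propagating the SVRG estimation error $\|v^s_t-\nabla f(x^s_t)\|$ through all three blocks at once: it is controlled only in expectation and only in terms of the anchor distance $\|x^s_t-\tilde x^s\|$, so the $\frac1b\|x-\tilde x\|^2$ terms in the stationarity bound are unavoidable and must be reabsorbed by the lemma. This is what forces $\Gamma^s_t$ to carry $\|x^s_{t-1}-\tilde x^s\|^2$ and $c_t\|x^s_t-\tilde x^s\|^2$ with the recursively defined $c_t$, and the scheme closes only because $\eta=\frac{\alpha\sigma_{\min}(G)}{5L}$, $\rho=\frac{2\sqrt{231}\kappa_GL}{\sigma^A_{\min}\alpha}$, $M=n^{1/3}$, $b=n^{2/3}$ are tuned so that, after summing the $c_{t+1}$ recursion over the length-$M$ inner loop, the coefficient of $\|x^s_{t+1}-x^s_t\|^2$ stays $\ge\chi_t\ge\frac{\sqrt{231}\kappa_GL}{2\alpha}>0$ rather than collapsing. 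Most of that bookkeeping --- and the verification $\Gamma^*>-\infty$ via Assumption 3 --- lives inside the preceding lemma; granted it, Theorem \ref{th:3} is the comparatively routine optimality-condition-plus-Cauchy-Schwarz computation above.
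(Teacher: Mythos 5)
Your proposal follows essentially the same route as the paper's own proof: the same three per-block bounds read off from the optimality conditions (the identity $A^Tz^s_{t+1}=v^s_t+\frac{1}{\eta}G(x^s_{t+1}-x^s_t)$ for the $\nabla_x L$ block, the explicit subgradient element plus Cauchy--Schwarz for the $\partial_{y_j}L$ blocks, and $\frac{1}{\rho^2}\|z^s_{t+1}-z^s_t\|^2$ via Lemma \ref{lem:A9} for the feasibility residual), each dominated by $\nu_i\theta^s_t$, followed by the same reduction of $\frac{1}{T}\sum_{s,t}\theta^s_t$ to the telescoped Lyapunov bound of Lemma \ref{lem:A10} with the factor $2$ arising exactly from the doubling of the shifted terms you describe. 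The argument is correct and matches the paper's proof in both structure and constants.
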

\begin{remark}
 Theorem \ref{th:3} shows that given $M=n^{\frac{1}{3}}$, $b=n^{\frac{2}{3}}$, $\eta = \frac{\alpha\sigma_{\min}(G)}{5L} \ (0<\alpha \leq 1)$ and
 $\rho = \frac{2\sqrt{231}\kappa_GL}{\sigma^A_{\min}\alpha}$, the non-convex SVRG-ADMM has the optimal
 IFO complexity of $\mathcal{O}(n+n^{\frac{2}{3}}\epsilon^{-1})$ for finding an $\epsilon$-approximate stationary point.
\end{remark}
\subsection{Convergence Analysis of Non-convex SAGA-ADMM}
In the subsection, we extend the existing nonconvex SAGA-ADMM method \cite{huang2016stochastic}
to the multiple variables setting for solving the problem \eqref{eq:2}.
The SAGA-ADMM algorithm is described in Algorithm 4 given in the Appendix \ref{Appendix:A4}.
Next, we analyze convergence properties of non-convex SAGA-ADMM,
and derive its the optimal IFO complexity.

\begin{lemma}
 Suppose the sequence $\{x_t,y_{[m]}^t,z_t\}_{t=1}^T$ is generated from Algorithm \ref{alg:4},
 and define a \emph{Lyapunov} function
 \begin{align}
 & \Omega_t \!= \!\mathbb{E}\big[ \mathcal{L}_{\rho} (x_t,y_{[m]}^t,z_t) \!+ \! (\frac{3\sigma^2_{\max}(G)}{\sigma^A_{\min}\eta^2\rho} \!+ \! \frac{9L^2}{\sigma^A_{\min}\rho}) \|x_{t}\!-\!x_{t-1}\|^2 \nonumber \\
 & + \frac{9L^2}{\sigma^A_{\min}\rho b}\frac{1}{n} \sum_{i=1}^n\|x_{t-1}-u^{t-1}_i\|^2 + c_t\frac{1}{n} \sum_{i=1}^n\|x_{t}-u^t_i\|^2 \big], \nonumber
\end{align}
 where the positive sequence $\{c_t\}$ satisfies
 \begin{equation*}
  c_t \!=\! \left\{
  \begin{aligned}
  & \frac{18L^2 }{\sigma^A_{\min}\rho b} \!+\! \frac{L}{b} \!+\! (1-p)(1+\beta)c_{t+1}, \ 0 \leq t \leq T-1, \\
  & 0, \ t \geq T,
  \end{aligned}
  \right.\end{equation*}
where $p$ denotes probability of an index $i$ being in $\mathcal{I}_t$.
Further, let $b= n^{\frac{2}{3}}$, $\eta = \frac{\alpha\sigma_{\min}(G)}{17L} \ (0 < \alpha \leq 1)$ and $\rho = \frac{2\sqrt{2031}\kappa_G}{\sigma^A_{\min}\alpha}$
we have
\begin{align}
 \frac{1}{T} \sum_{t=1}^T & \big( \sigma_{\min}^H\sum_{j=1}^m \|y_j^t-y_j^{t+1}\|^2 + \chi_t \|x_t-x_{t+1}\|^2 \nonumber \\
 &+  \frac{L}{2b}\frac{1}{n}\sum_{i=1}^n\|x_t-u^t_i\|^2 \big) \leq \frac{\Omega_0 - \Omega^*}{T}, \nonumber
\end{align}
where $\chi_t \geq \frac{\sqrt{2031}\kappa_GL}{2\alpha} >0$ and $\Omega^*$ denotes a lower bound of function $\Omega_t$.
\end{lemma}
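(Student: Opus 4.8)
\emph{Proof proposal.} The plan is to follow the same Lyapunov-descent template used for Lemmas~1 and~3, adapted to the SAGA memory structure $\{u_i^t\}_{i=1}^n$. First I would assemble three ingredients. \emph{(i) Dual control:} from the optimality condition of the linearized $x$-update in Algorithm~\ref{alg:4}, the update $z_{t+1}=z_t-\rho(Ax_{t+1}+\sum_j B_j y_j^{t+1}-c)$, and the full column rank of $A$ (Assumption~4, so $\sigma^A_{\min}>0$), derive a bound of the form $\mathbb{E}\|z_{t+1}-z_t\|^2 \le \frac{c_1}{\sigma^A_{\min}}\big(\mathbb{E}\|v_t-\nabla f(x_t)\|^2 + L^2\|x_{t+1}-x_t\|^2 + \frac{\sigma^2_{\max}(G)}{\eta^2}(\|x_{t+1}-x_t\|^2+\|x_t-x_{t-1}\|^2)\big)$, i.e.\ the dual increment is controlled by the primal increments plus the gradient-estimation error. \emph{(ii) SAGA variance:} by $L$-smoothness (Assumption~1) and independence of $\mathcal{I}_t$ from $\{u_i^t\}$, $\mathbb{E}\|v_t-\nabla f(x_t)\|^2 \le \frac{L^2}{b}\cdot\frac1n\sum_{i=1}^n\|x_t-u_i^t\|^2$. \emph{(iii) Memory evolution:} since $u_i^{t+1}=x_t$ with probability $p$ and $u_i^{t+1}=u_i^t$ otherwise, Young's inequality with parameter $\beta>0$ gives $\mathbb{E}\frac1n\sum_i\|x_{t+1}-u_i^{t+1}\|^2 \le (1-p)(1+\beta)\frac1n\sum_i\|x_t-u_i^t\|^2 + (1+\beta^{-1})\|x_{t+1}-x_t\|^2$, which is exactly what motivates the backward recursion defining $c_t$.

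Next I would establish the one-step descent of $\mathcal{L}_\rho$. Using strong convexity of each $y_j$-subproblem in the $\|\cdot\|_{H_j}$ norm together with its optimality condition, the optimality condition of the linearized $x$-subproblem together with $L$-smoothness (to pass from $\hat{\mathcal{L}}_\rho$ to $\mathcal{L}_\rho$), and the identity $\langle z_{t+1}-z_t,\,Ax_{t+1}+\sum_j B_j y_j^{t+1}-c\rangle = -\frac1\rho\|z_{t+1}-z_t\|^2$, I would obtain $\mathbb{E}[\mathcal{L}_\rho(x_{t+1},y^{t+1}_{[m]},z_{t+1})] \le \mathbb{E}[\mathcal{L}_\rho(x_t,y^t_{[m]},z_t)] - \sigma^H_{\min}\sum_j\|y_j^t-y_j^{t+1}\|^2 - \big(\frac{\sigma_{\min}(G)}{\eta}-\frac{L}{2}\big)\|x_{t+1}-x_t\|^2 + \frac1\rho\mathbb{E}\|z_{t+1}-z_t\|^2 + \frac{\eta}{2}\mathbb{E}\|v_t-\nabla f(x_t)\|^2$. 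Substituting the bounds from (i) and (ii) turns the right-hand side into the Lagrangian plus explicit multiples of $\|x_{t+1}-x_t\|^2$, $\|x_t-x_{t-1}\|^2$ and $\frac1n\sum_i\|x_t-u_i^t\|^2$.

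Then I would assemble $\Omega_t$ and telescope. Adding the auxiliary terms with the stated coefficients — $\frac{3\sigma^2_{\max}(G)}{\sigma^A_{\min}\eta^2\rho}+\frac{9L^2}{\sigma^A_{\min}\rho}$ on $\|x_t-x_{t-1}\|^2$, $\frac{9L^2}{\sigma^A_{\min}\rho b}$ on the previous-memory term, and $c_t$ on the current-memory term — makes every surplus positive term absorbed: the $\|x_t-x_{t-1}\|^2$ coefficient cancels the backward-pointing primal gap produced by the dual bound, while the $c_t$ recursion is designed so that $c_t\cdot(\text{memory evolution})$ exactly reproduces $\frac{18L^2}{\sigma^A_{\min}\rho b}+\frac{L}{b}+(1-p)(1+\beta)c_{t+1}$ feeding into the next memory term. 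This yields $\Omega_{t+1}\le \Omega_t - \sigma^H_{\min}\sum_j\|y_j^t-y_j^{t+1}\|^2 - \chi_t\|x_{t+1}-x_t\|^2 - \frac{L}{2b}\frac1n\sum_i\|x_t-u_i^t\|^2$, where positivity of $\chi_t$ (and hence $\chi_t\ge\frac{\sqrt{2031}\kappa_GL}{2\alpha}$) is exactly what forces the constants $\eta=\frac{\alpha\sigma_{\min}(G)}{17L}$ and $\rho=\frac{2\sqrt{2031}\kappa_G}{\sigma^A_{\min}\alpha}$ with $b=n^{2/3}$. Telescoping from $t=1$ to $T$, dividing by $T$, and invoking $\Omega^*:=\inf_t\Omega_t>-\infty$ gives the claim; for this lower bound I would show $\mathcal{L}_\rho(x_t,y^t_{[m]},z_t)\ge f(x_t)+\sum_j g_j(y_j^t) - \frac{c'}{\rho}\|z_{t+1}-z_t\|^2 + (\text{nonneg})$, re-absorb $\|z_{t+1}-z_t\|^2$ via the dual bound into the primal-gap terms already carried in $\Omega_t$, and conclude with Assumption~3 ($f^*,g_j^*>-\infty$).

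The main obstacle is the simultaneous balancing of three competing error sources — the dual-gap surplus, the SAGA variance, and the memory contraction — under one fixed triple $(b,\eta,\rho)$. Concretely, one must choose $\beta$ as a function of $p$ (roughly $\beta\asymp p$, so that $(1-p)(1+\beta)\le 1-p/2$) to keep the backward recursion for $c_t$ uniformly bounded, namely $c_t\le\frac{2}{p}\big(\frac{18L^2}{\sigma^A_{\min}\rho b}+\frac{L}{b}\big)$; with $p\approx b/n$ and $b=n^{2/3}$ this keeps $c_t=\mathcal{O}(L/n^{1/3})$, small enough not to spoil positivity of $\chi_t$. Getting the numerical constants ($17$, $2\sqrt{2031}$) to line up so that all three descent coefficients stay strictly positive while $\Omega_t$ remains bounded below is where the delicate bookkeeping lives, and the probabilistic layer coming from the random memory updates (absent in deterministic ADMM, and handled differently than the epoch-based SVRG-ADMM of Lemma~3) is the genuinely new difficulty here.
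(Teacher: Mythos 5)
Your proposal is correct and follows essentially the same route as the paper's proof: the same three ingredients (dual increment controlled by primal increments plus the SAGA variance via the $x$-optimality condition and the $z$-update, the Reddi-et-al.\ variance bound $\mathbb{E}\|v_t-\nabla f(x_t)\|^2\le \frac{L^2}{bn}\sum_i\|x_t-u_i^t\|^2$, and the memory-evolution inequality with Young's parameter $\beta$ chosen of order $p$), the same backward recursion for $c_t$ with the same resulting bound $c_t=\mathcal{O}(n b^{-2})$, and the same telescoping of $\Omega_t$ with the lower bound obtained by rewriting the cross term through $z_t-z_{t-1}$. The only deviations are bookkeeping choices you already flag (e.g.\ weighting the variance term by $\frac{\eta}{2}$ rather than the paper's $\frac{1}{2L}$, which must be adjusted to reproduce the exact $\frac{L}{2b}$ coefficient in the statement).
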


Let $\theta_t =  \mathbb{E} [ \|x_{t+1}-x_t\|^2 + \|x_t-x_{t-1}\|^2 + \frac{1}{b n}\sum^n_{i=1} (\|x_t-u^t_i\|^2 + \|x_{t-1}-u^{t-1}_i\|^2)+\sum_{j=1}^m \|y_j^t-y_j^{t+1}\|^2 ]$.

\begin{theorem} \label{th:4}
Suppose the sequence $\{x_t,y_{[m]}^t,z_t\}_{t=1}^T$ is generated from Algorithm \ref{alg:4}. Let
\begin{align}
  &\nu_1 =m\big(\rho^2\sigma^B_{\max}\sigma^A_{\max} + \rho^2(\sigma^B_{\max})^2 + \sigma^2_{\max}(H)\big), \nonumber \\
  & \nu_2 = 3L^2 + \frac{3\sigma^2_{\max}(G)}{\eta^2}, \ \nu_3 = \frac{9L^2 }{\sigma^A_{\min}\rho^2} + \frac{3\sigma^2_{\max}(G)}{\sigma^A_{\min}\eta^2\rho^2}, \nonumber
 \end{align}
 and given $b= n^{\frac{2}{3}}$, $\eta = \frac{\alpha\sigma_{\min}(G)}{17L} \ (0 < \alpha \leq 1)$ and $\rho = \frac{2\sqrt{2031}\kappa_G}{\sigma^A_{\min}\alpha}$,
 then we have
 \begin{align}
 \frac{1}{T}\sum_{t=1}^T \mathbb{E}\big[ \mbox{dist}(0,\partial L(x_t,y_{[m]}^t,z_t))^2\big] \leq \frac{2\nu_{\max}(\Omega_0 - \Omega^*)}{\gamma T},  \nonumber
 \end{align}
 where $\gamma = \min(\sigma_{\min}^H, \frac{L}{2}, \chi_{t})$ with $\chi_t \geq \frac{\sqrt{2031}\kappa_GL}{2\alpha} >0$, $\nu_{\max}= \max(\nu_1,\nu_2,\nu_3)$ and
 $\Omega^*$ is a lower bound of function $\Omega_t$. It implies that the iteration number $T$ satisfies
 \begin{align}
  T = \frac{2\nu_{\max}}{\epsilon \gamma}(\Omega_0- \Omega^*),  \nonumber
 \end{align}
 then $(x_{t^*},y_{[m]}^{t^*},z_{t^*})$ is an $\epsilon$-approximate stationary point of \eqref{eq:2}, where $t^* = \mathop{\arg\min}_{ 1\leq t\leq T}\theta_{t}$.
\end{theorem}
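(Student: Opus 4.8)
The plan is to follow the template already used for Theorems~\ref{th:1} and \ref{th:3}: first convert the iterate-difference inequality of the preceding lemma into a bound on the stationarity measure $\mbox{dist}(0,\partial L(x_t,y_{[m]}^t,z_t))^2$, and then average over $t=1,\dots,T$ and invoke that lemma. The starting point is that the preceding lemma already supplies the descent estimate $\frac1T\sum_{t=1}^T\big(\sigma_{\min}^H\sum_{j=1}^m\|y_j^t-y_j^{t+1}\|^2 + \chi_t\|x_t-x_{t+1}\|^2 + \frac{L}{2b}\frac1n\sum_{i=1}^n\|x_t-u_i^t\|^2\big)\le \frac{\Omega_0-\Omega^*}{T}$, so the only missing ingredient is a per-iterate inequality $\mathbb{E}[\mbox{dist}(0,\partial L)^2]\le \nu_{\max}\,\theta_t$ with $\theta_t$ the quantity defined just above the theorem.

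For that per-iterate bound I would estimate each block of $\partial L$ separately. Using the first-order optimality condition of the $y_j$-subproblem (step~9 of Algorithm~\ref{alg:4}) together with the dual update $z_{t+1}=z_t-\rho(Ax_{t+1}+\sum_jB_jy_j^{t+1}-c)$, an element of $\partial_{y_j}L(x_t,y_{[m]}^t,z_t)$ can be written as a linear combination of $B_j^T(z_{t+1}-z_t)$, $\rho B_j^TA(x_t-x_{t+1})$, $\rho B_j^T\sum_{i>j}B_i(y_i^t-y_i^{t+1})$ and $H_j(y_j^{t+1}-y_j^t)$; squaring, summing over $j$, and controlling $\|z_{t+1}-z_t\|^2$ by the primal differences and the SAGA error (as in the lemma's proof) produces the constant $\nu_1=m(\rho^2\sigma^B_{\max}\sigma^A_{\max}+\rho^2(\sigma^B_{\max})^2+\sigma^2_{\max}(H))$. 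From the optimality condition of the linearized $x$-update we get $\nabla_xL(x_t,\cdot)=\nabla f(x_t)-v_t+\frac1\eta G(x_t-x_{t+1})+\rho A^T(\cdot)$, so $\|\nabla_xL\|^2$ is bounded by $\|v_t-\nabla f(x_t)\|^2$, $\frac{\sigma^2_{\max}(G)}{\eta^2}\|x_t-x_{t+1}\|^2$ and a feasibility-residual term, giving $\nu_2=3L^2+3\sigma^2_{\max}(G)/\eta^2$; and the feasibility block $-Ax_t-\sum_jB_jy_j^t+c$ is handled through the dual update and $L$-smoothness of $f$, yielding $\nu_3=\frac{9L^2}{\sigma^A_{\min}\rho^2}+\frac{3\sigma^2_{\max}(G)}{\sigma^A_{\min}\eta^2\rho^2}$. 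The estimator error $\|v_t-\nabla f(x_t)\|^2$ is not itself part of $\theta_t$, so it must be replaced by the standard mini-batch SAGA variance bound $\mathbb{E}\|v_t-\nabla f(x_t)\|^2\le\frac{L^2}{b}\frac1n\sum_{i=1}^n\|x_t-u_i^t\|^2$ (Assumption~1), which is exactly the $\frac1{bn}\sum_i\|x_t-u_i^t\|^2$ summand retained in $\theta_t$; the $x_{t-1}$ and $u_i^{t-1}$ summands enter through the bound on $\|z_{t+1}-z_t\|^2$. Collecting the three cases gives $\mathbb{E}[\mbox{dist}(0,\partial L(x_t,y_{[m]}^t,z_t))^2]\le\nu_{\max}\theta_t$ with $\nu_{\max}=\max(\nu_1,\nu_2,\nu_3)$.

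Averaging this over $t=1,\dots,T$ and re-indexing the shifted sums $\sum_t\|x_t-x_{t-1}\|^2=\sum_{t}\|x_{t+1}-x_t\|^2$ and $\sum_t\frac1n\sum_i\|x_{t-1}-u_i^{t-1}\|^2$ (each contributes at most twice), and using $\gamma=\min(\sigma_{\min}^H,\frac L2,\chi_t)$ to absorb the coefficients of the lemma's left-hand side, I obtain $\frac1T\sum_t\theta_t\le\frac{2(\Omega_0-\Omega^*)}{\gamma T}$, hence $\frac1T\sum_t\mathbb{E}[\mbox{dist}^2]\le\frac{2\nu_{\max}(\Omega_0-\Omega^*)}{\gamma T}$. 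Choosing $T=\frac{2\nu_{\max}(\Omega_0-\Omega^*)}{\epsilon\gamma}$ drives the average below $\epsilon$, so the minimizing index $t^*=\arg\min_t\theta_t$ yields an $\epsilon$-stationary point; since $b=n^{2/3}$ and each non-checkpoint step costs $\mathcal{O}(b)$ IFO calls (plus one full gradient), the overall cost is $\mathcal{O}(n+n^{2/3}\epsilon^{-1})$.

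The main obstacle is the per-iterate step: because nonconvex ADMM lacks Fej\'er monotonicity there is no direct control of $\|z_{t+1}-z_t\|$, so one must re-derive, consistently across all three blocks, its bound in terms of the primal differences and the SAGA estimator error, so that the separate constants collapse into the single $\nu_{\max}$. A secondary technical point is verifying that the recursively-defined sequence $\{c_t\}$ is finite over the horizon $t\le T$ (the $(1-p)(1+\beta)$ contraction factor, with an appropriate small $\beta$, keeps $c_t$ uniformly bounded) and that $\Omega_t$ is bounded below --- this uses Assumption~3 and the full-column-rank of $A$ from Assumption~4 to convert the linear dual term in $\mathcal{L}_\rho$ into controllable primal quantities --- which is what makes $\Omega_0-\Omega^*$ a legitimate finite constant; this mirrors the SVRG-ADMM argument with the SAGA contraction replacing the $(1+\beta)$ factor.
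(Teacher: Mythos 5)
Your proposal follows essentially the same route as the paper's proof: a per-iterate bound $\mathbb{E}[\mbox{dist}(0,\partial L)^2]\le\nu_{\max}\theta_t$ obtained block-by-block from the three optimality/update conditions (with the SAGA variance bound $\mathbb{E}\|v_t-\nabla f(x_t)\|^2\le\frac{L^2}{bn}\sum_i\|x_t-u_i^t\|^2$ supplying the $\frac{1}{bn}\sum_i\|x_t-u_i^t\|^2$ terms), followed by averaging against the descent estimate of the preceding lemma and choosing $T$ accordingly. Aside from minor imprecisions in bookkeeping (the dual update cancels the $z$-difference in the $y_j$-block directly into primal residuals, and $\nabla_xL$ carries no feasibility-residual term once $A^Tz_{t+1}=v_t+\frac{G}{\eta}(x_{t+1}-x_t)$ is substituted), the argument and constants match the paper's.
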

\begin{remark}
 Theorem \ref{th:4} shows that given $b=n^{\frac{2}{3}}$, $\eta = \frac{\alpha\sigma_{\min}(G)}{17L} \ (0<\alpha \leq 1)$ and
 $\rho = \frac{2\sqrt{2031}\kappa_GL}{\sigma^A_{\min}\alpha}$, the non-convex SAGA-ADMM has the optimal
 IFO of $\mathcal{O}(n+n^{\frac{2}{3}}\epsilon^{-1})$ for finding an $\epsilon$-approximate stationary point.
\end{remark}

\begin{remark}
\textbf{Our contributions} on convergence analysis of both the non-convex SVRG-ADMM and SAGA-ADMM are given as follows:
\begin{itemize}
\item We extend both the existing non-convex SVRG-ADMM and SAGA-ADMM to the multi-block setting
      for solving the problem \eqref{eq:2};
\item We not only give its optimal IFO complexity of $\mathcal{O}(n+n^{\frac{2}{3}}\epsilon^{-1})$,
      but also provide the \emph{specific and simple} choice on the step-size $\eta$ and penalty parameter $\rho$.
\end{itemize}
\end{remark}
\begin{table}
  \centering
  \caption{Real datasets } \label{tab:2}
  \begin{tabular}{c|c|c|c}
  \hline
  datasets & $\#samples$ & $\#features$ & $\#classes$ \\ \hline
  \emph{a9a}   & 32,561  & 123 & 2\\
  \emph{w8a} & 64,700  & 300 & 2\\
  \emph{ijcnn1} & 126,702 & 22 & 2 \\
  \emph{covtype.binary} & 581,012 & 54 & 2\\ \hline
  \emph{letter} & 15,000 & 16 & 26 \\
  \emph{sensorless} & 58,509 &48 & 11 \\
  \emph{mnist} & 60,000 & 780 & 10 \\
  \emph{covtype} & 581,012  & 54 & 7 \\
  \hline
  \end{tabular}
\end{table}

\section{ Experiments }
In this section, we will compare the proposed algorithm (SPIDER-ADMM) with the existing non-convex algorithms \big( nc-ADMM \cite{jiang2019structured},
nc-SVRG-ADMM \cite{huang2016stochastic,zheng2016stochastic}, nc-SAGA-ADMM \cite{huang2016stochastic} and nc-SADMM \cite{huang2018mini} \big) on two applications:
1) Graph-guided binary classification; 2) Multi-task learning.
In the experiment, we use some publicly available datasets\footnote{
These data are from the LIBSVM website (www.csie.ntu.edu.tw/~cjlin/libsvmtools/datasets/).},
which are summarized in Table \ref{tab:2}. All algorithms are implemented in MATLAB,
and all experiments are performed on a PC with an Intel i7-4790 CPU and 16GB memory.
\begin{figure}[htbp]
\centering
\subfigure[\emph{a9a}]{\includegraphics[width=0.23\textwidth]{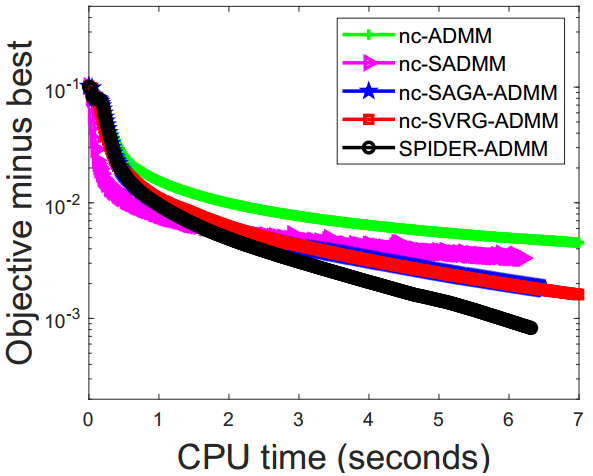}}
\subfigure[\emph{w8a}]{\includegraphics[width=0.23\textwidth]{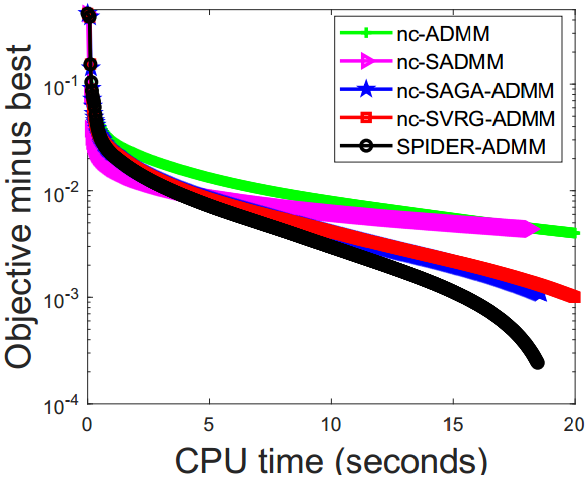}}
\subfigure[\emph{ijcnn1}]{\includegraphics[width=0.23\textwidth]{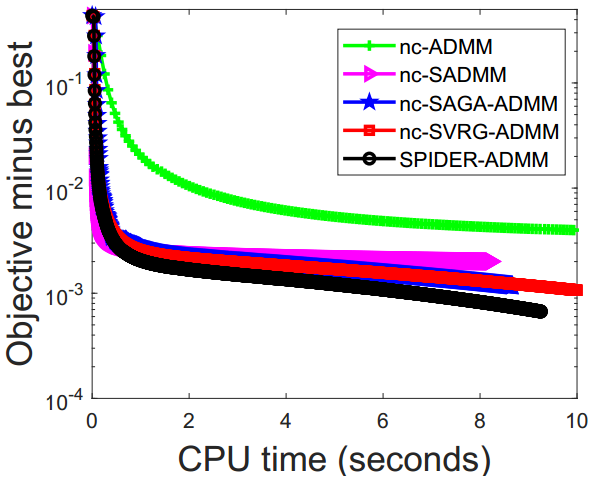}}
\subfigure[\emph{covtype.binary}]{\includegraphics[width=0.23\textwidth]{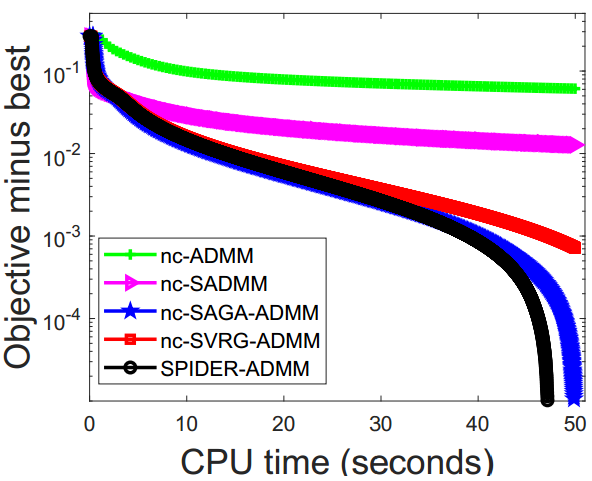}}
\caption{Objective value \emph{versus} CPU time of the \emph{nonconvex} graph-guided binary classification model on some real datasets.} \label{fig:1}
\vspace{-1em}
\end{figure}
\subsection{ Graph-Guided Binary Classification }
In the subsection, we focus on the binary classification task.
Specifically, given a set of training samples $(a_i,b_i)_{i=1}^n$,
where $a_i\in \mathbb{R}^d$, $b_i \in \{-1,1\}$,
then we solve the following nonconvex empirical loss minimization problem:
\begin{align} \label{eq:80}
 \min_{x\in \mathbb{R}^d} \ \frac{1}{n}\sum_{i=1}^n f_i(x) + \lambda\|Ax\|_1,
\end{align}
where $f_i(x)=\frac{1}{1+\exp(b_i a_i^Tx)}$ is the nonconvex sigmoid loss function.
We use the nonsmooth regularizer \emph{i.e.,}
graph-guided fused lasso \cite{kim2009multivariate}, and $A$ decodes the sparsity pattern of graph,
which is obtained by sparse precision matrix estimation \cite{friedman2008sparse}.
To solve the problem \eqref{eq:80},
we give an auxiliary variable $y$ with the constraint $y = Ax$.
In the experiment, we fix the parameter $\lambda=10^{-5}$, and use the same initial solution $x_0$
from the standard normal distribution for all algorithms.

Figure \ref{fig:1} shows that the objective values of our SPIDER-ADMM method faster decrease than those of other methods,
as CPU time consumed increases.
Thus, these results demonstrate that our method has a relatively faster convergence rate than other methods.

\begin{figure}[htbp]
\centering
\subfigure[\emph{letter}]{\includegraphics[width=0.23\textwidth]{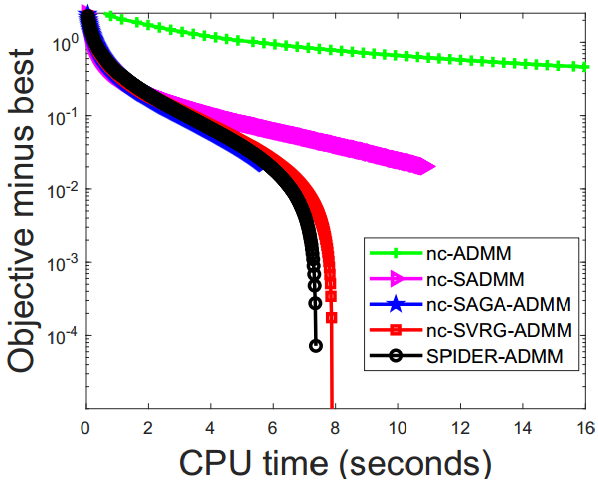}}
\subfigure[\emph{sensorless}]{\includegraphics[width=0.23\textwidth]{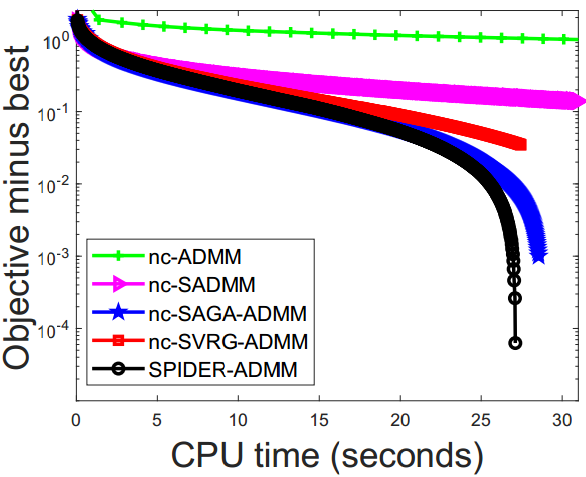}}
\subfigure[\emph{mnist}]{\includegraphics[width=0.23\textwidth]{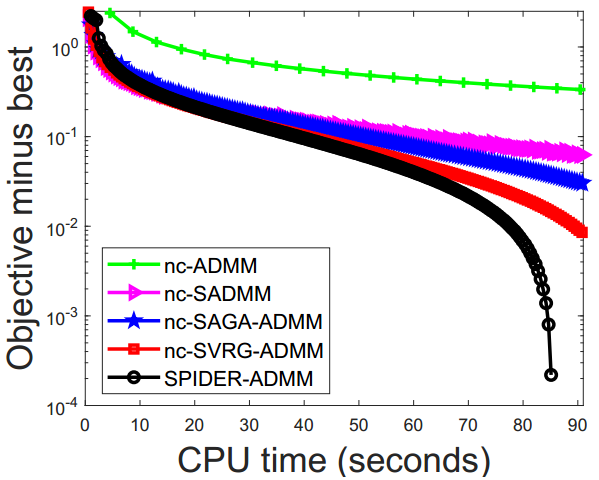}}
\subfigure[\emph{covtype}]{\includegraphics[width=0.23\textwidth]{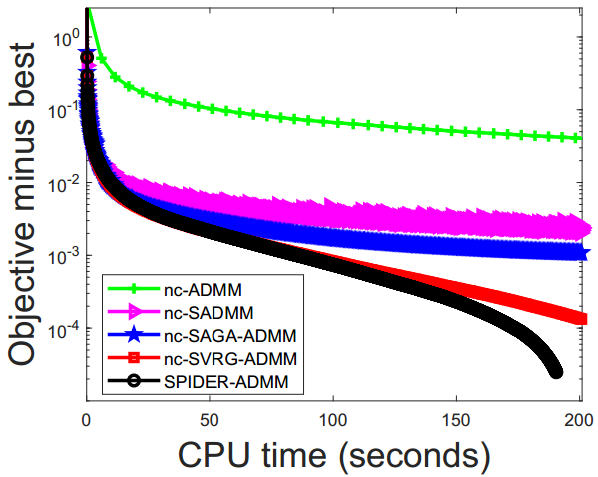}}
\caption{Objective value \emph{versus} CPU time of the \emph{nonconvex} multi-task learning on some real datasets.} \label{fig:2}
\vspace{-1em}
\end{figure}
\subsection{ Multi-Task Learning }
In this subsection, we focus on the multi-task learning task with sparse and low-rank structures.
Specifically, given a set of training samples $(a_i,b_i)_{i=1}^n$,
where $a_i\in \mathbb{R}^d$ and $b_i \in \{1,2,\cdots,c\}$, then let $D\in \mathbb{R}^{n\times c}$ with $D_{ij}=1$ if $j=b_i$,
and $D_{ij}=0$ otherwise.
This multi-task learning is equivalent to solving the following nonconvex problem:
\begin{equation}
 \min_{X \in \mathbb{R}^{c\times d}} \ \frac{1}{n}\sum_{i=1}^n f_i(X) + \lambda_1\sum_{ij}\kappa(|X_{ij}|) + \lambda_2\|X\|_* ,
\end{equation}
where $f_i(X) = \log(\sum_{j=1}^c\exp(X_{j,.}a_i)) - \sum_{j=1}^c D_{ij}X_{j,.}a_i$ is a multinomial logistic loss
function, $\kappa(|X_{ij}|)=\beta\log(1+\frac{|X_{ij}|}{\alpha})$ is the
nonconvex log-sum penalty function \cite{candes2008enhancing}.
Next, we change the above problem into the following form:
\begin{align} \label{eq:82}
 \min & \quad \frac{1}{n}\sum_{i=1}^n \bar{f}_i(X) + \lambda_1\kappa_0\|Y_1\|_1 + \lambda_2\|Y_2\|_* \\
 \mbox{s.t.} & \quad AX + B_1Y_1 + B_2Y_2 = 0, \nonumber
\end{align}
where $\bar{f}_i(X) = f_i(X) + \lambda_1\big( \sum_{ij}\kappa(|X_{ij}|)-\kappa_0\|X\|_1 \big)$, and $\kappa_0=\kappa'(0)$.
Here $A = [I_c;I_c] \in \mathbb{R}^{2c\times c}$, $B_1=[-I_c;0] \in \mathbb{R}^{2c\times c}$ and $B_2=[0;-I]$.
By the Proposition 2.3 in \citet{yao2016efficient},  $\bar{f}_i(X)$ is nonconvex and smooth.
In the experiment, we fix the parameters $\lambda_1 = 10^{-5}$ and $\lambda_2 = 10^{-4}$,
and use the same initial solution $x_0$ from the standard normal distribution for all algorithms.

Figure \ref{fig:2} shows that objective values of our SPIDER-ADMM faster decrease than those of the other methods,
as CPU time consumed increases.
Similarly, these results also demonstrate that
our method has a relatively faster convergence rate than other methods.

\section{Conclusion}
In the paper, we proposed a faster stochastic ADMM method (\emph{i.e.,} SPIDER-ADMM)
for nonconvex optimization. Moreover, we proved that the SPIDER-ADMM achieves a lower IFO complexity of $\mathcal{O}(n+n^{1/2}\epsilon^{-1})$.
Further, we extended the SPIDER-ADMM to the online setting, and
proposed a faster online ADMM method (\emph{i.e.,} online SPIDER-ADMM).
As one of major contribution of this paper, we provided a new theoretical analysis framework for the nonconvex stochastic ADMM methods
with providing an optimal IFO complexity.
Based on our new theoretical analysis framework, we studied the unsolved optimal IFO complexity of
the existing non-convex SVRG-ADMM and SAGA-ADMM methods, and also proved that they reach an IFO complexity of $\mathcal{O}(n+n^{2/3}\epsilon^{-1})$.
In the future work, we can apply the stage-wise stochastic momentum technique \cite{chen2018universal} to accelerate our algorithms.

\section*{Acknowledgments}
We thank the anonymous reviewers for their helpful comments.
F.H. and H.H. were partially supported by U.S. NSF IIS 1836945,  IIS 1836938,  DBI 1836866,  IIS 1845666,  IIS 1852606,  IIS 1838627, IIS 1837956.
S.C. was partially supported by the NSFC under Grant No. 61806093 and No. 61682281,
and the Key Program of NSFC under Grant No. 61732006.

%
%

\nocite{langley00}

\bibliography{reference}
\bibliographystyle{icml2019}

%
%
%

\begin{onecolumn}

\begin{appendices}

\section{ Supplementary Materials }

In this section, we at detail provide the proof of the above lemmas and theorems.
Throughout the paper, let $n_k = [k/q]$ such that $(n_k-1)q \leq k \leq n_kq-1$.
First, we introduce a useful lemma from \citet{fang2018spider}.

\begin{lemma}
 \cite{fang2018spider} Under Assumption 1, the SPIDER generates stochastic gradient $v_k$ satisfies for all $(n_k-1)q+1\leq k \leq n_kq-1$,
 \begin{align} \label{eq:A10}
  \mathbb{E}\|v_k - \nabla f(x_k)\|^2 \leq \frac{L^2}{|S_2|}\mathbb{E}\|x_k-x_{k-1}\|^2 + \mathbb{E}\|v_{k-1}-\nabla f(x_{k-1})\|^2.
 \end{align}
\end{lemma}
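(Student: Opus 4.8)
The plan is to prove this one-step recursion by isolating the zero-mean ``fresh noise'' that the mini-batch introduces at step $k$ and exploiting the independence of the samples drawn into $\mathcal{I}_k$. Write $e_k := v_k - \nabla f(x_k)$ for the estimation error. From the update $v_k = \nabla f_{\mathcal{I}_k}(x_k) - \nabla f_{\mathcal{I}_k}(x_{k-1}) + v_{k-1}$ one obtains the algebraic identity
\begin{align}
 e_k = e_{k-1} + \big(\nabla f_{\mathcal{I}_k}(x_k) - \nabla f_{\mathcal{I}_k}(x_{k-1})\big) - \big(\nabla f(x_k) - \nabla f(x_{k-1})\big), \nonumber
\end{align}
and I denote the last two grouped terms by $\xi_k$, so that $e_k = e_{k-1} + \xi_k$. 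First I would record that, conditionally on the $\sigma$-algebra $\mathcal{F}_{k-1}$ generated by the iterates up to and including $x_k$ (which fixes $x_{k-1}$, $x_k$ and $v_{k-1}$, hence $e_{k-1}$), each $\nabla f_i$ drawn into $\mathcal{I}_k$ is an unbiased estimator of $\nabla f$, so $\mathbb{E}[\xi_k\mid \mathcal{F}_{k-1}] = 0$.

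Next I would expand $\|e_k\|^2 = \|e_{k-1}\|^2 + 2\langle e_{k-1},\xi_k\rangle + \|\xi_k\|^2$ and take conditional expectation: the cross term vanishes because $e_{k-1}$ is $\mathcal{F}_{k-1}$-measurable and $\mathbb{E}[\xi_k\mid\mathcal{F}_{k-1}]=0$, leaving
\begin{align}
 \mathbb{E}\big[\|e_k\|^2 \mid \mathcal{F}_{k-1}\big] = \|e_{k-1}\|^2 + \mathbb{E}\big[\|\xi_k\|^2 \mid \mathcal{F}_{k-1}\big]. \nonumber
\end{align}
The remaining work is to bound $\mathbb{E}[\|\xi_k\|^2\mid\mathcal{F}_{k-1}]$. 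Writing $\mathcal{I}_k=\{i_1,\dots,i_b\}$ with $b=|S_2|$ sampled independently with replacement (as specified in Algorithm~\ref{alg:1}), $\xi_k$ is an average of $b$ i.i.d., conditionally zero-mean summands, so the cross terms in its squared norm also vanish and
\begin{align}
 \mathbb{E}\big[\|\xi_k\|^2\mid\mathcal{F}_{k-1}\big] &= \frac{1}{b}\,\mathbb{E}_i\big\|\big(\nabla f_i(x_k)-\nabla f_i(x_{k-1})\big) - \big(\nabla f(x_k)-\nabla f(x_{k-1})\big)\big\|^2 \nonumber \\
 &\leq \frac{1}{b}\,\mathbb{E}_i\big\|\nabla f_i(x_k)-\nabla f_i(x_{k-1})\big\|^2, \nonumber
\end{align}
where the inequality is the elementary fact $\mathbb{E}\|X-\mathbb{E}X\|^2\le\mathbb{E}\|X\|^2$. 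Applying the $L$-smoothness of each $f_i$ (Assumption~1) gives $\mathbb{E}_i\|\nabla f_i(x_k)-\nabla f_i(x_{k-1})\|^2 \le L^2\|x_k-x_{k-1}\|^2$; combining and taking total expectation yields the claimed bound with constant $L^2/|S_2|$.

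There is no serious obstacle here, since the statement is quoted from \citet{fang2018spider}; the only points requiring care are (i) choosing the conditioning $\sigma$-algebra so that $e_{k-1}$ and $x_k-x_{k-1}$ are measurable while the batch $\mathcal{I}_k$ remains fresh (this is what kills the cross term and turns the relation into a genuine recursion rather than an expansion that re-introduces earlier randomness), and (ii) using that the sampling is \emph{with replacement}, which makes the $b$ summands independent and produces the clean $1/b$ factor instead of a finite-population correction term.
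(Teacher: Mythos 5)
Your proof is correct. The paper does not prove this lemma at all---it is quoted verbatim from \citet{fang2018spider} and used as an imported fact---and your argument (the error decomposition $e_k = e_{k-1} + \xi_k$, the vanishing cross term under conditioning on $\mathcal{F}_{k-1}$, the $1/b$ variance reduction from with-replacement sampling, and $\mathbb{E}\|X-\mathbb{E}X\|^2 \le \mathbb{E}\|X\|^2$ followed by $L$-smoothness of each $f_i$) is precisely the standard martingale-variance argument given in that reference. The two points you flag as requiring care, the choice of conditioning $\sigma$-algebra and the independence of the batch samples, are indeed the only delicate steps, and you handle both correctly.
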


From Lemma 1, telescoping \eqref{eq:A10} over $k$ from $(n_k-1)q+1$ to $k$, we have
\begin{align}
 \mathbb{E}\|v_k - \nabla f(x_k)\|^2 \leq \sum_{i=(n_k-1)q}^{k-1}\frac{L^2}{|S_2|}\mathbb{E}\|x_{i+1}-x_i\|^2 + \mathbb{E}\|v_{(n_k-1)q}-\nabla f(x_{(n_k-1)q})\|^2.
\end{align}

In Algorithm \ref{alg:3}, due to $v_{(n_k-1)q}=\nabla f(x_{(n_k-1)q})$ and $|S_2|=b$, we have
\begin{align} \label{eq:A12}
 \mathbb{E}\|v_k - \nabla f(x_k)\|^2 \leq \sum_{i=(n_k-1)q}^{k-1}\frac{L^2}{b}\mathbb{E}\|x_{i+1}-x_i\|^2.
\end{align}

In Algorithm \ref{alg:4}, by Assumption 2 and $|S_2|=b_2$, we have
\begin{align} \label{eq:A13}
 \mathbb{E}\|v_k - \nabla f(x_k)\|^2 \leq \sum_{i=(n_k-1)q}^{k-1}\frac{L^2}{b_2}\mathbb{E}\|x_{i+1}-x_i\|^2 + \frac{\delta^2}{b_1}.
\end{align}

\textbf{Notations:} To make the paper easier to follow, we give
the following notations:
\begin{itemize}
\item $\|\cdot\|$ denotes the vector $\ell_2$ norm and the matrix spectral norm, respectively.
\item $\|x\|_G=\sqrt{x^TGx}$, where $G$ is a positive definite matrix.
\item $\sigma^A_{\min}$ and $\sigma^A_{\max}$ denotes the minimum and maximum eigenvalues of $A^TA$, respectively;
       the conditional number $\kappa_A = \frac{\sigma_{\max}^A}{\sigma_{\min}^A}$.
\item $\sigma^{B_j}_{\max}$ denotes the maximum eigenvalues of $B_j^TB_j$ for all $j\in[k]$, and $\sigma^{B}_{\max} = \max_{j=1}^k \sigma^{B_j}_{\max}$.
\item $\sigma_{\min}(G)$ and $\sigma_{\max}(G)$ denotes the minimum and maximum eigenvalues of matrix $G$, respectively;
      the conditional number $\kappa_G = \frac{\sigma_{\max}(G)}{\sigma_{\min}(G)}$.
\item $\eta$ denotes the step size of updating variable $x$.
\item $L$ denotes the Lipschitz constant of $\nabla f(x)$.
\item $b$ denotes the mini-batch size of stochastic gradient.
\item In both SPIDER-ADMM and online SPIDER-ADMM, $K$ denotes the total number of iteration. In both SVRG-ADMM and SAGA-ADMM,
$T$, $M$ and $S$ are the total number of iterations, the number of iterations in the inner loop,
and the number of iterations in the outer loop, respectively.
\item In SVRG-ADMM algorithm, $y_j^{s,t}$ denotes output of the variable $y_j$ in $t$-th inner loop and $s$-th outer loop.
\end{itemize}

\subsection{ Convergence Analysis of the SPIDER-ADMM }
\label{Appendix:A1}
In this subsection, we conduct convergence analysis of the SPIDER-ADMM. We begin with giving some useful lemmas.

\begin{lemma} \label{lem:A3}
Under Assumption 1 and given the sequence $\{x_k,y_{[m]}^k,z_k\}_{k=1}^K$ from Algorithm \ref{alg:3}, it holds that
\begin{align}
 \mathbb{E}\|z_{k+1} - z_k\|^2 \leq & \frac{18L^2}{\sigma^A_{\min}b} \sum_{i=(n_k-1)q}^{k-1}\mathbb{E}\|x_{i+1}-x_i\|^2
 + (\frac{9L^2}{\sigma^A_{\min}}+\frac{3\sigma^2_{\max}(G)}{\sigma^A_{\min}\eta^2})\|x_k-x_{k-1}\|^2 \nonumber \\
 & + \frac{3\sigma^2_{\max}(G)}{\sigma^A_{\min}\eta^2}\|x_{k+1}-x_{k}\|^2.
\end{align}
\end{lemma}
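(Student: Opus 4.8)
The plan is to bound $\mathbb{E}\|z_{k+1}-z_k\|^2$ by exploiting the optimality condition in the $x$-update together with the dual update rule. First I would write down the first-order optimality condition for the subproblem that defines $x_{k+1}$ (step 10 of the algorithm): since $x_{k+1}$ minimizes $\hat{\mathcal{L}}_\rho(x,y^{k+1}_{[m]},z_k,v_k)$, which is a smooth strongly-ish quadratic-plus-linear function of $x$, we get
\begin{align}
v_k + \frac{1}{\eta}G(x_{k+1}-x_k) - A^Tz_k + \rho A^T\big(Ax_{k+1}+\textstyle\sum_j B_jy^{k+1}_j-c\big) = 0. \nonumber
\end{align}
Using the dual update $z_{k+1} = z_k - \rho(Ax_{k+1}+\sum_j B_jy^{k+1}_j-c)$, the last two terms collapse to $-A^Tz_{k+1}$, so the identity becomes $A^Tz_{k+1} = v_k + \frac{1}{\eta}G(x_{k+1}-x_k)$. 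This is the key algebraic step: it expresses $A^Tz_{k+1}$ purely in terms of the gradient estimate and the consecutive $x$-difference.

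Next I would subtract the same identity at index $k$ (i.e. $A^Tz_k = v_{k-1} + \frac{1}{\eta}G(x_k-x_{k-1})$) to obtain $A^T(z_{k+1}-z_k) = (v_k - v_{k-1}) + \frac{1}{\eta}G\big((x_{k+1}-x_k)-(x_k-x_{k-1})\big)$. Since $A$ is full column rank, $\sigma^A_{\min}>0$ and $\|z_{k+1}-z_k\|^2 \le \frac{1}{\sigma^A_{\min}}\|A^T(z_{k+1}-z_k)\|^2$. Then I would apply the inequality $\|a+b+c\|^2 \le 3\|a\|^2+3\|b\|^2+3\|c\|^2$, writing $v_k-v_{k-1} = (v_k-\nabla f(x_k)) - (v_{k-1}-\nabla f(x_{k-1})) + (\nabla f(x_k)-\nabla f(x_{k-1}))$. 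Wait — more cleanly, I would split as $v_k - v_{k-1} = \big(v_k - \nabla f(x_k)\big) + \big(\nabla f(x_k)-\nabla f(x_{k-1})\big) - \big(v_{k-1}-\nabla f(x_{k-1})\big)$, bound the middle term by $L\|x_k-x_{k-1}\|$ via Assumption 1, and bound the two variance terms using the SPIDER estimate \eqref{eq:A12}, namely $\mathbb{E}\|v_k-\nabla f(x_k)\|^2 \le \frac{L^2}{b}\sum_{i=(n_k-1)q}^{k-1}\mathbb{E}\|x_{i+1}-x_i\|^2$ (and similarly at $k-1$). The $G$-difference term contributes $\frac{\sigma^2_{\max}(G)}{\eta^2}$ factors on $\|x_{k+1}-x_k\|^2$ and $\|x_k-x_{k-1}\|^2$ after another application of $\|a-b\|^2\le 2\|a\|^2+2\|b\|^2$. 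Collecting the coefficients should reproduce exactly the stated bound: the $\frac{18L^2}{\sigma^A_{\min}b}$ sum term from the two variance terms, the $\frac{9L^2}{\sigma^A_{\min}}$ piece from the Lipschitz term combined with the variance summand at $k-1$ being reabsorbed, plus the $\frac{3\sigma^2_{\max}(G)}{\sigma^A_{\min}\eta^2}$ coefficients on the two consecutive differences.

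I expect the main obstacle to be bookkeeping the constants carefully so that the variance bounds at indices $k$ and $k-1$, which involve sums over (possibly different) epoch windows $[(n_k-1)q,k-1]$, combine into a single clean sum over $[(n_k-1)q,k-1]$ with the stated constant; one has to use that $\|x_k-x_{k-1}\|^2$ itself can be folded into either the explicit difference term or absorbed, and that when $k$ is the first iterate of an epoch the variance term vanishes since $v_{(n_k-1)q}=\nabla f(x_{(n_k-1)q})$. The splitting $3$-way versus $2$-way Cauchy–Schwarz has to be chosen to land exactly on the coefficients $9$, $3$, $18$; I would do a $2$-way split first ($A^T\Delta z = (v_k-v_{k-1}) + \frac{1}{\eta}G\Delta^2 x$ giving factor $2$... ) — actually to hit these constants I'd split $\|A^T(z_{k+1}-z_k)\|^2$ into three groups: $\|v_k-\nabla f(x_k)\|^2$-related, $\|\nabla f(x_k)-\nabla f(x_{k-1})\| + \|v_{k-1}-\nabla f(x_{k-1})\|$-related (these go together as the "$x_k-x_{k-1}$-scale" terms), and the $G$-difference term, with appropriate factors of $3$. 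The rest is Assumption 1, \eqref{eq:A12}, and the full-column-rank bound via $\sigma^A_{\min}$, all of which are already available.
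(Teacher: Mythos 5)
Your proposal follows essentially the same route as the paper's proof: derive $A^Tz_{k+1}=v_k+\frac{G}{\eta}(x_{k+1}-x_k)$ from the $x$-optimality condition and the dual update, difference it at consecutive indices, apply a three-way Jensen bound together with $\sigma^A_{\min}$, and control $\|v_k-v_{k-1}\|^2$ by the three-way split through $\nabla f(x_k)$ and $\nabla f(x_{k-1})$ using Assumption 1 and the SPIDER variance bound \eqref{eq:A12}, which yields exactly the coefficients $18$, $9$, and $3$. The constant bookkeeping you were unsure about resolves precisely as you guessed (factor $3$ at both levels of splitting), so the argument is correct and matches the paper's.
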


\begin{proof}
Using the optimal condition of the step 10 in Algorithm \ref{alg:3},
we have
 \begin{align}
   v_k + \frac{G}{\eta}(x_{k+1}-x_k) - A^Tz_k + \rho A^T(Ax_{k+1}+\sum_{j=1}^mB_jy_j^{k+1}-c) = 0.
 \end{align}
Then using the step 11 of Algorithm \ref{alg:3}, we have
 \begin{align} \label{eq:A16}
  A^Tz_{k+1} = v_k + \frac{G}{\eta}(x_{k+1}-x_k).
 \end{align}
It follows that
 \begin{align} \label{eq:A17}
  A^T(z_{k+1}-z_k) = & v_k - v_{k-1} + \frac{G}{\eta}(x_{k+1}-x_k) - \frac{G}{\eta}(x_{k}-x_{k-1}).
 \end{align}
By \eqref{eq:A17}, we have
 \begin{align} \label{eq:A18}
 \mathbb{E}\|z_{k+1}-z_k\|^2 \leq \frac{1}{\sigma^A_{\min}}\big[3 \mathbb{E}\|v_k - v_{k-1}\|^2 +\frac{3\sigma^2_{\max}(G)}{\eta^2}\mathbb{E}\|x_{k+1}-x_k\|^2
  + \frac{3\sigma^2_{\max}(G)}{\eta^2}\mathbb{E}\|x_{k}-x_{k-1}\|^2 \big],
 \end{align}
where the inequality holds by the Jensen's inequality yielding $\|\frac{1}{n}\sum_{i=1}^n z_i\|^2 \leq \frac{1}{n}\sum_{i=1}^n \|z_i\|^2$.

Next, considering the upper bound of $\|v_k - v_{k-1}\|^2$, we have
 \begin{align} \label{eq:A19}
  \mathbb{E}\|v_k - v_{k-1}\|^2 & = \mathbb{E}\|v_k - \nabla f(x_k) + \nabla f(x_k) -  \nabla f(x_{k-1}) + \nabla f(x_{k-1}) - v_{k-1}\|^2 \nonumber \\
  & \leq 3\mathbb{E}\|v_k - \nabla f(x_k)\|^2 + 3\mathbb{E}\|\nabla f(x_k) -  \nabla f(x_{k-1})\|^2 + 3\mathbb{E}\|\nabla f(x_{k-1}) - v_{k-1}\|^2 \nonumber \\
  & \leq \frac{3L^2}{b}\sum_{i=(n_k-1)q}^{k-1}\mathbb{E}\|x_{i+1}-x_i\|^2 + 3L^2\mathbb{E}\|x_{k-1} - x_k\|^2 + \frac{3L^2}{b}\sum_{i=(n_k-1)q}^{k-2}\mathbb{E}\|x_{i+1}-x_i\|^2 \nonumber \\
  & \leq  \frac{6L^2}{b}\sum_{i=(n_k-1)q}^{k-1}\mathbb{E}\|x_{i+1}-x_i\|^2 + 3L^2\mathbb{E}\|x_{k-1} - x_k\|^2,
 \end{align}
where the second inequality holds by Assumption 1 and the inequality \eqref{eq:A12}.

Finally, combining the inequalities \eqref{eq:A18} and \eqref{eq:A19}, we obtain the above result.

\end{proof}

\begin{lemma} \label{lem:A4}
 Suppose the sequence $\{x_k,y_{[m]}^k,z_k\}_{k=1}^K$ is generated from Algorithm \ref{alg:3},
 and define a \emph{Lyapunov} function $R_k$ as follows:
 \begin{align}
 R_k = \mathcal{L}_{\rho} (x_k,y_{[m]}^k,z_k) + (\frac{9L^2}{\sigma^A_{\min}\rho}+\frac{3\kappa_A\sigma^2_{\max}(G)}{\sigma^A_{\min}\eta^2\rho})\|x_k-x_{k-1}\|^2 + \frac{2\kappa_AL^2}{\sigma^A_{\min}\rho b} \sum_{i=(n_k-1)q}^{k-1}\mathbb{E}\|x_{i+1}-x_i\|^2.
\end{align}
Let $b=q$, $\eta=\frac{2\alpha\sigma_{\min}(G)}{3L} \ (0<\alpha \leq 1)$ and $\rho = \frac{\sqrt{170\kappa_A}\kappa_GL}{\sigma^A_{\min}\alpha}$,
then we have
\begin{align}
 \frac{1}{K}\sum_{i=0}^{K-1} (\|x_{i+1} - x_{i}\|^2 + \sum_{j=1}^m \|y_j^i-y_j^{i+1}\|^2) \leq \frac{\mathbb{E} [R_{0}]-R^*}{K\gamma},
\end{align}
where $\gamma = \min(\chi,\sigma_{\min}^H)$ with $\chi\geq \frac{\sqrt{170\kappa_A}\kappa_GL}{4\alpha}$ and $R^*$ is a lower bound of the function $R_k$.
\end{lemma}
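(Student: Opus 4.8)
The plan is to derive a dissipation inequality for the Lyapunov function $R_k$ and then telescope it. I would track how the augmented Lagrangian $\mathcal{L}_{\rho}(x_k,y_{[m]}^k,z_k)$ changes across the three block updates of one iteration, show that the two correction terms in $R_k$ are engineered precisely to cancel the parts of that change that are not already of the right sign, and conclude that $\sum_{k=0}^{K-1}(R_{k+1}-R_k)\le-\gamma\sum_{k=0}^{K-1}\big(\|x_{k+1}-x_k\|^2+\sum_{j=1}^m\|y_j^k-y_j^{k+1}\|^2\big)$. Since $R_K\ge R^*$ this rearranges immediately to the stated bound.

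For the \emph{$y$-update}: each step-9 subproblem minimizes, as a function of $y_j$, the sum of the convex $g_j$, the convex penalty $\tfrac{\rho}{2}\|B_jy_j+\tilde{c}\|^2$, and the proximal term $\tfrac12\|y_j-y_j^k\|_{H_j}^2$, hence is strongly convex with modulus at least $\sigma_{\min}(H_j)$; using minimality of $y_j^{k+1}$ together with the extra $\tfrac12\|y_j^{k+1}-y_j^k\|_{H_j}^2$ carried on the value side and chaining the resulting Gauss--Seidel inequalities over $j=1,\dots,m$ yields $\mathcal{L}_{\rho}(x_k,y_{[m]}^{k+1},z_k)\le\mathcal{L}_{\rho}(x_k,y_{[m]}^{k},z_k)-\sigma_{\min}^H\sum_{j=1}^m\|y_j^k-y_j^{k+1}\|^2$. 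For the \emph{$x$-update}: $\hat{\mathcal L}_{\rho}(\cdot,y_{[m]}^{k+1},z_k,v_k)$ equals $\mathcal L_\rho(\cdot,y_{[m]}^{k+1},z_k)$ at $x=x_k$ and is strongly convex in $x$ with modulus $\sigma_{\min}(G)/\eta+\rho\sigma^A_{\min}$, so minimality of $x_{k+1}$ bounds $\hat{\mathcal L}_{\rho}(x_{k+1},\cdot)$, and $L$-smoothness of $f$ (Assumption~1) converts this into a bound on $\mathcal L_\rho(x_{k+1},y_{[m]}^{k+1},z_k)$ modulo the error term $(\nabla f(x_k)-v_k)^T(x_{k+1}-x_k)$; bounding that term by Young's inequality and by the SPIDER estimate \eqref{eq:A12} produces a net $-c_x\|x_{k+1}-x_k\|^2$ (with $c_x=\sigma_{\min}(G)/\eta+\rho\sigma^A_{\min}/2-L$) plus $\tfrac{L}{2b}\sum_{i=(n_k-1)q}^{k-1}\mathbb{E}\|x_{i+1}-x_i\|^2$. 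For the \emph{$z$-update}: the dual step gives the identity $\mathcal L_\rho(x_{k+1},y_{[m]}^{k+1},z_{k+1})=\mathcal L_\rho(x_{k+1},y_{[m]}^{k+1},z_k)+\tfrac1\rho\|z_{k+1}-z_k\|^2$, and Lemma~\ref{lem:A3} controls $\tfrac1\rho\mathbb{E}\|z_{k+1}-z_k\|^2$ by the same current-epoch running sum together with multiples of $\|x_k-x_{k-1}\|^2$ and $\|x_{k+1}-x_k\|^2$.

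Adding the three estimates, the $\|x_k-x_{k-1}\|^2$ term produced by Lemma~\ref{lem:A3} is exactly the one absorbed by the $\|x_k-x_{k-1}\|^2$ correction in $R_k$ (whose coefficient, inflated by $\kappa_A\ge1$, dominates it), and the accumulated SPIDER and dual running sums are absorbed by the third term of $R_k$, which is reset at every anchor $k$ with $\mathrm{mod}(k,q)=0$, precisely where $v_k=\nabla f(x_k)$ holds exactly. What remains is $-\chi\|x_{k+1}-x_k\|^2-\sigma_{\min}^H\sum_j\|y_j^k-y_j^{k+1}\|^2$ per iteration, where $\chi$ is $c_x$ minus the $\|x_{k+1}-x_k\|^2$ coefficients arising from $\tfrac1\rho\|z_{k+1}-z_k\|^2$, from the per-step growth of the running-sum term, and from the per-epoch accumulation of the variance sums (each $\|x_{i+1}-x_i\|^2$ being recounted at most $q$ times, which is why $b=q$ is imposed). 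Plugging in $\eta=\tfrac{2\alpha\sigma_{\min}(G)}{3L}$ (so $\sigma_{\min}(G)/\eta=3L/2\alpha$) and $\rho=\tfrac{\sqrt{170\kappa_A}\kappa_GL}{\sigma^A_{\min}\alpha}$ makes the leading term $\rho\sigma^A_{\min}/2=\sqrt{170\kappa_A}\kappa_GL/2\alpha$ of $c_x$ outweigh all the negative contributions and leaves $\chi\ge\tfrac{\sqrt{170\kappa_A}\kappa_GL}{4\alpha}$; with $\gamma=\min(\chi,\sigma_{\min}^H)$ this gives the claimed dissipation. Telescoping and using $R_K\ge\mathcal L_\rho(x_K,y_{[m]}^K,z_K)\ge R^*$ — the latter being the standard lower bound on the augmented Lagrangian along the trajectory, obtained from Assumptions~3 and 4 and the optimality relation \eqref{eq:A16}, $A^Tz_{k+1}=v_k+\tfrac{G}{\eta}(x_{k+1}-x_k)$ — finishes the proof.

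The delicate point is exactly this bookkeeping around the running-sum term and the length-$q$ epochs: both Lemma~\ref{lem:A3} and the SPIDER estimate \eqref{eq:A12} at step $k$ depend on \emph{all} of $\|x_{i+1}-x_i\|^2$ for $i$ between the last anchor $(n_k-1)q$ and $k-1$, so these contributions do not cancel step by step; the running-sum term in $R_k$, with its weight $\propto 1/b=1/q$ and its reset at each full-gradient anchor (where no variance error is present), is what carries that memory, and only after summing over a whole epoch does everything collapse to $O(1)$ multiples of $\|x_{i+1}-x_i\|^2$. Fitting those multiplicities inside the $-c_x\|x_{k+1}-x_k\|^2$ budget is what pins down the numerical constants, i.e.\ $\sqrt{170\kappa_A}$ in $\rho$ and $\tfrac14$ in the lower bound on $\chi$; the rest is routine algebra.
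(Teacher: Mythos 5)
Your proposal is correct and follows essentially the same route as the paper's proof: the same three-block dissipation (Gauss--Seidel descent for the $y_j$'s, smoothness plus Young's inequality plus the SPIDER variance bound \eqref{eq:A12} for $x$, the dual identity with Lemma~\ref{lem:A3} for $z$), the same epoch-level bookkeeping in which each $\|x_{i+1}-x_i\|^2$ is recounted at most $q$ times so that $b=q$ closes the budget, and the same lower-boundedness argument via $A^Tz_{k+1}=v_k+\tfrac{G}{\eta}(x_{k+1}-x_k)$. The one small imprecision is the chain $R_K\ge\mathcal L_\rho(x_K,y_{[m]}^K,z_K)\ge R^*$: the dual-substitution argument bounds $\mathcal L_\rho$ below only up to residual negative multiples of $\|x_{k+1}-x_k\|^2$ and of the epoch running sum (and uses the bounded-gradient Assumption~2), and it is precisely the correction terms of $R_K$ that absorb these, so the lower bound holds for $R_K$ directly rather than for $\mathcal L_\rho$ alone.
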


\begin{proof}

By the optimal condition of step 9 in Algorithm \ref{alg:3},
we have, for $j\in [m]$
\begin{align}
0 & =(y_j^k-y_j^{k+1})^T\big(\partial g_j(y_j^{k+1}) - B^Tz_k + \rho B^T(Ax_k + \sum_{i=1}^jB_iy_i^{k+1} + \sum_{i=j+1}^mB_iy_i^{k}-c) + H_j(y_j^{k+1}-y_j^k)\big) \nonumber \\
& \leq g_j(y_j^k)- g_j(y_j^{k+1}) - (z_k)^T(B_jy_j^k-B_jy_j^{k+1}) + \rho(By_j^k-By_j^{k+1})^T(Ax_k + \sum_{i=1}^jB_iy_i^{k+1} + \sum_{i=j+1}^mB_iy_i^{k}-c) - \|y_j^{k+1}-y_j^k\|^2_{H_j} \nonumber \\
& = g_j(y_j^k)- g_j(y_j^{k+1}) - (z_k)^T(Ax_k+\sum_{i=1}^{j-1}B_iy_i^{k+1} + \sum_{i=j}^mB_iy_i^{k}-c) + (z_k)^T(Ax_k+\sum_{i=1}^jB_iy_i^{k+1}+ \sum_{i=j+1}^mB_iy_i^{k}-c) \nonumber \\
& \quad  + \frac{\rho}{2}\|Ax_k +\sum_{i=1}^{j-1}B_iy_i^{k+1} + \sum_{i=j}^mB_iy_i^{k}-c\|^2 - \frac{\rho}{2}\|Ax_k+\sum_{i=1}^jB_iy_i^{k+1}+ \sum_{i=j+1}^mB_iy_i^{k}-c\|^2 -\frac{\rho}{2}\|B_jy_j^k-B_jy_j^{k+1}\|^2 \nonumber \\
& \quad - \|y_j^{k+1}-y_j^k\|^2_{H_j} \nonumber \\
& \leq \underbrace{ f(x_k) + g_j(y_j^k) - (z_k)^T(Ax_k+\sum_{i=1}^{j-1}B_iy_i^{k+1} + \sum_{i=j}^mB_iy_i^{k}-c) + \frac{\rho}{2}\|Ax_k +\sum_{i=1}^{j-1}B_iy_i^{k+1} + \sum_{i=j}^mB_iy_i^{k}-c\|^2}_{\mathcal{L}_{\rho} (x_k,y_{[j-1]}^{k+1},y_{[j:m]}^k,z_k)}  - \|y_j^{k+1}-y_j^k\|^2_{H_j}  \nonumber \\
& \quad -\underbrace{(f(x_k) + g_j(y_j^{k+1}) - (z_k)^T(Ax_k+\sum_{i=1}^jB_iy_i^{k+1}+ \sum_{i=j+1}^mB_iy_i^{k}-c) + \frac{\rho}{2}\|Ax_k+\sum_{i=1}^jB_iy_i^{k+1}+ \sum_{i=j+1}^mB_iy_i^{k}-c\|^2}_{\mathcal{L}_{\rho} (x_k,y_{[j]}^{k+1},y_{[j+1:m]}^k,z_k)} \nonumber \\
& \leq \mathcal{L}_{\rho} (x_k,y_{[j-1]}^{k+1},y_{[j:m]}^k,z_k) - \mathcal{L}_{\rho} (x_k,y_{[j]}^{k+1},y_{[j+1:m]}^k,z_k)
- \sigma_{\min}(H_j)\|y_j^k-y_j^{k+1}\|^2,
\end{align}
where the first inequality holds by the convexity of function $g_j(y)$,
and the second equality follows by applying the equality
$(a-b)^Tb = \frac{1}{2}(\|a\|^2-\|b\|^2-\|a-b\|^2)$ on the term $(By_j^k-By_j^{k+1})^T(Ax_k + \sum_{i=1}^jB_iy_i^{k+1} + \sum_{i=j+1}^mB_iy_i^{k}-c)$.
Thus, we have, for all $j\in[m]$
\begin{align} \label{eq:A4-1}
 \mathcal{L}_{\rho} (x_k,y_{[j-1]}^{k+1},y_{[j:m]}^k,z_k) \leq \mathcal{L}_{\rho} (x_k,y_{[j]}^{k+1},y_{[j+1:m]}^k,z_k)
 - \sigma_{\min}(H_j)\|y_j^k-y_j^{k+1}\|^2.
\end{align}
Telescoping inequality \eqref{eq:A4-1} over $j$ from $1$ to $m$, we obtain
\begin{align} \label{eq:A23}
 \mathcal{L}_{\rho} (x_k,y^{k+1}_{[m]},z_k) \leq \mathcal{L}_{\rho} (x_k,y^k_{[m]},z_k)
 - \sigma_{\min}^H\sum_{j=1}^m \|y_j^k-y_j^{k+1}\|^2.
\end{align}
where $\sigma_{\min}^H=\min_{j\in[m]}\sigma_{\min}(H_j)$.

By Assumption 1, we have
\begin{align} \label{eq:A24}
0 \leq f(x_k) - f(x_{k+1}) + \nabla f(x_k)^T(x_{k+1}-x_k) + \frac{L}{2}\|x_{k+1}-x_k\|^2.
\end{align}
Using the optimal condition of step 10 in Algorithm \ref{alg:3},
we have
\begin{align} \label{eq:A25}
 0 = (x_k-x_{k+1})^T \big( v_k - A^Tz_k + \rho A^T(Ax_{k+1} + \sum_{j=1}^mB_jy_j^{k+1}-c) + \frac{G}{\eta}(x_{k+1}-x_k) \big).
\end{align}
Combining \eqref{eq:A24} and \eqref{eq:A25}, we have
\begin{align}
 0 & \leq f(x_k) - f(x_{k+1}) + \nabla f(x_k)^T(x_{k+1}-x_k) + \frac{L}{2}\|x_{k+1}-x_k\|^2 \nonumber \\
 & \quad + (x_k-x_{k+1})^T \big( v_k - A^Tz_k + \rho A^T(Ax_{k+1} + \sum_{j=1}^mB_jy_j^{k+1}-c) + \frac{G}{\eta}(x_{k+1}-x_k) \big)  \nonumber \\
 & = f(x_k) - f(x_{k+1}) + \frac{L}{2}\|x_k-x_{k+1}\|^2 - \frac{1}{\eta}\|x_k - x_{k+1}\|^2_G + (x_k-x_{k+1})^T(v_k-\nabla f(x_k)) \nonumber \\
 & \quad -(z_k)^T(Ax_k-Ax_{k+1}) + \rho(Ax_k - Ax_{k+1})^T(Ax_{k+1} + \sum_{j=1}^mB_jy_j^{k+1}-c) \nonumber \\
 & = f(x_k) - f(x_{k+1}) + \frac{L}{2}\|x_k-x_{k+1}\|^2 - \frac{1}{\eta}\|x_k - x_{k+1}\|^2_G
 + (x_k-x_{k+1})^T\big(v_k-\nabla f(x_k)\big) -(z_k)^T(Ax_k + \sum_{j=1}^mB_jy_j^{k+1}-c)\nonumber \\
 & \quad  + (z_k)^T(Ax_{k+1}+ \sum_{j=1}^mB_jy_j^{k+1}-c) + \frac{\rho}{2}\big(\|Ax_k + \sum_{j=1}^mB_jy_j^{k+1}-c\|^2
 - \|Ax_{k+1} + \sum_{j=1}^mB_jy_j^{k+1}-c\|^2 - \|Ax_k - Ax_{k+1}\|^2 \big) \nonumber \\
 & = \underbrace{f(x_k) -z_k^T(Ax_k + \sum_{j=1}^mB_jy_j^{k+1}-c) + \frac{\rho}{2}\|Ax_{k} + \sum_{j=1}^mB_jy_j^{k+1}-c\|^2}_{L_{\rho}(x_k,y_{[m]}^{k+1},z_k)} + \frac{L}{2}\|x_k-x_{k+1}\|^2
 + (x_k-x_{k+1})^T\big(v_k-\nabla f(x_k)\big)  \nonumber \\
 & \quad - \underbrace{\big(f(x_{k+1})-  z_k^T(Ax_{k+1}+ \sum_{j=1}^mB_jy_j^{k+1}-c) + \frac{\rho}{2} \|Ax_{k+1} + \sum_{j=1}^mB_jy_j^{k+1}-c\|^2\big)}_{L_{\rho}(x_{k+1},y^{k+1}_{[m]},z_{k})} - \frac{1}{\eta}\|x_k - x_{k+1}\|^2_G - \frac{\rho}{2}\|Ax_k - Ax_{k+1}\|^2 \nonumber \\
 & \leq \mathcal{L}_{\rho} (x_k,y_{[m]}^{k+1},z_k) -  \mathcal{L}_{\rho} (x_{k+1},y_{[m]}^{k+1},z_k)
 - (\frac{\sigma_{\min}(G)}{\eta}+\frac{\rho \sigma^A_{\min}}{2}-\frac{L}{2}) \|x_{k+1} - x_{k}\|^2 + (x_k-x_{k+1})^T(v_k-\nabla f(x_k)) \nonumber \\
 & \leq \mathcal{L}_{\rho} (x_k,y_{[m]}^{k+1},z_k) -  \mathcal{L}_{\rho} (x_{k+1},y_{[m]}^{k+1},z_k)
 - (\frac{\sigma_{\min}(G)}{\eta}+\frac{\rho \sigma^A_{\min}}{2}-L ) \|x_{k+1} - x_{k}\|^2 + \frac{1}{2L}\|v_k - \nabla f(x_k)\|^2 \nonumber \\
 & \leq \mathcal{L}_{\rho} (x_k,y_{[m]}^{k+1},z_k) -  \mathcal{L}_{\rho} (x_{k+1},y_{[m]}^{k+1},z_k)
 - (\frac{\sigma_{\min}(G)}{\eta}+\frac{\rho \sigma^A_{\min}}{2}-L ) \|x_{k+1} - x_{k}\|^2 + \frac{L}{2b}\sum_{i=(n_k-1)q}^{k-1}\mathbb{E}\|x_{i+1}-x_i\|^2, \nonumber
\end{align}
where the second equality follows by applying the equality
$(a-b)^Tb = \frac{1}{2}(\|a\|^2-\|b\|^2-\|a-b\|^2)$ over the
term $(Ax_k - Ax_{k+1})^T(Ax_{k+1}+\sum_{j=1}^mB_jy_j^{k+1}-c)$; the third inequality
follows by the inequality $a^Tb \leq \frac{1}{2L}\|a\|^2 + \frac{L}{2}\|b\|^2$, and the forth inequality holds by the inequality \eqref{eq:A12}.
It follows that
\begin{align} \label{eq:A26}
\mathcal{L}_{\rho} (x_{k+1},y_{[m]}^{k+1},z_k) \leq \mathcal{L}_{\rho} (x_k,y_{[m]}^{k+1},z_k)
 - (\frac{\sigma_{\min}(G)}{\eta}+\frac{\rho \sigma^A_{\min}}{2}-L ) \|x_{k+1} - x_{k}\|^2 + \frac{L}{2b}\sum_{i=(n_k-1)q}^{k-1}\mathbb{E}\|x_{i+1}-x_i\|^2.
\end{align}

Using the step 10 in Algorithm \ref{alg:3}, we have
\begin{align} \label{eq:A27}
\mathcal{L}_{\rho} (x_{k+1},y_{[m]}^{k+1},z_{k+1}) -\mathcal{L}_{\rho} (x_{k+1},y_{[m]}^{k+1},z_k)
& = \frac{1}{\rho}\|z_{k+1}-z_k\|^2 \nonumber \\
& \leq \frac{18L^2}{\sigma^A_{\min}b\rho} \sum_{i=(n_k-1)q}^{k-1}\mathbb{E}\|x_{i+1}-x_i\|^2
 + (\frac{9L^2}{\sigma^A_{\min}\rho}+\frac{3\sigma^2_{\max}(G)}{\sigma^A_{\min}\eta^2\rho})\|x_k-x_{k-1}\|^2 \nonumber \\
& + \frac{3\sigma^2_{\max}(G)}{\sigma^A_{\min}\eta^2\rho}\|x_{k+1}-x_{k}\|^2,
\end{align}
where the above inequality holds by Lemma \ref{lem:A3}.

Combining \eqref{eq:A23}, \eqref{eq:A26} and \eqref{eq:A27}, we have
\begin{align} \label{eq:A28}
\mathcal{L}_{\rho} (x_{k+1},y_{[m]}^{k+1},z_{k+1}) & \leq \mathcal{L}_{\rho} (x_k,y_{[m]}^k,z_k)
  - \sigma_{\min}^H\sum_{j=1}^m \|y_j^k-y_j^{k+1}\|^2- (\frac{\sigma_{\min}(G)}{\eta}+\frac{\rho \sigma^A_{\min}}{2}-L - \frac{3\sigma^2_{\max}(G)}{\sigma^A_{\min}\eta^2\rho}) \|x_{k+1} - x_{k}\|^2  \nonumber \\
& \quad + (\frac{L}{2b} + \frac{18L^2}{\sigma^A_{\min}b\rho})\sum_{i=(n_k-1)q}^{k-1}\mathbb{E}\|x_{i+1}-x_i\|^2
 + (\frac{9L^2}{\sigma^A_{\min}\rho}+\frac{3\sigma^2_{\max}(G)}{\sigma^A_{\min}\eta^2\rho})\|x_k-x_{k-1}\|^2.
\end{align}

Next, we define a \emph{Lyapunov} function $R_k$:
\begin{align}
 R_k = \mathcal{L}_{\rho} (x_k,y_{[m]}^k,z_k) + (\frac{9L^2}{\sigma^A_{\min}\rho}+\frac{3\kappa_A\sigma^2_{\max}(G)}{\sigma^A_{\min}\eta^2\rho})\|x_k-x_{k-1}\|^2 + \frac{2\kappa_AL^2}{\sigma^A_{\min}\rho b} \sum_{i=(n_k-1)q}^{k-1}\mathbb{E}\|x_{i+1}-x_i\|^2.
\end{align}
Since
$$\sum_{i=(n_k-1)q}^{k}\mathbb{E}\|x_{i+1}-x_i\|^2 = \sum_{i=(n_k-1)q}^{k-1}\mathbb{E}\|x_{i+1}-x_i\|^2 + \|x_{k+1}-x_k\|^2,$$
and $\kappa_A \geq 1$, the inequality \eqref{eq:A28} can be rewrite as follows:
\begin{align} \label{eq:A30}
R_{k+1} & \leq R_k
  - (\frac{\sigma_{\min}(G)}{\eta}+\frac{\rho \sigma^A_{\min}}{2}-L - \frac{6\kappa_A\sigma^2_{\max}(G)}{\sigma^A_{\min}\eta^2\rho}-\frac{9L^2}{\sigma^A_{\min}\rho}
  - \frac{2\kappa_AL^2}{\sigma^A_{\min}\rho b}) \|x_{k+1} - x_{k}\|^2  \nonumber \\
& \quad- \sigma_{\min}^H\sum_{j=1}^m \|y_j^k-y_j^{k+1}\|^2 + (\frac{L}{2b} + \frac{18L^2}{\sigma^A_{\min}b\rho})\sum_{i=(n_k-1)q}^{k-1}\mathbb{E}\|x_{i+1}-x_i\|^2.
\end{align}

Then telescoping equality \eqref{eq:A30} over $k$ from $(n_k-1)q$ to $k$ where $k \leq n_kq-1$ and
let $n_j=n_k$ for $(n_k-1)q \leq j \leq n_kq-1$, we have
\begin{align} \label{eq:A31}
\mathbb{E} [R_{k+1}] &\leq \mathbb{E} [R_{(n_k-1)q}]
  - (\frac{\sigma_{\min}(G)}{\eta}+\frac{\rho \sigma^A_{\min}}{2}-L - \frac{6\kappa_A\sigma^2_{\max}(G)}{\sigma^A_{\min}\eta^2\rho}-\frac{9L^2}{\sigma^A_{\min}\rho}
   -\frac{2\kappa_AL^2}{\sigma^A_{\min}\rho b} ) \sum_{j=(n_k-1)q}^k \|x_{j+1} - x_{j}\|^2  \nonumber \\
& - \sigma_{\min}^H\sum_{i=(n_k-1)q}^{k}\sum_{j=1}^m \|y_j^i-y_j^{i+1}\|^2 + (\frac{L}{2b} + \frac{18L^2}{\sigma^A_{\min}b\rho})\sum_{j=(n_k-1)q}^{k}\sum_{i=(n_k-1)q}^{k-1}\mathbb{E}\|x_{i+1}-x_i\|^2  \nonumber \\
& \leq \mathbb{E} [R_{(n_k-1)q}]
  - (\frac{\sigma_{\min}(G)}{\eta}+\frac{\rho \sigma^A_{\min}}{2}-L - \frac{6\kappa_A\sigma^2_{\max}(G)}{\sigma^A_{\min}\eta^2\rho}-\frac{9L^2}{\sigma^A_{\min}\rho}
  -\frac{2\kappa_AL^2}{\sigma^A_{\min}\rho b}) \sum_{i=(n_k-1)q}^k \|x_{i+1} - x_{i}\|^2  \nonumber \\
& - \sigma_{\min}^H\sum_{i=(n_k-1)q}^{k-1}\sum_{j=1}^m \|y_j^i-y_j^{i+1}\|^2+ (\frac{Lq}{2b} + \frac{18L^2q}{\sigma^A_{\min}b\rho})\sum_{i=(n_k-1)q}^{k}\mathbb{E}\|x_{i+1}-x_i\|^2  \nonumber \\
& = \mathbb{E} [R_{(n_k-1)q}] - (\frac{\sigma_{\min}(G)}{\eta}+\frac{\rho \sigma^A_{\min}}{2}-L - \frac{6\kappa_A\sigma^2_{\max}(G)}{\sigma^A_{\min}\eta^2\rho}-\frac{9L^2}{\sigma^A_{\min}\rho} - \frac{2\kappa_AL^2}{\sigma^A_{\min}\rho b}-\frac{Lq}{2b} - \frac{18L^2q}{\sigma^A_{\min}b\rho} ) \sum_{i=(n_k-1)q}^k \|x_{i+1} - x_{i}\|^2 \nonumber \\
& \quad - \sigma_{\min}^H\sum_{i=(n_k-1)q}^{k-1}\sum_{j=1}^m \|y_j^i-y_j^{i+1}\|^2,
\end{align}
where the second inequality holds by the fact that
\begin{align}
\sum_{j=(n_k-1)q}^{k}\sum_{i=(n_k-1)q}^{k-1}\mathbb{E}\|x_{i+1}-x_i\|^2 \leq \sum_{j=(n_k-1)q}^{k}\sum_{i=(n_k-1)q}^{k}\mathbb{E}\|x_{i+1}-x_i\|^2 \leq q\sum_{i=(n_k-1)q}^{k}\mathbb{E}\|x_{i+1}-x_i\|^2. \nonumber
\end{align}

Since $b=q$, we have
\begin{align}
 \chi & = \frac{\sigma_{\min}(G)}{\eta}+\frac{\rho \sigma^A_{\min}}{2}-L - \frac{6\sigma^2_{\max}(G)}{\sigma^A_{\min}\eta^2\rho}
 -\frac{9L^2}{\sigma^A_{\min}\rho} -  \frac{2\kappa_AL^2}{\sigma^A_{\min}\rho b} -\frac{Lq}{2b} - \frac{18L^2q}{\sigma^A_{\min}b\rho} \nonumber \\
 & = \underbrace{\frac{\sigma_{\min}(G)}{\eta} - \frac{3L}{2}}_{L_1} + \underbrace{\frac{\rho \sigma^A_{\min}}{2} - \frac{6\sigma^2_{\max}(G)}{\sigma^A_{\min}\eta^2\rho}
 -\frac{27L^2}{\sigma^A_{\min}\rho}-  \frac{2\kappa_AL^2}{\sigma^A_{\min}\rho b}}_{L_2}.
\end{align}
Given $0< \eta \leq \frac{2\sigma_{\min}(G)}{3L}$, we have $L_1\geq 0$. Further, let $\eta = \frac{2\alpha\sigma_{\min}(G)}{3L} \ (0<\alpha \leq 1)$ and
$\rho = \frac{\sqrt{170\kappa_A}\kappa_GL}{\sigma^A_{\min}\alpha}$,
we have
\begin{align}
L_2 & = \frac{\rho \sigma^A_{\min}}{2} - \frac{6\sigma^2_{\max}(G)}{\sigma^A_{\min}\eta^2\rho}
 - \frac{27L^2}{\sigma^A_{\min}\rho} - \frac{2\kappa_AL^2}{\sigma^A_{\min}\rho b}\nonumber \\
 & =  \frac{\rho \sigma^A_{\min}}{2} - \frac{27L^2\kappa^2_G}{2\sigma^A_{\min}\rho\alpha^2}
 -\frac{27L^2}{\sigma^A_{\min}\rho} - \frac{2\kappa_AL^2}{\sigma^A_{\min}\rho b}\nonumber \\
 & \geq \frac{\rho \sigma^A_{\min}}{2} - \frac{27L^2\kappa^2_G\kappa_A}{2\sigma^A_{\min}\rho\alpha^2}
 -\frac{27L^2\kappa^2_G\kappa_A}{\sigma^A_{\min}\rho\alpha^2} -  \frac{2\kappa^2_G\kappa_AL^2}{\sigma^A_{\min}\rho \alpha^2} \nonumber \\
 & \geq \frac{\rho \sigma^A_{\min}}{4} + \underbrace{\frac{\rho \sigma^A_{\min}}{4} - \frac{85L^2\kappa^2_G\kappa_A }{2\sigma^A_{\min}\rho\alpha^2}}_{\geq 0} \nonumber \\
 & \geq \frac{\sqrt{170\kappa_A}\kappa_GL}{4\alpha},
\end{align}
where the first inequality holds by $\kappa_G=\geq 1$ and
$b \geq 1 \geq \alpha^2$ and the third equality holds by $\rho = \frac{\sqrt{170\kappa_A}\kappa_GL}{\sigma^A_{\min}\alpha}$.
Thus, we obtain $\chi \geq \frac{\sqrt{170\kappa_A}\kappa_GL}{4\alpha}$.

Next, using \eqref{eq:A16}, we have
\begin{align}
 z_{k+1} = (A^T)^+(v_{k} + \frac{G}{\eta}(x_{k+1}-x_k)),
\end{align}
where $(A^T)^+$ is the pseudoinverse of $A^T$. Due to that $A$ is full row rank,
we have $(A^T)^+ = (AA^T)^{-1}A$.
It follows that $\sigma_{\max}((A^T)^+)^T(A^T)^+)\leq \frac{\sigma_{\max}^A}{(\sigma^A_{\min})^2}=\frac{\kappa_A}{\sigma^A_{\min}}$.

Then we have
\begin{align}
&\mathcal{L}_{\rho} (x_{k+1},y_{[m]}^{k+1},z_{k+1})
= f(x_{k+1}) + \sum_{j=1}^m g_j(y_j^{k+1}) - z_{k+1}^T(Ax_{k+1} + \sum_{j=1}^mB_jy_j^{k+1} -c) + \frac{\rho}{2}\|Ax_{k+1} + \sum_{j=1}^mB_jy_j^{k+1} -c\|^2 \nonumber \\
& = f(x_{k+1}) + \sum_{j=1}^m g_j(y_j^{k+1}) -  \langle(A^T)^+(v_k + \frac{G}{\eta}(x_{k+1}-x_k)), Ax_{k+1} + \sum_{j=1}^mB_jy_j^{k+1} -c\rangle + \frac{\rho}{2}\|Ax_{k+1} + \sum_{j=1}^mB_jy_j^{k+1} -c\|^2 \nonumber \\
& = f(x_{k+1}) + \sum_{j=1}^m g_j(y_j^{k+1}) - \langle(A^T)^+(v_{k} - \nabla f(x_{k}) + \nabla f(x_{k})+ \frac{G}{\eta}(x_{k+1}-x_k)), Ax_{k+1} + \sum_{j=1}^mB_jy_j^{k+1} -c\rangle  \nonumber \\
& \quad +  \frac{\rho}{2}\|Ax_{k+1} + \sum_{j=1}^mB_jy_j^{k+1} -c\|^2 \nonumber \\
& \geq f(x_{k+1}) + \sum_{j=1}^m g_j(y_j^{k+1}) - \frac{2\kappa_A}{\sigma^A_{\min}\rho}\|v_k - \nabla f(x_{k})\|^2 - \frac{2\kappa_A}{\sigma^A_{\min}\rho}\|\nabla f(x_{k})\|^2
- \frac{2\kappa_A\sigma^2_{\max}(G)}{\sigma^A_{\min}\eta^2\rho}\|x_{k+1}-x_k\|^2 \nonumber \\
& \quad + \frac{\rho}{8}\|Ax_{k+1} + \sum_{j=1}^mB_jy_j^{k+1} -c\|^2 \nonumber\\
& \geq f(x_{k+1}) + \sum_{j=1}^m g_j(y_j^{k+1})  - \frac{2\kappa_AL^2}{\sigma^A_{\min}b\rho} \sum_{i=(n_k-1)q}^{k-1}\mathbb{E}\|x_{i+1}-x_i\|^2 - \frac{2\kappa_A\delta^2}{\sigma^A_{\min}\rho}
- \frac{2\kappa_A\sigma^2_{\max}(G)}{\sigma^A_{\min}\eta^2\rho}\|x_{k+1}-x_k\|^2
\end{align}
where the first inequality is obtained by applying $ \langle a, b\rangle \leq \frac{1}{2\beta}\|a\|^2 + \frac{\beta}{2}\|b\|^2$ to the terms
$\langle(A^T)^+(v_{k} - \nabla f(x_{k})), Ax_{k+1} + By_{[m]}^{k+1} -c\rangle$, $\langle(A^T)^+v_{k}, Ax_{k+1} + By_{[m]}^{k+1} -c\rangle $ and
$\langle(A^T)^+\frac{G}{\eta}(x_{k+1}-x_k), Ax_{k+1} + By_{[m]}^{k+1} -c\rangle$ with $\beta = \frac{\rho}{4}$, respectively. The second inequality follows by the inequality \eqref{eq:A12}
and Assumption 3.
Therefore, we have, for $k=0,1,2,\cdots$
\begin{align}
 R_{k+1}\geq f^* + \sum_{j=1}^m g_j^* - \frac{2\kappa_A\delta^2}{\sigma^A_{\min}\rho}.
\end{align}
It follows that the function $R_k$ is bounded from below. Let $R^*$ denotes a low bound of function $R_k$.

Further, telescoping equality \eqref{eq:A31} over $k$ from $0$ to $K$,
we have
\begin{align}
\mathbb{E} [R_{K}] - \mathbb{E} [R_{0}] & = (\mathbb{E} [R_{q}] - \mathbb{E} [R_{0}]) + (\mathbb{E} [R_{2q}] - \mathbb{E} [R_{q}]) + \cdots + (\mathbb{E} [R_{K}] - \mathbb{E} [R_{(n_k-1)q}]) \nonumber \\
& \leq - \sum_{i=0}^{q-1} (\chi\|x_{i+1} - x_{i}\|^2 + \sigma_{\min}^H\sum_{j=1}^m \|y_j^k-y_j^{k+1}\|^2) - \sum_{i=q}^{2q-1} ( \chi\|x_{i+1} - x_{i}\|^2 + \sigma_{\min}^H\sum_{j=1}^m \|y_j^k-y_j^{k+1}\|^2) \nonumber \\
& \quad - \cdots - \sum_{i=(n_k-1)q}^{K-1} ( \chi \|x_{i+1} - x_{i}\|^2  + \sigma_{\min}^H\sum_{j=1}^m \|y_j^k-y_j^{k+1}\|^2) \nonumber \\
& = - \sum_{i=0}^{K-1} (\chi \|x_{i+1} - x_{i}\|^2+ \sigma_{\min}^H\sum_{j=1}^m \|y_j^k-y_j^{k+1}\|^2) .
\end{align}

Finally, we obtain
\begin{align} \label{eq:A38}
 \frac{1}{K}\sum_{i=0}^{K-1} (\|x_{i+1} - x_{i}\|^2 + \sum_{j=1}^m \|y_j^i-y_j^{i+1}\|^2) \leq \frac{\mathbb{E} [R_{0}]-R^*}{K\gamma},
\end{align}
where $\gamma = \min(\chi,\sigma_{\min}^H)$ with $\chi\geq \frac{\sqrt{170\kappa_A}\kappa_GL}{4\alpha}$.

\end{proof}

\begin{theorem}
 Suppose the sequence $\{x_k,y_{[m]}^k,z_k)_{k=1}^K$ is generated from Algorithm \ref{alg:3}, and let $b=q$, $\eta = \frac{2\alpha\sigma_{\min}(G)}{3L} \ (0<\alpha \leq 1)$,
 $\rho = \frac{\sqrt{170 \kappa_A} \kappa_G L}{\sigma^A_{\min}\alpha}$, and
  \begin{align}
   \nu_1 = m\big(\rho^2\sigma^B_{\max}\sigma^A_{\max} + \rho^2(\sigma^B_{\max})^2 + \sigma^2_{\max}(H)\big), \ \nu_2 = 3(L^2+ \frac{\sigma^2_{\max}(G)}{\eta^2}),
   \ \nu_3 = \frac{18 L^2 }{\sigma^A_{\min} \rho^2} + \frac{3\sigma^2_{\max}(G) }{\sigma^A_{\min}\eta^2\rho^2},
 \end{align}
 then we have
 \begin{align}
  \frac{1}{K}\sum_{k=1}^K \mathbb{E}\big[ \mbox{dist}(0,\partial L(x_k,y_{[m]}^k,z_k))^2\big] \leq \frac{\nu_{\max}}{K}\sum_{k=1}^{K-1} \theta_k \leq  \frac{3\nu_{\max}(R_{0}-R^*)}{K\gamma},
 \end{align}
 where $\gamma \geq \frac{\sqrt{170\kappa_A}\kappa_GL}{4\alpha}$, $\nu_{\max}=\max\{\nu_1,\nu_2,\nu_3\}$ and
 $R^*$ is a lower bound of the function $R_k$.
 It implies that the number of iteration $K$ satisfies
 \begin{align}
 K = \frac{3\nu_{\max}(R_{0}-R^*)}{\epsilon \gamma} \nonumber
 \end{align}
 then $(x_{k^*},y_{k^*},z_{k^*})$ is an $\epsilon$-approximate stationary point of \eqref{eq:2}, where $k^* = \mathop{\arg\min}_{k}\theta_{k}$.
\end{theorem}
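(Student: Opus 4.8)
The plan is to bound $\mathrm{dist}\big(0,\partial L(x_{k+1},y_{[m]}^{k+1},z_{k+1})\big)^2$ block by block using the optimality conditions of steps 9--11, then average over $k$ and invoke Lemma~\ref{lem:A4}. (The statement is written at $(x_k,y_{[m]}^k,z_k)$, but since we average over all iterations this is just a relabeling of the post-update quantities.) Recall $\partial L$ has three kinds of blocks: the $x$-block $\nabla_x L=\nabla f(x)-A^Tz$, the $y_j$-blocks $\partial_{y_j}L=\partial g_j(y_j)-B_j^Tz$, and the feasibility block $-Ax-\sum_jB_jy_j+c$.

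For the $x$-block, the optimality condition of step 10 is exactly \eqref{eq:A16}, i.e. $A^Tz_{k+1}=v_k+\frac{G}{\eta}(x_{k+1}-x_k)$, so $\nabla_xL(x_{k+1},y_{[m]}^{k+1},z_{k+1})=\big(\nabla f(x_{k+1})-\nabla f(x_k)\big)+\big(\nabla f(x_k)-v_k\big)-\frac{G}{\eta}(x_{k+1}-x_k)$. Applying $\|a+b+c\|^2\le 3\|a\|^2+3\|b\|^2+3\|c\|^2$, Assumption~1, and the SPIDER variance bound \eqref{eq:A12} with $b=q$ gives $\mathbb{E}\|\nabla_xL\|^2\le\nu_2\big(\|x_{k+1}-x_k\|^2+\frac1q\sum_{i=(n_k-1)q}^{k-1}\mathbb{E}\|x_{i+1}-x_i\|^2\big)$. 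For the feasibility block, the dual update of step 11 gives $Ax_{k+1}+\sum_jB_jy_j^{k+1}-c=-\tfrac1\rho(z_{k+1}-z_k)$, so its squared norm is $\tfrac1{\rho^2}\|z_{k+1}-z_k\|^2$, which Lemma~\ref{lem:A3} bounds by $\nu_3$ times a combination of $\|x_{k+1}-x_k\|^2$, $\|x_k-x_{k-1}\|^2$ and $\tfrac1q\sum_{i=(n_k-1)q}^{k-1}\|x_{i+1}-x_i\|^2$. For the $y$-block, the optimality condition of step 9 yields $\xi_j\in\partial g_j(y_j^{k+1})$ with $\xi_j-B_j^Tz_k+\rho B_j^T\big(Ax_k+\sum_{i\le j}B_iy_i^{k+1}+\sum_{i>j}B_iy_i^{k}-c\big)+H_j(y_j^{k+1}-y_j^k)=0$; substituting $z_k-z_{k+1}=\rho(Ax_{k+1}+\sum_iB_iy_i^{k+1}-c)$ and simplifying, $\xi_j-B_j^Tz_{k+1}=\rho B_j^TA(x_{k+1}-x_k)+\rho B_j^T\sum_{i>j}B_i(y_i^{k+1}-y_i^k)-H_j(y_j^{k+1}-y_j^k)\in\partial_{y_j}L(x_{k+1},y_{[m]}^{k+1},z_{k+1})$. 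Bounding its norm with Cauchy--Schwarz and the eigenvalue bounds $\sigma^A_{\max},\sigma^B_{\max},\sigma_{\max}(H)$ and summing over $j$ gives $\sum_j\mathbb{E}[\mathrm{dist}(0,\partial_{y_j}L)^2]\le\nu_1\big(\|x_{k+1}-x_k\|^2+\sum_j\|y_j^k-y_j^{k+1}\|^2\big)$. Collecting the three parts and taking $\nu_{\max}=\max\{\nu_1,\nu_2,\nu_3\}$ yields $\mathbb{E}[\mathrm{dist}(0,\partial L)^2]\le\nu_{\max}\theta_k$, which is the first inequality.

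It then remains to sum $\theta_k$. The $\|x_{k+1}-x_k\|^2$ and $\|x_k-x_{k-1}\|^2$ terms each telescope-bound by $\sum_{k=0}^{K-1}\|x_{k+1}-x_k\|^2$; in $\sum_k\frac1q\sum_{i=(n_k-1)q}^{k}\|x_{i+1}-x_i\|^2$ each $\|x_{i+1}-x_i\|^2$ appears in at most $q$ inner sums, each with weight $1/q$, so this is also at most $\sum_{k=0}^{K-1}\|x_{k+1}-x_k\|^2$. Hence $\sum_{k=1}^{K-1}\theta_k\le 3\sum_{k=0}^{K-1}\big(\|x_{k+1}-x_k\|^2+\sum_j\|y_j^k-y_j^{k+1}\|^2\big)$, and Lemma~\ref{lem:A4} gives $\frac1K\sum_{k=1}^{K-1}\theta_k\le\frac{3(R_0-R^*)}{K\gamma}$, which is the second inequality. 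Finally, with $K=\frac{3\nu_{\max}(R_0-R^*)}{\epsilon\gamma}$ the right-hand side is $\le\epsilon$; since $\theta_{k^*}=\min_k\theta_k\le\frac1K\sum_k\theta_k$ and $\mathbb{E}[\mathrm{dist}(0,\partial L(x_{k^*},y_{[m]}^{k^*},z_{k^*}))^2]\le\nu_{\max}\theta_{k^*}$, the returned point is an $\epsilon$-approximate stationary point of \eqref{eq:2}.

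The main obstacle is the $y$-block estimate: one must handle the Gauss--Seidel cross terms $\sum_{i>j}B_i(y_i^{k+1}-y_i^k)$ so that after summing over $j$ the coefficient collapses to the stated $\nu_1$ rather than acquiring an extra factor of $m$, and verify that $\xi_j-B_j^Tz_{k+1}$ is genuinely a member of $\partial_{y_j}L$ evaluated at the \emph{post-update} point (the $y$-update only "sees" $z_k$ and a partially updated $y$, so the substitution of the step-11 relation is essential). A secondary point is that the $x$-block and feasibility bounds must be expressed through the running quantity $\frac1q\sum_{i=(n_k-1)q}^{k-1}\|x_{i+1}-x_i\|^2$ produced by the SPIDER recursion, and one has to check this is precisely what $\theta_k$ records so that Lemma~\ref{lem:A4} applies verbatim.
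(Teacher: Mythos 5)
Your proposal is correct and follows essentially the same route as the paper: the same block-by-block bound on $\partial L$ via the optimality conditions of steps 9--11 (with the step-11 substitution for the $y_j$-blocks yielding $\rho B_j^TA(x_{k+1}-x_k)+\rho B_j^T\sum_{i>j}B_i(y_i^{k+1}-y_i^k)-H_j(y_j^{k+1}-y_j^k)$, the identity $A^Tz_{k+1}=v_k+\frac{G}{\eta}(x_{k+1}-x_k)$ for the $x$-block, and Lemma~\ref{lem:A3} for the feasibility block), followed by the same counting argument $\sum_k\sum_{i=(n_k-1)q}^k\|x_{i+1}-x_i\|^2\le q\sum_k\|x_{k+1}-x_k\|^2$ and an appeal to Lemma~\ref{lem:A4}. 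No gaps.
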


\begin{proof}
First, we define a useful variable $\theta_k=\|x_{k+1}-x_{k}\|^2+\|x_{k}-x_{k-1}\|^2+\frac{1}{q}\sum_{i=(n_k-1)q}^k \|x_{i+1}-x_i\|^2 + \sum_{j=1}^m \|y_j^k-y_j^{k+1}\|^2$.
Next, by the optimal condition of the step 9 in Algorithm \ref{alg:3}, we
have, for all $i\in [m]$
\begin{align}
  \mathbb{E}\big[\mbox{dist}(0,\partial_{y_j} L(x,y_{[m]},z))^2\big]_{k+1} & = \mathbb{E}\big[\mbox{dist} (0, \partial g_j(y_j^{k+1})-B_j^Tz_{k+1})^2\big] \nonumber \\
 & = \|B_j^Tz_k -\rho B_j^T(Ax_k + \sum_{i=1}^jB_iy_i^{k+1} + \sum_{i=j+1}^mB_iy_i^{k} -c) - H_j(y_j^{k+1}-y_j^k) -B_j^Tz_{k+1}\|^2 \nonumber \\
 & = \|\rho B_j^TA(x_{k+1}-x_{k}) + \rho B_j^T \sum_{i=j+1}^m B_i (y_i^{k+1}-y_i^{k})- H_j(y_j^{k+1}-y_j^k) \|^2 \nonumber \\
 & \leq m\rho^2\sigma^{B_j}_{\max}\sigma^A_{\max}\|x_{k+1}-x_k\|^2 + m\rho^2\sigma^{B_j}_{\max}\sum_{i=j+1}^m \sigma^{B_i}_{\max}\|y_i^{k+1}-y_i^{k}\|^2 + m\sigma^2_{\max}(H_j)\|y_j^{k+1}-y_j^k\|^2\nonumber \\
 & \leq m\big(\rho^2\sigma^B_{\max}\sigma^A_{\max} + \rho^2(\sigma^B_{\max})^2 + \sigma^2_{\max}(H)\big) \theta_{k},
\end{align}
where the first inequality follows by the inequality $\|\frac{1}{n}\sum_{i=1}^n z_i\|^2 \leq \frac{1}{n}\sum_{i=1}^n \|z_i\|^2$.

By the step 10 of Algorithm \ref{alg:3}, we have
\begin{align}
 \mathbb{E}\big[\mbox{dist}(0,\nabla_x L(x,y_{[m]},z))^2\big]_{k+1} & = \mathbb{E}\|A^Tz_{k+1}-\nabla f(x_{k+1})\|^2  \nonumber \\
 & = \mathbb{E}\|v_k - \nabla f(x_{k+1}) - \frac{G}{\eta} (x_k - x_{k+1})\|^2 \nonumber \\
 & = \mathbb{E}\|v_k - \nabla f(x_{k}) + \nabla f(x_{k})- \nabla f(x_{k+1}) - \frac{G}{\eta}(x_k-x_{k+1})\|^2  \nonumber \\
 & \leq \sum_{i=(n_k-1)q}^{k-1}\frac{3L^2}{b}\mathbb{E}\|x_{i+1}-x_i\|^2 + 3(L^2+ \frac{\sigma^2_{\max}(G)}{\eta^2})\|x_k-x_{k+1}\|^2  \nonumber \\
 & \leq 2(L^2+ \frac{\sigma^2_{\max}(G)}{\eta^2})\theta_{k},
\end{align}
where the second inequality holds by $b=q$.

By the step 11 of Algorithm \ref{alg:3}, we have
\begin{align}
 \mathbb{E}\big[\mbox{dist}(0,\nabla_z L(x,y_{[m]},z))^2\big]_{k+1} & = \mathbb{E}\|Ax_{k+1}+ \sum_{j=1}^m B_jy_j^{k+1}-c\|^2 \nonumber \\
 &= \frac{1}{\rho^2} \mathbb{E} \|z_{k+1}-z_k\|^2  \nonumber \\
 & \leq \frac{18L^2}{\sigma^A_{\min}b\rho^2} \sum_{i=(n_k-1)q}^{k-1}\mathbb{E}\|x_{i+1}-x_i\|^2
 + (\frac{9L^2}{\sigma^A_{\min}\rho^2}+\frac{3\sigma^2_{\max}(G)}{\sigma^A_{\min}\eta^2\rho^2})\|x_k-x_{k-1}\|^2 \nonumber \\
 & + \frac{3\sigma^2_{\max}(G)}{\sigma^A_{\min}\eta^2\rho^2}\|x_{k+1}-x_{k}\|^2 \nonumber \\
 & \leq \big( \frac{18 L^2 }{\sigma^A_{\min} \rho^2} + \frac{3\sigma^2_{\max}(G) }{\sigma^A_{\min}\eta^2\rho^2} \big) \theta_{k} ,
\end{align}
where the second inequality holds by $b=q$.

By \eqref{eq:A38}, we have
\begin{align}
 \frac{1}{K}\sum_{i=0}^{K-1} (\|x_{i+1} - x_{i}\|^2 + \sum_{j=1}^m \|y_j^i-y_j^{i+1}\|^2) \leq \frac{\mathbb{E} [R_{0}]-R^*}{K\gamma},
\end{align}
where $\gamma = \min(\chi,\sigma_{\min}^H)$ with $\chi\geq \frac{\sqrt{170\kappa_A}\kappa_GL}{4\alpha}$. Since
\begin{align}
\sum_{k=0}^{K-1}\sum_{i=(n_k-1)q}^k \|x_{i+1}-x_i\|^2 \leq q\sum_{k=0}^{K-1} \|x_{k+1}-x_k\|^2
\end{align}
we have
\begin{align}
 \frac{1}{K}\sum_{k=1}^K \mathbb{E}\big[ \mbox{dist}(0,\partial L(x_k,y_{[m]}^k,z_k))^2\big] \leq \frac{\nu_{\max}}{K}\sum_{k=1}^{K-1} \theta_k \leq  \frac{3\nu_{\max}(R_{0}-R^*)}{K \gamma},
\end{align}
where $\gamma \geq \frac{\sqrt{170\kappa_A}\kappa_GL}{4\alpha}$, $\nu_{\max}=\max\{\nu_1,\nu_2,\nu_3\}$ with
\begin{align}
   \nu_1 = m\big(\rho^2\sigma^B_{\max}\sigma^A_{\max} + \rho^2(\sigma^B_{\max})^2 + \sigma^2_{\max}(H)\big), \ \nu_2 = 3(L^2+ \frac{\sigma^2_{\max}(G)}{\eta^2}),
   \ \nu_3 = \frac{18 L^2 }{\sigma^A_{\min} \rho^2} + \frac{3\sigma^2_{\max}(G) }{\sigma^A_{\min}\eta^2\rho^2}.
 \end{align}
Given $\eta = \frac{2\alpha\sigma_{\min}(G)}{3L} \ (0<\alpha \leq 1)$ and $\rho = \frac{\sqrt{170 \kappa_A} \kappa_G L}{\sigma^A_{\min}\alpha}$, since $m$ is relatively small,
it easy verifies that
$\nu_{\max} = O(1)$ and $\gamma=O(1)$, which are independent on $n$ and $K$. Thus, we obtain
\begin{align}
 \frac{1}{K}\sum_{k=1}^K \mathbb{E}\big[ \mbox{dist}(0,\partial L(x_k,y_{[m]}^k,z_k))^2\big] \leq  O(\frac{1}{K}).
\end{align}

\end{proof}

\subsection{ Convergence Analysis of the Online SPIDER-ADMM }
\label{Appendix:A2}
In this subsection, we conduct convergence analysis of the online SPIDER-ADMM.
First, we give some useful lemmas.

\begin{lemma} \label{lem:A5}
Under Assumption 1 and given the sequence $\{x_k,y_{[m]}^k,,z_k\}_{k=1}^K$ from Algorithm \ref{alg:4}, it holds that
\begin{align}
 \mathbb{E}\|z_{k+1} - z_k\|^2 \leq & \frac{18L^2}{\sigma^A_{\min}b_2} \sum_{i=(n_k-1)q}^{k-1}\mathbb{E}\|x_{i+1}-x_i\|^2 + \frac{18\sigma^2}{\sigma^A_{\min}b_1}
 + (\frac{9L^2}{\sigma^A_{\min}}+\frac{3\sigma^2_{\max}(G)}{\sigma^A_{\min}\eta^2})\|x_k-x_{k-1}\|^2 \nonumber \\
 & + \frac{3\sigma^2_{\max}(G)}{\sigma^A_{\min}\eta^2}\|x_{k+1}-x_{k}\|^2.
\end{align}
\end{lemma}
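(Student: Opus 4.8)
The plan is to mirror the derivation of Lemma~\ref{lem:A3}, but substituting the online SPIDER variance estimate \eqref{eq:A13} for its finite-sum analogue \eqref{eq:A12}. First I would use the first-order optimality condition of the $x$-update at step~10 of Algorithm~\ref{alg:4}: since $x_{k+1}$ minimizes $\hat{\mathcal{L}}_\rho(x,y^{k+1}_{[m]},z_k,v_k)$,
\begin{align}
 v_k + \frac{G}{\eta}(x_{k+1}-x_k) - A^Tz_k + \rho A^T\Big(Ax_{k+1}+\sum_{j=1}^mB_jy_j^{k+1}-c\Big) = 0. \nonumber
\end{align}
Inserting the dual update $z_{k+1}=z_k-\rho(Ax_{k+1}+\sum_{j=1}^mB_jy_j^{k+1}-c)$ collapses this to $A^Tz_{k+1}=v_k+\frac{G}{\eta}(x_{k+1}-x_k)$, exactly as in \eqref{eq:A16}. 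Differencing two consecutive iterations gives $A^T(z_{k+1}-z_k)=(v_k-v_{k-1})+\frac{G}{\eta}\big((x_{k+1}-x_k)-(x_k-x_{k-1})\big)$, and then, just as in \eqref{eq:A18}, bounding $\|z_{k+1}-z_k\|^2\le\frac{1}{\sigma^A_{\min}}\|A^T(z_{k+1}-z_k)\|^2$ and applying $\|\sum_{i=1}^3a_i\|^2\le3\sum_{i=1}^3\|a_i\|^2$ isolates the three contributions $\mathbb{E}\|v_k-v_{k-1}\|^2$, $\frac{\sigma^2_{\max}(G)}{\eta^2}\mathbb{E}\|x_{k+1}-x_k\|^2$ and $\frac{\sigma^2_{\max}(G)}{\eta^2}\mathbb{E}\|x_k-x_{k-1}\|^2$.

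The second step is to control $\mathbb{E}\|v_k-v_{k-1}\|^2$. Inserting $\pm\nabla f(x_k)$ and $\pm\nabla f(x_{k-1})$ and using convexity of $\|\cdot\|^2$ together with Assumption~1 gives
\begin{align}
 \mathbb{E}\|v_k-v_{k-1}\|^2 \le 3\mathbb{E}\|v_k-\nabla f(x_k)\|^2 + 3L^2\mathbb{E}\|x_k-x_{k-1}\|^2 + 3\mathbb{E}\|v_{k-1}-\nabla f(x_{k-1})\|^2. \nonumber
\end{align}
I would then apply \eqref{eq:A13} to both $\mathbb{E}\|v_k-\nabla f(x_k)\|^2$ and $\mathbb{E}\|v_{k-1}-\nabla f(x_{k-1})\|^2$; since the two indices lie in the same epoch, the error at $k-1$ involves only the shorter sum $\sum_{i=(n_k-1)q}^{k-2}$, which I absorb into $\sum_{i=(n_k-1)q}^{k-1}$, and each contributes an additive $\delta^2/b_1$. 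This yields $\mathbb{E}\|v_k-v_{k-1}\|^2\le\frac{6L^2}{b_2}\sum_{i=(n_k-1)q}^{k-1}\mathbb{E}\|x_{i+1}-x_i\|^2+3L^2\mathbb{E}\|x_k-x_{k-1}\|^2+\frac{6\delta^2}{b_1}$, the exact analogue of \eqref{eq:A19} with $b$ replaced by $b_2$ and the extra bias term $6\delta^2/b_1$.

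Substituting this into the bound from the first step and collecting constants produces exactly the claimed inequality, with the stray $\sigma^2$ in the statement read as $\delta^2$; in particular $\frac{3}{\sigma^A_{\min}}\cdot\frac{6\delta^2}{b_1}=\frac{18\delta^2}{\sigma^A_{\min}b_1}$ accounts for the new additive term, while $\frac{9L^2}{\sigma^A_{\min}}$, $\frac{3\sigma^2_{\max}(G)}{\sigma^A_{\min}\eta^2}$ come out as in Lemma~\ref{lem:A3}. The only delicate point — and what I expect to be the main obstacle — is the index bookkeeping at the epoch boundaries: one must check that \eqref{eq:A13} is genuinely applicable to $v_{k-1}$ as well as $v_k$ (it degenerates to the pure $\delta^2/b_1$ bound whenever $\mathrm{mod}(k-1,q)=0$, i.e.\ at a restart point), and that the ranges $(n_k-1)q\le k-1\le k\le n_kq-1$ line up so that the telescoped variance bound carries over verbatim. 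Everything else is the same constant-chasing as in the finite-sum case.
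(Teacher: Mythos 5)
Your proposal is correct and follows exactly the route the paper intends: the paper itself omits this proof with the remark that it is identical to that of Lemma \ref{lem:A3}, and your derivation is precisely that argument with the online variance bound \eqref{eq:A13} (carrying the extra $\delta^2/b_1$ term, which after the factor $3\cdot 3/\sigma^A_{\min}$ becomes the $18\delta^2/(\sigma^A_{\min}b_1)$ term) substituted for \eqref{eq:A12}. Your reading of the stray $\sigma^2$ as $\delta^2$ and your check of the epoch-boundary case $\mathrm{mod}(k-1,q)=0$ are both right.
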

Because the proof of the above lemma is the same to the proof of Lemma \ref{lem:A3}, so we omit this proof.

\begin{lemma} \label{lem:A6}
 Suppose the sequence $\{x_k,y_{[m]}^k,,z_k\}_{k=1}^K$ is generated from Algorithm \ref{alg:4},
 and define a \emph{Lyapunov} function $\Phi_k$ as follows:
 \begin{align}
 \Phi_k = \mathcal{L}_{\rho} (x_k,y_{[m]}^k,z_k) + (\frac{9L^2}{\sigma^A_{\min}\rho}+\frac{3\kappa_A\sigma^2_{\max}(G)}{\sigma^A_{\min}\eta^2\rho})\|x_k-x_{k-1}\|^2
 + \frac{2\kappa_AL^2}{\sigma^A_{\min}\rho b} \sum_{i=(n_k-1)q}^{k-1}\mathbb{E}\|x_{i+1}-x_i\|^2.
\end{align}
Let $b_2=q$, $\eta=\frac{2\alpha\sigma_{\min}(G)}{3L} \ (0<\alpha \leq 1)$ and $\rho= \frac{\sqrt{170\kappa_A}\kappa_GL}{\sigma^A_{\min}\alpha}$,
then we have
\begin{align} \label{eq:A63}
 \frac{1}{K}\sum_{i=0}^{K-1} ( \|x_{i+1} - x_{i}\|^2 + \sum_{j=1}^m \|y_j^i-y_j^{i+1}\|^2) \leq \frac{\mathbb{E} [\Phi_{0}]-\Phi^*}{K\gamma} + \frac{\delta^2}{2b_1L\gamma} + \frac{18\delta^2}{\sigma^A_{\min}b_1\rho\gamma},
\end{align}
where $\gamma = \min(\chi, \sigma_{\min}^H)$, $\chi \geq \frac{\sqrt{170\kappa_A}\kappa_GL}{4\alpha}$ and $\Phi^*$ is a lower bound of the function $\Phi_k$.
\end{lemma}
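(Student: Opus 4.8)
The plan is to follow the proof of Lemma \ref{lem:A4} almost verbatim, the only genuinely new feature being the extra variance term $\delta^2/b_1$ that enters because the ``snapshot'' gradient $v_{(n_k-1)q}$ is now a mini-batch estimate of size $b_1$ rather than the exact full gradient. Concretely, every place where the finite-sum proof invokes \eqref{eq:A12}, the online proof must instead invoke \eqref{eq:A13}, which adds a $\delta^2/b_1$ term, and every place it invokes Lemma \ref{lem:A3} it must instead invoke Lemma \ref{lem:A5}, which adds an $\frac{18\delta^2}{\sigma^A_{\min}b_1}$ term to the bound on $\|z_{k+1}-z_k\|^2$.

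First I would derive the $y$-block descent exactly as for \eqref{eq:A23}: using the optimality condition of step 9 of Algorithm \ref{alg:4}, the convexity of each $g_j$, and the identity $(a-b)^Tb=\frac12(\|a\|^2-\|b\|^2-\|a-b\|^2)$, then telescoping over $j\in[m]$, gives $\mathcal{L}_{\rho}(x_k,y^{k+1}_{[m]},z_k)\le\mathcal{L}_{\rho}(x_k,y^{k}_{[m]},z_k)-\sigma^H_{\min}\sum_{j=1}^m\|y^k_j-y^{k+1}_j\|^2$; this is pathwise and involves no stochasticity. Next, for the $x$-update I would combine the smoothness inequality \eqref{eq:A24} with the optimality condition of step 10, apply Young's inequality to the cross term $(x_k-x_{k+1})^T(v_k-\nabla f(x_k))$, and bound $\|v_k-\nabla f(x_k)\|^2$ via \eqref{eq:A13}, obtaining
\begin{align}
\mathcal{L}_{\rho}(x_{k+1},y^{k+1}_{[m]},z_k)\le\mathcal{L}_{\rho}(x_k,y^{k+1}_{[m]},z_k)-\Big(\tfrac{\sigma_{\min}(G)}{\eta}+\tfrac{\rho\sigma^A_{\min}}{2}-L\Big)\|x_{k+1}-x_k\|^2+\tfrac{L}{2b_2}\!\!\sum_{i=(n_k-1)q}^{k-1}\!\!\mathbb{E}\|x_{i+1}-x_i\|^2+\tfrac{\delta^2}{2Lb_1}, \nonumber
\end{align}
the last term being the new contribution. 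For the dual step I would use $\mathcal{L}_{\rho}(x_{k+1},y^{k+1}_{[m]},z_{k+1})-\mathcal{L}_{\rho}(x_{k+1},y^{k+1}_{[m]},z_k)=\frac1\rho\|z_{k+1}-z_k\|^2$ together with Lemma \ref{lem:A5}.

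Then I would add the three inequalities, plug in the stated $\Phi_k$, absorb the running sum using $\sum_{i=(n_k-1)q}^{k}=\sum_{i=(n_k-1)q}^{k-1}+\|x_{k+1}-x_k\|^2$ and $\kappa_A\ge1$, and telescope over a full inner block (from $(n_k-1)q$ to $n_kq-1$) exactly as in \eqref{eq:A31}, using $\sum_{j}\sum_{i\le k-1}\le q\sum_{i\le k}$ and $b_2=q$. With $\eta=\frac{2\alpha\sigma_{\min}(G)}{3L}$ and $\rho=\frac{\sqrt{170\kappa_A}\kappa_GL}{\sigma^A_{\min}\alpha}$ the coefficient of $\|x_{k+1}-x_k\|^2$ is $\ge\chi\ge\frac{\sqrt{170\kappa_A}\kappa_GL}{4\alpha}$ by the same $L_1,L_2$ split used in Lemma \ref{lem:A4}. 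To finish, I would show $\Phi_k$ is bounded below: from $A^Tz_{k+1}=v_k+\frac G\eta(x_{k+1}-x_k)$ and $(A^T)^+=(AA^T)^{-1}A$, Young's inequality, Assumption 2 and Assumption 3 give $\Phi_{k+1}\ge f^*+\sum_{j=1}^m g_j^*-\frac{2\kappa_A\delta^2}{\sigma^A_{\min}\rho}=:\Phi^*$. Telescoping the per-iteration descent over $k$ from $0$ to $K$ and dividing by $K$ yields \eqref{eq:A63}, the residuals $\frac{\delta^2}{2b_1L\gamma}$ and $\frac{18\delta^2}{\sigma^A_{\min}b_1\rho\gamma}$ being the accumulated per-iteration variance contributions $\frac{\delta^2}{2Lb_1}$ (from the $x$-step) and $\frac{18\delta^2}{\sigma^A_{\min}b_1\rho}$ (from the dual step), each divided by $\gamma$.

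The main obstacle I expect is the bookkeeping of the $\delta^2/b_1$ terms through the inner-loop telescoping: unlike the finite-sum case, $v_{(n_k-1)q}$ is itself inexact, so the recursion \eqref{eq:A13} carries a $\delta^2/b_1$ floor at \emph{every} iterate in a block, and one must verify that summing these over the $q$ iterations of a block and then over all blocks still leaves only an $O(\delta^2/b_1)$ additive error per iteration on average. This works precisely because the inner mini-batch and block length are balanced ($b_2=q$), so the $\frac{L^2}{b_2}\sum_i\|x_{i+1}-x_i\|^2$ portion of \eqref{eq:A13} is absorbed into the descent coefficient $\chi$, leaving the clean constant $\delta^2/b_1$; checking that the coefficient of $\|x_{k+1}-x_k\|^2$ stays nonnegative after this absorption, under the prescribed $\eta$ and $\rho$, is the one computation that needs care.
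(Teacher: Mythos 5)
Your proposal is correct and follows essentially the same route as the paper's proof: the same $y$-block descent, the same $x$-step bound with \eqref{eq:A13} replacing \eqref{eq:A12} to inject the $\frac{\delta^2}{2b_1L}$ term, the same dual-step bound via Lemma \ref{lem:A5} injecting $\frac{18\delta^2}{\sigma^A_{\min}b_1\rho}$, the same Lyapunov function, block-wise telescoping, $L_1$/$L_2$ split for $\chi$, and pseudoinverse argument for the lower bound of $\Phi_k$. The only cosmetic discrepancy is the constant in $\Phi^*$ (the paper's lower bound picks up an extra $\frac{4\kappa_A\delta^2}{\sigma^A_{\min}b_1\rho}$ from the online variance floor in $\|v_k-\nabla f(x_k)\|^2$), which does not affect the statement.
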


\begin{proof}
This proof is the same as the proof of Lemma \ref{lem:A4}.

By the optimal condition of step 9 in Algorithm \ref{alg:4},
we have, for $j\in [m]$
\begin{align}
0 & =(y_j^k-y_j^{k+1})^T\big(\partial g_j(y_j^{k+1}) - B^Tz_k + \rho B^T(Ax_k + \sum_{i=1}^jB_iy_i^{k+1} + \sum_{i=j+1}^mB_iy_i^{k}-c) + H_j(y_j^{k+1}-y_j^k)\big) \nonumber \\
& \leq g_j(y_j^k)- g_j(y_j^{k+1}) - (z_k)^T(B_jy_j^k-B_jy_j^{k+1}) + \rho(By_j^k-By_j^{k+1})^T(Ax_k + \sum_{i=1}^jB_iy_i^{k+1} + \sum_{i=j+1}^mB_iy_i^{k}-c) - \|y_j^{k+1}-y_j^k\|^2_{H_j} \nonumber \\
& = g_j(y_j^k)- g_j(y_j^{k+1}) - (z_k)^T(Ax_k+\sum_{i=1}^{j-1}B_iy_i^{k+1} + \sum_{i=j}^mB_iy_i^{k}-c) + (z_k)^T(Ax_k+\sum_{i=1}^jB_iy_i^{k+1}+ \sum_{i=j+1}^mB_iy_i^{k}-c) \nonumber \\
& \quad  + \frac{\rho}{2}\|Ax_k +\sum_{i=1}^{j-1}B_iy_i^{k+1} + \sum_{i=j}^mB_iy_i^{k}-c\|^2 - \frac{\rho}{2}\|Ax_k+\sum_{i=1}^jB_iy_i^{k+1}+ \sum_{i=j+1}^mB_iy_i^{k}-c\|^2 -\frac{\rho}{2}\|B_jy_j^k-B_jy_j^{k+1}\|^2 \nonumber \\
& \quad - \|y_j^{k+1}-y_j^k\|^2_{H_j} \nonumber \\
& \leq \underbrace{ f(x_k) + g_j(y_j^k) - (z_k)^T(Ax_k+\sum_{i=1}^{j-1}B_iy_i^{k+1} + \sum_{i=j}^mB_iy_i^{k}-c) + \frac{\rho}{2}\|Ax_k +\sum_{i=1}^{j-1}B_iy_i^{k+1} + \sum_{i=j}^mB_iy_i^{k}-c\|^2}_{\mathcal{L}_{\rho} (x_k,y_{[j-1]}^{k+1},y_{[j:m]}^k,z_k)}  - \|y_j^{k+1}-y_j^k\|^2_{H_j}  \nonumber \\
& \quad -\underbrace{(f(x_k) + g_j(y_j^{k+1}) - (z_k)^T(Ax_k+\sum_{i=1}^jB_iy_i^{k+1}+ \sum_{i=j+1}^mB_iy_i^{k}-c) + \frac{\rho}{2}\|Ax_k+\sum_{i=1}^jB_iy_i^{k+1}+ \sum_{i=j+1}^mB_iy_i^{k}-c\|^2}_{\mathcal{L}_{\rho} (x_k,y_{[j]}^{k+1},y_{[j+1:m]}^k,z_k)} \nonumber \\
& \leq \mathcal{L}_{\rho} (x_k,y_{[j-1]}^{k+1},y_{[j:m]}^k,z_k) - \mathcal{L}_{\rho} (x_k,y_{[j]}^{k+1},y_{[j+1:m]}^k,z_k)
- \sigma_{\min}(H_j)\|y_j^k-y_j^{k+1}\|^2,
\end{align}
where the first inequality holds by the convexity of function $g_j(y)$,
and the second equality follows by applying the equality
$(a-b)^Tb = \frac{1}{2}(\|a\|^2-\|b\|^2-\|a-b\|^2)$ on the term $(By_j^k-By_j^{k+1})^T(Ax_k + \sum_{i=1}^jB_iy_i^{k+1} + \sum_{i=j+1}^mB_iy_i^{k}-c)$.
Thus, we have, for all $j\in[m]$
\begin{align} \label{eq:A6-1}
 \mathcal{L}_{\rho} (x_k,y_{[j-1]}^{k+1},y_{[j:m]}^k,z_k) \leq \mathcal{L}_{\rho} (x_k,y_{[j]}^{k+1},y_{[j+1:m]}^k,z_k)
 - \sigma_{\min}(H_j)\|y_j^k-y_j^{k+1}\|^2.
\end{align}
Telescoping inequality \eqref{eq:A6-1} over $j$ from $1$ to $m$, we obtain
\begin{align} \label{eq:A50}
 \mathcal{L}_{\rho} (x_k,y^{k+1}_{[m]},z_k) \leq \mathcal{L}_{\rho} (x_k,y^k_{[m]},z_k)
 - \sigma_{\min}^H\sum_{j=1}^m \|y_j^k-y_j^{k+1}\|^2.
\end{align}

Using Assumption 1, we have
\begin{align} \label{eq:A51}
0 \leq f(x_k) - f(x_{k+1}) + \nabla f(x_k)^T(x_{k+1}-x_k) + \frac{L}{2}\|x_{k+1}-x_k\|^2.
\end{align}
Using the optimal condition of step 10 in Algorithm \ref{alg:4},
we have
\begin{align} \label{eq:A52}
 0 = (x_k-x_{k+1})^T \big( v_k - A^Tz_k + \rho A^T(Ax_{k+1} + \sum_{j=1}^mB_jy_j^{k+1}-c) + \frac{G}{\eta}(x_{k+1}-x_k) \big).
\end{align}
Combining \eqref{eq:A51} and \eqref{eq:A52}, we have
\begin{align}
 0 & \leq f(x_k) - f(x_{k+1}) + \nabla f(x_k)^T(x_{k+1}-x_k) + \frac{L}{2}\|x_{k+1}-x_k\|^2 \nonumber \\
 & \quad + (x_k-x_{k+1})^T \big( v_k - A^Tz_k + \rho A^T(Ax_{k+1} + \sum_{j=1}^mB_jy_j^{k+1}-c) + \frac{G}{\eta}(x_{k+1}-x_k) \big)  \nonumber \\
 & = f(x_k) - f(x_{k+1}) + \frac{L}{2}\|x_k-x_{k+1}\|^2 - \frac{1}{\eta}\|x_k - x_{k+1}\|^2_G + (x_k-x_{k+1})^T(v_k-\nabla f(x_k)) \nonumber \\
 & \quad -(z_k)^T(Ax_k-Ax_{k+1}) + \rho(Ax_k - Ax_{k+1})^T(Ax_{k+1} + \sum_{j=1}^mB_jy_j^{k+1}-c) \nonumber \\
 & = f(x_k) - f(x_{k+1}) + \frac{L}{2}\|x_k-x_{k+1}\|^2 - \frac{1}{\eta}\|x_k - x_{k+1}\|^2_G
 + (x_k-x_{k+1})^T\big(v_k-\nabla f(x_k)\big) -(z_k)^T(Ax_k + \sum_{j=1}^mB_jy_j^{k+1}-c)\nonumber \\
 & \quad  + (z_k)^T(Ax_{k+1} + \sum_{j=1}^mB_jy_j^{k+1}-c) + \frac{\rho}{2}\big(\|Ax_k + \sum_{j=1}^mB_jy_j^{k+1}-c\|^2
 - \|Ax_{k+1} + \sum_{j=1}^mB_jy_j^{k+1}-c\|^2 - \|Ax_k - Ax_{k+1}\|^2 \big) \nonumber \\
 & = \underbrace{f(x_k) -z_k^T(Ax_k + \sum_{j=1}^mB_jy_j^{k+1}-c) + \frac{\rho}{2}\|Ax_k + \sum_{j=1}^mB_jy_j^{k+1}-c\|^2}_{L_{\rho}(x_k,y_{[m]}^{k+1},z_k)} + \frac{L}{2}\|x_k-x_{k+1}\|^2
 + (x_k-x_{k+1})^T\big(v_k-\nabla f(x_k)\big)  \nonumber \\
 & \quad- \underbrace{\big(f(x_{k+1})
 - z_k^T(Ax_{k+1}+ \sum_{j=1}^mB_jy_j^{k+1}-c) + \frac{\rho}{2} \|Ax_{k+1} + \sum_{j=1}^mB_jy_j^{k+1}-c\|^2\big)}_{L_{\rho}(x_{k+1},y_{[m]}^{k+1},z_{k})} - \frac{1}{\eta}\|x_k - x_{k+1}\|^2_G
 - \frac{\rho}{2}\|Ax_k - Ax_{k+1}\|^2 \nonumber \\
 & \leq \mathcal{L}_{\rho} (x_k,y_{[m]}^{k+1},z_k) -  \mathcal{L}_{\rho} (x_{k+1},y_{[m]}^{k+1},z_k)
 - (\frac{\sigma_{\min}(G)}{\eta}+\frac{\rho \sigma^A_{\min}}{2}-\frac{L}{2}) \|x_{k+1} - x_{k}\|^2 + (x_k-x_{k+1})^T(v_k-\nabla f(x_k)) \nonumber \\
 & \leq \mathcal{L}_{\rho} (x_k,y_{[m]}^{k+1},z_k) -  \mathcal{L}_{\rho} (x_{k+1},y_{[m]}^{k+1},z_k)
 - (\frac{\sigma_{\min}(G)}{\eta}+\frac{\rho \sigma^A_{\min}}{2}-L ) \|x_{k+1} - x_{k}\|^2 + \frac{1}{2L}\|v_k - \nabla f(x_k)\|^2 \nonumber \\
 & \leq \mathcal{L}_{\rho} (x_k,y_{[m]}^{k+1},z_k) -  \mathcal{L}_{\rho} (x_{k+1},y_{[m]}^{k+1},z_k)
 - (\frac{\sigma_{\min}(G)}{\eta}+\frac{\rho \sigma^A_{\min}}{2}-L ) \|x_{k+1} - x_{k}\|^2 + \frac{L}{2b_2}\sum_{i=(n_k-1)q}^{k-1}\mathbb{E}\|x_{i+1}-x_i\|^2+\frac{\sigma^2}{2b_1L}, \nonumber
\end{align}
where the second equality follows by applying the equality
$(a-b)^Tb = \frac{1}{2}(\|a\|^2-\|b\|^2-\|a-b\|^2)$ over the
term $(Ax_k - Ax_{k+1})^T(Ax_{k+1}+\sum_{j=1}^mB_jy_j^{k+1}-c)$; the third inequality
follows by the inequality $a^Tb \leq \frac{1}{2L}\|a\|^2 + \frac{L}{2}\|b\|^2$, and the forth inequality holds by the inequality \eqref{eq:A13}.
It follows that
\begin{align} \label{eq:A53}
\mathcal{L}_{\rho} (x_{k+1},y_{[m]}^{k+1},z_k) \leq &\mathcal{L}_{\rho} (x_k,y_{[m]}^{k+1},z_k)
  - (\frac{\sigma_{\min}(G)}{\eta}+\frac{\rho \sigma^A_{\min}}{2}-L ) \|x_{k+1} - x_{k}\|^2 \nonumber \\
& + \frac{L}{2b_2}\sum_{i=(n_k-1)q}^{k-1}\mathbb{E}\|x_{i+1}-x_i\|^2 + \frac{\sigma^2}{2b_1L}.
\end{align}

Using the step 11 in Algorithm \ref{alg:4}, we have
\begin{align} \label{eq:A54}
\mathcal{L}_{\rho} (x_{k+1},y_{[m]}^{k+1},z_{k+1}) -\mathcal{L}_{\rho} (x_{k+1},y_{[m]}^{k+1},z_k)
& = \frac{1}{\rho}\|z_{k+1}-z_k\|^2 \nonumber \\
& \leq \frac{18L^2}{\sigma^A_{\min}b_2\rho} \sum_{i=(n_k-1)q}^{k-1}\mathbb{E}\|x_{i+1}-x_i\|^2
 + (\frac{9L^2}{\sigma^A_{\min}\rho}+\frac{3\sigma^2_{\max}(G)}{\sigma^A_{\min}\eta^2\rho})\|x_k-x_{k-1}\|^2 \nonumber \\
& \quad + \frac{3\sigma^2_{\max}(G)}{\sigma^A_{\min}\eta^2\rho}\|x_{k+1}-x_{k}\|^2 + \frac{18\sigma^2}{\sigma^A_{\min}b_1\rho},
\end{align}
where the above inequality holds by Lemma \ref{lem:A3}.

Combining \eqref{eq:A50}, \eqref{eq:A53} and \eqref{eq:A54}, we have
\begin{align} \label{eq:A55}
\mathcal{L}_{\rho} (x_{k+1},y_{[m]}^{k+1},z_{k+1}) & \leq \mathcal{L}_{\rho} (x_k,y_{[m]}^k,z_k)
  - \sigma_{\min}^H\sum_{j=1}^m \|y_j^k-y_j^{k+1}\|^2 - (\frac{\sigma_{\min}(G)}{\eta}+\frac{\rho \sigma^A_{\min}}{2}-L - \frac{3\sigma^2_{\max}(G)}{\sigma^A_{\min}\eta^2\rho}) \|x_{k+1} - x_{k}\|^2  \nonumber \\
& \quad + (\frac{L}{2b_2} + \frac{18L^2}{\sigma^A_{\min}b_2\rho})\sum_{i=(n_k-1)q}^{k-1}\mathbb{E}\|x_{i+1}-x_i\|^2
 + (\frac{9L^2}{\sigma^A_{\min}\rho}+\frac{3\sigma^2_{\max}(G)}{\sigma^A_{\min}\eta^2\rho})\|x_k-x_{k-1}\|^2 \nonumber \\
& \quad + \frac{\sigma^2}{2b_1L} + \frac{18\sigma^2}{\sigma^A_{\min}b_1\rho}.
\end{align}

Next, we define a \emph{Lyapunov} function $R_k$:
\begin{align}
 \Phi_k = \mathcal{L}_{\rho} (x_k,y_{[m]}^k,z_k) + (\frac{9L^2}{\sigma^A_{\min}\rho}+\frac{3\kappa_A\sigma^2_{\max}(G)}{\sigma^A_{\min}\eta^2\rho})\|x_k-x_{k-1}\|^2 + \frac{2\kappa_AL^2}{\sigma^A_{\min}\rho b_2} \sum_{i=(n_k-1)q}^{k-1}\mathbb{E}\|x_{i+1}-x_i\|^2,
\end{align}
where $\kappa_A = \frac{\sigma_{\max}^A}{\sigma_{\min}^A} \geq 1$. Since
$$\sum_{i=(n_k-1)q}^{k}\mathbb{E}\|x_{i+1}-x_i\|^2 = \sum_{i=(n_k-1)q}^{k-1}\mathbb{E}\|x_{i+1}-x_i\|^2 + \|x_{k+1}-x_k\|^2,$$
the inequality \eqref{eq:A55} can be rewrite as follows:
\begin{align} \label{eq:A56}
\Phi_{k+1} & \leq \Phi_k - \sigma_{\min}^H\sum_{j=1}^m \|y_j^k-y_j^{k+1}\|^2 - (\frac{\sigma_{\min}(G)}{\eta}+\frac{\rho \sigma^A_{\min}}{2}-L
- \frac{6\kappa_A\sigma^2_{\max}(G)}{\sigma^A_{\min}\eta^2\rho}-\frac{9L^2}{\sigma^A_{\min}\rho}-\frac{2\kappa_AL^2}{\sigma^A_{\min}\rho b_2}) \|x_{k+1} - x_{k}\|^2  \nonumber \\
& \quad + (\frac{L}{2b_2} + \frac{18L^2}{\sigma^A_{\min}b_2\rho})\sum_{i=(n_k-1)q}^{k-1}\mathbb{E}\|x_{i+1}-x_i\|^2 + \frac{\delta^2}{2b_1L} + \frac{18\delta^2}{\sigma^A_{\min}b_1\rho}.
\end{align}

Then telescoping equality \eqref{eq:A56} over $k$ from $(n_k-1)q$ to $k$ where $k \leq n_kq-1$ and
let $n_j=n_k$ for $(n_k-1)q \leq j \leq n_kq-1$, we have
\begin{align} \label{eq:A57}
\mathbb{E} [\Phi_{k+1}] &\leq \mathbb{E} [\Phi_{(n_k-1)q}]
 - (\frac{\sigma_{\min}(G)}{\eta}+\frac{\rho \sigma^A_{\min}}{2}-L - \frac{6\kappa_A\sigma^2_{\max}(G)}{\sigma^A_{\min}\eta^2\rho}-\frac{9L^2}{\sigma^A_{\min}\rho}-\frac{2\kappa_AL^2}{\sigma^A_{\min}\rho b_2})
 \sum_{j=(n_k-1)q}^k \|x_{j+1} - x_{j}\|^2  \nonumber \\
& \quad - \sigma_{\min}^H \sum_{i=(n_k-1)q}^{k}\sum_{j=1}^m \|y_j^i-y_j^{i+1}\|^2 + (\frac{L}{2b_2} + \frac{18L^2}{\sigma^A_{\min}b_2\rho})\sum_{j=(n_k-1)q}^{k}\sum_{i=(n_k-1)q}^{k-1}\mathbb{E}\|x_{i+1}-x_i\|^2 + \frac{\delta^2}{2b_1L} + \frac{18\delta^2}{\sigma^A_{\min}b_1\rho} \nonumber \\
& \leq \mathbb{E} [\Phi_{(n_k-1)q}]
  - (\frac{\sigma_{\min}(G)}{\eta}+\frac{\rho \sigma^A_{\min}}{2}-L - \frac{6\kappa_A\sigma^2_{\max}(G)}{\sigma^A_{\min}\eta^2\rho}-\frac{9L^2}{\sigma^A_{\min}\rho}-\frac{2\kappa_AL^2}{\sigma^A_{\min}\rho b_2})
  \sum_{i=(n_k-1)q}^k \|x_{i+1} - x_{i}\|^2  \nonumber \\
& \quad - \sigma_{\min}^H \sum_{i=(n_k-1)q}^{k}\sum_{j=1}^m \|y_j^i-y_j^{i+1}\|^2+ (\frac{Lq}{2b_2} + \frac{18L^2q}{\sigma^A_{\min}b_2\rho})\sum_{i=(n_k-1)q}^{k}\mathbb{E}\|x_{i+1}-x_i\|^2 + \frac{\delta^2}{2b_1L} + \frac{18\delta^2}{\sigma^A_{\min}b_1\rho} \nonumber \\
& = \mathbb{E} [\Phi_{(n_k-1)q}] - (\frac{\sigma_{\min}(G)}{\eta}+\frac{\rho \sigma^A_{\min}}{2}-L - \frac{6\kappa_A\sigma^2_{\max}(G)}{\sigma^A_{\min}\eta^2\rho} -\frac{9L^2}{\sigma^A_{\min}\rho} -\frac{2\kappa_AL^2}{\sigma^A_{\min}\rho b_2} -\frac{Lq}{2b_2} - \frac{18L^2q}{\sigma^A_{\min}b_2\rho} ) \sum_{i=(n_k-1)q}^k \|x_{i+1} - x_{i}\|^2\nonumber \\
& \quad - \sigma_{\min}^H \sum_{i=(n_k-1)q}^{k}\sum_{j=1}^m \|y_j^i-y_j^{i+1}\|^2+ \frac{\delta^2}{2b_1L} + \frac{18\delta^2}{\sigma^A_{\min}b_1\rho},
\end{align}
where the second inequality holds by the fact that
\begin{align}
\sum_{j=(n_k-1)q}^{k}\sum_{i=(n_k-1)q}^{k-1}\mathbb{E}\|x_{i+1}-x_i\|^2 \leq \sum_{j=(n_k-1)q}^{k}\sum_{i=(n_k-1)q}^{k}\mathbb{E}\|x_{i+1}-x_i\|^2 \leq q\sum_{i=(n_k-1)q}^{k}\mathbb{E}\|x_{i+1}-x_i\|^2. \nonumber
\end{align}

Since $b_2=q$, we have
\begin{align}
 \chi & = \frac{\sigma_{\min}(G)}{\eta}+\frac{\rho \sigma^A_{\min}}{2}-L - \frac{6\kappa_A\sigma^2_{\max}(G)}{\sigma^A_{\min}\eta^2\rho}
 -\frac{9L^2}{\sigma^A_{\min}\rho} - \frac{2\kappa_AL^2}{\sigma^A_{\min}\rho b_2} -\frac{Lq}{2b_2}  - \frac{18L^2q}{\sigma^A_{\min}\rho b_2} \nonumber \\
 & = \underbrace{\frac{\sigma_{\min}(G)}{\eta} - \frac{3L}{2}}_{L_1} + \underbrace{\frac{\rho \sigma^A_{\min}}{2} - \frac{6\kappa_A\sigma^2_{\max}(G)}{\sigma^A_{\min}\eta^2\rho}
 -\frac{27L^2}{\sigma^A_{\min}\rho}- \frac{2\kappa_AL^2}{\sigma^A_{\min}\rho}}_{L_2}.
\end{align}
Given $0< \eta \leq \frac{2\sigma_{\min}(G)}{3L}$, we have $L_1\geq 0$. Further, let $\eta = \frac{2\alpha\sigma_{\min}(G)}{3L} \ (0<\alpha \leq 1)$ and $\rho = \frac{\sqrt{170\kappa_A}\kappa_GL}{\sigma^A_{\min}\alpha}$,
we have
\begin{align}
L_2 & = \frac{\rho \sigma^A_{\min}}{2} - \frac{6\kappa_A\sigma^2_{\max}(G)}{\sigma^A_{\min}\eta^2\rho}
 -\frac{27L^2}{\sigma^A_{\min}\rho} - \frac{2\kappa_AL^2}{\sigma^A_{\min}\rho }\nonumber \\
 & = \frac{\rho \sigma^A_{\min}}{2} - \frac{27\kappa_A\kappa^2_GL^2}{2\sigma^A_{\min}\rho\alpha^2}
 -\frac{27L^2}{\sigma^A_{\min}\rho} - \frac{2\kappa_AL^2}{\sigma^A_{\min}\rho }\nonumber \\
 & \geq \frac{\rho \sigma^A_{\min}}{2} - \frac{27\kappa_A\kappa^2_GL^2}{2\sigma^A_{\min}\rho\alpha^2}
 -\frac{27\kappa_A\kappa^2_GL^2}{\sigma^A_{\min}\rho\alpha^2} - \frac{2\kappa_A\kappa^2_GL^2}{\sigma^A_{\min}\rho \alpha^2}\nonumber \\
 & = \frac{\rho \sigma^A_{\min}}{4} + \underbrace{\frac{\rho \sigma^A_{\min}}{4} - \frac{85\kappa_A\kappa^2_GL^2}{2\sigma^A_{\min}\rho\alpha^2}}_{\geq 0} \nonumber \\
 & \geq \frac{\sqrt{170\kappa_A}\kappa_GL}{4\alpha},
\end{align}
where the first inequality follows by $\kappa_G \geq 1$, $\kappa_A \geq 1$ and $0 < \alpha \leq 1$; and the third equality holds by $\rho = \frac{\sqrt{170\kappa_A}\kappa_GL}{\sigma^A_{\min}\alpha}$.
It follows that $\chi \geq \frac{\sqrt{170\kappa_A}\kappa_GL}{4\alpha}$.

Using \eqref{eq:A16}, we have
\begin{align}
 z_{k+1} = (A^T)^+(v_{k} + \frac{G}{\eta}(x_{k+1}-x_k)),
\end{align}
where $(A^T)^+$ is the pseudoinverse of $A^T$. Due to that $A$ is full row rank,
we have $(A^T)^+ = (AA^T)^{-1}A$.
It follows that $\sigma_{\max}((A^T)^+)^T(A^T)^+)\leq \frac{\sigma_{\max}^A}{(\sigma^A_{\min})^2}=\frac{\kappa_A}{\sigma^A_{\min}}$.

Then we have
\begin{align}
\mathcal{L}_{\rho} (x_{k+1},y_{[m]}^{k+1},z_{k+1})
& = f(x_{k+1}) + \sum_{j=1}^mg_j(y_j^{k+1}) - z_{k+1}^T(Ax_{k+1} + \sum_{j=1}^mB_jy_j^{k+1} -c) + \frac{\rho}{2}\|Ax_{k+1} + \sum_{j=1}^mB_jy_j^{k+1} -c\|^2 \nonumber \\
& = f(x_{k+1}) + \sum_{j=1}^mg_j(y_j^{k+1}) -  \langle(A^T)^+(v_k + \frac{G}{\eta}(x_{k+1}-x_k)), Ax_{k+1} + \sum_{j=1}^mB_jy_j^{k+1} -c\rangle \nonumber \\
& \quad + \frac{\rho}{2}\|Ax_{k+1} + \sum_{j=1}^mB_jy_j^{k+1} -c\|^2 \nonumber \\
& = f(x_{k+1}) + \sum_{j=1}^mg_j(y_j^{k+1}) - \langle(A^T)^+(v_{k} - \nabla f(x_{k}) + \nabla f(x_{k})+ \frac{G}{\eta}(x_{k+1}-x_k)), Ax_{k+1} + \sum_{j=1}^mB_jy_j^{k+1} -c\rangle  \nonumber \\
& \quad +  \frac{\rho}{2}\|Ax_{k+1} + \sum_{j=1}^mB_jy_j^{k+1} -c\|^2 \nonumber \\
& \geq f(x_{k+1}) + \sum_{j=1}^mg_j(y_j^{k+1}) - \frac{2\kappa_A}{\sigma^A_{\min}\rho}\|v_k - \nabla f(x_{k})\|^2 - \frac{2\kappa_A}{\sigma^A_{\min}\rho}\|\nabla f(x_{k})\|^2
- \frac{2\kappa_A\sigma^2_{\max}(G)}{\sigma^A_{\min}\eta^2\rho}\|x_{k+1}-x_k\|^2 \nonumber \\
& \quad + \frac{\rho}{8}\|Ax_{k+1} + \sum_{j=1}^mB_jy_j^{k+1} -c\|^2 \nonumber\\
& \geq f(x_{k+1}) + \sum_{j=1}^mg_j(y_j^{k+1}) - \frac{2\kappa_AL^2}{\sigma^A_{\min}b_2\rho} \sum_{i=(n_k-1)q}^{k-1}\mathbb{E}\|x_{i+1}-x_i\|^2
- \frac{4\kappa_A\delta^2}{\sigma^A_{\min}b_1\rho}- \frac{2\kappa_A\delta^2}{\sigma^A_{\min}\rho} \nonumber \\
& \quad - \frac{2\kappa_A\sigma^2_{\max}(G)}{\sigma^A_{\min}\eta^2\rho}\|x_{k+1}-x_k\|^2
\end{align}
where the first inequality is obtained by applying $ \langle a, b\rangle \leq \frac{1}{2\beta}\|a\|^2 + \frac{\beta}{2}\|b\|^2$ to the terms
$\langle(A^T)^+(v_{k} - \nabla f(x_{k})), Ax_{k+1} + \sum_{j=1}^mB_jy_j^{k+1} -c\rangle$, $\langle(A^T)^+v_{k}, Ax_{k+1} + \sum_{j=1}^mB_jy_j^{k+1} -c\rangle $ and
$\langle(A^T)^+\frac{G}{\eta}(x_{k+1}-x_k), Ax_{k+1} + \sum_{j=1}^mB_jy_j^{k+1} -c\rangle$ with $\beta = \frac{\rho}{4}$, respectively.
The second inequality follows by the inequality \eqref{eq:A13} and Assumption 3.
Therefore, we have, for $k=0,1,2,\cdots$
\begin{align}
 \Phi_{k+1}\geq f^* + \sum_{j=1}^mg_j^* - \frac{2(2+b_1)\kappa_A\delta^2}{\sigma^A_{\min}\rho b_1}.
\end{align}
It follows that the function $\Phi_k$ is bounded from below. Let $\Phi^*$ denotes a low bound of function $\Phi_k$.

Further, telescoping equality \eqref{eq:A57} over $k$ from $0$ to $K$,
we have
\begin{align}
\mathbb{E} [\Phi_{K}] - \mathbb{E} [\Phi_{0}] & = (\mathbb{E} [\Phi_{q}] - \mathbb{E} [\Phi_{0}]) + (\mathbb{E} [\Phi_{2q}] - \mathbb{E} [\Phi_{q}]) + \cdots + (\mathbb{E} [\Phi_{K}] - \mathbb{E} [\Phi_{(n_k-1)q}]) \nonumber \\
& \leq -\sum_{i=0}^{q-1} ( \chi \|x_{i+1} - x_{i}\|^2 + \sigma_{\min}^H\sum_{j=1}^m \|y_j^i-y_j^{i+1}\|^2)-  \sum_{i=q}^{2q-1} (\chi \|x_{i+1} - x_{i}\|^2 + \sigma_{\min}^H\sum_{j=1}^m \|y_j^i-y_j^{i+1}\|^2) \nonumber \\
& - \cdots - \sum_{i=(n_k-1)q}^{K-1} ( \chi\|x_{i+1} - x_{i}\|^2 + \sigma_{\min}^H \sum_{j=1}^m \|y_j^i-y_j^{i+1}\|^2)
+ \frac{K\delta^2}{2b_1L} + \frac{K18\delta^2}{\sigma^A_{\min}b_1\rho} \nonumber \\
& = -\sum_{i=0}^{K-1} ( \chi \|x_{i+1} - x_{i}\|^2 + \sigma_{\min}^H\sum_{j=1}^m \|y_j^i-y_j^{i+1}\|^2) + \frac{K\delta^2}{2b_1L} + \frac{K18\delta^2}{\sigma^A_{\min}b_1\rho}.
\end{align}
Thus, the above inequality implies that
\begin{align} \label{eq:A63}
 \frac{1}{K}\sum_{i=0}^{K-1} ( \|x_{i+1} - x_{i}\|^2 + \sum_{j=1}^m \|y_j^i-y_j^{i+1}\|^2) \leq \frac{\mathbb{E} [\Phi_{0}]-\Phi^*}{K\gamma} + \frac{\delta^2}{2b_1L\gamma} + \frac{18\delta^2}{\sigma^A_{\min}b_1\rho\gamma},
\end{align}
where $\gamma = \min(\chi, \sigma_{\min}^H)$ and $\chi \geq \frac{\sqrt{170\kappa_A}\kappa_GL}{4\alpha}$.

\end{proof}

\begin{theorem}
 Suppose the sequence $\{x_k,y_{[m]}k,z_k)_{k=1}^K$ is generated from Algorithm \ref{alg:4}, and let $b_2=q=\sqrt{b_1}$, $\eta = \frac{2\alpha\sigma_{\min}(G)}{3L} \ (0<\alpha \leq 1)$,
 $\rho = \frac{\sqrt{170\kappa_A}\kappa_GL}{\sigma^A_{\min}\alpha}$, and
  \begin{align}
   \nu_1 = m\big(\rho^2\sigma^B_{\max}\sigma^A_{\max} + \rho^2(\sigma^B_{\max})^2 + \sigma^2_{\max}(H)\big), \ \nu_2 = 3(L^2+ \frac{\sigma^2_{\max}(G)}{\eta^2}),
   \ \nu_3 = \frac{18 L^2 }{\sigma^A_{\min} \rho^2} + \frac{3\sigma^2_{\max}(G) }{\sigma^A_{\min}\eta^2\rho^2},
 \end{align}
 then we have
 \begin{align}
  \frac{1}{K}\sum_{k=1}^K \mathbb{E}\big[ \mbox{dist}(0,\partial L(x_k,y_{[m]}^k,z_k))^2\big] \leq \frac{\nu_{\max}}{K}\sum_{k=1}^{K-1} \theta_k \leq  \frac{3\nu_{\max}(\Phi_{0}-\Phi^*)}{K\gamma}
  + \frac{3\nu_{\max}\delta^2}{b_1\gamma}(\frac{1}{2L}+\frac{18}{\sigma^A_{\min}\rho}),
 \end{align}
 where $\gamma \geq \frac{\sqrt{170\kappa_A}\kappa_GL}{4\alpha}$, $\nu_{\max}=\max\{\nu_1,\nu_2,\nu_3\}$ and
 $\Phi^*$ is a lower bound of the function $\Phi_k$.
 It implies that $K$ and $b_1$ satisfy
 \begin{align}
 K = \frac{6\nu_{\max}(\Phi_{0}-\Phi^*)}{\epsilon \gamma}, \quad    b_1 = \frac{6\nu_{\max}\delta^2}{\epsilon\gamma}(\frac{1}{2L}+\frac{18\alpha}{\sqrt{170\kappa_A}\kappa_G L})\nonumber
 \end{align}
 then $(x_{k^*},y_{[m]}^{k^*},z_{k^*})$ is an $\epsilon$-approximate stationary point of \eqref{eq:2}, where $k^* = \mathop{\arg\min}_{k}\theta_{k}$.
\end{theorem}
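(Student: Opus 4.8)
The plan is to follow, nearly verbatim, the structure of the finite-sum SPIDER-ADMM theorem proved above; the only genuinely new ingredient is that every use of the SPIDER variance estimate now carries the extra floor $\delta^2/b_1$ supplied by \eqref{eq:A13} (equivalently, by Assumption 2). Write $\theta_k=\mathbb{E}\big[\|x_{k+1}-x_k\|^2+\|x_k-x_{k-1}\|^2+\tfrac1q\sum_{i=(n_k-1)q}^k\|x_{i+1}-x_i\|^2+\sum_{j=1}^m\|y_j^k-y_j^{k+1}\|^2\big]$. The first step is to bound each of the three blocks of $\partial L$ in Definition \ref{def:1}, evaluated at the iterate produced at step $k+1$ of Algorithm \ref{alg:4}, by a constant multiple of $\theta_k$ plus an $O(\delta^2/b_1)$ term.

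For the $y_j$-block, the proximal optimality at step~9, after subtracting $B_j^Tz_{k+1}$, shows that an element of $\partial_{y_j}L(x_{k+1},y_{[m]}^{k+1},z_{k+1})$ equals $\rho B_j^TA(x_{k+1}-x_k)+\rho B_j^T\sum_{i=j+1}^mB_i(y_i^{k+1}-y_i^k)-H_j(y_j^{k+1}-y_j^k)$, so $\|\tfrac1n\sum z_i\|^2\le\tfrac1n\sum\|z_i\|^2$ together with the eigenvalue definitions bounds its squared norm by $\nu_1\theta_k$. For the $x$-block, combining the optimality condition at step~10 with the dual update gives the identity \eqref{eq:A16}, $A^Tz_{k+1}=v_k+\tfrac G\eta(x_{k+1}-x_k)$, hence $\nabla_xL(x_{k+1},y_{[m]}^{k+1},z_{k+1})=v_k-\nabla f(x_{k+1})-\tfrac G\eta(x_k-x_{k+1})=(v_k-\nabla f(x_k))+(\nabla f(x_k)-\nabla f(x_{k+1}))-\tfrac G\eta(x_k-x_{k+1})$; squaring, using Assumption~1 and \eqref{eq:A13} with $b_2=q$, bounds its squared norm by $2\nu_2\theta_k+O(\delta^2/b_1)$. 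For the $z$-block the feasibility residual equals $\rho^{-2}\|z_{k+1}-z_k\|^2$, which Lemma \ref{lem:A5} bounds by $\nu_3\theta_k+O(\delta^2/b_1)$. Summing the three blocks yields $\tfrac1K\sum_{k=1}^K\mathbb{E}\big[\mathrm{dist}(0,\partial L(x_k,y_{[m]}^k,z_k))^2\big]\le\tfrac{\nu_{\max}}{K}\sum_{k=1}^{K-1}\theta_k+O(\delta^2/b_1)$, the residual $O(\delta^2/b_1)$ being of the same order as, hence absorbable into, the term produced next.

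Next I would control $\tfrac1K\sum_k\theta_k$. An index shift gives $\sum_k\|x_k-x_{k-1}\|^2\le\|x_1-x_0\|^2+\sum_k\|x_{k+1}-x_k\|^2$, and since each $\|x_{i+1}-x_i\|^2$ occurs in at most $q$ of the sums $\tfrac1q\sum_{i=(n_k-1)q}^k$ with weight $1/q$ apiece, $\sum_k\tfrac1q\sum_{i=(n_k-1)q}^k\|x_{i+1}-x_i\|^2\le\sum_k\|x_{k+1}-x_k\|^2$; hence $\tfrac1K\sum_k\theta_k\le\tfrac3K\sum_k\big(\|x_{k+1}-x_k\|^2+\sum_j\|y_j^k-y_j^{k+1}\|^2\big)$, and Lemma \ref{lem:A6} (inequality \eqref{eq:A63}) bounds the right side by $\tfrac{3(\Phi_0-\Phi^*)}{K\gamma}+\tfrac{3\delta^2}{2b_1L\gamma}+\tfrac{54\delta^2}{\sigma^A_{\min}b_1\rho\gamma}$. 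Multiplying by $\nu_{\max}$ and regrouping gives exactly the stated two-sided inequality $\tfrac{3\nu_{\max}(\Phi_0-\Phi^*)}{K\gamma}+\tfrac{3\nu_{\max}\delta^2}{b_1\gamma}\big(\tfrac1{2L}+\tfrac{18}{\sigma^A_{\min}\rho}\big)$. As in the finite-sum case one then checks that, with $\eta=\tfrac{2\alpha\sigma_{\min}(G)}{3L}$ and $\rho=\tfrac{\sqrt{170\kappa_A}\kappa_GL}{\sigma^A_{\min}\alpha}$, both $\nu_{\max}$ and $\gamma=\min(\chi,\sigma^H_{\min})$ are $O(1)$, independent of $K$ and $b_1$.

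For the implication I split the target in half: taking $K=\tfrac{6\nu_{\max}(\Phi_0-\Phi^*)}{\epsilon\gamma}$ makes the first term at most $\epsilon/2$, and taking $b_1=\tfrac{6\nu_{\max}\delta^2}{\epsilon\gamma}\big(\tfrac1{2L}+\tfrac{18\alpha}{\sqrt{170\kappa_A}\kappa_GL}\big)$ — using $\tfrac{18}{\sigma^A_{\min}\rho}=\tfrac{18\alpha}{\sqrt{170\kappa_A}\kappa_GL}$ — makes the second at most $\epsilon/2$, so $\tfrac1K\sum_k\mathbb{E}[\mathrm{dist}(0,\partial L)^2]\le\epsilon$. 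Since $k^*=\mathop{\arg\min}_k\theta_k$ and, by the block bounds, $\mathbb{E}\big[\mathrm{dist}(0,\partial L(x_{k^*},y_{[m]}^{k^*},z_{k^*}))^2\big]\le\nu_{\max}\theta_{k^*}\le\tfrac{\nu_{\max}}{K}\sum_k\theta_k\le\epsilon$, Definition \ref{def:1} yields the claim. The conceptual skeleton being identical to the finite-sum theorem, the main obstacle is bookkeeping: faithfully propagating the variance floor $\delta^2/b_1$ from \eqref{eq:A13} (itself a consequence of Assumption~2) through all three block bounds and through the Lyapunov descent of Lemma \ref{lem:A6} — whose lower bound $\Phi^*$ also relies on Assumption~2 — and confirming that this floor only inflates constants, so that the batch/iteration trade-off $b_1=O(\epsilon^{-1})$, $q=b_2=\sqrt{b_1}=O(\epsilon^{-1/2})$, $K=O(\epsilon^{-1})$ gives the $\mathcal{O}(\epsilon^{-3/2})$ IFO complexity.
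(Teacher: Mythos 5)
Your proposal is correct and follows essentially the same route as the paper's own proof: bound each of the three blocks of $\partial L$ by a constant multiple of $\theta_k$ (plus the variance floor), reduce $\frac{1}{K}\sum_k\theta_k$ to the Lyapunov telescoping bound of Lemma \ref{lem:A6} via the index-shift and the $q$-fold-counting argument, and split $\epsilon$ between the $K$-term and the $b_1$-term. If anything you are more careful than the paper's appendix proof, which silently drops the $O(\delta^2/b_1)$ floor arising from \eqref{eq:A13} in the $x$- and $z$-block distance bounds (the main-text version of this theorem restores it as the $w/b_1$ term); your observation that this floor only perturbs constants and leaves the $b_1=\mathcal{O}(\epsilon^{-1})$, $K=\mathcal{O}(\epsilon^{-1})$ trade-off intact is exactly right.
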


\begin{proof}
We begin with defining a useful variable $\theta_k=\|x_{k+1}-x_{k}\|^2+\|x_{k}-x_{k-1}\|^2+\frac{1}{q}\sum_{i=(n_k-1)q}^k \|x_{i+1}-x_i\|^2 + \sum_{j=1}^m\|y_j^k - y_j^{k+1}\|^2$.
Next, by the optimal condition of the step 9 in Algorithm \ref{alg:4}, we have, for all $i\in [m]$
\begin{align}
  \mathbb{E}\big[\mbox{dist}(0,\partial_{y_j} L(x,y_{[m]},z))^2\big]_{k+1} & = \mathbb{E}\big[\mbox{dist} (0, \partial g_j(y_j^{k+1})-B_j^Tz_{k+1})^2\big] \nonumber \\
 & = \|B_j^Tz_k -\rho B_j^T(Ax_k + \sum_{i=1}^jB_iy_i^{k+1} + \sum_{i=j+1}^mB_iy_i^{k} -c) - H_j(y_j^{k+1}-y_j^k) -B_j^Tz_{k+1}\|^2 \nonumber \\
 & = \|\rho B_j^TA(x_{k+1}-x_{k}) + \rho B_j^T \sum_{i=j+1}^m B_i (y_i^{k+1}-y_i^{k})- H_j(y_j^{k+1}-y_j^k) \|^2 \nonumber \\
 & \leq m\rho^2\sigma^{B_j}_{\max}\sigma^A_{\max}\|x_{k+1}-x_k\|^2 + m\rho^2\sigma^{B_j}_{\max}\sum_{i=j+1}^m \sigma^{B_i}_{\max}\|y_i^{k+1}-y_i^{k}\|^2 + m\sigma^2_{\max}(H_j)\|y_j^{k+1}-y_j^k\|^2\nonumber \\
 & \leq m\big(\rho^2\sigma^B_{\max}\sigma^A_{\max} + \rho^2(\sigma^B_{\max})^2 + \sigma^2_{\max}(H)\big) \theta_{k},
\end{align}
where the first inequality follows by the inequality $\|\frac{1}{n}\sum_{i=1}^n z_i\|^2 \leq \frac{1}{n}\sum_{i=1}^n \|z_i\|^2$.

By the step 10 of Algorithm \ref{alg:4}, we have
\begin{align}
 \mathbb{E}\big[\mbox{dist}(0,\nabla_x L(x,y_{[m]},z))^2\big]_{k+1} & = \mathbb{E}\|A^Tz_{k+1}-\nabla f(x_{k+1})\|^2  \nonumber \\
 & = \mathbb{E}\|v_k - \nabla f(x_{k+1}) - \frac{G}{\eta} (x_k - x_{k+1})\|^2 \nonumber \\
 & = \mathbb{E}\|v_k - \nabla f(x_{k}) + \nabla f(x_{k})- \nabla f(x_{k+1}) - \frac{G}{\eta}(x_k-x_{k+1})\|^2  \nonumber \\
 & \leq \sum_{i=(n_k-1)q}^{k-1}\frac{3L^2}{b}\mathbb{E}\|x_{i+1}-x_i\|^2 + 3(L^2+ \frac{\sigma^2_{\max}(G)}{\eta^2})\|x_k-x_{k+1}\|^2  \nonumber \\
 & \leq 3(L^2+ \frac{\sigma^2_{\max}(G)}{\eta^2})\theta_{k},
\end{align}
where the second inequality holds by $b_2=q$.

By the step 11 of Algorithm \ref{alg:4}, we have
\begin{align}
 \mathbb{E}\big[\mbox{dist}(0,\nabla_z L(x,y_{[m]},z))^2\big]_{k+1} & = \mathbb{E}\| Ax_{k+1}+\sum_{j=1}^mB_jy_j^{k+1}-c \|^2 \nonumber \\
 &= \frac{1}{\rho^2} \mathbb{E} \|z_{k+1}-z_k\|^2  \nonumber \\
 & \leq \frac{18L^2}{\sigma^A_{\min}b\rho^2} \sum_{i=(n_k-1)q}^{k-1}\mathbb{E}\|x_{i+1}-x_i\|^2
 + (\frac{9L^2}{\sigma^A_{\min}\rho^2}+\frac{3\sigma^2_{\max}(G)}{\sigma^A_{\min}\eta^2\rho^2})\|x_k-x_{k-1}\|^2 \nonumber \\
 & + \frac{3\sigma^2_{\max}(G)}{\sigma^A_{\min}\eta^2\rho^2}\|x_{k+1}-x_{k}\|^2 \nonumber \\
 & \leq \big( \frac{18 L^2 }{\sigma^A_{\min} \rho^2} + \frac{3\sigma^2_{\max}(G) }{\sigma^A_{\min}\eta^2\rho^2} \big) \theta_{k} ,
\end{align}
where the second inequality holds by $b_2=q$.

By \eqref{eq:A63}, we have
\begin{align} \label{eq:A63}
 \frac{1}{K}\sum_{i=0}^{K-1} ( \|x_{i+1} - x_{i}\|^2 + \sum_{j=1}^m \|y_j^i-y_j^{i+1}\|^2) \leq \frac{\mathbb{E} [\Phi_{0}]-\Phi^*}{K\gamma} + \frac{\delta^2}{2b_1L\gamma} + \frac{18\delta^2}{\sigma^A_{\min}b_1\rho\gamma},
\end{align}
where $\gamma = \min(\chi, \sigma_{\min}^H)$ and $\chi \geq \frac{\sqrt{170\kappa_A}\kappa_GL}{4\alpha}$. Since
\begin{align}
\sum_{k=0}^{K-1}\sum_{i=(n_k-1)q}^k \|x_{i+1}-x_i\|^2 \leq q\sum_{k=0}^{K-1} \|x_{k+1}-x_k\|^2
\end{align}
we have
\begin{align}
  \frac{1}{K}\sum_{k=1}^K\mathbb{E}\big[ \mbox{dist}(0,\partial L(x_k,y_{[m]}^k,z_k))^2\big] \leq \frac{\nu_{\max}}{K}\sum_{k=1}^{K-1} \theta_k \leq  \frac{3\nu_{\max}(\Phi_{0}-\Phi^*)}{K\gamma} + \frac{3\nu_{\max}\delta^2}{b_1\gamma}(\frac{1}{2L} + \frac{18 }{\sigma^A_{\min}\rho}),
\end{align}
where $\gamma \geq \frac{\sqrt{170\kappa_A}\kappa_GL}{4\alpha}$ and $\nu_{\max}=\max\{\nu_1,\nu_2,\nu_3\}$ with
\begin{align}
   \nu_1 =  m\big(\rho^2\sigma^B_{\max}\sigma^A_{\max} + \rho^2(\sigma^B_{\max})^2 + \sigma^2_{\max}(H)\big), \ \nu_2 = 3(L^2+ \frac{\sigma^2_{\max}(G)}{\eta^2}),
   \ \nu_3 = \frac{18 L^2 }{\sigma^A_{\min} \rho^2} + \frac{3\sigma^2_{\max}(G) }{\sigma^A_{\min}\eta^2\rho^2}.
 \end{align}
Given  $\eta = \frac{2\alpha\sigma_{\min}(G)}{3L} \ (0<\alpha \leq 1)$ and $\rho = \frac{\sqrt{170\kappa_A}\kappa_GL}{\sigma^A_{\min}\alpha}$, since $m$ is relatively small,
it easy verifies that $\gamma = O(1)$
and $\nu_{\max}=O(1)$, which are independent on $b_1$ and $K$. Thus, we obtain
\begin{align}
  \frac{1}{K}\sum_{k=1}^K \mathbb{E}\big[ \mbox{dist}(0,\partial L(x_k,y_{[m]}^k,z_k))^2\big] \leq O(\frac{1}{K}) + O(\frac{1}{b_1}).
\end{align}

\end{proof}

\newpage

\subsection{ Theoretical Analysis of the non-convex SVRG-ADMM}
\label{Appendix:A3}
In this subsection, we first extend the existing nonconvex SVRG-ADMM \cite{zheng2016stochastic,huang2016stochastic} to the multi-blocks setting for solving the problem \eqref{eq:2},
which is summarized in Algorithm \ref{alg:3}. Then we afresh study the convergence analysis of this non-convex SVRG-ADMM.

\begin{algorithm}[htb]
   \caption{ SVRG-ADMM for Nonconvex Optimization}
   \label{alg:3}
\begin{algorithmic}[1]
   \STATE {\bfseries Input:} $M$, $T$, $S=[T/M]$, $\rho>0$ and $H_j\succ0$ for all $j\in [m]$;
   \STATE {\bfseries Initialize:} $x_0^1$, $\tilde{x}^1=x_0^1$, $z_0^1$ and $y_j^{0,1}$ for all $j\in [m]$;
   \FOR {$s=1,2,\cdots,S$}
   \STATE{} $\nabla f(\tilde{x}^{s})=\frac{1}{n}\sum_{i=1}^n\nabla f_i(\tilde{x}^{s})$;
   \FOR {$t=0,1,\cdots,M-1$}
   \STATE{} Uniformly random pick a mini-batch $\mathcal{I}_t$ (with replacement) from $\{1,2,\cdots,n\}$ with $|\mathcal{I}_t|=b$, and compute
            $$v_{t}^{s} = \nabla f_{\mathcal{I}_t}(x_{t}^{s})-\nabla f_{\mathcal{I}_t}(\tilde{x}^s)+\nabla f(\tilde{x}^s);$$
   \STATE{} $ y^{s,t+1}_j= \arg\min_{y_j} \mathcal {L}_{\rho}(x^s_t,y^{s,t+1}_{[j-1]},y_j,y^{s,t}_{[j+1:m]},z_t) + \frac{1}{2}\|y_j-y^{s,t}_j\|_{H_j}^2$ for all $j\in [m]$;
   \STATE{} $x^{s}_{t+1} = \arg\min_x \hat{\mathcal {L}}_{\rho}\big(x,y^s_{t+1}, z_t^s, v_{t}^{s}\big)$;
   \STATE{} $z_{t+1}^{s} = z_{t}^{s}-\rho(Ax_{t+1}^{s} + \sum_{j=1}^mB_jy_j^{s,t+1}-c)$;
   \ENDFOR
   \STATE{} $\tilde{x}^{s+1}=x_0^{s+1}=x_{M}^{s}$, $y_j^{s+1,0}=y_j^{s,M}$ for all $j\in [m]$, $z_0^{s+1}=z_{M}^{s}$;
   \ENDFOR
   \STATE {\bfseries Output \ (in theory):} Chosen uniformly random from $\{(x_{t}^s,y_{[m]}^{s,t},z_{t}^s)_{t=1}^M\}_{s=1}^S$.
   \STATE {\bfseries Output \ (in practice):} $\{x_{T}^S,y_{[m]}^{S,T},z_{T}^S\}$.
\end{algorithmic}
\end{algorithm}

\begin{lemma} \label{lem:A9}
Suppose the sequence $\big\{(x^s_t,y_{[m]}^{s,t},z^s_t)_{t=1}^M\big\}_{s=1}^S$ is generated by Algorithm \ref{alg:3}. The following inequality holds
 \begin{align}
 \mathbb{E}\|z^s_{t+1}-z^s_{t}\|^2 \leq & \frac{9L^2 }{\sigma^A_{\min} b} \big( \|x^s_t - \tilde{x}^s\|^2
 + \|x^s_{t-1} - \tilde{x}^s\|^2\big)
  + \frac{3\sigma^2_{\max}(G)}{\sigma^A_{\min}\eta^2}\|x^s_{t+1}-x^s_t\|^2 \nonumber \\
  & + (\frac{3\sigma^2_{\max}(G)}{\sigma^A_{\min}\eta^2}+\frac{9L^2}{\sigma^A_{\min}})\|x^s_{t}-x^s_{t-1}\|^2.
 \end{align}
\end{lemma}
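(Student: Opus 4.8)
The plan is to mirror the argument of Lemma \ref{lem:A3}, replacing the SPIDER variance control by its SVRG counterpart. First I would write the first-order optimality condition of the $x$-update (step 8 of Algorithm \ref{alg:3}): since $x_{t+1}^s$ minimizes $\hat{\mathcal{L}}_\rho(x,y_{t+1}^s,z_t^s,v_t^s)$,
\[
 v_t^s + \frac{G}{\eta}(x_{t+1}^s-x_t^s) - A^Tz_t^s + \rho A^T\big(Ax_{t+1}^s + \textstyle\sum_{j=1}^m B_jy_j^{s,t+1}-c\big)=0,
\]
and substituting the dual update (step 9) $z_{t+1}^s = z_t^s - \rho(Ax_{t+1}^s+\sum_j B_jy_j^{s,t+1}-c)$ collapses this to $A^Tz_{t+1}^s = v_t^s + \frac{G}{\eta}(x_{t+1}^s-x_t^s)$. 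Differencing consecutive iterates gives $A^T(z_{t+1}^s-z_t^s) = (v_t^s-v_{t-1}^s) + \frac{G}{\eta}(x_{t+1}^s-x_t^s) - \frac{G}{\eta}(x_t^s-x_{t-1}^s)$.

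Because $A$ has full column rank, $\sigma^A_{\min}\|z_{t+1}^s-z_t^s\|^2 \le \|A^T(z_{t+1}^s-z_t^s)\|^2$, and Jensen's inequality $\|a+b+c\|^2\le 3(\|a\|^2+\|b\|^2+\|c\|^2)$ splits the right-hand side into a gradient-gap term $\tfrac{3}{\sigma^A_{\min}}\mathbb{E}\|v_t^s-v_{t-1}^s\|^2$ plus $\tfrac{3\sigma^2_{\max}(G)}{\sigma^A_{\min}\eta^2}\|x_{t+1}^s-x_t^s\|^2$ and $\tfrac{3\sigma^2_{\max}(G)}{\sigma^A_{\min}\eta^2}\|x_t^s-x_{t-1}^s\|^2$. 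To control $\mathbb{E}\|v_t^s-v_{t-1}^s\|^2$ I would insert $\pm\nabla f(x_t^s)$, $\pm\nabla f(x_{t-1}^s)$ and apply Jensen again to get $3\mathbb{E}\|v_t^s-\nabla f(x_t^s)\|^2 + 3\|\nabla f(x_t^s)-\nabla f(x_{t-1}^s)\|^2 + 3\mathbb{E}\|v_{t-1}^s-\nabla f(x_{t-1}^s)\|^2$; the middle term is $\le 3L^2\|x_t^s-x_{t-1}^s\|^2$ by Assumption 1, while the outer two are the standard SVRG estimator variances: for the with-replacement minibatch estimator $v_t^s=\nabla f_{\mathcal{I}_t}(x_t^s)-\nabla f_{\mathcal{I}_t}(\tilde{x}^s)+\nabla f(\tilde{x}^s)$ one has $\mathbb{E}\|v_t^s-\nabla f(x_t^s)\|^2 \le \tfrac{1}{b}\mathbb{E}\|\nabla f_i(x_t^s)-\nabla f_i(\tilde{x}^s)\|^2 \le \tfrac{L^2}{b}\|x_t^s-\tilde{x}^s\|^2$, using $\mathbb{E}\|X-\mathbb{E}X\|^2\le\mathbb{E}\|X\|^2$ and $L$-smoothness of each $f_i$.

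Putting the pieces together, $\tfrac{3}{\sigma^A_{\min}}\mathbb{E}\|v_t^s-v_{t-1}^s\|^2 \le \tfrac{9L^2}{\sigma^A_{\min}b}\big(\|x_t^s-\tilde{x}^s\|^2+\|x_{t-1}^s-\tilde{x}^s\|^2\big) + \tfrac{9L^2}{\sigma^A_{\min}}\|x_t^s-x_{t-1}^s\|^2$, and folding in the two displacement terms yields exactly the stated bound, with coefficient $\tfrac{3\sigma^2_{\max}(G)}{\sigma^A_{\min}\eta^2}+\tfrac{9L^2}{\sigma^A_{\min}}$ on $\|x_t^s-x_{t-1}^s\|^2$. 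I do not anticipate a real obstacle here: the estimate is purely per-step (the Lyapunov function and the choices $M=n^{1/3}$, $b=n^{2/3}$ play no role), and the only points needing care are bookkeeping the constants (the three factors $3$ that combine into the $9$) and the boundary index $t=0$, where $x_{t-1}^s$ and $v_{t-1}^s$ should be read as the last inner iterate of the previous epoch, consistent with the warm-start in step 11.
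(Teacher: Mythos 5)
Your proposal is correct and follows essentially the same route as the paper's proof: the optimality condition of the $x$-update combined with the dual update to get $A^Tz^s_{t+1}=v^s_t+\frac{G}{\eta}(x^s_{t+1}-x^s_t)$, differencing, the three-term Jensen split, and the $\pm\nabla f$ insertion with the SVRG variance bound $\mathbb{E}\|v^s_t-\nabla f(x^s_t)\|^2\le \frac{L^2}{b}\|x^s_t-\tilde{x}^s\|^2$ (which the paper cites from Reddi et al.\ rather than rederiving). The constants and the final bound match exactly.
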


\begin{proof}
 Using the optimal condition for the step 8 of Algorithm \ref{alg:3}, we have
 \begin{align}
   v^s_t + \frac{1}{\eta}G(x^s_{t+1}-x^s_t) - A^Tz^s_t + \rho A^T(Ax^s_{t+1} + \sum_{j=1}^mB_jy_j^{s,t+1}-c) = 0,
 \end{align}
 By the step 10 of Algorithm \ref{alg:3}, we have
 \begin{align} \label{eq:A65}
  A^Tz^s_{t+1} = v^s_t + \frac{1}{\eta}G(x^s_{t+1}-x^s_t).
 \end{align}
 Since
 \begin{align}
 A^T(z^s_{t+1}-z^s_t) = v^s_t - v^s_{t-1} + \frac{G}{\eta}(x^s_{t+1}-x^s_t) - \frac{G}{\eta}(x^s_{t}-x^s_{t-1}),
 \end{align}
 then we have
 \begin{align} \label{eq:A67}
 \|z^s_{t+1}-z^s_t\|^2
 \leq \frac{1}{\sigma^A_{\min}}\big[3\|v^s_t - v^s_{t-1}\|^2 +\frac{3\sigma^2_{\max}(G)}{\eta^2}\|x^s_{t+1}-x^s_t\|^2
  + \frac{3\sigma^2_{\max}(G)}{\eta^2}\|x^s_{t}-x^s_{t-1}\|^2 \big] .
 \end{align}

 Next, considering the upper bound of $\|v^s_t - v^s_{t-1}\|^2$, we have
 \begin{align} \label{eq:A68}
  \|v^s_t - v^s_{t-1}\|^2 & = \|v^s_t - \nabla f(x^s_t) + \nabla f(x^s_t) -  \nabla f(x^s_{t-1}) + \nabla f(x^s_{t-1}) - v^s_{t-1}\|^2 \nonumber \\
  & \leq 3\|v^s_t - \nabla f(x^s_t)\|^2 + 3\|\nabla f(x^s_t) -  \nabla f(x^s_{t-1})\|^2 + 3\|\nabla f(x^s_{t-1}) - v^s_{t-1}\|^2 \nonumber \\
  & \leq \frac{3L^2}{b}\|x^s_t - \tilde{x}^s\|^2 + \frac{3L^2}{b}\|x^s_{t-1} - \tilde{x}^s\|^2 + 3L^2\|x^s_t - x^s_{t-1}\|^2
 \end{align}
 where the second inequality holds by Lemma 3 of \cite{Reddi2016Prox} and Assumption 1.
 Finally, combining \eqref{eq:A67} and \eqref{eq:A68}, we obtain the above result.
\end{proof}

\begin{lemma} \label{lem:A10}
 Suppose the sequence $\{(x^{s}_t,y_{[m]}^{s,t},z^{s}_t)_{t=1}^M\}_{s=1}^S$ is generated from Algorithm \ref{alg:3},
 and define a \emph{Lyapunov} function:
 \begin{align}
 \Gamma^s_t = \mathbb{E}\big[\mathcal{L}_{\rho} (x^s_t,y_{[m]}^{s,t},z^s_t) + (\frac{3\sigma^2_{\max}(G)}{\sigma^A_{\min}\eta^2\rho} + \frac{9L^2}{\sigma^A_{\min}\rho})\|x^s_{t}-x^s_{t-1}\|^2
 + \frac{9L^2 }{\sigma^A_{\min}\rho b}\|x^s_{t-1}-\tilde{x}^s\|^2 + c_t\|x^s_{t}-\tilde{x}^s\|^2\big],
 \end{align}
 where the positive sequence $\{c_t\}$ satisfies, for $s =1,2,\cdots,S$
 \begin{equation*}
  c_t= \left\{
  \begin{aligned}
  & \frac{18 L^2 }{\sigma^A_{\min}\rho b} +
     \frac{L}{b} + (1+\beta)c_{t+1}, \ 1 \leq t \leq M, \\
  & 0, \ t \geq M+1.
  \end{aligned}
  \right.\end{equation*}
Let $M=[n^{\frac{1}{3}}]$, $b=[n^{\frac{2}{3}}]$, $\eta = \frac{\alpha\sigma_{\min}(G)}{5L} \ (0< \alpha \leq 1)$ and
$\rho = \frac{2\sqrt{231}\kappa_G L}{\sigma^A_{\min}\alpha}$, we have
\begin{align}
\frac{1}{T}\sum_{s=1}^S \sum_{t=0}^{M-1} (\sigma_{\min}^H\sum_{j=1}^m \|y_j^{s,t}-y_j^{s,t+1}\|^2 + \frac{L}{2b}\|x^s_t-\tilde{x}^s\|^2_2 + \chi_t \|x^s_{t+1}-x^s_t\|^2) \leq \frac{\Gamma^1_0 - \Gamma^*}{T}  .
\end{align}
where $\Gamma^*$ denotes a low bound of $\Gamma^s_t$ and and $\chi_t \geq \frac{\sqrt{231}\kappa_G L}{2\alpha} > 0$.
\end{lemma}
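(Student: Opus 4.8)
The plan is to mirror the three-block argument used in the proof of Lemma~\ref{lem:A4}, replacing the SPIDER variance estimate by the SVRG one and carrying the proximal-point correction terms $c_t\|x^s_t-\tilde x^s\|^2$ through a Lyapunov recursion in the style of \citet{Reddi2016Prox}.

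First I would analyse one inner iteration of Algorithm~\ref{alg:3}. From the optimality condition of the $y_j$-subproblem, convexity of each $g_j$, and the identity $(a-b)^Tb=\tfrac12(\|a\|^2-\|b\|^2-\|a-b\|^2)$, telescoping over $j=1,\dots,m$ gives $\mathcal{L}_{\rho}(x^s_t,y^{s,t+1}_{[m]},z^s_t)\le\mathcal{L}_{\rho}(x^s_t,y^{s,t}_{[m]},z^s_t)-\sigma_{\min}^H\sum_{j=1}^m\|y^{s,t}_j-y^{s,t+1}_j\|^2$. For the $x$-step, combining the $L$-smoothness descent inequality (Assumption~1) with the optimality condition of the $x$-subproblem and then $a^Tb\le\tfrac1{2L}\|a\|^2+\tfrac L2\|b\|^2$ applied to $\langle x^s_t-x^s_{t+1},\,v^s_t-\nabla f(x^s_t)\rangle$ yields $\mathcal{L}_{\rho}(x^s_{t+1},y^{s,t+1}_{[m]},z^s_t)\le\mathcal{L}_{\rho}(x^s_t,y^{s,t+1}_{[m]},z^s_t)-\big(\tfrac{\sigma_{\min}(G)}{\eta}+\tfrac{\rho\sigma^A_{\min}}{2}-L\big)\|x^s_{t+1}-x^s_t\|^2+\tfrac1{2L}\mathbb{E}\|v^s_t-\nabla f(x^s_t)\|^2$, and the SVRG variance bound $\mathbb{E}\|v^s_t-\nabla f(x^s_t)\|^2\le\tfrac{L^2}{b}\|x^s_t-\tilde x^s\|^2$ (Lemma~3 of \citet{Reddi2016Prox}) turns the last term into $\tfrac{L}{2b}\|x^s_t-\tilde x^s\|^2$. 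For the dual step I would use $\mathcal{L}_{\rho}(x^s_{t+1},y^{s,t+1}_{[m]},z^s_{t+1})-\mathcal{L}_{\rho}(x^s_{t+1},y^{s,t+1}_{[m]},z^s_t)=\tfrac1\rho\|z^s_{t+1}-z^s_t\|^2$ together with Lemma~\ref{lem:A9}.

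Summing the three estimates produces a one-step bound on $\mathcal{L}_{\rho}$ whose right-hand side involves $\|x^s_{t+1}-x^s_t\|^2$, $\|x^s_t-x^s_{t-1}\|^2$, $\|x^s_t-\tilde x^s\|^2$ and $\|x^s_{t-1}-\tilde x^s\|^2$. I would then add the extra Lyapunov terms: the coefficients $\tfrac{3\sigma^2_{\max}(G)}{\sigma^A_{\min}\eta^2\rho}+\tfrac{9L^2}{\sigma^A_{\min}\rho}$ of $\|x^s_t-x^s_{t-1}\|^2$ and $\tfrac{9L^2}{\sigma^A_{\min}\rho b}$ of $\|x^s_{t-1}-\tilde x^s\|^2$ are chosen so that the ``lagging'' contributions from Lemma~\ref{lem:A9} cancel on telescoping, and the term $c_{t+1}\|x^s_{t+1}-\tilde x^s\|^2$ is split by Young's inequality $\|x^s_{t+1}-\tilde x^s\|^2\le(1+\beta)\|x^s_t-\tilde x^s\|^2+(1+\beta^{-1})\|x^s_{t+1}-x^s_t\|^2$. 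Matching the resulting coefficient of $\|x^s_t-\tilde x^s\|^2$ is exactly the recursion $c_t=\tfrac{18L^2}{\sigma^A_{\min}\rho b}+\tfrac Lb+(1+\beta)c_{t+1}$, which leaves the residual $-\tfrac{L}{2b}\|x^s_t-\tilde x^s\|^2$, while the coefficient of $\|x^s_{t+1}-x^s_t\|^2$ collapses to $-\chi_t$ with $\chi_t=\tfrac{\sigma_{\min}(G)}{\eta}+\tfrac{\rho\sigma^A_{\min}}{2}-L-\tfrac{6\sigma^2_{\max}(G)}{\sigma^A_{\min}\eta^2\rho}-\tfrac{9L^2}{\sigma^A_{\min}\rho}-(1+\beta^{-1})c_{t+1}$. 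Choosing $\beta$ of order $1/M$ and $b=M^2=n^{2/3}$ keeps $(1+\beta)^M\le e$, so $c_t=\mathcal{O}(L/M)$ uniformly in $t$ and $(1+\beta^{-1})c_{t+1}=\mathcal{O}(L)$; substituting $\eta=\tfrac{\alpha\sigma_{\min}(G)}{5L}$ and $\rho=\tfrac{2\sqrt{231}\kappa_GL}{\sigma^A_{\min}\alpha}$ then makes $\chi_t\ge\tfrac{\sqrt{231}\kappa_GL}{2\alpha}>0$, yielding $\Gamma^s_{t+1}\le\Gamma^s_t-\sigma_{\min}^H\sum_{j=1}^m\|y^{s,t}_j-y^{s,t+1}_j\|^2-\tfrac{L}{2b}\|x^s_t-\tilde x^s\|^2-\chi_t\|x^s_{t+1}-x^s_t\|^2$.

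To conclude, I would show $\Gamma^s_t\ge\Gamma^*$ independently of $T$ exactly as in Lemma~\ref{lem:A4}: from $A^Tz^s_{t+1}=v^s_t+\tfrac1\eta G(x^s_{t+1}-x^s_t)$, the same pseudoinverse bound used there, Young's inequality with parameter $\rho/4$, the variance bound once more, and Assumptions~2--3, one lower-bounds $\mathcal{L}_{\rho}(x^s_{t+1},y^{s,t+1}_{[m]},z^s_{t+1})$, and the positive Lyapunov coefficients dominate the remaining negative terms. Then I would telescope the one-step inequality over $t=0,\dots,M-1$ within epoch $s$ (using $c_M=0$ and $x^s_0=\tilde x^s$, so that the $t=0$ term vanishes) and sum over $s=1,\dots,S$, connecting consecutive epochs through the warm-start relations $\tilde x^{s+1}=x^s_M$, $x^{s+1}_0=x^s_M$, $z^{s+1}_0=z^s_M$, so that the whole double sum is controlled by $\Gamma^1_0-\Gamma^*$; dividing by $T=MS$ gives the claim. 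The step I expect to be the main obstacle is the joint control of the $c_t$ recursion: keeping $(1+\beta)^M$ bounded while still forcing $(1+\beta^{-1})c_{t+1}$ small enough that $\chi_t$ stays above $\tfrac{\sqrt{231}\kappa_GL}{2\alpha}$ — this is precisely where the specific constants ($5$, $2\sqrt{231}$) and the scalings $M=n^{1/3}$, $b=n^{2/3}$ are forced.
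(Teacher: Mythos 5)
Your proposal matches the paper's proof essentially step for step: the same three-part descent (the $y$-block telescoping, the $x$-step combined with the SVRG variance bound of Lemma 3 of \citet{Reddi2016Prox}, and the dual update controlled by Lemma \ref{lem:A9}), the same Young-inequality split of $c_{t+1}\|x^s_{t+1}-\tilde x^s\|^2$ producing the $c_t$ recursion with $\beta=1/M$, and the same parameter bookkeeping forcing $\chi_t\geq\frac{\sqrt{231}\kappa_G L}{2\alpha}$. The one step you under-specify is the epoch boundary: the interior one-step inequality does not directly telescope across epochs because the reference point $\tilde x^s$ and the coefficient $c_t$ both reset, so the paper proves a separate inequality $\Gamma^{s+1}_1\leq\Gamma^s_M-\sigma_{\min}^H\sum_j\|y_j^{s,M}-y_j^{s+1,1}\|^2-\frac{L}{2b}\|x^s_M-\tilde x^s\|^2-\chi_M\|x^{s+1}_1-x^s_M\|^2$ using $v^{s+1}_0=\nabla f(x^s_M)$ and the bound $\mathbb{E}\|v^{s+1}_0-v^s_M\|^2\leq\frac{L^2}{b}\|x^s_M-\tilde x^s\|^2$; your plan needs this explicitly (and note the boundary condition is $c_{M+1}=0$, not $c_M=0$). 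Your lower-bound argument for $\Gamma^*$ (pseudoinverse of $A^T$ plus Assumption 2, as in the SPIDER lemma) also differs from the paper's (monotonicity of $\Gamma^s_t$ combined with telescoping the $\pm\frac{1}{2\rho}\|z\|^2$ terms), but both routes are valid.
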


\begin{proof}

By the optimal condition of step 7 in Algorithm \ref{alg:3},
we have, for $j\in [m]$
\begin{align}
0 & =(y_j^{s,t}-y_j^{s,t+1})^T\big(\partial g_j(y_j^{s,t+1}) - B^Tz_t^s + \rho B^T(Ax_t^s + \sum_{i=1}^jB_iy_i^{s,t+1} + \sum_{i=j+1}^mB_iy_i^{s,t}-c) + H_j(y_j^{s,t+1}-y_j^{s,t})\big) \nonumber \\
& \leq g_j(y_j^{s,t})- g_j(y_j^{s,t+1}) - (z_t^s)^T(B_jy_j^{s,t}-B_jy_j^{s,t+1}) + \rho(By_j^{s,t}-By_j^{s,t+1})^T(Ax_t^s + \sum_{i=1}^jB_iy_i^{s,t+1} + \sum_{i=j+1}^mB_iy_i^{s,t}-c) \nonumber \\
& \quad - \|y_j^{s,t+1}-y_j^{s,t}\|^2_{H_j} \nonumber \\
& = g_j(y_j^{s,t})- g_j(y_j^{s,t+1}) - (z_t^s)^T(Ax_t^s+\sum_{i=1}^{j-1}B_iy_i^{s,t+1} + \sum_{i=j}^mB_iy_i^{s,t}-c) + (z_t^s)^T(Ax_t^s+\sum_{i=1}^jB_iy_i^{s,t+1}+ \sum_{i=j+1}^mB_iy_i^{s,t}-c) \nonumber \\
& \quad  + \frac{\rho}{2}\|Ax_t^s +\sum_{i=1}^{j-1}B_iy_i^{s,t+1} + \sum_{i=j}^mB_iy_i^{s,t}-c\|^2 - \frac{\rho}{2}\|Ax_t^s+\sum_{i=1}^jB_iy_i^{s,t+1}+ \sum_{i=j+1}^mB_iy_i^{s,t}-c\|^2
  - \|y_j^{s,t+1}-y_j^{s,t}\|^2_{H_j} \nonumber \\
& \quad  -\frac{\rho}{2}\|B_jy_j^{s,t}-B_jy_j^{s,t+1}\|^2  \nonumber \\
& \leq \underbrace{ f(x_t^s) + g_j(y_j^{s,t}) - (z_t^s)^T(Ax_t^s+\sum_{i=1}^{j-1}B_iy_i^{s,t+1} + \sum_{i=j}^mB_iy_i^{s,t}-c) + \frac{\rho}{2}\|Ax_t^s +\sum_{i=1}^{j-1}B_iy_i^{s,t+1} + \sum_{i=j}^mB_iy_i^{s,t}-c\|^2}_{\mathcal{L}_{\rho} (x_t^s,y_{[j-1]}^{s,t+1},y_{[j:m]}^{s,t},z_t^s)}  - \|y_j^{s,t+1}-y_j^{s,t}\|^2_{H_j}  \nonumber \\
& \quad -\underbrace{(f(x_t^s) + g_j(y_j^{s,t+1}) - (z_t^s)^T(Ax_t^s+\sum_{i=1}^jB_iy_i^{s,t+1}+ \sum_{i=j+1}^mB_iy_i^{s,t}-c) + \frac{\rho}{2}\|Ax_t^s+\sum_{i=1}^jB_iy_i^{s,t+1}+ \sum_{i=j+1}^mB_iy_i^{s,t}-c\|^2}_{\mathcal{L}_{\rho} (x_t^s,y_{[j]}^{s,t+1},y_{[j+1:m]}^{s,t},z_t^s)} \nonumber \\
& \leq \mathcal{L}_{\rho} (x_t^s,y_{[j-1]}^{s,t+1},y_{[j:m]}^{s,t},z_t^s) - \mathcal{L}_{\rho} (x_t^s,y_{[j]}^{s,t+1},y_{[j+1:m]}^{s,t},z_t^s)
- \sigma_{\min}(H_j)\|y_j^{s,t}-y_j^{s,t+1}\|^2,
\end{align}
where the first inequality holds by the convexity of function $g_j(y)$,
and the second equality follows by applying the equality
$(a-b)^Tb = \frac{1}{2}(\|a\|^2-\|b\|^2-\|a-b\|^2)$ on the term $(By_j^{s,t}-By_j^{s,t+1})^T(Ax_t^s + \sum_{i=1}^jB_iy_i^{s,t+1} + \sum_{i=j+1}^mB_iy_i^{s,t}-c)$.
Thus, we have, for all $j\in[m]$
\begin{align} \label{eq:A10-1}
 \mathcal{L}_{\rho} (x_t^s,y_{[j-1]}^{s,t+1},y_{[j:m]}^{s,t},z_t^s) \leq \mathcal{L}_{\rho} (x_t^s,y_{[j]}^{s,t+1},y_{[j+1:m]}^{s,t},z_t^s)
 - \sigma_{\min}(H_j)\|y_j^{s,t}-y_j^{s,t+1}\|^2.
\end{align}
Telescoping inequality \eqref{eq:A10-1} over $j$ from $1$ to $m$, we obtain
\begin{align} \label{eq:A73}
 \mathcal{L}_{\rho} (x_t^s,y^{s,t+1}_{[m]},z_t^s) \leq \mathcal{L}_{\rho} (x_t^s,y^{s,t}_{[m]},z_t^s)
 - \sigma_{\min}^H\sum_{j=1}^m \|y_j^{s,t}-y_j^{s,t+1}\|^2,
\end{align}
where $\sigma_{\min}^H=\min_{j\in[m]}\sigma_{\min}(H_j)$.

By Assumption 1, we have
\begin{align} \label{eq:A74}
0 \leq f(x^s_t) - f(x^s_{t+1}) + \nabla f(x^s_t)^T(x^s_{t+1}-x^s_t) + \frac{L}{2}\|x^s_{t+1}-x^s_t\|^2.
\end{align}
Using optimal condition of the step 8 in Algorithm \ref{alg:3},
we have
\begin{align} \label{eq:A75}
 0 = (x^s_t-x^s_{t+1})^T \big( v^s_t - A^Tz^s_t + \rho A^T(Ax^s_{t+1} + \sum_{j=1}^mB_jy_j^{s,t+1}-c) + \frac{G}{\eta}(x^s_{t+1}-x^s_t) \big).
\end{align}
Combining \eqref{eq:A74} and \eqref{eq:A75}, we have
\begin{align}
 0 & \leq f(x^s_t) - f(x^s_{t+1}) + \nabla f(x^s_t)^T(x^s_{t+1}-x^s_t) + \frac{L}{2}\|x^s_{t+1}-x^s_t\|^2 \nonumber \\
 & \quad + (x^s_t-x^s_{t+1})^T \big( v^s_t - A^Tz^s_t + \rho A^T(Ax^s_{t+1} + \sum_{j=1}^mB_jy_j^{s,t+1}-c) + \frac{G}{\eta}(x^s_{t+1}-x^s_t) \big)  \nonumber \\
 & = f(x^s_t) - f(x^s_{t+1}) + \frac{L}{2}\|x^s_t-x^s_{t+1}\|^2 - \frac{1}{\eta}\|x^s_t - x^s_{t+1}\|^2_G + (x^s_t-x^s_{t+1})^T(v^s_t-\nabla f(x^s_t)) \nonumber \\
 & \quad -(z^s_t)^T(Ax^s_t-Ax^s_{t+1}) + \rho(Ax^s_t - Ax^s_{t+1})^T(Ax^s_{t+1} + \sum_{j=1}^mB_jy_j^{s,t+1}-c) \nonumber \\
 & \mathop{=}^{(i)} f(x^s_t) - f(x^s_{t+1}) + \frac{L}{2}\|x^s_t-x^s_{t+1}\|^2 - \frac{1}{\eta}\|x^s_t - x^s_{t+1}\|^2_G
 + (x^s_t-x^s_{t+1})^T(v^s_t-\nabla f(x^s_t)) -(z^s_t)^T(Ax^s_t + \sum_{j=1}^mB_jy_j^{s,t+1}-c)\nonumber \\
 & \quad  + (z^s_t)^T(Ax^s_{t+1}+ \sum_{j=1}^mB_jy_j^{s,t+1}-c) + \frac{\rho}{2}\big(\|Ax^s_{t} + \sum_{j=1}^mB_jy_j^{s,t+1}-c\|^2
 - \|Ax^s_{t+1} + \sum_{j=1}^mB_jy_j^{s,t+1}-c\|^2 - \|Ax^s_t - Ax^s_{t+1}\|^2 \big) \nonumber \\
 & = \underbrace{f(x^s_t) -(z^s_t)^T(Ax^s_t + \sum_{j=1}^mB_jy_j^{s,t+1}-c) + \frac{\rho}{2}\|Ax^s_{t} + \sum_{j=1}^mB_jy_j^{s,t+1}-c\|^2}_{\mathcal{L}_{\rho} (x^s_t,y_{[m]}^{s,t+1},z^s_t)}
 + \frac{L}{2}\|x^s_t-x^s_{t+1}\|^2 + (x^s_t-x^s_{t+1})^T(v^s_t-\nabla f(x^s_t)) \nonumber \\
 & \quad - \underbrace{f(x^s_{t+1}) -(z^s_t)^T(Ax^s_{t+1} + \sum_{j=1}^mB_jy_j^{s,t+1}-c) + \frac{\rho}{2}\|Ax^s_{t+1} + \sum_{j=1}^mB_jy_j^{s,t+1}-c\|^2}_{\mathcal{L}_{\rho} (x^s_{t+1},y_{[m]}^{s,t+1},z^s_t)} -\frac{1}{\eta}\|x^s_t - x^s_{t+1}\|^2_G - \frac{\rho}{2}\|Ax^s_t - Ax^s_{t+1}\|^2 \nonumber \\
 & \leq \mathcal{L}_{\rho} (x^s_t,y_{[m]}^{s,t+1},z^s_t) -  \mathcal{L}_{\rho} (x^s_{t+1},y_{[m]}^{s,t+1},z^s_t)
 - (\frac{\sigma_{\min}(G)}{\eta}+\frac{\rho \sigma^A_{\min}}{2}-\frac{L}{2})\|x^s_t - x^s_{t+1}\|^2 +(x^s_t-x^s_{t+1})^T(v^s_t-\nabla f(x^s_t)) \nonumber \\
 & \mathop{\leq}^{(ii)}  \mathcal{L}_{\rho} (x^s_t,y_{[m]}^{s,t+1},z^s_t) -  \mathcal{L}_{\rho} (x^s_{t+1},y_{[m]}^{s,t+1},z^s_t)
 - (\frac{\sigma_{\min}(G)}{\eta}+\frac{\rho \sigma^A_{\min}}{2}-L)\|x^s_t - x^s_{t+1}\|^2 + \frac{1}{2L}\|v^s_t-\nabla f(x^s_t)\|^2 \nonumber \\
 & \mathop{\leq}^{(iii)} \mathcal{L}_{\rho} (x^s_t,y_{[m]}^{s,t+1},z^s_t) -  \mathcal{L}_{\rho} (x^s_{t+1},y_{[m]}^{s,t+1},z^s_t)
 - (\frac{\sigma_{\min}(G)}{\eta}+\frac{\rho \sigma^A_{\min}}{2}-L)\|x^s_t - x^s_{t+1}\|^2 +\frac{ L}{2b}\|x^s_t-\tilde{x}^s\|^2 ,
\end{align}
where the equality $(i)$ holds by applying the equality
$(a-b)^Tb = \frac{1}{2}(\|a\|^2-\|b\|^2-\|a-b\|^2)$ on the term $(Ax^s_t - Ax^s_{t+1})^T(Ax^s_{t+1}+\sum_{j=1}^mB_jy_j^{s,t+1}-c)$, the inequality
$(ii)$ holds by the inequality $a^Tb \leq \frac{L}{2}\|a\|^2 + \frac{1}{2L}\|b\|^2$,
and the inequality $(iii)$ holds by Lemma 3 of \cite{Reddi2016Prox}. Thus, we obtain
\begin{align} \label{eq:A77}
\mathcal{L}_{\rho} (x^s_{t+1},y_{[m]}^{s,t+1},z^s_t) \leq & \mathcal{L}_{\rho} (x^s_t,y_{[m]}^{s,t+1},z^s_t) -
(\frac{\sigma_{\min}(G)}{\eta}+\frac{\rho \sigma^A_{\min}}{2}-L)\|x^s_t - x^s_{t+1}\|^2 +\frac{ L}{2b}\|x^s_t-\tilde{x}^s\|^2 .
\end{align}

By the step 9 in Algorithm \ref{alg:3}, we have
\begin{align} \label{eq:A78}
\mathcal{L}_{\rho} (x^s_{t+1},y_{[m]}^{s,t+1},z^s_{t+1}) -
\mathcal{L}_{\rho} (x^s_{t+1},y_{[m]}^{s,t+1},z^s_t)
& = \frac{1}{\rho}\|z^s_{t+1}-z^s_t\|^2 \nonumber \\
& \leq  \frac{9 L^2 }{\sigma^A_{\min} b\rho} \big( \|x^s_t - \tilde{x}^s\|^2 + \|x^s_{t-1} - \tilde{x}^s\|^2\big)
 + \frac{3\sigma^2_{\max}(G)}{\sigma^A_{\min}\eta^2\rho}\|x^s_{t+1}-x^s_t\|^2 \nonumber \\
& \quad + (\frac{3\sigma^2_{\max}(G)}{\sigma^A_{\min}\eta^2\rho}+\frac{9L^2}{\sigma^A_{\min}\rho})\|x^s_{t}-x^s_{t-1}\|^2,
\end{align}
where the first inequality follows by Lemma \ref{lem:A9}.

Combining \eqref{eq:A73}, \eqref{eq:A77} and \eqref{eq:A78}, we have
\begin{align}
\mathcal{L}_{\rho} (x^s_{t+1},y_{[m]}^{s,t+1},z^s_{t+1}) & \leq \mathcal{L}_{\rho} (x^s_t,y_{[m]}^{s,t},z^s_t)
- \sigma_{\min}^H\sum_{j=1}^m \|y_j^{s,t}-y_j^{s,t+1}\|^2 - (\frac{\sigma_{\min}(G)}{\eta}+\frac{\rho \sigma^A_{\min}}{2}-L)\|x^s_t - x^s_{t+1}\|^2 \nonumber \\
& \quad  +\frac{ L}{2b}\|x^s_t-\tilde{x}^s\|^2 + \frac{9 L^2 }{\sigma^A_{\min} b\rho}\big( \|x^s_t - \tilde{x}^s\|^2 + \|x^s_{t-1} - \tilde{x}^s\|^2\big)
+ \frac{3\sigma^2_{\max}(G)}{\sigma^A_{\min}\eta^2\rho}\|x^s_{t+1}-x^s_t\|^2 \nonumber \\
& \quad + (\frac{3\sigma^2_{\max}(G)}{\sigma^A_{\min}\eta^2\rho}+\frac{9L^2}{\sigma^A_{\min}\rho})\|x^s_{t}-x^s_{t-1}\|^2.
\end{align}

Next, we define a \emph{Lyapunov} function $\Gamma^s_t$ as follows:
\begin{align} \label{eq:69}
 \Gamma^s_t = \mathbb{E}\big[\mathcal{L}_{\rho} (x^s_t,y_{[m]}^{s,t},z^s_t) + (\frac{3\sigma^2_{\max}(G)}{\sigma^A_{\min}\eta^2\rho} + \frac{9L^2}{\sigma^A_{\min}\rho})\|x^s_{t}-x^s_{t-1}\|^2
 + \frac{9 L^2 }{\sigma^A_{\min}\rho b}\|x^s_{t-1}-\tilde{x}^s\|^2 + c_t\|x^s_{t}-\tilde{x}^s\|^2\big].
\end{align}
Considering the upper bound of $\|x^s_{t+1}-\tilde{x}^s\|^2$, we have
\begin{align}  \label{eq:70}
\|x^s_{t+1}-x^s_t + x^s_t - \tilde{x}^s\|^2
& = \|x^s_{t+1}-x^s_t\|^2 + 2(x^s_{t+1}-x^s_t)^T(x^s_t-\tilde{x}^s) +\|x^s_t -\tilde{x}^s\|^2 \nonumber \\
& \leq \|x^s_{t+1}-x^s_t\|^2 + 2\big(\frac{1}{2\beta} \|x^s_{t+1}-x^s_t\|^2 + \frac{\beta}{2}\|x^s_t-\tilde{x}^s\|^2\big)
+ \|x^s_t -\tilde{x}^s\|^2 \nonumber \\
& = (1+1/\beta)\|x^s_{t+1}-x^s_t\|^2 +(1+\beta)\|x^s_t -\tilde{x}^s\|^2,
\end{align}
where the above inequality holds by by the
Cauchy-Schwarz inequality with $\beta>0$. Combining \eqref{eq:69} with \eqref{eq:70}, then we obtain
\begin{align} \label{eq:A82}
\Gamma^s_{t+1} & = \mathbb{E}\big[\mathcal{L}_{\rho}(x^s_{t+1},y_{[m]}^{s,t+1},z^s_{t+1}) + (\frac{3\sigma^2_{\max}(G)}{\sigma^A_{\min}\eta^2\rho}+\frac{9L^2}{\sigma^A_{\min}\rho}) \|x^s_{t+1}-x^s_{t}\|^2 + \frac{9 L^2 }{\sigma^A_{\min} b\rho} \|x^s_{t}-\tilde{x}^s\|^2 + c_{t+1}\|x^s_{t+1}-\tilde{x}^s\|^2\big] \nonumber \\
& \leq \mathcal{L}_{\rho} (x^s_t,y_{[m]}^{s,t},z^s_t) + (\frac{3\sigma^2_{\max}(G)}{\sigma^A_{\min}\eta^2\rho}+\frac{9L^2}{\sigma^A_{\min}\rho})\|x^s_{t}-x^s_{t-1}\|^2
+ \frac{9 L^2}{\sigma^A_{\min}\rho b} \|x^s_{t-1}-\tilde{x}^s\|^2 + \big(\frac{18 L^2 }{\sigma^A_{\min}\rho b} + \frac{L}{b}  +(1+\beta)c_{t+1}\big)\|x^s_t-\tilde{x}^s\|^2 \nonumber\\
& \quad - \big( \frac{\sigma_{\min}(G)}{\eta} + \frac{\rho\sigma^A_{\min}}{2} - L - \frac{6\sigma^2_{\max}(G)}{\sigma^A_{\min}\eta^2\rho}-\frac{9L^2}{\sigma^A_{\min}\rho}
- (1+1/\beta)c_{t+1} \big)\|x^s_t - x^s_{t+1}\|^2 \nonumber \\
& \quad - \sigma_{\min}^H\sum_{j=1}^m \|y_j^{s,t}-y_j^{s,t+1}\|^2 - \frac{L }{2b}\|x^s_{t}-\tilde{x}^s\|^2   \nonumber \\
& \leq \Gamma^s_t - \chi_t \|x^s_t - x^s_{t+1}\|^2 - \sigma_{\min}^H\sum_{j=1}^m \|y_j^{s,t}-y_j^{s,t+1}\|^2 - \frac{L}{2b}\|x^s_{t}-\tilde{x}^s\|^2,
\end{align}
where $c_t = \frac{18 L^2 }{\sigma^A_{\min}\rho b} + \frac{L}{b}  + (1+\beta)c_{t+1}$ and $\chi_t = \frac{\sigma_{\min}(G)}{\eta}+\frac{\rho
\sigma^A_{\min}}{2} - L - \frac{6\sigma^2_{\max}(G)}{\sigma^A_{\min}\eta^2\rho}-\frac{9L^2}{\sigma^A_{\min}\rho} - (1+1/\beta)c_{t+1}$.

Next, we will prove the relationship between $\Gamma^{s+1}_1$ and
$\Gamma^s_M$. Since $x^{s+1}_0 = x^s_M = \tilde{x}^{s+1}$, we have
\begin{align}
 v^{s+1}_0 = \nabla f_{\mathcal{I}}(x^{s+1}_0) - \nabla f_{\mathcal{I}}(x^{s+1}_0) + \nabla f(x^{s+1}_0) = \nabla f(x^{s+1}_0) = \nabla f(x^{s}_M).
\end{align}
Thus, we obtain
\begin{align}
 \mathbb{E} \|v^{s+1}_0 - v^{s}_M\|^2 & = \mathbb{E}\|\nabla f(x^{s}_M) - \nabla f_{\mathcal{I}}(x^{s}_M)
 + \nabla f_{\mathcal{I}}(\tilde{x}^{s}) - \nabla f(\tilde{x}^{s})\|^2 \nonumber \\
 & = \| \nabla f_{\mathcal{I}}(x^{s}_M)
 - \nabla f_{\mathcal{I}}(\tilde{x}^{s})-\mathbb{E}_{\mathcal{I}} [\nabla f_{\mathcal{I}}(x^{s}_M)
 - \nabla f_{\mathcal{I}}(\tilde{x}^{s})]\|^2 \nonumber \\
 & \leq \frac{1}{bn} \sum_{i=1}^n \mathbb{E}\| \nabla f_{i}(x^{s}_M) - \nabla f_{i}(\tilde{x}^{s})\|^2 \nonumber \\
 & \leq \frac{L^2}{b}\|x^s_M-\tilde{x}^s\|^2.
\end{align}
By the step 9 of Algorithm \ref{alg:3}, we have
\begin{align} \label{eq:A85}
 \|z^{s+1}_1-z^s_M\|^2 & \leq
 \frac{1}{\sigma^A_{\min}}\|v^{s+1}_0 - v^{s}_M + \frac{G}{\eta}(x^{s+1}_1-x^{s+1}_0) + \frac{G}{\eta}(x^s_M - x^s_{M-1}) \|^2 \nonumber \\
 &= \frac{1}{\sigma^A_{\min}}\|\nabla f(x^{s}_M) - v^{s}_M + \frac{G}{\eta}(x^{s+1}_1-x^{s}_M) + \frac{G}{\eta}(x^s_M - x^s_{M-1})\|^2 \nonumber \\
 & \leq \frac{1}{\sigma^A_{\min}} \big( 3\|\nabla f(x^{s}_M) - v^{s}_M\|^2 +
 \frac{3\sigma^2_{\max}(G)}{\eta^2}\|x^{s+1}_1 - x^s_M\|^2+ \frac{3\sigma^2_{\max}(G)}{\eta^2}\|x^s_M-x^{s}_{M-1}\|^2 \big)\nonumber \\
 & \leq \frac{1}{\sigma^A_{\min}} \big( 3\|\nabla f(x^{s}_M) - v^{s}_M\|^2 + \frac{3\sigma^2_{\max}(G)}{\eta^2}\|x^{s+1}_1 - x^s_M\|^2
 + \frac{3\sigma^2_{\max}(G)}{\eta^2}\|x^s_M-x^{s}_{M-1}\|^2 \big)\nonumber \\
 & \leq \frac{1}{\sigma^A_{\min}} \big( \frac{3L^2}{b}\|x^s_M-\tilde{x}^s\|^2_2 + \frac{3\sigma^2_{\max}(G)}{\eta^2}\|x^{s+1}_1 - x^s_M\|^2
 + \frac{3\sigma^2_{\max}(G)}{\eta^2}\|x^s_M-x^{s}_{M-1}\|^2 \big).
\end{align}

Since $x^{s}_M=x^{s+1}_0$, $y_j^{s,M}=y_j^{s+1,0}$ for all $j\in [m]$ and $z^{s}_M=z^{s+1}_0$,  by \eqref{eq:A73}, we have
\begin{align} \label{eq:A86}
\mathcal{L}_{\rho} (x^{s+1}_0,y_{[m]}^{s+1,1},z^{s+1}_0) \leq \mathcal{L}_{\rho} (x^{s}_M,y^{s,M}_{[m]},z^{s}_M) - \sigma_{\min}^H\sum_{j=1}^m \|y_j^{s,M}-y_j^{s+1,1}\|^2.
\end{align}
By \eqref{eq:A77}, we have
\begin{align} \label{eq:A87}
\mathcal{L}_{\rho} (x^{s+1}_{1},y^{s+1,1}_{[m]},z^{s+1}_0) \leq
\mathcal{L}_{\rho} (x^{s+1}_0,y^{s+1,1}_{[m]},z^{s+1}_0) - (\frac{\sigma_{\min}(G)}{\eta}+\frac{\rho \sigma^A_{\min}}{2}-L)\|x^{s+1}_0 - x^{s+1}_{1}\|^2.
\end{align}
By \eqref{eq:A78}, we have
\begin{align} \label{eq:A88}
\mathcal{L}_{\rho} (x^{s+1}_{1},y^{s+1,1}_{[m]},z^{s+1}_1) & \leq
\mathcal{L}_{\rho} (x^{s+1}_{1},y^{s+1,1}_{[m]},z^{s+1}_0) + \frac{1}{\rho}\|z^{s+1}_1-z^{s+1}_0\|^2 \nonumber \\
& \leq \mathcal{L}_{\rho} (x^{s+1}_{1},y^{s+1,1}_{[m]},z^{s+1}_0) + \frac{1}{\sigma^A_{\min}\rho} \big( \frac{3L^2}{b}\|x^s_M-\tilde{x}^s\|^2_2 + \frac{3\sigma^2_{\max}(G)}{\eta^2}\|x^{s+1}_1 - x^s_M\|^2 \nonumber \\
& \quad + \frac{3\sigma^2_{\max}(G)}{\eta^2}\|x^s_M-x^{s}_{M-1}\|^2 \big).
\end{align}
where the second inequality holds by \eqref{eq:A85}.

Combining \eqref{eq:A86}, \eqref{eq:A87} with \eqref{eq:A88}, we have
\begin{align}
\mathcal{L}_{\rho} (x^{s+1}_{1},y^{s+1,1}_{[m]},z^{s+1}_1) & \leq
\mathcal{L}_{\rho} (x^{s}_M,y^{s,M}_{[m]},z^{s}_M) - \sigma_{\min}^H\sum_{j=1}^m \|y_j^{s,M}-y_j^{s+1,1}\|^2 -(\frac{\sigma_{\min}(G)}{\eta}+\frac{\rho \sigma^A_{\min}}{2}-L)\|x^{s+1}_0 - x^{s+1}_{1}\|^2 + \nonumber \\
& \quad \frac{1}{\sigma^A_{\min}\rho} \big( \frac{3L^2d}{b}\|x^s_M-\tilde{x}^s\|^2_2 +\frac{3\sigma^2_{\max}(G)}{\eta^2}\|x^{s+1}_1 - x^s_M\|^2 + \frac{3\sigma^2_{\max}(G)}{\eta^2}\|x^s_M-x^{s}_{M-1}\|^2 \big).
\end{align}

Therefore, we have
\begin{align} \label{eq:A89}
\Gamma^{s+1}_1
& = \mathbb{E}\big[\mathcal{L}_{\rho}(x^{s+1}_{1},y_{[m]}^{s+1,1},z^{s+1}_1)
+ (\frac{3\sigma^2_{\max}(G)}{\sigma^A_{\min}\eta^2\rho} + \frac{9L^2}{\sigma^A_{\min}\rho})\|x^{s+1}_{1}-x^{s+1}_{0}\|^2
+ \frac{9 L^2 }{\sigma^A_{\min} b\rho}\|x^{s+1}_{0}-\tilde{x}^{s+1}\|^2 + c_1\|x^{s+1}_{1}-\tilde{x}^{s+1}\|^2\big] \nonumber \\
& = \mathcal{L}_{\rho} (x^{s+1}_{1},y_{[m]}^{s+1,1},z^{s+1}_1) +
\big(\frac{3\sigma^2_{\max}(G)}{\sigma^A_{\min}\eta^2\rho} + \frac{9L^2}{\sigma^A_{\min}\rho} + c_1 \big) \|x^{s+1}_{1}-x^{s+1}_{0}\|^2 \nonumber \\
& \leq \mathcal{L}_{\rho} (x^{s}_M,y_{[m]}^{s,M},z^{s}_M) +
(\frac{3\sigma^2_{\max}(G)}{\sigma^A_{\min}\eta^2\rho} + \frac{9L^2}{\sigma^A_{\min}\rho})\|x^{s}_{M}-x^{s}_{M-1}\|^2
+ \frac{9L^2}{\sigma^A_{\min}\rho b}\|x^s_{M-1}-\tilde{x}^s\|^2_2 + (\frac{18L^2 }{\sigma^A_{\min}\rho b} + \frac{L}{b}) \|x^s_M-\tilde{x}^s\|^2_2  \nonumber \\
& \quad - \sigma_{\min}^H\sum_{j=1}^m \|y_j^{s,M}-y_j^{s+1,1}\|^2 - \big(\frac{\sigma_{\min}(G)}{\eta}+ \frac{\rho \sigma^A_{\min}}{2} - L -
\frac{6\sigma^2_{\max}(G)}{\sigma^A_{\min}\eta^2\rho} - \frac{9L^2}{\sigma^A_{\min}\rho}- c_1 \big)\|x^{s+1}_1-x^s_M\|^2_2 \nonumber \\
& \quad - \frac{9L^2}{\sigma^A_{\min}\rho} \|x^s_M - x^s_{M-1}\|^2_2 - \frac{9 L^2}{\sigma^A_{\min}\rho b}\|x^s_{M-1}-\tilde{x}^s\|^2_2 - (\frac{15 L^2 }{\sigma^A_{\min}\rho b}
+ \frac{L}{b})\|x^s_M-\tilde{x}^s\|^2_2 \nonumber \\
& \leq \Gamma^s_M - \sigma_{\min}^H\sum_{j=1}^m \|y_j^{s,M}-y_j^{s+1,1}\|^2 - \frac{L}{2b}\|x^s_M-\tilde{x}^s\|^2_2 - \big(\frac{\sigma_{\min}(G)}{\eta}+ \frac{\rho \sigma^A_{\min}}{2} - L -
\frac{6\sigma^2_{\max}(G)}{\sigma^A_{\min}\eta^2\rho} - \frac{9L^2}{\sigma^A_{\min}\rho}- c_1 \big)\|x^{s+1}_1-x^s_M\|^2 \nonumber \\
& = \Gamma^s_M - \sigma_{\min}^H\sum_{j=1}^m \|y_j^{s,M}-y_j^{s+1,1}\|^2 - \frac{L}{2b}\|x^s_M-\tilde{x}^s\|^2_2 - \chi_M \|x^{s+1}_1-x^s_M\|^2,
\end{align}
where $c_M = \frac{18L^2}{\sigma^A_{\min}\rho b} + \frac{L}{b}$, and $\chi_M = \frac{\sigma_{\min}(G)}{\eta}+ \frac{\rho \sigma^A_{\min}}{2} - L -
\frac{6\sigma^2_{\max}(G)}{\sigma^A_{\min}\eta^2\rho} - \frac{9L^2}{\sigma^A_{\min}\rho}- c_1$.

Let $c_{M+1} = 0$ and $\beta=\frac{1}{M}$, recursing on $t$, we have
\begin{align}
 c_{t+1} = (\frac{18 L^2}{\sigma^A_{\min}\rho b} + \frac{L}{ b})\frac{(1+\beta)^{M-t}-1}{\beta}
 & = \frac{M}{b}(\frac{18L^2}{\sigma^A_{\min}\rho} + L ) \big((1+\frac{1}{M})^{M-t}-1\big) \nonumber \\
 & \leq \frac{M}{b}(\frac{18 L^2}{\sigma^A_{\min}\rho} + L )(e-1) \leq  \frac{2M}{b}(\frac{18 L^2}{\sigma^A_{\min}\rho} + L ).
\end{align}
where the first inequality holds by $(1+\frac{1}{M})^M$ is an increasing function and $\lim_{M\rightarrow \infty}(1+\frac{1}{M})^M=e$.
It follows that, for $t=1,2,\cdots,M$
\begin{align}
 \chi_t & \geq \frac{\sigma_{\min}(G)}{\eta}+\frac{\rho
\sigma^A_{\min}}{2} - L -\frac{6\sigma^2_{\max}(G)}{\sigma^A_{\min}\eta^2\rho}-\frac{9L^2}{\sigma^A_{\min}\rho} - (1+1/\beta)\frac{2M}{b}(\frac{18 L^2}{\sigma^A_{\min}\rho} + L ) \nonumber \\
& = \frac{\sigma_{\min}(G)}{\eta}+\frac{\rho
\sigma^A_{\min}}{2} - L -\frac{6\sigma^2_{\max}(G)}{\sigma^A_{\min}\eta^2\rho}-\frac{9L^2}{\sigma^A_{\min}\rho} - (1+M)\frac{2M}{b}(\frac{18 L^2}{\sigma^A_{\min}\rho} + L ) \nonumber \\
& \geq \frac{\sigma_{\min}(G)}{\eta}+\frac{\rho
\sigma^A_{\min}}{2} - L -\frac{6\sigma^2_{\max}(G)}{\sigma^A_{\min}\eta^2\rho}-\frac{9L^2}{\sigma^A_{\min}\rho} - \frac{4M^2}{b}(\frac{18 L^2}{\sigma^A_{\min}\rho} + L ) \nonumber \\
& = \underbrace{\frac{\sigma_{\min}(G)}{\eta} - L - \frac{4M^2L}{b}}_{Q_1} + \underbrace{\frac{\rho\sigma^A_{\min}}{2} - \frac{6\sigma^2_{\max}(G)}{\sigma^A_{\min}\eta^2\rho}-\frac{9L^2}{\sigma^A_{\min}\rho}
- \frac{72M^2L^2}{b\sigma^A_{\min}\rho}}_{Q_2}.
\end{align}

Let $M = [n^{\frac{1}{3}}]$, $b=[n^{\frac{2}{3}}]$ and $0< \eta \leq \frac{\sigma_{\min}(G)}{5L}$, we have $Q_1 \geq 0$. Further, set $\eta = \frac{\alpha\sigma_{\min}(G)}{5L} \ (0< \alpha \leq 1)$ and
$\rho = \frac{2\sqrt{231}\kappa_G L}{\sigma^A_{\min}\alpha}$, we have
\begin{align}
Q_2 & = \frac{\rho\sigma^A_{\min}}{2} - \frac{6\sigma^2_{\max}(G)}{\sigma^A_{\min}\eta^2\rho}-\frac{9L^2}{\sigma^A_{\min}\rho} - \frac{72M^2L^2}{b\sigma^A_{\min}\rho} \nonumber \\
& = \frac{\rho\sigma^A_{\min}}{2} - \frac{150\kappa^2_G L^2}{\sigma^A_{\min}\rho\alpha^2}-\frac{9L^2}{\sigma^A_{\min}\rho} - \frac{72L^2}{\sigma^A_{\min}\rho} \nonumber \\
& \geq \frac{\rho\sigma^A_{\min}}{2} - \frac{150\kappa^2_G L^2}{\sigma^A_{\min}\rho\alpha^2}-\frac{9\kappa^2_GL^2}{\sigma^A_{\min}\rho\alpha^2} - \frac{72\kappa^2_G L^2}{\sigma^A_{\min}\rho\alpha^2} \nonumber \\
& = \frac{\rho\sigma^A_{\min}}{4} + \underbrace{ \frac{\rho\sigma^A_{\min}}{4} - \frac{231\kappa^2_G L^2}{\sigma^A_{\min}\rho\alpha^2} }_{\geq 0} \nonumber \\
& \geq \frac{\sqrt{231}\kappa_G L}{2\alpha} > 0 \nonumber
\end{align}
where $\kappa_G = \frac{\sigma_{\max}(G)}{\sigma_{\min}(G)} \geq 1$. Thus, we have $\chi_t \geq \frac{\sqrt{231}\kappa_G L}{2\alpha} > 0$ for all $t$.

Since $\frac{L}{2b} > 0$ and $\chi_t > 0$, by \eqref{eq:A82} and \eqref{eq:A89}, the function $\Gamma^s_t$ is monotone decreasing.
Using \eqref{eq:69}, we have
\begin{align} \label{eq:A106}
 \Gamma^s_t &\geq   \mathbb{E}\big[\mathcal{L}_{\rho} (x^s_t,y_{[m]}^{s,t},z^s_t) ] \nonumber \\
 & = f(x^s_t) + \sum_{j=1}^mg(y_j^{s,t}) - (z^s_t)^T(Ax^s_t + \sum_{j=1}^mB_jy_j^{s,t} - c) + \frac{\rho}{2}\|Ax^s_t + \sum_{j=1}^mB_jy_j^{s,t} -c\| \nonumber \\
 & = f(x^s_t) + \sum_{j=1}^mg(y_j^{s,t}) - \frac{1}{\rho}(z^s_t)^T(z^s_{t-1} - z^s_t) + \frac{1}{2\rho}\|z^s_t - z^s_{t-1}\|^2 \nonumber \\
 & = f(x^s_t) + \sum_{j=1}^mg(y_j^{s,t}) - \frac{1}{2\rho}\|z^s_{t-1}\|^2 + \frac{1}{2\rho}\|z^s_t\|^2 + \frac{1}{\rho}\|z^s_t - z^s_{t-1}\|^2 \nonumber \\
 & \geq f^* + \sum_{j=1}^mg_j^* - \frac{1}{2\rho}\|z^s_{t-1}\|^2 + \frac{1}{2\rho}\|z^s_t\|^2.
\end{align}
Summing the inequality \eqref{eq:A106} over $t=0,1\cdots,M$ and $s=1,2,\cdots,S$, we have
\begin{align}
 \frac{1}{T} \sum_{s=1}^S\sum_{t=0}^M  \Gamma^s_t \geq f^* + \sum_{j=1}^mg_j^*  - \frac{1}{2\rho}\|z^1_{0}\|^2.
\end{align}
Thus, the function $\Gamma^s_t$ is bounded from below. Set $\Gamma^*$ denotes a low bound of $\Gamma^s_t$.

Finally, telescoping \eqref{eq:A82} and \eqref{eq:A89} over $t$ from $0$ to $M-1$
and over $s$ from $1$ to $S$, we have
\begin{align} \label{eq:A93}
\frac{1}{T}\sum_{s=1}^S \sum_{t=0}^{M-1} ( \sigma_{\min}^H\sum_{j=1}^m \|y_j^{s,t}-y_j^{s,t+1}\|^2 + \frac{L}{2b}\|x^s_t-\tilde{x}^s\|^2_2 + \chi_t\|x^s_t-x^s_{t+1}\|^2)
\leq \frac{\Gamma^1_0 - \Gamma^*}{T}.
\end{align}
where $T=MS$ and $\chi_t \geq \frac{\sqrt{231}\kappa_G L}{2\alpha} > 0$.

\end{proof}

\begin{theorem}
 Suppose the sequence $\{(x^{s}_t,y_{[m]}^{s,t},z^{s}_t)_{t=1}^M\}_{s=1}^S$ is generated from Algorithm \ref{alg:3}, and let $\eta = \frac{\alpha\sigma_{\min}(G)}{5L} \ (0 < \alpha \leq 1)$,
 $\rho = \frac{2\sqrt{231}\kappa_G L}{\sigma^A_{\min}\alpha}$ and
 \begin{align}
 \nu_1 = m\big(\rho^2\sigma^B_{\max}\sigma^A_{\max} + \rho^2(\sigma^B_{\max})^2 + \sigma^2_{\max}(H)\big), \ \nu_2 = 3L^2 + \frac{3\sigma^2_{\max}(G)}{\eta^2},
 \ \nu_3 = \frac{9L^2 }{\sigma^A_{\min}\rho^2} + \frac{3\sigma^2_{\max}(G)}{\sigma^A_{\min}\eta^2\rho^2}.
\end{align}
 then we have
 \begin{align}
\frac{1}{T}\sum_{s=1}^S \sum_{t=0}^{M-1} \mathbb{E}\big[ \mbox{dist}(0,\partial L(x^s_t,y_{[m]}^{s,t},z^s_t))^2\big] &\leq \frac{\nu_{\max}}{T} \sum_{s=1}^S \sum_{t=0}^{M-1}\theta^s_t \leq \frac{2\nu_{\max}(\Gamma^1_0 - \Gamma^*)}{\gamma T}
\end{align}
where $\min(\sigma_{\min}^H,\frac{L}{2},\chi_t)$, $\nu_{\max}= \max(\nu_1,\nu_2,\nu_3)$ and
$\Gamma^*$ is a lower bound of function $\Gamma^s_t$.
Thus, given $(t^*,s^*) = \mathop{\arg\min}_{t,s}\theta^s_{t}$ and
 \begin{align}
 T = \frac{2\nu_{\max}(\Gamma^1_0 - \Gamma^*)}{\epsilon \gamma}, \nonumber
 \end{align}
then $(x^{s^*}_{t^*},y_{[m]}^{s^*,t^*},z^{s^*}_{t^*})$ is an $\epsilon$-stationary point of \eqref{eq:2}.
\end{theorem}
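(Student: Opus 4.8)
The plan is to bound each block of the stationarity measure $\mathrm{dist}(0,\partial L(x^s_t,y^{s,t}_{[m]},z^s_t))^2$ by a constant multiple of the auxiliary quantity $\theta^s_t$, then invoke the descent inequality \eqref{eq:A93} established in the preceding lemma. First I would handle the $y_j$-blocks: from the optimality condition for step 7 of Algorithm \ref{alg:3}, the subgradient residual $\mathrm{dist}(0,\partial g_j(y^{s,t+1}_j) - B_j^\top z^s_{t+1})$ equals $\|\rho B_j^\top A(x^s_{t+1}-x^s_t) + \rho B_j^\top\sum_{i>j}B_i(y^{s,t+1}_i - y^{s,t}_i) - H_j(y^{s,t+1}_j - y^{s,t}_j)\|$, exactly as in the SPIDER-ADMM theorem. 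Expanding with $\|\sum_{i=1}^r z_i\|^2 \le r\sum\|z_i\|^2$ gives a bound by $m(\rho^2\sigma^B_{\max}\sigma^A_{\max} + \rho^2(\sigma^B_{\max})^2 + \sigma^2_{\max}(H))\,\theta^s_t = \nu_1\theta^s_t$. Second, for the $x$-block I would use $A^\top z^s_{t+1} = v^s_t + \frac{G}{\eta}(x^s_{t+1}-x^s_t)$ from \eqref{eq:A65}, so $\|A^\top z^s_{t+1}-\nabla f(x^s_{t+1})\|^2 = \|v^s_t - \nabla f(x^s_t) + \nabla f(x^s_t)-\nabla f(x^s_{t+1}) - \frac{G}{\eta}(x^s_t-x^s_{t+1})\|^2$; using Assumption 1 and the SVRG variance bound (Lemma 3 of \cite{Reddi2016Prox}) $\mathbb{E}\|v^s_t-\nabla f(x^s_t)\|^2 \le \frac{L^2}{b}(\|x^s_t-\tilde x^s\|^2 + \|x^s_{t-1}-\tilde x^s\|^2)$, this is at most $3(L^2 + \frac{\sigma^2_{\max}(G)}{\eta^2})\theta^s_t = \nu_2\theta^s_t$. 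Third, the $z$-block: $\|Ax^s_{t+1}+\sum_j B_jy^{s,t+1}_j - c\|^2 = \frac{1}{\rho^2}\|z^s_{t+1}-z^s_t\|^2$, which Lemma \ref{lem:A9} bounds by $(\frac{9L^2}{\sigma^A_{\min}\rho^2} + \frac{3\sigma^2_{\max}(G)}{\sigma^A_{\min}\eta^2\rho^2})\theta^s_t = \nu_3\theta^s_t$ after absorbing the $\|x^s_t-\tilde x^s\|^2$ and $\|x^s_{t-1}-\tilde x^s\|^2$ terms into the $\frac{1}{b}(\cdots)$ part of $\theta^s_t$ (here $b=n^{2/3}\ge 1$).

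Summing the three blocks gives $\mathbb{E}[\mathrm{dist}(0,\partial L)^2] \le (\nu_1+\nu_2+\nu_3)\theta^s_t \le 3\nu_{\max}\theta^s_t$; actually I want the slightly sharper route of noting that $\mathrm{dist}^2$ decomposes as a sum of at most the three squared block-norms, each $\le \nu_{\max}\theta^s_t$, but to match the $\frac{2\nu_{\max}}{\gamma T}$ constant I would instead pair each block bound with the corresponding descent term. Specifically, from \eqref{eq:A93} each of $\chi_t\|x^s_{t+1}-x^s_t\|^2$, $\frac{L}{2b}\|x^s_t-\tilde x^s\|^2$, and $\sigma^H_{\min}\sum_j\|y^{s,t}_j-y^{s,t+1}_j\|^2$ is controlled, and with $\gamma = \min(\sigma^H_{\min},\frac{L}{2},\chi_t)$ one gets $\gamma\,\theta^s_t \le 2\big(\chi_t\|x^s_{t+1}-x^s_t\|^2 + \frac{L}{2b}\|x^s_t-\tilde x^s\|^2 + \sigma^H_{\min}\sum_j\|y^{s,t}_j-y^{s,t+1}_j\|^2 + (\text{shifted terms})\big)$; telescoping the right side over $t,s$ and using \eqref{eq:A93} yields $\frac{1}{T}\sum_{s,t}\theta^s_t \le \frac{2(\Gamma^1_0-\Gamma^*)}{\gamma T}$, hence $\frac{1}{T}\sum_{s,t}\mathbb{E}[\mathrm{dist}(0,\partial L)^2] \le \frac{2\nu_{\max}(\Gamma^1_0-\Gamma^*)}{\gamma T}$. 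Setting $T = \frac{2\nu_{\max}(\Gamma^1_0-\Gamma^*)}{\epsilon\gamma}$ forces the average, and therefore the minimizing index $(t^*,s^*)$, below $\epsilon$, which is precisely the $\epsilon$-stationary point claim; finally, since $M=n^{1/3}$, $b=n^{2/3}$, $\eta = \Theta(1/L)$, $\rho = \Theta(L)$, one checks $\nu_{\max}=O(1)$ and $\gamma = \Theta(L)$ are independent of $n$, giving $T = O(\epsilon^{-1})$ and an IFO cost of $S\cdot(n + Mb) = \frac{T}{M}(n+Mb) = O(n + n^{2/3}\epsilon^{-1})$.

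The main obstacle I anticipate is the bookkeeping of the "shifted" and "averaged-over-epoch" terms in $\theta^s_t$ — the quantities $\|x^s_t - x^s_{t-1}\|^2$ and $\frac{1}{b}\|x^s_{t-1}-\tilde x^s\|^2$ appear in $\theta^s_t$ at index $t$ but are only controlled by the descent inequality at the previous index, so the telescoping must account for the boundary terms at $t=0$ (where the epoch-restart argument \eqref{eq:A89} relating $\Gamma^{s+1}_1$ and $\Gamma^s_M$ is used) and at $t=M$. The Lyapunov function $\Gamma^s_t$ was designed in Lemma \ref{lem:A10} so that these carry-over terms are exactly absorbed, so the proof should go through, but getting the constant $2$ (rather than $3$) requires carefully matching index shifts, which is the one place where a naive argument loses a factor. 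Everything else is a routine application of Cauchy–Schwarz, Assumption 1, and the two variance bounds already in hand.
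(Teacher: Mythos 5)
Your proposal follows essentially the same route as the paper's own proof: bounding the $y_j$-, $x$-, and $z$-block residuals by $\nu_1\theta^s_t$, $\nu_2\theta^s_t$, $\nu_3\theta^s_t$ via the optimality conditions, the relation $A^Tz^s_{t+1}=v^s_t+\frac{G}{\eta}(x^s_{t+1}-x^s_t)$, and Lemma \ref{lem:A9}, then combining with the descent inequality \eqref{eq:A93} and the factor-of-two counting of the shifted terms in $\theta^s_t$. Your remarks on the constant bookkeeping (the $3\nu_{\max}$ versus $\nu_{\max}$ prefactor and the index shifts at epoch boundaries) are if anything more careful than the paper's own treatment, which glosses over these points; no substantive gap.
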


\begin{proof}
First, we define a variable $\theta^s_{t} = \|x^s_{t+1}-x^s_{t}\|^2 + \|x^s_{t}-x^s_{t-1}\|^2 + \frac{1}{b}(\|x^s_{t}-\tilde{x}^s\|^2 + \|x^s_{t-1}-\tilde{x}^s\|^2 ) + \sum_{j=1}^m\|y_j^{s,t} - y_j^{s,t+1}\|^2$.
By the step 7 of Algorithm \ref{alg:3}, we have, for all $i\in [m]$
\begin{align}
  \mathbb{E}\big[\mbox{dist}(0,\partial_{y_j} L(x,y_{[m]},z))^2\big]_{s,t+1} & = \mathbb{E}\big[\mbox{dist} (0, \partial g_j(y_j^{s,t+1})-B_j^Tz^s_{t+1})^2\big] \nonumber \\
 & = \|B_j^Tz^s_t -\rho B_j^T(Ax^s_t + \sum_{i=1}^jB_iy_i^{s,t+1} + \sum_{i=j+1}^m B_iy_i^{s,t} -c) - H_j(y_j^{s,t+1}-y_j^{s,t}) -B_j^Tz^s_{t+1}\|^2 \nonumber \\
 & = \|\rho B_j^TA(x^s_{t+1}-x^s_{t}) + \rho B_j^T \sum_{i=j+1}^m B_i (y_i^{s,t+1}-y_i^{s,t})- H_j(y_j^{s,t+1}-y_j^{s,t}) \|^2 \nonumber \\
 & \leq m\rho^2\sigma^{B_j}_{\max}\sigma^A_{\max}\|x^s_{t+1}-x^s_t\|^2 + m\rho^2\sigma^{B_j}_{\max}\sum_{i=j+1}^m \sigma^{B_i}_{\max}\|y_i^{s,t+1}-y_i^{s,t}\|^2 \nonumber \\
 & \quad + m\sigma^2_{\max}(H_j)\|y_j^{s,t+1}-y_j^{s,t}\|^2\nonumber \\
 & \leq m\big(\rho^2\sigma^B_{\max}\sigma^A_{\max} + \rho^2(\sigma^B_{\max})^2 + \sigma^2_{\max}(H)\big) \theta^s_{t},
\end{align}
where the first inequality follows by the inequality $\|\frac{1}{n}\sum_{i=1}^n z_i\|^2 \leq \frac{1}{n}\sum_{i=1}^n \|z_i\|^2$.

By the step 8 of Algorithm \ref{alg:3}, we have
\begin{align}\label{eq:A96}
 \mathbb{E}[\mbox{dist}(0,\nabla_x L(x,y,z))]_{s,t+1} & = \mathbb{E}\|A^Tz^s_{t+1}-\nabla f(x^s_{t+1})\|^2  \nonumber \\
 & = \mathbb{E}\|v^s_t - \nabla f(x^s_{t+1}) - \frac{G}{\eta} (x^s_t-x^s_{t+1})\|^2 \nonumber \\
 & = \mathbb{E}\|v^s_t - \nabla f(x^s_{t}) +\nabla f(x^s_{t})- \nabla f(x^s_{t+1})
  - \frac{G}{\eta}(x^s_t-x^s_{t+1})\|^2  \nonumber \\
 & \leq  \frac{3L^2}{b}\|x^s_t-\tilde{x}^s\|^2 + 3(L^2+ \frac{\sigma^2_{\max}(G)}{\eta^2})\|x^s_t-x^s_{t+1}\|^2  \nonumber \\
 & \leq \big( 3L^2 + \frac{3\sigma^2_{\max}(G)}{\eta^2} \big)\theta^s_{t}.
\end{align}
By the step 9 of Algorithm \ref{alg:3}, we have
\begin{align}\label{eq:85}
 \mathbb{E}[\mbox{dist}(0,\nabla_{z} L(x,y,z))]_{s,t+1} & = \mathbb{E}\|Ax^s_{t+1}+By^s_{t+1}-c\|^2 \nonumber \\
 &= \frac{1}{\rho^2} \mathbb{E} \|z^s_{t+1}-z^s_t\|^2  \nonumber \\
 & \leq \frac{9L^2 }{\sigma^A_{\min}\rho^2 b} \big( \|x^s_t - \tilde{x}^s\|^2
  + \|x^s_{t-1} - \tilde{x}^s\|^2\big) + \frac{3\sigma^2_{\max}(G)}{\sigma^A_{\min}\eta^2\rho^2}\|x^s_{t+1}-x^s_t\|^2 \nonumber \\
 & \quad + \frac{3(\sigma^2_{\max}(G) + 3L^2\eta^2)}{\sigma^A_{\min}\eta^2\rho^2}\|x^s_{t}-x^s_{t-1}\|^2 \nonumber \\
 & \leq \big( \frac{9L^2 }{\sigma^A_{\min}\rho^2} + \frac{3\sigma^2_{\max}(G)}{\sigma^A_{\min}\eta^2\rho^2} \big)\theta^s_{t}. \nonumber \\
\end{align}

Using \eqref{eq:A93}, we have
\begin{align}
 \frac{1}{T}\sum_{s=1}^S \sum_{t=0}^{M-1} ( \sigma_{\min}^H\sum_{j=1}^m \|y_j^{s,t}-y_j^{s,t+1}\|^2 + \frac{L}{2b}\|x^s_t-\tilde{x}^s\|^2  + \chi_t \|x^s_{t+1}-x^s_t\|^2) \leq \frac{\Gamma^1_0 - \Gamma^*}{T},
\end{align}
where $\chi_t \geq \frac{\sqrt{231}\kappa_G L}{2\alpha} > 0$.
Thus, we have
\begin{align}
 \frac{1}{T}\sum_{s=1}^S \sum_{t=0}^{M-1} \mathbb{E}\big[ \mbox{dist}(0,\partial L(x^s_t,y_{[m]}^{s,t},z^s_t))^2\big] & \leq \frac{\nu_{\max}}{T} \sum_{s=1}^S \sum_{t=0}^{M-1}\theta^s_t
  \leq \frac{2\nu_{\max}(\Gamma^1_0 - \Gamma^*)}{\gamma T},
\end{align}
where $\gamma = \min(\sigma_{\min}^H,\frac{L}{2},\chi_t)$ and $\nu_{\max} = \max(\nu_1,\nu_2,\nu_3)$ with
\begin{align}
 \nu_1 = m\big(\rho^2\sigma^B_{\max}\sigma^A_{\max} + \rho^2(\sigma^B_{\max})^2 + \sigma^2_{\max}(H)\big), \ \nu_2 = 3L^2 + \frac{3\sigma^2_{\max}(G)}{\eta^2},
 \ \nu_3 = \frac{9L^2 }{\sigma^A_{\min}\rho^2} + \frac{3\sigma^2_{\max}(G)}{\sigma^A_{\min}\eta^2\rho^2}.
\end{align}

Given $\eta = \frac{\alpha\sigma_{\min}(G)}{5L} \ (0 < \alpha \leq 1)$ and $\rho = \frac{2\sqrt{231}\kappa_G L}{\sigma^A_{\min}\alpha}$, since $m$ is relatively small,
it easy verifies that $\nu_{\max} = O(1)$ and $\gamma=O(1)$, which are independent on $n$ and $T$.
Thus, we obtain
\begin{align}
\frac{1}{T}\sum_{s=1}^S \sum_{t=0}^{M-1} \mathbb{E}\big[ \mbox{dist}(0,\partial L(x^s_t,y_{[m]}^{s,t},z^s_t))^2\big]  \leq O(\frac{1}{T}).
\end{align}

\end{proof}

\subsection{ Theoretical Analysis of the non-convex SAGA-ADMM }
\label{Appendix:A4}
In the subsection, we first extend the existing nonconvex SAGA-ADM to to the multi-blocks setting for solving the problem \eqref{eq:2},
which is summarized in Algorithm \ref{alg:4}. Then we afresh study the convergence analysis of this non-convex SVRG-ADMM.

\begin{algorithm}[htb]
   \caption{ SAGA-ADMM for Nonconvex  Optimization }
   \label{alg:4}
\begin{algorithmic}[1]
   \STATE {\bfseries Input:} $T$, $\eta$, $\rho$ and $H_j\succ0$ for all $j\in [m]$;
   \STATE {\bfseries Initialize:} $x_0$, $u_i^0=x_0$ for $i\in \{1,2,\cdots,n\}$, $\phi_0=\frac{1}{n}\sum_{i=1}^n\nabla f_i(u^0_i)$ and $y_j^0$ for all $j\in [m]$;
   \FOR {$t=0,1,\cdots,T-1$}
   \STATE{} Uniformly random pick a mini-batch $\mathcal{I}_t$ (with replacement) from $\{1,2,\cdots,n\}$ with $|\mathcal{I}_t|=b$, and compute
             $$v_{t} = \frac{1}{b}\sum_{i_t\in \mathcal{I}_t}
             \big(\nabla f_{i_t}(x_{t})-\nabla f_{i_t}(u^t_{i_t}) \big)+\hat{\phi}_t$$
            with $\phi_t=\frac{1}{n}\sum_{i=1}^n\nabla f_i(u^t_i)$;
   \STATE{} $ y^{t+1}_j= \arg\min_{y_j} \mathcal {L}_{\rho}(x_t,y^{t+1}_{[j-1]},y_j,y^{t}_{[j+1:m]},z_t) + \frac{1}{2}\|y_j-y^{t}_j\|_{H_j}^2$ for all $j\in [m]$;
   \STATE{} $x_{t+1}=\arg\min_x \hat{\mathcal {L}}_{\rho}\big(x,y_{t+1},z_{t},v_{t}\big)$;
   \STATE{} $z_{t+1} = z_{t}-\rho(Ax_{t+1} + \sum_{j=1}^mB_jy_j^{t+1}-c)$;
   \STATE{} $u^{t+1}_{i_t}= x_{t}$ for $i \in \mathcal{I}_t$ and $u_i^{t+1}=u^t_i$ for $i \not\in \mathcal{I}_t$;
   \STATE{} $\phi_{t+1} = \phi_t-\frac{1}{n}\sum_{i_t\in \mathcal{I}_t} \big(\nabla f_{i_t}(u^t_{i_t})-\nabla f_{i_t}(u^{t+1}_{i_t})\big)$;
   \ENDFOR
   \STATE {\bfseries Output \ (in theory):} Chosen uniformly random from $\{x_{t},y_{[m]}^{t},z_t\}_{t=1}^{T}$.
   \STATE {\bfseries Output \ (in practice):} $\{x_{T},y_{[m]}^{T},z_T\}$.
\end{algorithmic}
\end{algorithm}

\begin{lemma} \label{lem:A11}
 Suppose the sequence $\{x_t,y_{[m]}^t,z_t\}_{t=1}^T$ is generated by Algorithm \ref{alg:4}. The following inequality holds
 \begin{align}
 \mathbb{E}\|z_{t+1}-z_{t}\|^2 \leq & \frac{9 L^2 }{\sigma^A_{\min} b} \frac{1}{n}\sum_{i=1}^n \big( \|x_t - u^t_i\|^2
 + \|x_{t-1} - u^{t-1}_i\|^2\big)
  + \frac{3\sigma^2_{\max}(G)}{\sigma^A_{\min}\eta^2}\|x_{t+1}-x_t\|^2 \nonumber \\
  & + \frac{3(\sigma^2_{\max}(G) + 3L^2\eta^2)}{\sigma^A_{\min}\eta^2}\|x_{t}-x_{t-1}\|^2.
 \end{align}
\end{lemma}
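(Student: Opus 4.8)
The plan is to follow the proofs of Lemma~\ref{lem:A3} and Lemma~\ref{lem:A9} almost verbatim; the only genuinely new ingredient is the variance bound for the SAGA gradient estimator. First I would write the first-order optimality condition for the $x$-update at step~6 of Algorithm~\ref{alg:4},
\begin{align}
 v_t + \frac{G}{\eta}(x_{t+1}-x_t) - A^Tz_t + \rho A^T\Big(Ax_{t+1}+\sum_{j=1}^m B_jy_j^{t+1}-c\Big) = 0, \nonumber
\end{align}
and substitute the dual update $z_{t+1}=z_t-\rho(Ax_{t+1}+\sum_{j=1}^m B_jy_j^{t+1}-c)$ from step~7 to eliminate the primal residual, which collapses the condition to $A^Tz_{t+1}=v_t+\frac{G}{\eta}(x_{t+1}-x_t)$. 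Subtracting the same identity written at index $t-1$ gives $A^T(z_{t+1}-z_t)=(v_t-v_{t-1})+\frac{G}{\eta}\big[(x_{t+1}-x_t)-(x_t-x_{t-1})\big]$.

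Next, using Assumption~4 to pass from $\|A^T(z_{t+1}-z_t)\|^2$ to $\sigma^A_{\min}\|z_{t+1}-z_t\|^2$ exactly as in the proof of Lemma~\ref{lem:A3}, together with $\|a+b+c\|^2\le 3\|a\|^2+3\|b\|^2+3\|c\|^2$ and $\|Gw\|\le\sigma_{\max}(G)\|w\|$, one obtains
\begin{align}
 \mathbb{E}\|z_{t+1}-z_t\|^2 \le \frac{1}{\sigma^A_{\min}}\Big[3\,\mathbb{E}\|v_t-v_{t-1}\|^2 + \frac{3\sigma^2_{\max}(G)}{\eta^2}\mathbb{E}\|x_{t+1}-x_t\|^2 + \frac{3\sigma^2_{\max}(G)}{\eta^2}\mathbb{E}\|x_t-x_{t-1}\|^2\Big]. \nonumber
\end{align}
It then remains to bound $\mathbb{E}\|v_t-v_{t-1}\|^2$. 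Splitting $v_t-v_{t-1}=\big(v_t-\nabla f(x_t)\big)+\big(\nabla f(x_t)-\nabla f(x_{t-1})\big)+\big(\nabla f(x_{t-1})-v_{t-1}\big)$ and applying the same three-term inequality, the middle term is handled by $L$-smoothness (Assumption~1) and the two estimator-error terms by the SAGA variance bound $\mathbb{E}\|v_t-\nabla f(x_t)\|^2\le \frac{L^2}{bn}\sum_{i=1}^n\|x_t-u^t_i\|^2$ (Lemma~3 of \cite{Reddi2016Prox}). Substituting back and collecting the coefficient of $\|x_t-x_{t-1}\|^2$, namely $\frac{3\sigma^2_{\max}(G)}{\sigma^A_{\min}\eta^2}+\frac{9L^2}{\sigma^A_{\min}}=\frac{3(\sigma^2_{\max}(G)+3L^2\eta^2)}{\sigma^A_{\min}\eta^2}$, produces the stated inequality.

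The algebraic manipulations are mechanical; the one step that needs care is the probabilistic bookkeeping in the final estimate. Since $\phi_t=\frac1n\sum_{i=1}^n\nabla f_i(u^t_i)$ and the memory entries $u^t_i$ are themselves random, $v_t$ is unbiased for $\nabla f(x_t)$ only conditionally on the filtration generated by the iterates through time $t$; the SAGA variance bound must therefore be invoked conditionally and then de-conditioned via the tower property before anything is summed over $t$. Apart from this, the argument is the SVRG-ADMM proof of Lemma~\ref{lem:A9} with $\tilde x^s$ replaced by the per-sample memory $u^t_i$ and every occurrence of $\|x^s_t-\tilde x^s\|^2$ replaced by $\frac1n\sum_{i=1}^n\|x_t-u^t_i\|^2$.
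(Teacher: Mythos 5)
Your proposal matches the paper's own proof essentially step for step: the same optimality-condition identity $A^Tz_{t+1}=v_t+\frac{G}{\eta}(x_{t+1}-x_t)$, the same differencing and three-term Cauchy--Schwarz bound under Assumption~4, and the same decomposition of $v_t-v_{t-1}$ with the SAGA variance bound from \citet{Reddi2016Prox} (the paper invokes their Lemma~4 rather than Lemma~3, but it is the same per-sample-memory estimate you state). Your added remark about conditioning on the filtration and de-conditioning via the tower property is a reasonable piece of care that the paper glosses over, but it does not change the argument.
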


\begin{proof}
 By the optimize condition of the the step 6 in Algorithm \ref{alg:4}, we have
 \begin{align}
   v_t + \frac{1}{\eta}G(x_{t+1}-x_t) - A^Tz_t + \rho A^T(Ax_{t+1}+\sum_{j=1}^mB_jy_j^{t+1}-c) = 0.
 \end{align}
 Using the step 7 of Algorithm \ref{alg:4}, then we have
 \begin{align} \label{eq:A12-1}
  A^Tz_{t+1} = v_t + \frac{G}{\eta}(x_{t+1}-x_t).
 \end{align}
 It follows that
 \begin{align}
 A^T(z_{t+1}-z_t) = & v_t - v_{t-1} + \frac{G}{\eta}(x_{t+1}-x_t) - \frac{1}{\eta}G(x_{t}-x_{t-1}).
 \end{align}
 By Assumption 4, we have
 \begin{align} \label{eq:A12-2}
 \|z_{t+1}-z_t\|^2 \leq \frac{1}{\sigma^A_{\min}}\big[3\|v_t - v_{t-1}\|^2 + \frac{3\sigma^2_{\max}(G)}{\eta^2}\|x_{t+1}-x_t\|^2
  + \frac{3\sigma^2_{\max}(G)}{\eta^2}\|x_{t}-x_{t-1}\|^2 \big] .
 \end{align}

 Next, considering the upper bound of $\|v^s_t - v^s_{t-1}\|^2$, we have
 \begin{align} \label{eq:A12-3}
  \|v_t - v_{t-1}\|^2 & = \|v_t - \nabla f(x_t) + \nabla f(x_t) -  \nabla f(x_{t-1}) + \nabla f(x_{t-1}) - v_{t-1}\|^2 \nonumber \\
  & \leq 3\|v_t - \nabla f(x_t)\|^2 + 3\|\nabla f(x_t) -  \nabla f(x_{t-1})\|^2 + 3\|\nabla f(x_{t-1}) - v_{t-1}\|^2 \nonumber \\
  & \leq \frac{3 L^2}{b} \frac{1}{n} \sum_{i=1}^n \big(\|x_t - u^t_i\|^2 + \|x_{t-1} - u^{t-1}_i\|^2 \big) + 3L^2\|x_t - x_{t-1}\|^2
 \end{align}
 where the second inequality holds by lemma 4 of \cite{Reddi2016Prox} and Assumption 1.
 Finally, combining the inequalities \eqref{eq:A12-2} and \eqref{eq:A12-3}, we can obtain the above result.
\end{proof}

\begin{lemma} \label{lem:A12}
 Suppose the sequence $\{x_t,y_{[m]}^t,z_t\}_{t=1}^T$ is generated from Algorithm \ref{alg:4},
 and define a \emph{Lyapunov} function
 \begin{align}
 \Omega_t = \mathbb{E}\big[ \mathcal{L}_{\rho} (x_t,y_{[m]}^t,z_t)+ (\frac{3\sigma^2_{\max}(G)}{\sigma^A_{\min}\rho\eta^2}+\frac{9L^2}{\sigma^A_{\min}\rho})
 \|x_{t}-x_{t-1}\|^2 + \frac{9L^2}{\sigma^A_{\min}\rho b}\frac{1}{n} \sum_{i=1}^n\|x_{t-1}-u^{t-1}_i\|^2 + c_t\frac{1}{n} \sum_{i=1}^n\|x_{t}-u^t_i\|^2 \big], \nonumber
\end{align}
 where the positive sequence $\{c_t\}$ satisfies
 \begin{equation*}
  c_t= \left\{
  \begin{aligned}
  & \frac{18L^2 }{\sigma^A_{\min}\rho b} + \frac{L}{b} +(1-p)(1+\beta)c_{t+1}, \ 0 \leq t \leq T-1, \\
  & 0, \ t \geq T,
  \end{aligned}
  \right.\end{equation*}
where $p$ denotes probability of an index $i$ being in $\mathcal{I}_t$.
Further, let $b= [n^{\frac{2}{3}}]$, $\eta = \frac{\alpha\sigma_{\min}(G)}{17L} \ (0 < \alpha \leq 1)$ and $\rho = \frac{2\sqrt{2031}\kappa_G}{\sigma^A_{\min}\alpha}$
we have
\begin{align}
 \frac{1}{T} \sum_{t=1}^T ( \sigma_{\min}^H\sum_{j=1}^m \|y_j^t-y_j^{t+1}\|^2 + \chi_t \|x_t-x_{t+1}\|^2 +  \frac{L}{2b}\frac{1}{n}\sum_{i=1}^n\|x_t-u^t_i\|^2 )
 \leq \frac{\Omega_0 - \Omega^*}{T},
\end{align}
where $\chi_t \geq \frac{\sqrt{2031}\kappa_GL}{2\alpha} >0$ and $\Omega^*$ denotes a low bound of $\Omega_t$.
\end{lemma}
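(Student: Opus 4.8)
The plan is to mirror the argument for the non-convex SVRG-ADMM (Lemma~\ref{lem:A10}), replacing the snapshot $\tilde{x}^s$ by the SAGA table $\{u_i^t\}_{i=1}^n$ and tracking its evolution inside the potential $\Omega_t$. The skeleton is a three-block sufficient-decrease estimate for the augmented Lagrangian $\mathcal{L}_\rho$ along the successive updates of $\{y_j\}_{j=1}^m$, $x$ and $z$ in Algorithm~\ref{alg:4}, taken in conditional expectation, followed by absorption of the stochastic-error remainders into $\Omega_t$ so that a clean per-iteration drop is exposed.

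First I would treat the $y$-block: using the optimality condition of step~5, the convexity of each $g_j$, and the identity $(a-b)^Tb=\tfrac12(\|a\|^2-\|b\|^2-\|a-b\|^2)$ on the quadratic penalty, one gets (as in \eqref{eq:A73}) that $\mathcal{L}_\rho(x_t,y_{[m]}^{t+1},z_t)\le \mathcal{L}_\rho(x_t,y_{[m]}^{t},z_t)-\sigma_{\min}^H\sum_{j=1}^m\|y_j^t-y_j^{t+1}\|^2$. For the $x$-block I would combine the $L$-smooth upper bound on $f$ (Assumption~1) with the optimality condition of step~6, apply $a^Tb\le \tfrac L2\|a\|^2+\tfrac1{2L}\|b\|^2$ to the cross term $(x_t-x_{t+1})^T(v_t-\nabla f(x_t))$, and invoke the SAGA variance bound (lemma~4 of \cite{Reddi2016Prox}), $\mathbb{E}\|v_t-\nabla f(x_t)\|^2\le \tfrac{L^2}{b}\tfrac1n\sum_{i=1}^n\|x_t-u_i^t\|^2$, to obtain a drop of order $\big(\tfrac{\sigma_{\min}(G)}{\eta}+\tfrac{\rho\sigma^A_{\min}}{2}-L\big)\|x_{t+1}-x_t\|^2$ with remainder $\tfrac L{2b}\tfrac1n\sum_i\|x_t-u_i^t\|^2$. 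For the $z$-block I would use the dual update to write $\mathcal{L}_\rho(x_{t+1},y^{t+1}_{[m]},z_{t+1})-\mathcal{L}_\rho(x_{t+1},y^{t+1}_{[m]},z_t)=\tfrac1\rho\|z_{t+1}-z_t\|^2$ and bound the right-hand side by Lemma~\ref{lem:A11}. Adding the three estimates and appending the three correction terms that define $\Omega_t$, the SAGA-specific step is the table-update bound $\mathbb{E}\big[\tfrac1n\sum_i\|x_{t+1}-u^{t+1}_i\|^2\mid\mathcal F_t\big]\le p\,\mathbb{E}[\|x_{t+1}-x_t\|^2\mid\mathcal F_t]+(1-p)\tfrac1n\sum_i\|x_{t+1}-u^t_i\|^2$, combined with $\|x_{t+1}-u^t_i\|^2\le (1+1/\beta)\|x_{t+1}-x_t\|^2+(1+\beta)\|x_t-u^t_i\|^2$ (Young's inequality); these force exactly the recursion $c_t=\tfrac{18L^2}{\sigma^A_{\min}\rho b}+\tfrac Lb+(1-p)(1+\beta)c_{t+1}$ and yield $\Omega_{t+1}\le \Omega_t-\chi_t\|x_t-x_{t+1}\|^2-\sigma_{\min}^H\sum_j\|y_j^t-y_j^{t+1}\|^2-\tfrac L{2b}\tfrac1n\sum_i\|x_t-u^t_i\|^2$ with $\chi_t=\tfrac{\sigma_{\min}(G)}{\eta}+\tfrac{\rho\sigma^A_{\min}}{2}-L-\tfrac{6\sigma^2_{\max}(G)}{\sigma^A_{\min}\eta^2\rho}-\tfrac{9L^2}{\sigma^A_{\min}\rho}-(1+1/\beta)c_{t+1}$.

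It then remains to (i) show $\chi_t$ is uniformly positive and (ii) bound $\Omega_t$ from below. For (i) I would solve the recursion: since $p\ge b/n$, picking $\beta$ with $(1-p)(1+\beta)<1$ makes the series geometric and gives $c_{t+1}=O\big(\tfrac1p(\tfrac{L^2}{\sigma^A_{\min}\rho b}+\tfrac Lb)\big)=O\big(\tfrac{n}{b^2}(\tfrac{L^2}{\sigma^A_{\min}\rho}+L)\big)$, which is $O(1)$ once $b=[n^{2/3}]$; then with $\eta=\tfrac{\alpha\sigma_{\min}(G)}{17L}$ and $\rho=\tfrac{2\sqrt{2031}\kappa_G}{\sigma^A_{\min}\alpha}$ the same split-and-compare computation used for $Q_1,Q_2$ in Lemma~\ref{lem:A10} — first isolate the nonnegative part $\tfrac{\sigma_{\min}(G)}{\eta}-17L\ge0$, then dominate the $\rho^{-1}$ terms using $\kappa_G\ge1$, $\kappa_A\ge1$ and $0<\alpha\le1$ — yields $\chi_t\ge \tfrac{\sqrt{2031}\kappa_G L}{2\alpha}>0$. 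For (ii) I would copy the telescoping-norm argument from \eqref{eq:A106}: since the correction terms are nonnegative, $\Omega_t\ge\mathbb{E}[\mathcal{L}_\rho(x_t,y^t_{[m]},z_t)]$, and rewriting the linear and penalty parts via the dual update gives $\Omega_t\ge f^*+\sum_jg_j^*-\tfrac1{2\rho}\|z_{t-1}\|^2+\tfrac1{2\rho}\|z_t\|^2$, so summing over $t$ telescopes the $\|z\|^2$ terms and produces a finite lower bound $\Omega^*$. Summing the per-iteration drop over $t=0,\dots,T-1$ and dividing by $T$ then gives the claim. The main obstacle I expect is keeping $\chi_t>0$: the memory correction $c_{t+1}$ is inflated by the factor $1/p\sim n/b$ coming from the replacement-sampling recursion, and this must be beaten by the $1/b$ savings in the competing terms, which is precisely why the minibatch size $b=[n^{2/3}]$ (and the correspondingly large $\rho$) is the right and essentially forced choice.
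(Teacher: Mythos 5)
Your proposal follows essentially the same route as the paper's own proof: the same three-block descent estimates for $\mathcal{L}_\rho$ (with the SAGA variance bound from Lemma 4 of \cite{Reddi2016Prox} replacing the SVRG one), the same table-update recursion combined with Young's inequality to derive the $c_t$ recursion and the expression for $\chi_t$, the same $Q_1,Q_2$ split with $\beta\sim b/n$ and $p\gtrsim b/n$ to get $\chi_t\ge \frac{\sqrt{2031}\kappa_G L}{2\alpha}$, and the same telescoping of the $\|z\|^2$ terms to lower-bound $\Omega_t$. The plan is correct and matches the paper's argument in all essential steps, including the correct identification of why $b=[n^{2/3}]$ is forced.
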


\begin{proof}
By the optimal condition of step 5 in Algorithm \ref{alg:4},
we have, for $j\in [m]$
\begin{align}
0 & =(y_j^t-y_j^{t+1})^T\big(\partial g_j(y_j^{t+1}) - B^Tz_t + \rho B^T(Ax_t + \sum_{i=1}^jB_iy_i^{t+1} + \sum_{i=j+1}^mB_iy_i^{k}-c) + H_j(y_j^{t+1}-y_j^t)\big) \nonumber \\
& \leq g_j(y_j^t)- g_j(y_j^{t+1}) - (z_t)^T(B_jy_j^t-B_jy_j^{t+1}) + \rho(By_j^t-By_j^{t+1})^T(Ax_t + \sum_{i=1}^jB_iy_i^{t+1} + \sum_{i=j+1}^mB_iy_i^{k}-c) - \|y_j^{t+1}-y_j^t\|^2_{H_j} \nonumber \\
& = g_j(y_j^t)- g_j(y_j^{t+1}) - (z_t)^T(Ax_t+\sum_{i=1}^{j-1}B_iy_i^{t+1} + \sum_{i=j}^mB_iy_i^{k}-c) + (z_t)^T(Ax_t+\sum_{i=1}^jB_iy_i^{t+1}+ \sum_{i=j+1}^mB_iy_i^{k}-c) \nonumber \\
& \quad  + \frac{\rho}{2}\|Ax_t +\sum_{i=1}^{j-1}B_iy_i^{t+1} + \sum_{i=j}^mB_iy_i^{k}-c\|^2 - \frac{\rho}{2}\|Ax_t+\sum_{i=1}^jB_iy_i^{t+1}+ \sum_{i=j+1}^mB_iy_i^{k}-c\|^2 -\frac{\rho}{2}\|B_jy_j^t-B_jy_j^{t+1}\|^2 \nonumber \\
& \quad - \|y_j^{t+1}-y_j^t\|^2_{H_j} \nonumber \\
& \leq \underbrace{ f(x_t) + g_j(y_j^t) - (z_t)^T(Ax_t+\sum_{i=1}^{j-1}B_iy_i^{t+1} + \sum_{i=j}^mB_iy_i^{k}-c) + \frac{\rho}{2}\|Ax_t +\sum_{i=1}^{j-1}B_iy_i^{t+1} + \sum_{i=j}^mB_iy_i^{k}-c\|^2}_{\mathcal{L}_{\rho} (x_t,y_{[j-1]}^{t+1},y_{[j:m]}^t,z_t)}  - \|y_j^{t+1}-y_j^t\|^2_{H_j}  \nonumber \\
& \quad -\underbrace{(f(x_t) + g_j(y_j^{t+1}) - (z_t)^T(Ax_t+\sum_{i=1}^jB_iy_i^{t+1}+ \sum_{i=j+1}^mB_iy_i^{k}-c) + \frac{\rho}{2}\|Ax_t+\sum_{i=1}^jB_iy_i^{t+1}+ \sum_{i=j+1}^mB_iy_i^{k}-c\|^2}_{\mathcal{L}_{\rho} (x_t,y_{[j]}^{t+1},y_{[j+1:m]}^t,z_t)} \nonumber \\
& \leq \mathcal{L}_{\rho} (x_t,y_{[j-1]}^{t+1},y_{[j:m]}^t,z_t) - \mathcal{L}_{\rho} (x_t,y_{[j]}^{t+1},y_{[j+1:m]}^t,z_t)
- \sigma_{\min}(H_j)\|y_j^t-y_j^{t+1}\|^2,
\end{align}
where the first inequality holds by the convexity of function $g_j(y)$,
and the second equality follows by applying the equality
$(a-b)^Tb = \frac{1}{2}(\|a\|^2-\|b\|^2-\|a-b\|^2)$ on the term $(By_j^t-By_j^{t+1})^T(Ax_t + \sum_{i=1}^jB_iy_i^{t+1} + \sum_{i=j+1}^mB_iy_i^{k}-c)$.
Thus, we have, for all $j\in[m]$
\begin{align} \label{eq:A12-1}
 \mathcal{L}_{\rho} (x_t,y_{[j-1]}^{t+1},y_{[j:m]}^t,z_t) \leq \mathcal{L}_{\rho} (x_t,y_{[j]}^{t+1},y_{[j+1:m]}^t,z_t)
 - \sigma_{\min}(H_j)\|y_j^t-y_j^{t+1}\|^2.
\end{align}
Telescoping inequality \eqref{eq:A12-1} over $j$ from $1$ to $m$, we obtain
\begin{align} \label{eq:A13-1}
 \mathcal{L}_{\rho} (x_t,y^{t+1}_{[m]},z_t) \leq \mathcal{L}_{\rho} (x_t,y^t_{[m]},z_t)
 - \sigma_{\min}^H\sum_{j=1}^m \|y_j^t-y_j^{t+1}\|^2.
\end{align}
where $\sigma_{\min}^H=\min_{j\in[m]}\sigma_{\min}(H_j)$.

Using Assumption 1, we have
\begin{align} \label{eq:A13-2}
0 \leq f(x_t) - f(x_{t+1}) + \nabla f(x_t)^T(x_{t+1}-x_t) + \frac{L}{2}\|x_{t+1}-x_t\|^2.
\end{align}
By the step 6 of Algorithm \ref{alg:4},
we have
\begin{align} \label{eq:A13-3}
 0 = (x_t-x_{t+1})^T \big( v_t - A^Tz_t + \rho A^T(Ax_{t+1} + \sum_{j=1}^mB_jy_j^{t+1}-c) + \frac{G}{\eta}(x_{t+1}-x_t) \big).
\end{align}
Combining \eqref{eq:A13-2} and \eqref{eq:A13-3}, we have
\begin{align}
 0 & \leq f(x_t) - f(x_{t+1}) + \nabla f(x_t)^T(x_{t+1}-x_t) + \frac{L}{2}\|x_{t+1}-x_t\|^2 \nonumber \\
 & \quad + (x_t-x_{t+1})^T \big( v_t - A^Tz_t + \rho A^T(Ax_{t+1} + \sum_{j=1}^mB_jy_j^{t+1}-c) + \frac{G}{\eta}(x_{t+1}-x_t) \big)  \nonumber \\
 & = f(x_t) - f(x_{t+1}) + \frac{L}{2}\|x_t-x_{t+1}\|^2 - \frac{1}{\eta}\|x_t - x_{t+1}\|^2_G + (x_t-x_{t+1})^T(v_t-\nabla f(x_t)) \nonumber \\
 & \quad -(z_t)^T(Ax_t-Ax_{t+1}) + \rho(Ax_t - Ax_{t+1})^T(Ax_t + \sum_{j=1}^mB_jy_j^{t+1}-c) \nonumber \\
 & \mathop{=}^{(i)} f(x_t) - f(x_{t+1}) + \frac{L}{2}\|x_t-x_{t+1}\|^2 - \frac{1}{\eta}\|x_t - x_{t+1}\|^2_G + (x_t-x_{t+1})^T(v_t-\nabla f(x_t)) -(z_t)^T(Ax_t + \sum_{j=1}^mB_jy_j^{t+1}-c)\nonumber \\
 & \quad  + (z_t)^T(Ax_{t+1}+ \sum_{j=1}^mB_jy_j^{t+1}-c) + \frac{\rho}{2}\big(\|Ax_{t} + \sum_{j=1}^mB_jy_j^{t+1}-c\|^2 - \|Ax_{t+1} + \sum_{j=1}^mB_jy_j^{t+1}-c\|^2 - \|Ax_t - Ax_{t+1}\|^2 \big) \nonumber \\
 & = \underbrace{ f(x_t) -(z_t)^T(Ax_t + \sum_{j=1}^mB_jy_j^{t+1}-c) +  \frac{\rho}{2} \|Ax_{t} + \sum_{j=1}^mB_jy_j^{t+1}-c\|^2}_{ \mathcal{L}_{\rho} (x_t,y_{[m]}^{t+1},z_t)}
 + \frac{L}{2}\|x_t-x_{t+1}\|^2 + (x_t-x_{t+1})^T(v_t-\nabla f(x_t)) \nonumber \\
 & \quad -\underbrace{ (f(x_{t+1}) -(z_t)^T(Ax_{t+1} + \sum_{j=1}^mB_jy_j^{t+1}-c) +  \frac{\rho}{2} \|Ax_{t+1} + \sum_{j=1}^mB_jy_j^{t+1}-c\|^2}_{ \mathcal{L}_{\rho} (x_{t+1},y_{[m]}^{t+1},z_t)} - \frac{1}{\eta}\|x_t - x_{t+1}\|^2_G - \frac{\rho}{2}\|Ax_t - Ax_{t+1}\|^2 \nonumber \\
 & \leq \mathcal{L}_{\rho} (x_t,y_{[m]}^{t+1},z_t) -  \mathcal{L}_{\rho} (x_{t+1},y_{[m]}^{t+1},z_t)
 - (\frac{\sigma_{\min}(G)}{\eta} + \frac{\rho \sigma^A_{\min}}{2} - \frac{L}{2}) \|x_t - x_{t+1}\|^2 +(x_t-x_{t+1})^T(v_t-\nabla f(x_t)) \nonumber \\
 & \mathop{\leq}^{(ii)}  \mathcal{L}_{\rho} (x_t,y_{[m]}^{t+1},z_t) -  \mathcal{L}_{\rho} (x_{t+1},y_{[m]}^{t+1},z_t)
 - (\frac{\sigma_{\min}(G)}{\eta} + \frac{\rho \sigma^A_{\min}}{2} - L) \|x_t - x_{t+1}\|^2 + \frac{1}{2L}\|v_t-\nabla f(x_t)\|^2 \nonumber \\
 & \mathop{\leq}^{(iii)} \mathcal{L}_{\rho} (x_t,y_{[m]}^{t+1},z_t) -  \mathcal{L}_{\rho} (x_{t+1},y_{[m]}^{t+1},z_t)
 - (\frac{\sigma_{\min}(G)}{\eta} + \frac{\rho \sigma^A_{\min}}{2} - L) \|x_t - x_{t+1}\|^2 + \frac{L}{2b}\frac{1}{n} \sum_{i=1}^n \|x_t-u^t_i\|^2,
\end{align}
where the equality $(i)$ holds by applying the equality
$(a-b)^Tb = \frac{1}{2}(\|a\|^2-\|b\|^2-\|a-b\|^2)$ on the
term $(Ax_t - Ax_{t+1})^T(Ax_{t+1}+\sum_{j=1}^mB_jy_j^{t+1}-c)$; the inequality
$(ii)$ follows by the inequality $a^Tb \leq \frac{L}{2}\|a\|^2 + \frac{1}{2L}\|a\|^2$,
and the inequality $(iii)$ holds by Lemma 4 of \cite{Reddi2016Prox}.
Thus, we obtain
\begin{align} \label{eq:A13-4}
\mathcal{L}_{\rho} (x_{t+1},y_{[m]}^{t+1},z_t) \leq & \mathcal{L}_{\rho} (x_t,y_{[m]}^{t+1},z_t) -
(\frac{\sigma_{\min}(G)}{\eta} + \frac{\rho \sigma^A_{\min}}{2} - L)\|x_t - x_{t+1}\|^2 \nonumber \\
& +\frac{ L}{2b}\frac{1}{n} \sum_{i=1}^n\|x_t-u^t_i\|^2.
\end{align}

By the step 7 in Algorithm \ref{alg:4}, we have
\begin{align} \label{eq:A13-5}
\mathcal{L}_{\rho} (x_{t+1},y_{[m]}^{t+1},z_{t+1}) - \mathcal{L}_{\rho} (x_{t+1},y_{[m]}^{t+1},z_t)
& = \frac{1}{\rho}\|z_{t+1}-z_t\|^2 \nonumber \\
& \leq  \frac{9L^2 }{\sigma^A_{\min}\rho b} \frac{1}{n} \sum_{i=1}^n\big( \|x_t - u^t_i\|^2 + \|x_{t-1} - u^{t-1}_i\|^2\big)
 + \frac{3\sigma^2_{\max}(G)}{\sigma^A_{\min}\eta^2\rho}\|x_{t+1}-x_t\|^2 \nonumber \\
& \quad + \frac{3(\sigma^2_{\max}(G)+3L^2\eta^2)}{\sigma^A_{\min}\eta^2\rho}\|x_{t}-x_{t-1}\|^2,
\end{align}
where the first inequality follows by Lemma \ref{lem:A11}.

Combining \eqref{eq:A13-1}, \eqref{eq:A13-4} and \eqref{eq:A13-5}, we have
\begin{align} \label{eq:A13-6}
\mathcal{L}_{\rho} (x_{t+1},y_{[m]}^{t+1},z_{t+1}) & \leq \mathcal{L}_{\rho} (x_t,y_{[m]}^t,z_t)
 - (\frac{\sigma_{\min}(G)}{\eta} + \frac{\rho \sigma^A_{\min}}{2} - L)\|x_t - x_{t+1}\|^2 +\frac{ L}{2b}\frac{1}{n} \sum_{i=1}^n\|x_t-u^t_i\|^2  \nonumber \\
& \quad - \sigma_{\min}^H\sum_{j=1}^m \|y_j^t-y_j^{t+1}\|^2 + \frac{9L^2 }{\sigma^A_{\min}\rho b} \frac{1}{n} \sum_{i=1}^n\big( \|x_t - u^t_i\|^2 + \|x_{t-1} - u^{t-1}_i\|^2\big) \nonumber \\
& \quad + \frac{3\sigma^2_{\max}(G)}{\sigma^A_{\min}\eta^2\rho}\|x_{t+1}-x_t\|^2 + \frac{3(\sigma^2_{\max}(G)+3L^2\eta^2)}{\sigma^A_{\min}\eta^2\rho}\|x_{t}-x_{t-1}\|^2.
\end{align}

Next, we define a \emph{Lyapunov} function as follows:
\begin{align} \label{eq:A132}
 \Omega_t = \mathbb{E}\big[ \mathcal{L}_{\rho} (x_t,y_{[m]}^t,z_t)+ (\frac{3\sigma^2_{\max}(G)}{\sigma^A_{\min}\rho\eta^2}+\frac{9L^2}{\sigma^A_{\min}\rho})
 \|x_{t}-x_{t-1}\|^2 + \frac{9 L^2 }{\sigma^A_{\min}\rho b}\frac{1}{n} \sum_{i=1}^n\|x_{t-1}-z^{t-1}_i\|^2 +\frac{c_t}{n} \sum_{i=1}^n\|x_{t}-z^t_i\|^2 \big],
\end{align}
where $\kappa_A = \frac{\sigma^A_{\max}}{\sigma^A_{\min}} \geq 1$.

By the step 9 of Algorithm \ref{alg:4}, we have
 \begin{align} \label{eq:A13-7}
  \frac{1}{n}\sum_{i=1}^n \|x_{t+1}-u^{t+1}_i\|^2 &= \frac{1}{n}\sum_{i=1}^n \big( p\|x_{t+1}-x_{t}\|^2 + (1-p)\|x_{t+1}-u^{t}_i\|^2 \big) \nonumber \\
   & = \frac{p}{n} \sum_{i=1}^n \|x_{t+1}-x_{t}\|^2 + \frac{1-p}{n}\sum_{i=1}^n\|x_{t+1}-u^{t}_i\|^2  \nonumber \\
   & = p \|x_{t+1}-x_{t}\|^2 + \frac{1-p}{n} \sum_{i=1}^n \|x_{t+1}-u^{t}_i\|^2,
 \end{align}
where $p$ denotes probability of an index $i$ being in $\mathcal{I}_t$. Here, we have
 \begin{align}
  p = 1-(1-\frac{1}{n})^b \geq 1- \frac{1}{1+b/n} = \frac{b/n}{1+b/n} \geq \frac{b}{2n},
 \end{align}
where the first inequality follows from $(1-a)^b\leq \frac{1}{1+ab}$, and the second inequality holds by $b\leq n$.
Considering the upper bound of $\|x_{t+1}-z^{t}_i\|^2$, we have
\begin{align} \label{eq:A13-8}
  \|x_{t+1}-u^{t}_i\|^2 & = \|x_{t+1}-x_t+x_t-u^{t}_i\|^2 \nonumber \\
  & =  \|x_{t+1}-x_t\|^2 + 2(x_{t+1}-x_t)^T(x_t-u^{t}_i)+ \|x_t-u^{t}_i\|^2 \nonumber \\
  & \leq \|x_{t+1}-x_t\|^2 + 2\big( \frac{1}{2\beta}\|x_{t+1}-x_t\|^2 + \frac{\beta}{2}\|x_t-u^{t}_i\|^2\big)+ \|x_t-u^{t}_i\|^2 \nonumber \\
  & = (1+\frac{1}{\beta})\|x_{t+1}-x_t\|^2 + (1+\beta)\|x_t-u^{t}_i\|^2,
\end{align}
 where $\beta>0$.
Combining \eqref{eq:A13-7} with \eqref{eq:A13-8}, we have
 \begin{align}
 \frac{1}{n}\sum_{i=1}^n \|x_{t+1}-u^{t+1}_i\|^2 \leq (1+\frac{1-p}{\beta})\|x_{t+1}-x_t\|^2 +\frac{(1-p)(1+\beta)}{n}\sum_{i=1}^n\|x_t-u^{t}_i\|^2.
 \end{align}
It follows that
\begin{align} \label{eq:A13-9}
\Omega_{t+1} & = \mathbb{E}\big[\mathcal{L}_{\rho}(x_{t+1},y_{[m]}^{t+1},z_{t+1}) + (\frac{3\sigma^2_{\max}(G)}{\sigma^A_{\min}\rho\eta^2}+\frac{9L^2}{\sigma^A_{\min}\rho})\|x_{t+1}-x_{t}\|^2
+ \frac{9L^2 }{\sigma^A_{\min}\rho b}\frac{1}{n} \sum_{i=1}^n\|x_{t}-u^{t}_i\|^2 +\frac{c_{t+1}}{n}\sum_{i=1}^n \|x_{t+1}-u^{t+1}_i\|^2\big] \nonumber \\
& \leq \mathcal{L}_{\rho} (x_t,y_{[m]}^t,z_t) + (\frac{3\sigma^2_{\max}(G)}{\sigma^A_{\min}\rho\eta^2}+\frac{9L^2}{\sigma^A_{\min}\rho})\|x_{t}-x_{t-1}\|^2
 + \frac{9L^2}{\sigma^A_{\min}\rho b} \frac{1}{n}\sum_{i=1}^n\|x_{t-1}-u^{t-1}_i\|^2   \nonumber \\
& \quad + \big(\frac{18L^2}{\sigma^A_{\min}\rho b} + \frac{L}{b}+(1-p)(1+\beta)c_{t+1}\big)\frac{1}{n}\sum_{i=1}^n\|x_t-u^t_i\|^2 - \sigma_{\min}^H\sum_{j=1}^m \|y_j^t-y_j^{t+1}\|^2 \nonumber\\
& \quad - \frac{L}{2b}\frac{1}{n}\sum_{i=1}^n\|x_t-u^t_i\|^2 - \big( \frac{\sigma_{\min}(G)}{\eta}+\frac{\rho\sigma^A_{\min}}{2} - L
-\frac{6\sigma^2_{\max}(G)}{\sigma^A_{\min}\eta^2\rho} - \frac{9L^2}{\sigma^A_{\min}\rho}-(1+\frac{1-p}{\beta})c_{t+1} \big)\|x_t - x_{t+1}\|^2 \nonumber \\
& = \Omega_t - \sigma_{\min}^H\sum_{j=1}^m \|y_j^t-y_j^{t+1}\|^2 - \chi_t \|x_t - x_{t+1}\|^2 - \frac{L}{2b}\frac{1}{n}\sum_{i=1}^n\|x_t-u^t_i\|^2,
\end{align}
where $c_t = \frac{18L^2}{\sigma^A_{\min}\rho b} + \frac{L}{b} +(1-p)(1+\beta)c_{t+1}$
and $\chi_t=\frac{\sigma_{\min}(G)}{\eta}+\frac{\rho\sigma^A_{\min}}{2} - L
-\frac{6\sigma^2_{\max}(G)}{\sigma^A_{\min}\eta^2\rho} - \frac{9L^2}{\sigma^A_{\min}\rho}-(1+\frac{1-p}{\beta})c_{t+1}$.

Let $c_T = 0$ and $\beta=\frac{b}{4n}$. Since $(1-p)(1+\beta)=1+\beta-p-p\beta\leq 1+\beta-p$ and $p\geq \frac{b}{2n}$,
it follows that
\begin{align}
 c_t \leq c_{t+1}(1-\theta) + \frac{18L^2 }{\sigma^A_{\min}\rho b} + \frac{L}{b},
\end{align}
where $\theta = p-\beta\geq \frac{b}{4n}$.
Then recursing on $t$, for $0\leq t \leq T-1$, we have
\begin{align}
 c_t \leq \frac{1}{b}(\frac{18L^2}{\sigma^A_{\min}\rho} + L) \frac{1-\theta^{T-t}}{\theta} \leq \frac{1}{b\theta}(\frac{18L^2}{\sigma^A_{\min}\rho} + L)
 \leq \frac{4n}{b^2}(\frac{18L^2}{\sigma^A_{\min}\rho} + L).
\end{align}
It follows that
\begin{align}
\chi_t & = \frac{\sigma_{\min}(G)}{\eta}+\frac{\rho\sigma^A_{\min}}{2} - L
-\frac{6\sigma^2_{\max}(G)}{\sigma^A_{\min}\eta^2\rho} - \frac{9L^2}{\sigma^A_{\min}\rho}-(1+\frac{1-p}{\beta})c_{t+1} \nonumber \\
& \geq \frac{\sigma_{\min}(G)}{\eta}+\frac{\rho\sigma^A_{\min}}{2} - L
-\frac{6\sigma^2_{\max}(G)}{\sigma^A_{\min}\eta^2\rho} - \frac{9L^2}{\sigma^A_{\min}\rho}-( 1 + \frac{4n-2b}{b})\frac{4n}{b^2}(\frac{18\kappa_AL^2}{\sigma^A_{\min}\rho} + L) \nonumber \\
& = \frac{\sigma_{\min}(G)}{\eta}+\frac{\rho\sigma^A_{\min}}{2} - L
-\frac{6\sigma^2_{\max}(G)}{\sigma^A_{\min}\eta^2\rho} - \frac{9L^2}{\sigma^A_{\min}\rho} - (\frac{4n}{b}-1)\frac{4n}{b^2}(\frac{18\kappa_AL^2}{\sigma^A_{\min}\rho} + L) \nonumber \\
& \geq \frac{\sigma_{\min}(G)}{\eta}+\frac{\rho\sigma^A_{\min}}{2} - L
-\frac{6\sigma^2_{\max}(G)}{\sigma^A_{\min}\eta^2\rho} - \frac{9L^2}{\sigma^A_{\min}\rho} - \frac{16n^2}{b^3}(\frac{18\kappa_AL^2}{\sigma^A_{\min}\rho} + L) \nonumber \\
& = \underbrace{\frac{\sigma_{\min}(G)}{\eta}- L - \frac{16n^2L}{b^3}}_{Q_1} + \underbrace{\frac{\rho\sigma^A_{\min}}{2} -\frac{6\sigma^2_{\max}(G)}{\sigma^A_{\min}\eta^2\rho} - \frac{9L^2}{\sigma^A_{\min}\rho} - \frac{288n^2L^2}{\sigma^A_{\min}\rho b^3}}_{Q_2}
\end{align}

Let $b=[n^{\frac{2}{3}}]$ and $0< \eta \leq \frac{\sigma_{\min}(G)}{17 L}$, we have $Q_1 \geq 0$. Further, let $\eta = \frac{\alpha\sigma_{\min}(G)}{17 L} \ (0<\alpha \leq 1)$
and $\rho=\frac{2\sqrt{2031}\kappa_GL}{\sigma^A_{\min}\alpha}$, we have
\begin{align}
Q_2 & = \frac{\rho\sigma^A_{\min}}{2} -\frac{6\sigma^2_{\max}(G)}{\sigma^A_{\min}\eta^2\rho} - \frac{9L^2}{\sigma^A_{\min}\rho} - \frac{288n^2L^2}{\sigma^A_{\min}\rho b^3} \nonumber \\
& = \frac{\rho\sigma^A_{\min}}{2} -\frac{1734\kappa^2_GL^2}{\sigma^A_{\min}\alpha^2\rho} - \frac{9L^2}{\sigma^A_{\min}\rho} - \frac{288L^2}{\sigma^A_{\min}\rho } \nonumber \\
& \geq \frac{\rho\sigma^A_{\min}}{4} + \underbrace{\frac{\rho\sigma^A_{\min}}{4} - \frac{2031\kappa^2_GL^2}{\sigma^A_{\min}\alpha^2\rho}}_{\geq 0} \nonumber \\
& \geq \frac{\sqrt{2031}\kappa_GL}{2\alpha},
\end{align}
where $\kappa_G \geq 1$. Thus, we have $\chi_t \geq \frac{\sqrt{2031}\kappa_GL}{2\alpha}$ for all $t$.

Since $\frac{L}{2b} > 0$ and $\chi_t > 0$, by \eqref{eq:A13-9}, the function $\Omega_t$ is monotone decreasing.
By \eqref{eq:A132}, we have
\begin{align} \label{eq:A142}
 \Omega_t &\geq   \mathbb{E}\big[\mathcal{L}_{\rho} (x_t,y_{[m]}^t,z_t) ] \nonumber \\
 & = f(x_t) + \sum_{j=1}^m g_j(y_j^t) - (z_t)^T(Ax_t + \sum_{j=1}^m B_j y_j^{t} - c) + \frac{\rho}{2}\|Ax_t + \sum_{j=1}^m B_j y_j^{t} -c\| \nonumber \\
 & = f(x_t) + \sum_{j=1}^m g_j(y_j^t) - \frac{1}{\rho}(z_t)^T(z_{t-1} - z_t) + \frac{1}{2\rho}\|z_t - z_{t-1}\|^2 \nonumber \\
 & = f(x_t) + \sum_{j=1}^m g_j(y_j^t) - \frac{1}{2\rho}\|z_{t-1}\|^2 + \frac{1}{2\rho}\|z_t\|^2 + \frac{1}{\rho}\|z_t - z_{t-1}\|^2 \nonumber \\
 & \geq f^* + \sum_{j=1}^m g_j^* - \frac{1}{2\rho}\|z_{t-1}\|^2 + \frac{1}{2\rho}\|z_t\|^2.
\end{align}
Summing the inequality \eqref{eq:A142} over $t=0,1\cdots,T$, we have
\begin{align}
 \frac{1}{T} \sum_{t=0}^T  \Omega_t \geq f^* + \sum_{j=1}^m g_j^*  - \frac{1}{2\rho}\|z_{0}\|^2.
\end{align}
Thus, the function $\Omega_t$ is bounded from below. Set $\Omega^*$ denotes a low bound of $\Omega_t$.

Finally, telescoping inequality \eqref{eq:A13-9} over $t$ from $0$ to $T$,
we have
\begin{align} \label{eq:A144}
 \frac{1}{T} \sum_{t=1}^T ( \sigma_{\min}^H\sum_{j=1}^m \|y_j^t-y_j^{t+1}\|^2 + \chi_t \|x_t-x_{t+1}\|^2 +  \frac{L}{2b}\frac{1}{n}\sum_{i=1}^n\|x_t-u^t_i\|^2 )
 \leq \frac{\Omega_0 - \Omega^*}{T},
\end{align}
where $\chi_t \geq \frac{\sqrt{2031}\kappa_GL}{2\alpha} >0$.

\end{proof}

\begin{theorem}
Suppose the sequence $\{x_t,y_{[m]}^t,z_t\}_{t=1}^T$ is generated from Algorithm \ref{alg:4}, and let $b= [n^{\frac{2}{3}}]$, $\eta = \frac{\alpha\sigma_{\min}(G)}{17L} \ (0 < \alpha \leq 1)$, $\rho = \frac{2\sqrt{2031}\kappa_G}{\sigma^A_{\min}\alpha}$ and
 \begin{align}
  &\nu_1 =m\big(\rho^2\sigma^B_{\max}\sigma^A_{\max} + \rho^2(\sigma^B_{\max})^2 + \sigma^2_{\max}(H)\big), \ \nu_2 = 3L^2 + \frac{3\sigma^2_{\max}(G)}{\eta^2} \nonumber \\
  &\nu_3 = \frac{9L^2 }{\sigma^A_{\min}\rho^2} + \frac{3\sigma^2_{\max}(G)}{\sigma^A_{\min}\eta^2\rho^2},
 \end{align}
 then we have
\begin{align}
\frac{1}{T}\sum_{t=1}^T \mathbb{E}\big[ \mbox{dist}(0,\partial L(x_t,y_{[m]}^t,z_t))^2\big] \leq \frac{\nu_{\max}}{T} \sum_{t=1}^T\theta_t \leq \frac{2\nu_{\max}(\Omega_0 - \Omega^*)}{\gamma T}  \nonumber
\end{align}
 where $\gamma = \min(\sigma_{\min}^H, L/2, \chi_{t})$ with $\chi_t \geq \frac{\sqrt{2031}\kappa_GL}{2\alpha} >0$, $\nu_{\max}= \max(\nu_1,\nu_2,\nu_3)$ and
 $\Omega^*$ is a lower bound of function $\Omega_t$.
 Then, given $t^* = \mathop{\arg\min}_{ 1\leq t\leq T}\theta_{t}$ and
 \begin{align}
  T = \frac{2\kappa_{\max}}{\epsilon \gamma}(\Omega_0- \Omega^*),
 \end{align}
 then $(x_{t^*},y_{[m]}^{t^*},z_{t^*})$ is an $\epsilon$-approximate stationary point of \eqref{eq:2}.
\end{theorem}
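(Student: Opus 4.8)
The plan is to mirror the analysis used for SPIDER-ADMM and SVRG-ADMM, proving the statement in three stages. Write $\theta_t=\mathbb{E}\big[\|x_{t+1}-x_t\|^2+\|x_t-x_{t-1}\|^2+\tfrac1{bn}\sum_{i=1}^n(\|x_t-u^t_i\|^2+\|x_{t-1}-u^{t-1}_i\|^2)+\sum_{j=1}^m\|y_j^t-y_j^{t+1}\|^2\big]$ as in the theorem. Stage (i): bound $\mathbb{E}[\mathrm{dist}(0,\partial L(x_t,y_{[m]}^t,z_t))^2]$ blockwise by a multiple of $\theta_t$ using the optimality conditions of Algorithm~4 and the SAGA variance bound. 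Stage (ii): invoke the lemma immediately preceding this theorem to control $\tfrac1T\sum_t\theta_t$ by the telescoped Lyapunov decrease $\tfrac{\Omega_0-\Omega^*}{T}$. Stage (iii): combine the two and translate into the stopping rule for $T$.

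For stage (i), I would treat the three kinds of blocks separately. For a $y_j$-block, combining the optimality condition of step~5 with the dual update of step~7 shows that an element of $\partial_{y_j}L(x_t,y_{[m]}^{t+1},z_{t+1})$ equals $\rho B_j^TA(x_{t+1}-x_t)+\rho B_j^T\sum_{i>j}B_i(y_i^{t+1}-y_i^t)-H_j(y_j^{t+1}-y_j^t)$; applying $\|\sum_i a_i\|^2\le m\sum_i\|a_i\|^2$ bounds its squared norm by $\nu_1\theta_t$. For the $x$-block, step~6 gives $A^Tz_{t+1}=v_t+\tfrac{G}{\eta}(x_{t+1}-x_t)$, so $\nabla_xL(x_{t+1},\dots)=v_t-\nabla f(x_{t+1})-\tfrac{G}{\eta}(x_t-x_{t+1})$; inserting $\pm\nabla f(x_t)$, using $L$-smoothness, the SAGA variance inequality $\mathbb{E}\|v_t-\nabla f(x_t)\|^2\le\tfrac{L^2}{b}\tfrac1n\sum_i\|x_t-u^t_i\|^2$ (Lemma~4 of \cite{Reddi2016Prox}), and $b\ge1$, yields $\nu_2\theta_t$. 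For the $z$-block, $-Ax_{t+1}-\sum_jB_jy_j^{t+1}+c=\tfrac1\rho(z_{t+1}-z_t)$, and the SAGA dual-difference lemma (the one stated just before the Lyapunov lemma) gives $\nu_3\theta_t$. Summing over the $O(1)$ blocks (here $m$ is treated as a constant, as throughout the paper) gives $\mathbb{E}[\mathrm{dist}(0,\partial L)^2]\le\nu_{\max}\theta_t$.

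For stage (ii), the preceding lemma establishes, under $b=n^{2/3}$, $\eta=\tfrac{\alpha\sigma_{\min}(G)}{17L}$, $\rho=\tfrac{2\sqrt{2031}\kappa_G}{\sigma^A_{\min}\alpha}$, that $\tfrac1T\sum_{t=1}^T\big(\sigma_{\min}^H\sum_j\|y_j^t-y_j^{t+1}\|^2+\chi_t\|x_t-x_{t+1}\|^2+\tfrac{L}{2b}\tfrac1n\sum_i\|x_t-u^t_i\|^2\big)\le\tfrac{\Omega_0-\Omega^*}{T}$ with $\chi_t\ge\tfrac{\sqrt{2031}\kappa_GL}{2\alpha}>0$. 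I would re-index the backward-shifted summands inside $\sum_t\theta_t$: $\sum_t\|x_t-x_{t-1}\|^2\le\sum_t\|x_{t+1}-x_t\|^2$ up to a nonnegative boundary term, and likewise $\sum_t\tfrac1{bn}\sum_i\|x_{t-1}-u^{t-1}_i\|^2\le\sum_t\tfrac1{bn}\sum_i\|x_t-u^t_i\|^2$, so that $\sum_t\theta_t$ is majorized by twice $\sum_t(\|x_{t+1}-x_t\|^2+\tfrac1{bn}\sum_i\|x_t-u^t_i\|^2+\sum_j\|y_j^t-y_j^{t+1}\|^2)$. Dividing each group by the matching positive coefficient and setting $\gamma=\min(\sigma_{\min}^H,L/2,\chi_t)$ gives $\tfrac1T\sum_t\theta_t\le\tfrac{2(\Omega_0-\Omega^*)}{\gamma T}$.

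Combining (i) and (ii), $\tfrac1T\sum_t\mathbb{E}[\mathrm{dist}(0,\partial L)^2]\le\nu_{\max}\cdot\tfrac1T\sum_t\theta_t\le\tfrac{2\nu_{\max}(\Omega_0-\Omega^*)}{\gamma T}$, which is the displayed chain. Since the right-hand side is an average, the index $t^*=\arg\min_t\theta_t$ satisfies $\mathbb{E}[\mathrm{dist}(0,\partial L(x_{t^*},y_{[m]}^{t^*},z_{t^*}))^2]\le\nu_{\max}\theta_{t^*}\le\tfrac{2\nu_{\max}(\Omega_0-\Omega^*)}{\gamma T}$, so taking $T=\tfrac{2\nu_{\max}(\Omega_0-\Omega^*)}{\epsilon\gamma}$ makes this $\le\epsilon$, matching Definition~\ref{def:1}. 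I expect the main obstacle to be stage (ii): carefully matching the telescoped Lyapunov decrease, whose coefficient recursion $c_t=\tfrac{18L^2}{\sigma^A_{\min}\rho b}+\tfrac{L}{b}+(1-p)(1+\beta)c_{t+1}$ threads the SAGA memory terms $\{u^t_i\}$ and the probability-$p$ index sampling, against the assembled $\theta_t$, and confirming that $\chi_t$ and the coefficient of $\tfrac1{bn}\sum_i\|x_t-u^t_i\|^2$ stay uniformly positive under the prescribed $\eta$ and $\rho$ so that $\gamma>0$ and the bound is non-vacuous; the per-block estimates of stage (i) are then routine once the variance inequality is in hand.
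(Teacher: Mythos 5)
Your proposal matches the paper's own proof essentially step for step: the same blockwise bounds $\nu_1\theta_t$, $\nu_2\theta_t$, $\nu_3\theta_t$ from the optimality conditions of steps 5--7 together with the SAGA variance inequality (Lemma 4 of \citet{Reddi2016Prox}) and the dual-difference lemma, followed by the same invocation of the preceding Lyapunov lemma and the re-indexing that yields the factor $2/\gamma$. The only difference is that you spell out the shift argument behind $\sum_t\theta_t\leq 2\sum_t(\cdot)$, which the paper leaves implicit; the argument is correct as proposed.
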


\begin{proof}
We begin with defining a useful variable $\theta_t = \|x_{t+1}-x_t\|^2 + \|x_t-x_{t-1}\|^2 + \frac{1}{b n}\sum^n_{i=1} (\|x_t-u^t_i\|^2 + \|x_{t-1}-u^{t-1}_i\|^2)+\sum_{j=1}^m \|y_j^t-y_j^{t+1}\|^2$.
By the optimal condition of the step 5 in Algorithm \ref{alg:4}, we
have, for all $i\in [m]$
\begin{align}
  \mathbb{E}\big[\mbox{dist}(0,\partial_{y_j} L(x,y_{[m]},z))^2\big]_{t+1} & = \mathbb{E}\big[\mbox{dist} (0, \partial g_j(y_j^{t+1})-B_j^Tz_{t+1})^2\big] \nonumber \\
 & = \|B_j^Tz_t -\rho B_j^T(Ax_t + \sum_{i=1}^jB_iy_i^{t+1} + \sum_{i=j+1}^mB_iy_i^{t} -c) - H_j(y_j^{t+1}-y_j^k) -B_j^Tz_{t+1}\|^2 \nonumber \\
 & = \|\rho B_j^TA(x_{t+1}-x_{t}) + \rho B_j^T \sum_{i=j+1}^m B_i (y_i^{t+1}-y_i^{t})- H_j(y_j^{t+1}-y_j^k) \|^2 \nonumber \\
 & \leq m\rho^2\sigma^{B_j}_{\max}\sigma^A_{\max}\|x_{t+1}-x_t\|^2 + m\rho^2\sigma^{B_j}_{\max}\sum_{i=j+1}^m \sigma^{B_i}_{\max}\|y_i^{t+1}-y_i^{t}\|^2
 + m\sigma^2_{\max}(H_j)\|y_j^{t+1}-y_j^k\|^2\nonumber \\
 & \leq m\big(\rho^2\sigma^B_{\max}\sigma^A_{\max} + \rho^2(\sigma^B_{\max})^2 + \sigma^2_{\max}(H)\big) \theta_{t},
\end{align}
where the first inequality follows by the inequality $\|\frac{1}{n}\sum_{i=1}^n z_i\|^2 \leq \frac{1}{n}\sum_{i=1}^n \|z_i\|^2$.

By the step 6 in Algorithm \ref{alg:4}, we have
\begin{align}\label{eq:A14-3}
 \mathbb{E}[\mbox{dist}(0,\nabla_x L(x,y_{[m]},z))]_{t+1} & = \mathbb{E}\|A^Tz_{t+1}-\nabla f(x_{t+1})\|^2  \nonumber \\
 & = \mathbb{E}\|v_t - \nabla f(x_{t+1}) - \frac{G}{\eta} (x_t-x_{t+1})\|^2 \nonumber \\
 & = \mathbb{E}\|v_t - \nabla f(x_{t}) +\nabla f(x_{t})- \nabla f(x_{t+1})
  - \frac{G}{\eta}(x_t-x_{t+1})\|^2  \nonumber \\
 & \leq  \frac{3L^2}{b n}\sum_{i=1}^n\|x_t-u^t_i\|^2 + 3(L^2+ \frac{\sigma^2_{\max}(G)}{\eta^2})\|x_t-x_{t+1}\|^2 \nonumber \\
 & \leq \big( 3L^2+ \frac{3\sigma^2_{\max}(G)}{\eta^2} \big)\theta_{t}.
\end{align}

By the step 7 of Algorithm \ref{alg:4}, we have
\begin{align}\label{eq:A14-4}
 \mathbb{E}[\mbox{dist}(0,\nabla_{z} L(x,y_{[m]},z))]_{t+1} & = \mathbb{E}\|Ax_{t+1}+By_{t+1}-c\|^2 \nonumber \\
 & = \frac{1}{\rho^2} \mathbb{E} \|z_{t+1}-z_t\|^2  \nonumber \\
 & \leq \frac{9L^2 }{\sigma^A_{\min}\rho^2 b} \frac{1}{n}\sum_{i=1}^n\big( \|x_t - u^t_i\|^2
  + \|x_{t-1} - u^{t-1}_i\|^2\big) + \frac{3\sigma^2_{\max}(G)}{\sigma^A_{\min}\eta^2\rho^2}\|x_{t+1}-x_t\|^2 \nonumber \\
 & \quad +(\frac{3\sigma^2_{\max}(G)}{\sigma^A_{\min}\eta^2\rho^2} + \frac{9L^2)}{\sigma^A_{\min}\rho^2})\|x_{t}-x_{t-1}\|^2 \nonumber \\
 & \leq \big(\frac{9L^2}{\sigma^A_{\min}\rho^2} + \frac{3\sigma^2_{\max}(G)}{\sigma^A_{\min}\eta^2\rho^2} \big)\theta_{t}. \nonumber \\
\end{align}

Using \eqref{eq:A144}, we have
\begin{align}
\frac{1}{T}\sum_{t=1}^T \mathbb{E}\big[ \mbox{dist}(0,\partial L(x_t,y_{[m]}^t,z_t))^2\big] & \leq \frac{\nu_{\max}}{T} \sum_{t=1}^T \theta_t
\leq \frac{2\nu_{\max}(\Omega_0 - \Omega^*)}{\gamma T}  ,
\end{align}
where $\gamma = \min(\sigma_{\min}^H, L/2, \chi_{t})$, $\nu_{\max}= \max(\nu_1,\nu_2,\nu_3)$ with
\begin{align}
 \nu_1 = m\big(\rho^2\sigma^B_{\max}\sigma^A_{\max} + \rho^2(\sigma^B_{\max})^2 + \sigma^2_{\max}(H)\big),
 \ \nu_2 = 6 L^2 + \frac{3\sigma^2_{\max}(G)}{\eta^2}, \ \nu_3 = \frac{9L^2 }{\sigma^A_{\min}\rho^2} + \frac{3\sigma^2_{\max}(G)}{\sigma^A_{\min}\eta^2\rho^2}. \nonumber
\end{align}
Given $\eta = \frac{\alpha\sigma_{\min}(G)}{17L} \ (0 < \alpha \leq 1)$ and $\rho = \frac{2\sqrt{2031}\kappa_G}{\sigma^A_{\min}\alpha}$,
since $m$ is relatively small, it easy verifies that $\gamma = O(1) $ and $\nu_{\max} = O(1)$,
which are independent on $n$ and $T$.
Thus, we obtain
\begin{align}
\frac{1}{T}\sum_{t=1}^T \mathbb{E}\big[ \mbox{dist}(0,\partial L(x_t,y_{[m]}^t,z_t))^2\big]  \leq O(\frac{1}{T}).
\end{align}

\end{proof}

\end{appendices}

\end{onecolumn}

\end{document}